%
%
%
%
%


\pdfoutput=1  
\documentclass[10pt, a4paper]{amsart}
\usepackage{amscd}
\usepackage{amssymb}
\usepackage[all]{xy}
\usepackage{lmodern}
\usepackage[T1]{fontenc}
\usepackage{graphicx}
\title[The Squaring Operation]{The Squaring Operation for Commutative DG Rings}
\date{24 October 2015}
\usepackage{hyperref}
\hypersetup{colorlinks=false}

\author{Amnon Yekutieli}
\address{Department of  Mathematics,
Ben Gurion University, Be'er Sheva 84105, Israel}
\email{amyekut@math.bgu.ac.il}

\thanks{{\em Mathematics Subject Classification} 2010.
Primary: 16E45. Secondary: 16E35, 18G10, 13D03, 18E30.}

\keywords{DG rings, DG modules, derived categories, derived functors, 
resolutions.}
\thanks{Supported by the Israel Science Foundation grant no.\ 253/13.}

\newtheorem{thm}[equation]{Theorem}
\newtheorem{cor}[equation]{Corollary}
\newtheorem{prop}[equation]{Proposition}
\newtheorem{lem}[equation]{Lemma}
\theoremstyle{definition}
\newtheorem{dfn}[equation]{Definition}
\newtheorem{rem}[equation]{Remark}
\newtheorem{exa}[equation]{Example}

\newtheorem{cond}[equation]{Condition}

\newtheorem{setup}[equation]{Setup}

\numberwithin{equation}{section}
\setcounter{tocdepth}{1}

\newcommand{\iso}{\xrightarrow{\simeq}}

\newcommand{\xar}{\xrightarrow}
\newcommand{\opn}{\operatorname}
\newcommand{\cat}[1]{\operatorname{\mathsf{#1}}}

\newcommand{\cd}{\,{\cdot}\,}

\newcommand{\rmitem}[1]{\item[\text{\textup{(#1)}}]}

\newcommand{\mrm}[1]{\mathrm{#1}}

\newcommand{\la}{\lambda}

\renewcommand{\th}{\theta}

\newcommand{\al}{\alpha}
\newcommand{\be}{\beta}
\newcommand{\ga}{\gamma}
\newcommand{\ep}{\epsilon}
\newcommand{\ze}{\zeta}

\newcommand{\K}{\mathbb{K}}

\newcommand{\Q}{\mathbb{Q}}
\newcommand{\Z}{\mathbb{Z}}
\newcommand{\N}{\mathbb{N}}

\newcommand{\ali}[1]{\begin{aligned} #1 \end{aligned}}

\newcommand{\tup}[1]{\textup{#1}}
\newcommand{\bsym}[1]{\boldsymbol{#1}}

\newcommand{\ot}{\otimes}

\newcommand{\til}[1]{\tilde{#1}}

\newcommand{\bra}[1]{\langle #1 \rangle}
\renewcommand{\d}{\mathrm{d}}


\newcommand{\lb}{\linebreak}

\newcommand{\sbmat}[1]{\left[ \begin{smallmatrix} #1
\end{smallmatrix} \right]}
\newcommand{\bmat}[1]{\begin{bmatrix} #1 \end{bmatrix}}
\newcommand{\twoto}{\Rightarrow}
\newcommand{\centover}{ /_{\! \mrm{ce}}\,}

%


\begin{document}

\begin{abstract}
Let $A \to B$ be a homomorphism of commutative rings. The {\em squaring 
operation} is a functor $\opn{Sq}_{B / A}$ from the derived category 
$\cat{D}(B)$ of complexes of $B$-modules into itself. This  
operation is needed for the definition of {\em rigid complexes} (in the sense of 
Van den Bergh), that in turn leads to a new approach to Grothendieck duality for 
rings, schemes and even DM stacks. 

In our paper with J.J. Zhang from 2008 we introduced the squaring operation, 
and explored some of its properties. Unfortunately some of the proofs in that 
paper had severe gaps in them. 

In the present paper we reproduce the construction of the squaring operation. 
This is done in a more general context than in the first paper: here 
we consider a homomorphism $A \to B$ of {\em commutative DG rings}. Our 
first main result is that 
{\em the square $\opn{Sq}_{B / A}(M)$ of a DG $B$-module $M$ is independent 
of the resolutions used to present it}. 
Our second main result is on {\em the trace functoriality of the squaring 
operation}. We give precise statements and complete correct proofs. 

In a subsequent paper we will reproduce the remaining parts of the 2008 paper 
that require fixing. This will allow us to proceed with the other papers, 
mentioned in the bibliography, on the rigid approach to Grothendieck duality. 

The proofs of the main results require a substantial amount of foundational 
work on commutative and noncommutative DG rings, including a study of {\em 
semi-free DG rings}, their lifting properties, and their homotopies. This part 
of the paper could be of independent interest. 
\end{abstract}

\maketitle
\tableofcontents

\setcounter{section}{-1}
\section{Introduction}
\numberwithin{equation}{subsection}

\subsection{Background: Rigid Dualizing Complexes}
The concept of {\em rigid dualizing complex} was introduced by M. Van den Bergh 
in his influential paper \cite{VdB} from 1997. This was done for a 
noncommutative ring $A$ 
over a base field $\K$. Let us recall the definition; but to simplify matters 
(and because here we are interested in the commutative situation), we 
shall state it for a commutative ring $A$.

So let $\K$ be a field and $A$ a commutative $\K$-ring. We denote by 
$\cat{D}(A)$ the derived category of $A$-modules. Given a complex 
$M \in \cat{D}(A)$, its {\em square} is the complex 
\begin{equation} \label{eqn:490}
\opn{Sq}_{A / \K}(M) := 
\opn{RHom}_{A \ot_{\K} A}(A, M \ot_{\K} M) \in \cat{D}(A) .
\end{equation}
The $A$-module structure on $\opn{Sq}_{A / \K}(M)$ comes from the first 
argument of $\opn{RHom}$. 
Note that if $M$ is a single module, then 
\[ \opn{H}^q \bigl( \opn{Sq}_{A / \K}(M) \bigr) = 
\opn{HH}^q(A, M \ot_{\K} M) , \]
the $q$-th Hochschild cohomology of the $A$-bimodule $M \ot_{\K} M$.

Now assume $A$ is finitely generated over $\K$ (and hence it is noetherian). A 
{\em rigid dualizing complex} over $A$ relative 
to $\K$ is a pair $(R, \rho)$, where $R \in \cat{D}(A)$ is a dualizing complex 
(in the sense of \cite{RD}), and 
\begin{equation} \label{eqn:499}
\rho : R \iso \opn{Sq}_{A / \K}(R) 
\end{equation}
is an isomorphism in $\cat{D}(A)$. 
Van den Bergh proved that a rigid dualizing complex $(R, \rho)$ exists, and 
it is unique up to isomorphism. Further work in this direction was done 
by J.J. Zhang and the author in a series of papers; see \cite{YZ1} and its 
references. (These papers dealt with the noncommutative 
situation, which is significantly more complicated.) 

In the current paper we are interested in {\em commutative rings}, but in a 
{\em relative situation}: instead of a base field $\K$, we have a homomorphism 
$A \to B$ of commutative rings, and we want to produce a useful theory of 
squaring and rigidity. 

If the homomorphism $A \to B$ is {\em flat}, then there is a pretty easy way to 
generalize (\ref{eqn:490}), as follows. Given a complex $M \in \cat{D}(B)$, we 
may define 
\begin{equation} \label{eqn:491}
\opn{Sq}_{B / A}(M) := 
\opn{RHom}_{B \ot_A B}(B, M \ot^{\mrm{L}}_A M) \in \cat{D}(B) . 
\end{equation}
However, when $A \to B$ is not flat, formula (\ref{eqn:491})  is 
meaningless, since  there is no way to interpret $M \ot^{\mrm{L}}_A M$ as an 
object of $\cat{D}(B \ot_A B)$~!

In our paper with Zhang \cite{YZ2}, we proposed to solve the flatness problem 
by replacing the ring $B$ with a {\em flat DG ring resolution} $\til{B} \to B$ 
over $A$. Further work on rigid dualizing complexes over commutative rings was 
done in the paper \cite{YZ3}, and more is outlined in the survey paper 
\cite{Ye1}. 

There is work in progress (\cite{Ye6} 
and \cite{Ye7}) about rigid dualizing on schemes and Deligne-Mumford stacks. 
Indeed, the rigid approach to Grothendieck duality allows, for the first time, 
to state and prove global duality for a proper map between DM stacks. 
The ideas are outlined in the lecture notes \cite{Ye3}. 

Unfortunately, there were {\em serious flaws in the proofs in \cite{YZ2}}, 
as explained in Subsection \ref{subsec:discussion} of the Introduction. 
The discovery of these flaws forced us to go back and repair the foundations. 
Thus, {\em in the present paper we provide a comprehensive and correct 
treatment of the squaring operation}, using the DG ring method. 
The companion paper \cite{Ye5} is destined to repair further problems in 
\cite{YZ2}, and to enhance the results of \cite{YZ3}. 

Once the repairs to the foundations are done, we intend to proceed with the 
geometric application of rigid dualizing complexes (namely the papers in 
progress \cite{Ye6} and \cite{Ye7}).

\subsection{DG Rings and their Resolutions} \label{subsec:DGR-res}
Let $A = \bigoplus_{i \in \Z} A^i$ be a DG ring, where as usual ``DG'' is short 
for ``differential graded''. (Most other texts would call $A$ a unital 
associative DG algebra over $\Z$; see Remark \ref{rem:1025} for a discussion 
of nomenclature.) We say that $A$ is {\em nonpositive} if  
$A^i = 0$ for $i > 0$. The DG ring $A$ is called {\em strongly commutative} if 
$a \cd b = (-1)^{i  j} \cd b \cd a$ for $a \in A^i$ and $b \in A^j$, and 
$a \cd a = 0$ if $i$ is odd. 
We call $A$ a {\em commutative DG ring} if it is both nonpositive and strongly 
commutative. This is the term appearing in the title of the paper. 

We denote by $\cat{DGR}_{\mrm{sc}}^{\leq 0}$ the category of commutative 
DG rings. Observe that the category of commutative rings is a full subcategory 
of $\cat{DGR}_{\mrm{sc}}^{\leq 0}$, since a ring 
can be seen as a DG ring concentrated in degree $0$. 
All DG rings appearing in the Introduction are commutative (with the 
exception of Subsections \ref{subsec:proofs} and \ref{subsec:discussion}). 
However, in the paper itself we must deal with noncommutative DG rings, as 
explained in Subsection  \ref{subsec:proofs}.

Even though our motivation is the squaring operation for commutative rings, 
we develop the squaring operation more generally for commutative DG rings, 
and this is the way we shall present the results in the Introduction. 
The reason is twofold: first, there is no added difficulty in working with 
commutative DG rings (as compared to commutative rings); and second, the 
presentation is cleaner when working totally in the commutative DG framework. 

Let $A \xar{u} B$ be a homomorphism of commutative DG rings, i.e.\ a morphism 
in $\cat{DGR}_{\mrm{sc}}^{\leq 0}$. We refer to $A \xar{u} B$ as a {\em pair of 
commutative DG rings}, with notation $B / A$. These pairs form a category: 
given another pair 
$B' / A' = (A' \xar{u'} B')$, a morphism of pairs 
$w / v : B' / A' \to B / A$
is a commutative diagram 
\begin{equation} \label{eqn:985}
\UseTips \xymatrix @C=6ex @R=6ex {
A'
\ar[r]^{u'}
\ar[d]_{v}
&
B'
\ar[d]^{w}
\\
A
\ar[r]^{u}
&
B
} 
\end{equation}
in $\cat{DGR}_{\mrm{sc}}^{\leq 0}$.
The resulting category is denoted by 
$\cat{PDGR}_{\mrm{sc}}^{\leq 0}$.

Consider a pair $B / A$ in $\cat{PDGR}_{\mrm{sc}}^{\leq 0}$.
A {\em K-flat resolution} of $B / A$ is a morphism 
$s / r : \til{B} / \til{A} \to B / A$
in $\cat{PDGR}_{\mrm{sc}}^{\leq 0}$, such that $\til{B}$ is K-flat as a DG 
$\til{A}$-module, $r : \til{A} \to A$ is a quasi-isomorphism, and 
$s : \til{B} \to B$ is a surjective quasi-isomorphism.
(See Section \ref{sec:resolutions-modules} for a review of various kinds of DG 
module resolutions, including K-flat ones.) 

\begin{equation} \label{eqn:482}
\UseTips \xymatrix @C=8ex @R=6ex {
\til{A}
\ar[r]^{\til{u}} ="tilu"
\ar[d]_{r} ="v"
{} \save 
[]+<-12ex,-1ex> *+[F-:<3pt>]{\scriptstyle \text{qu-isom}} 
\ar@(r,l)@{..} "v" 
\restore
&
\til{B}
\ar[d]^{s} ="w"
{} \save 
[]+<10ex,4ex> *+[F-:<3pt>]{\scriptstyle \text{K-flat}} 
\ar@(l,ur)@{..} "tilu" 
\restore
&
{} \save 
[]+<4ex,-1ex> *+[F-:<3pt>]{\scriptstyle \text{surj qu-isom}} 
\ar@(l,r)@{..} "w" 
\restore
\\
A
\ar[r]_{u} ="u"
&
B
}
\end{equation}

Let $w / v : B' / A' \to B / A$
be a morphism in $\cat{PDGR}_{\mrm{sc}}^{\leq 0}$. A K-flat resolution of 
$w / v$ is a commutative diagram
\begin{equation} \label{eqn:986}
\UseTips \xymatrix @C=8ex @R=6ex {
\til{B}' / \til{A}'
\ar[r]^{\til{w} / \til{v}}
\ar[d]_{s' / r'}
&
\til{B} / \til{A}
\ar[d]^{s / r}
\\
B' / A'
\ar[r]^{w / v}
&
B / A
}
\end{equation}
in $\cat{PDGR}_{\mrm{sc}}^{\leq 0}$, where the vertical arrows $s / r$ and 
$s' / r'$ are K-flat resolutions.
We also say that $\til{w} / \til{v}$ is a morphism of resolutions above 
$w / v$. In case $B' / A' = B / A$, and  
$w / v$ is the identity automorphism of this pair, 
we say that $\til{w} / \til{v}$ is a morphism of resolutions of $B / A$. 
According to Proposition \ref{prop:1010}, 
K-flat resolutions of pairs, and of morphisms between pairs, exist. 

Let $A  \xar{u} B \xar{v} C$ be homomorphisms in 
$\cat{DGR}_{\mrm{sc}}^{\leq 0}$.
We refer to this data as a {\em triple of commutative DG rings}. 
A K-flat resolution of this triple is a commutative diagram 
\begin{equation} \label{eqn:966}
\UseTips \xymatrix @C=6ex @R=6ex {
\til{A}
\ar[r]^{\til{u}}
\ar[d]^{r}
&
\til{B}
\ar[d]^{s}
\ar[r]^{\til{v}}
&
\til{C}
\ar[d]^{t}
\\
A
\ar[r]^{u}
&
B
\ar[r]^{v}
&
C
} 
\end{equation}
in $\cat{DGR}_{\mrm{sc}}^{\leq 0}$,
such that 
$\til{B} / \til{A}$ is a K-flat resolution of the pair $B / A$, and 
$\til{C} / \til{A}$ is a K-flat resolution of the pair $C / A$.
The first triple above can be viewed as a morphism of pairs 
$v / \mrm{id}_A : B / A \to C / A$; and 
$\til{v} / \mrm{id}_{\til{A}} : \til{B} / \til{A}  \to \til{C} / \til{A}$
can be viewed as a K-flat resolution of $v / \mrm{id}_A$. 
Such resolutions exist by Proposition \ref{prop:1011}.

\subsection{The Squaring Operation} \label{subsec:squaring}
Recall that our DG rings are all commutative. 
To a DG ring $A$ we associate the category of DG $A$-modules 
$\cat{M}(A)$, and its derived category $\cat{D}(A)$. There is an additive 
functor $\opn{Q} : \cat{M}(A) \to  \cat{D}(A)$, which is the identity on 
objects, and which sends quasi-isomorphisms to isomorphisms. 
In Sections \ref{sec:facts-DG-mods} and \ref{sec:resolutions-modules} we recall 
some facts about DG modules, their resolutions, and related derived functors.

Let $A \xar{u} B$ be a homomorphism of DG rings, namely an object $B / A$ of 
$\cat{PDGR}_{\mrm{sc}}^{\leq 0}$. 
Given a DG $B$-module $M$, and a K-flat resolution 
$\til{B} / \til{A}$ of $B / A$, 
we let 
\begin{equation} \label{eqn:815}
\opn{Sq}_{B / A}^{\til{B} / \til{A}} (M) := 
\opn{RHom}_{\til{B} \ot_{\til{A}} \til{B}}(B, M \ot_{\til{A}}^{\mrm{L}} M)
\in \cat{D}(B) . 
\end{equation}
In Proposition \ref{prop:915} and Definition \ref{dfn:950} we provide an 
explicit presentation of \lb 
$\opn{Sq}_{B / A}^{\til{B} / \til{A}} (M)$, 
in terms of {\em compound resolutions}. 
See Remark \ref{rem:965} regarding symmetric compound resolutions.  

Consider a morphism of pairs 
$w / v : B' / A' \to B / A$, a DG module $M \in \cat{D}(B)$, a DG module
$M' \in \cat{D}(B')$, and a morphism 
$\th : M \to M'$ in $\cat{D}(B')$.
Clarification: actually $\th$ is a morphism $\opn{For}_w(M) \to M'$, where 
$\opn{For}_w : \cat{D}(B) \to \cat{D}(B')$
is the forgetful functor (restriction of scalars) corresponding to the DG ring 
homomorphism $w : B' \to B$. However, most of the time 
we shall suppress these forgetful functors, for the sake of clarity. 

Given a K-flat resolution
$\til{w} / \til{v} : \til{B}' / \til{A}' \to \til{B} / \til{A}$
of $w / v$, there is a morphism
\begin{equation} \label{eqn:961}
\opn{Sq}_{w / v}^{\til{w} / \til{v}} (\th) : 
\opn{Sq}_{B / A}^{\til{B} / \til{A}} (M) \to
\opn{Sq}_{B' / A'}^{\til{B}' / \til{A}'} (M')
\end{equation}
in $\cat{D}(B')$. The morphism 
$\opn{Sq}_{w / v}^{\til{w} / \til{v}} (\th)$
is constructed using compound resolutions -- see Proposition \ref{prop:916} and 
Definition \ref{dfn:956}. 
This construction is functorial in all arguments. 
By this we mean that if
$w' / v' : B'' / A'' \to B' / A'$
is another morphism in $\cat{PDGR}_{\mrm{sc}}^{\leq 0}$,
with K-flat resolution 
$\til{w}' / \til{v}' : \til{B}'' / \til{A}'' \to \til{B}' / \til{A}'$, 
and if $\th' : M' \to M''$
is a morphism in $\cat{D}(B'')$, then 
\[ \opn{Sq}_{(w / v) \circ (w' / v')}^{(\til{w} / \til{v}) \circ 
(\til{w}' / \til{v}')} (\th' \circ \th) = 
\opn{Sq}_{w' / v'}^{\til{w}' / \til{v}'} (\th') \circ
\opn{Sq}_{w / v}^{\til{w} / \til{v}} (\th) , \]
as morphisms 
\[ \opn{Sq}_{B / A}^{\til{B} / \til{A}} (M) \to
\opn{Sq}_{B'' / A''}^{\til{B}'' / \til{A}''} (M'') \]
in $\cat{D}(B'')$. See Proposition \ref{prop:940}. 

Here is the key technical result of our paper. It is repeated as Theorem 
\ref{thm:950} in the body of the paper, and proved there. 
A brief discussion of the proof can be found in Subsection \ref{subsec:proofs} 
of the Introduction. 

\begin{thm}[Homotopy Invariance] \label{thm:960}
Let $w / v : B' / A' \to B / A$ be a morphism in  
$\cat{PDGR}_{\mrm{sc}}^{\leq 0}$, let 
$M \in \cat{D}(B)$, let $M' \in \cat{D}(B')$, and let 
$\th : M \to M'$ be a morphism in $\cat{D}(B')$.
Suppose
$\til{B} / \til{A}$ and $\til{B}' / \til{A}'$
are K-flat resolutions of 
$B / A$ and $B' / A'$ respectively in $\cat{PDGR}_{\mrm{sc}}^{\leq 0}$,
and  
\[ \til{w}_0 / \til{v}_0,  \, \til{w}_1 / \til{v}_1 :
\til{B}' / \til{A}' \to \til{B} / \til{A} \]
are morphisms of resolutions above $w / v$. Then the morphisms 
\[ \opn{Sq}_{w / v}^{\til{w}_0 / \til{v}_0}(\th), \,
\opn{Sq}_{w / v}^{\til{w}_1 / \til{v}_1}(\th) :
\opn{Sq}_{B / A}^{\til{B} / \til{A}}(M) \to 
\opn{Sq}_{B' / A'}^{\til{B}' / \til{A}'}(M') \]
in $\cat{D}(B')$ are equal. 
\end{thm}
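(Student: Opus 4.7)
The plan is to reduce the theorem to a statement about DG ring homotopies and then propagate such a homotopy through the compound resolution model of the squaring. First, I would recall from the compound resolution machinery (Proposition \ref{prop:915}, Proposition \ref{prop:916} and Definition \ref{dfn:956}) that $\opn{Sq}_{w/v}^{\til{w}/\til{v}}(\th)$ is computed explicitly: one picks a suitable resolution $P \to B$ over $\til{B} \ot_{\til{A}} \til{B}$, together with K-flat resolutions of $M$ and $M'$ over $\til{A}$ and $\til{A}'$, and then assembles a map between the resulting $\Hom$-complexes from the data of $\til v, \til w, \th$. The theorem then becomes the assertion that two such assemblies, differing only in $\til v$ and $\til w$, produce chain-homotopic maps of $\Hom$-complexes, and hence equal morphisms in $\cat{D}(B')$.

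Second, I would invoke the foundational homotopy theory of semi-free commutative DG rings promised in the abstract and recalled in Subsection \ref{subsec:DGR-res}. Possibly after replacing $\til B'/\til A'$ by a semi-free resolution (which is permitted by the independence on the resolution established elsewhere in the paper), we have a semi-free pair mapping into $\til B/\til A$, where $s:\til B\to B$ is a \emph{surjective} quasi-isomorphism. Since both pairs $(\til v_0,\til w_0)$ and $(\til v_1,\til w_1)$ lift the same morphism $w/v$ through $s/r$, the standard lifting argument for semi-free DG rings produces a DG ring homotopy between them, represented either by a factorization through a cylinder object for $\til B'/\til A'$ or by a lifting through a path object for $\til B/\til A$. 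The surjectivity of $s$ and the K-flatness of $\til B'$ over $\til A'$ are exactly what allow the acyclic-fibration / cofibration lifting that yields such a homotopy.

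Third, I would transport this DG ring homotopy through the squaring construction. Tensoring a homotopy between DG ring maps $\til w_i$ with itself over a compatible homotopy between $\til v_i$ yields, by K-flatness, a chain homotopy
\[ \til w_0\ot\til w_0,\ \til w_1\ot\til w_1 : \til B'\ot_{\til A'}\til B' \to \til B\ot_{\til A}\til B, \]
and, combined with $\th$, a chain homotopy between the corresponding maps on $M\ot^{\mrm L}_{\til A'} M' \to M\ot^{\mrm L}_{\til A} M$ at the level of compound resolutions. Applying $\opn{RHom}_{\til B\ot_{\til A}\til B}(B,-)$ preserves chain homotopies, so the two induced morphisms on $\opn{Sq}$ coincide in $\cat{D}(B')$.

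The principal obstacle is the second step: producing a workable notion of homotopy inside $\cat{DGR}_{\mrm{sc}}^{\leq 0}$ that survives the diagonal, nonlinear operation $R\mapsto R\ot_{R_0}R$ and the subsequent tensoring against $M$. Cylinder and path objects in strongly commutative nonpositive DG rings are delicate (for instance, the naive polynomial cylinder may leave the nonpositive world, and one typically replaces it by a carefully chosen semi-free cylinder), so one must verify that the specific homotopy produced by the lifting machinery of semi-free commutative DG rings is of a form that induces an honest chain homotopy on the compound resolution used to compute the squaring. Once this compatibility is in place, the passage through $\ot^{\mrm L}$ and $\opn{RHom}$ is routine and the theorem follows.
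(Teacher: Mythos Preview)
Your outline has a genuine gap in its second step, and this gap is precisely the
reason the paper takes the route it does. You propose to ``invoke the foundational
homotopy theory of semi-free \emph{commutative} DG rings'' to produce a DG ring
homotopy between $\til w_0/\til v_0$ and $\til w_1/\til v_1$. But the paper states
explicitly (see the proof of Lemma \ref{lem:300} and Remark \ref{rem:780}) that the
construction of a $w_0$-$w_1$-derivation $\gamma$ from its values on generators
\emph{fails} over the strongly commutative polynomial ring $\Z[X]$ unless
$w_0 = w_1$; it only works over the free ring $\Z\bra{X}$. So there is no
``standard lifting argument for semi-free DG rings'' available in
$\cat{DGR}_{\mrm{sc}}^{\leq 0}$, and your caveat in the last paragraph is not a
side issue but the heart of the matter.

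The paper's actual proof is therefore quite different from your sketch. Theorem
\ref{thm:950} is deduced as the diagonal special case of the noncommutative
Homotopy Invariance Theorem \ref{thm:840} for the rectangle operation. That theorem
is proved in two stages. First (Lemma \ref{lem:827}), one imposes Condition
\ref{cond:930}: $\til B'$ is \emph{noncommutative} semi-free over $\til A'$ and
$\til v_0 = \til v_1$; only then does Theorem \ref{thm:300} supply a DG ring
homotopy $\gamma : \til w_0 \Rightarrow \til w_1$. This homotopy is not propagated
by ``tensoring $\gamma$ with itself'' (which would not yield a homotopy between
$\til w_0 \otimes \til w_0$ and $\til w_1 \otimes \til w_1$); instead it is encoded
as a single DG ring map $\til w_{\mrm{cyl}} : \til B' \to \opn{Cyl}(\til B)$ into
the cylinder DG ring, and one uses the functoriality of
$\opn{Rect}$ in morphisms of $\cat{PDGR}$ (Proposition \ref{prop:940}) together with
the auxiliary pair $\opn{Cyl}(B)/A$ and the modules $\opn{Cyl}(M^{\mrm l}),
\opn{Cyl}(M^{\mrm r})$ to conclude. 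Second, the general case (arbitrary K-flat
$\til B'/\til A'$ and possibly $\til v_0 \neq \til v_1$) is reduced to the first by
inserting auxiliary noncommutative semi-free resolutions
$\til B^{\heartsuit}, \til B^{\diamondsuit}, \til B'^{\,\diamondsuit}$ and a strict
resolution over $A$ so as to force the two $\til v$'s to coincide (diagram
(\ref{eqn:946})). Your proposal contains neither the passage to noncommutative
resolutions nor the reduction to $\til v_0 = \til v_1$, and both are essential.
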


Here is the first main theorem of our paper. 

\begin{thm}[Existence of Squares] \label{thm:965}
Let $A \to B$ be a homomorphism of commutative DG rings,
and let $M$ be a DG $B$-module. There is a DG $B$-module 
$\opn{Sq}_{B / A}(M)$, 
unique up to a unique isomorphism in $\cat{D}(B)$, together with 
an isomorphism
\[ \opn{sq}^{\til{B} / \til{A}} : 
\opn{Sq}_{B / A}(M) \iso \opn{Sq}_{B / A}^{\til{B} / \til{A}}(M) \]
in $\cat{D}(B)$ for any K-flat resolution 
$\til{B} / \til{A}$ of $B / A$, 
satisfying the following condition. 
\begin{enumerate}
\item[($*$)] For any morphism 
$\til{w} / \til{v} : \til{B}' / \til{A}' \to \til{B} / \til{A}$
of resolutions of $B / A$, the diagram 
\[ \UseTips \xymatrix @C=18ex @R=8ex {
\opn{Sq}_{B / A}(M)
\ar[d]_{ \opn{sq}^{\til{B} / \til{A}} }
\ar[dr]^{ \opn{sq}^{\til{B}' / \til{A}'} }
\\
\opn{Sq}_{B / A}^{\til{B} / \til{A}}(M)
\ar[r]_{ \opn{Sq}_{\mrm{id} / \mrm{id}}^{\til{w} / \til{v}}(\mrm{id}_M) }
&
\opn{Sq}_{B / A}^{\til{B}' / \til{A}'}(M)
} \]
of isomorphisms in $\cat{D}(B)$ is commutative. 
\end{enumerate}
\end{thm}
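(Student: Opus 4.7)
The plan is to fix any K-flat resolution $\til{B}_*/\til{A}_*$ of $B/A$, set $\opn{Sq}_{B/A}(M) := \opn{Sq}_{B/A}^{\til{B}_*/\til{A}_*}(M)$, and transport structure along canonical comparison isomorphisms built from Theorem \ref{thm:960}. The central technical object is, for any two K-flat resolutions $\til{B}_0/\til{A}_0$ and $\til{B}_1/\til{A}_1$ of $B/A$, a canonical isomorphism $\phi_{1,0} : \opn{Sq}_{B/A}^{\til{B}_0/\til{A}_0}(M) \iso \opn{Sq}_{B/A}^{\til{B}_1/\til{A}_1}(M)$ in $\cat{D}(B)$. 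To construct $\phi_{1,0}$, I would first use Propositions \ref{prop:1010} and \ref{prop:1011} to find a common refinement: a third K-flat resolution $\til{B}_2/\til{A}_2$ of $B/A$ together with morphisms of resolutions $\til{w}_i/\til{v}_i : \til{B}_2/\til{A}_2 \to \til{B}_i/\til{A}_i$ above $\mrm{id}_{B/A}$. By (\ref{eqn:961}) these induce $\psi_i := \opn{Sq}_{\mrm{id}/\mrm{id}}^{\til{w}_i/\til{v}_i}(\mrm{id}_M) : \opn{Sq}_{B/A}^{\til{B}_i/\til{A}_i}(M) \to \opn{Sq}_{B/A}^{\til{B}_2/\til{A}_2}(M)$; using the compound resolution formula (Propositions \ref{prop:915} and \ref{prop:916}) together with the observation that $\til{w}_i$ and $\til{v}_i$ are quasi-isomorphisms between K-flat DG modules, each $\psi_i$ is an isomorphism. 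Then define $\phi_{1,0} := \psi_1^{-1} \circ \psi_0$.

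Independence of $\phi_{1,0}$ from the choice of data $(\til{B}_2, \til{w}_i/\til{v}_i)$ is precisely where Theorem \ref{thm:960} enters: given an alternative choice $(\til{B}_2', \til{w}_i'/\til{v}_i')$ producing $\psi_i'$, pass to a further common refinement $\til{B}_3/\til{A}_3$ mapping above $\mrm{id}_{B/A}$ to both $\til{B}_2$ and $\til{B}_2'$; the two composed morphisms $\til{B}_3 \to \til{B}_i$ are both morphisms of resolutions above $\mrm{id}_{B/A}$, so Theorem \ref{thm:960} equates their induced maps on squares, and functoriality (Proposition \ref{prop:940}) forces $\psi_1^{-1} \circ \psi_0 = (\psi_1')^{-1} \circ \psi_0'$. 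The cocycle identities $\phi_{2,1} \circ \phi_{1,0} = \phi_{2,0}$ and $\phi_{i,i} = \mrm{id}$ follow by the same common-refinement pattern applied to three resolutions at once. With these in hand, set $\opn{sq}^{\til{B}/\til{A}} := \phi_{\til{B}/\til{A},\,\til{B}_*/\til{A}_*}$; condition ($*$) for $\til{w}/\til{v} : \til{B}'/\til{A}' \to \til{B}/\til{A}$ reduces via the cocycle relation to the identity $\opn{Sq}_{\mrm{id}/\mrm{id}}^{\til{w}/\til{v}}(\mrm{id}_M) = \phi_{\til{B}'/\til{A}',\,\til{B}/\til{A}}$, which is immediate upon computing the right-hand side using $\til{B}_2 := \til{B}'/\til{A}'$ with $\til{w}_1 := \mrm{id}_{\til{B}'}$ and $\til{w}_0 := \til{w}$. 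Uniqueness of $\opn{Sq}_{B/A}(M)$ up to a unique isomorphism in $\cat{D}(B)$ is then formal from ($*$).

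The main technical obstacle is the claim that each $\psi_i = \opn{Sq}_{\mrm{id}/\mrm{id}}^{\til{w}_i/\til{v}_i}(\mrm{id}_M)$ is an isomorphism in $\cat{D}(B)$ — equivalently, that the squaring operation carries a morphism of K-flat resolutions above $\mrm{id}_{B/A}$ to a quasi-isomorphism. This must be verified by hand using the compound resolution presentation in Proposition \ref{prop:915}: the K-flatness of $\til{B}_2$ over $\til{A}_2$ and of $\til{B}_i$ over $\til{A}_i$ is what ensures that the quasi-isomorphisms $\til{v}_i$ and $\til{w}_i$ propagate through the derived tensor product and $\opn{RHom}$ in the definition of $\opn{Sq}$. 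Once this single technical fact is granted, the remainder of the argument is pure coherence bookkeeping powered by Theorem \ref{thm:960} and Proposition \ref{prop:940}.
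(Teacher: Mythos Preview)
Your argument is correct and complete in outline. The fact you flag as the ``main technical obstacle'' --- that $\opn{Sq}_{\mrm{id}/\mrm{id}}^{\til{w}/\til{v}}(\mrm{id}_M)$ is an isomorphism whenever $\til{w}/\til{v}$ is a morphism between K-flat resolutions of the same pair --- is exactly Lemma~\ref{lem:917} specialised to the commutative case, so you need not reprove it by hand.

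The paper takes a more economical route. Rather than fixing an arbitrary base resolution and building comparison isomorphisms $\phi_{1,0}$ via common refinements, it chooses a \emph{universal} resolution $\til{B}_{\mrm{un}}/\til{A}_{\mrm{un}}$ in which $\til{A}_{\mrm{un}}$ is commutative semi-free over $\Z$ and $\til{B}_{\mrm{un}}$ is commutative semi-free over $\til{A}_{\mrm{un}}$ (Theorem~\ref{thm:305}). The lifting property of semi-free DG rings (Theorem~\ref{thm:306}) then supplies, for \emph{every} K-flat resolution $\til{B}/\til{A}$, a direct morphism $\til{w}_{\mrm{un}}/\til{v}_{\mrm{un}}:\til{B}_{\mrm{un}}/\til{A}_{\mrm{un}}\to\til{B}/\til{A}$, and one sets $\opn{sq}^{\til{B}/\til{A}}:=\opn{Sq}_{\mrm{id}/\mrm{id}}^{\til{w}_{\mrm{un}}/\til{v}_{\mrm{un}}}(\mrm{id}_M)^{-1}$. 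Homotopy Invariance (Theorem~\ref{thm:950}) shows this is independent of the choice of $\til{w}_{\mrm{un}}/\til{v}_{\mrm{un}}$, and condition~($*$) follows from a single application of Proposition~\ref{prop:940} plus one more use of Theorem~\ref{thm:950}. This avoids your entire layer of cocycle verification: no zigzags, no independence-of-refinement arguments, no triple refinements. What your approach buys is that the base resolution $\til{B}_*/\til{A}_*$ is arbitrary, making it transparent that the construction does not depend on it; but since the common refinements $\til{B}_2/\til{A}_2$ you use are in practice built as semi-free resolutions anyway (that is how one proves Proposition~\ref{prop:1010}), the added generality is notional. The paper's choice of a single initial object in the category of resolutions is the cleaner packaging of the same content.
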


The DG module $\opn{Sq}_{B / A}(M)$ is called the {\em square of $M$ over $B$ 
relative to $A$}. 

This theorem is repeated as Theorem \ref{thm:860} in the body of the paper. 
As we already mentioned, this assertion (for rings only, not DG rings) appeared 
as \cite[Theorem 2.2]{YZ2}; but the proof there had a large gap in it. 
For a discussion, and a comparison to \cite[Theorem 3.2]{AILN}, see
Subsection \ref{subsec:discussion} below.

Here is the second main theorem of the paper. 

\begin{thm}[Trace Functoriality] \label{thm:966}
Let $A \to B \xar{v} C$ 
be homomorphisms of commutative DG rings, 
let $M \in \cat{D}(B)$, let $N \in \cat{D}(C)$, and let 
$\th : N \to M$ be a morphism in $\cat{D}(B)$.
There is a unique morphism 
\[ \opn{Sq}_{v / \mrm{id}_A}(\th) :
\opn{Sq}_{C / A}(N) \to \opn{Sq}_{B / A}(M) \]
in $\cat{D}(B)$, satisfying the condition\tup{:}
\begin{enumerate}
\item[($**$)] For any K-flat resolution
$\til{A} \to \til{B} \xar{\til{v}} \til{C}$
of the triple $A \to B \xar{v} C$, the diagram 
\[ \UseTips \xymatrix @C=14ex @R=8ex {
\opn{Sq}_{C / A}(N)
\ar[d]_{ \opn{sq}^{\til{C} / \til{A}} }
\ar[r]^(0.5){ \opn{Sq}_{v / \mrm{id}}(\th) }
&
\opn{Sq}_{B / A}(M) 
\ar[d]^{ \opn{sq}^{\til{B} / \til{A}} }
\\
\opn{Sq}_{C / A}^{\til{C} / \til{A}}(N)
\ar[r]^(0.5){ \opn{Sq}_{v / \mrm{id}}^{\til{v} / \mrm{id}}(\th) }
&
\opn{Sq}_{B / A}^{\til{B} / \til{A}}(M) 
} \]
in $\cat{D}(B)$ is commutative. 
\end{enumerate}
\end{thm}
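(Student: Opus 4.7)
The plan is to \emph{define} $\opn{Sq}_{v/\mrm{id}_A}(\theta)$ by picking any K-flat resolution of the triple and transporting (\ref{eqn:961}) through the squaring isomorphisms of Theorem \ref{thm:965}, and then to verify that the resulting morphism does not depend on this choice. Concretely, by Proposition \ref{prop:1011} fix a K-flat resolution $\til{A}\to\til{B}\xar{\til{v}}\til{C}$ of the triple $A\to B\xar{v}C$; in particular $\til{v}/\mrm{id}_{\til{A}} : \til{B}/\til{A}\to\til{C}/\til{A}$ is a K-flat resolution of the morphism of pairs $v/\mrm{id}_A : B/A \to C/A$. The construction (\ref{eqn:961}) provides a morphism
\[ \opn{Sq}_{v/\mrm{id}}^{\til{v}/\mrm{id}}(\theta) : \opn{Sq}_{C/A}^{\til{C}/\til{A}}(N) \to \opn{Sq}_{B/A}^{\til{B}/\til{A}}(M) \]
in $\cat{D}(B)$, and I set
\[ \opn{Sq}_{v/\mrm{id}_A}(\theta) := \bigl(\opn{sq}^{\til{B}/\til{A}}\bigr)^{-1} \circ \opn{Sq}_{v/\mrm{id}}^{\til{v}/\mrm{id}}(\theta) \circ \opn{sq}^{\til{C}/\til{A}} . \]
Condition ($**$) is built into this formula, and it simultaneously forces the uniqueness clause of the theorem once existence is established.

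The nontrivial issue is that the right-hand side above is independent of the chosen K-flat resolution. Given a second K-flat resolution $\til{A}'\to\til{B}'\xar{\til{v}'}\til{C}'$ of the triple, Proposition \ref{prop:1011} applied to the identity self-map of $A\to B\xar{v}C$ (with the two given resolutions as candidate source and target, and using the lifting technology already packaged in Proposition \ref{prop:1010}) yields a third K-flat resolution $\til{A}''\to\til{B}''\xar{\til{v}''}\til{C}''$ of the triple together with morphisms of resolutions of triples into each of the two original ones. Hence it suffices to compare two resolutions linked by a single morphism of resolutions of triples
\[ (\til{r},\til{s},\til{t}) : (\til{A}''\to\til{B}''\to\til{C}'') \to (\til{A}\to\til{B}\to\til{C}) . \]

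Restricting $(\til{r},\til{s},\til{t})$ to the pairs $B/A$ and $C/A$ produces two morphisms of K-flat resolutions of pairs above the respective identities. Condition ($*$) of Theorem \ref{thm:965} applied to each pair expresses $\opn{sq}^{\til{B}/\til{A}}$ and $\opn{sq}^{\til{C}/\til{A}}$ in terms of $\opn{sq}^{\til{B}''/\til{A}''}$ and $\opn{sq}^{\til{C}''/\til{A}''}$ via the induced isomorphisms $\opn{Sq}_{\mrm{id}/\mrm{id}}^{\til{s}/\til{r}}(\mrm{id}_M)$ and $\opn{Sq}_{\mrm{id}/\mrm{id}}^{\til{t}/\til{r}}(\mrm{id}_N)$. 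Functoriality of $\opn{Sq}_{w/v}^{\til{w}/\til{v}}(\th)$ in all arguments, recorded in Proposition \ref{prop:940}, then fits these two triangles together with the horizontal maps $\opn{Sq}_{v/\mrm{id}}^{\til{v}/\mrm{id}}(\theta)$ and $\opn{Sq}_{v/\mrm{id}}^{\til{v}''/\mrm{id}}(\theta)$ into a single commutative diagram in $\cat{D}(B)$ whose two transported diagonals are the candidate definitions of $\opn{Sq}_{v/\mrm{id}_A}(\theta)$ coming from the two resolutions; so they coincide.

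The main obstacle lies in the last step: the identifications supplied by Proposition \ref{prop:940} require reconciling maps that at the level of compound resolutions were obtained by ad hoc choices. What makes this identification unambiguous in $\cat{D}(B)$ is precisely Theorem \ref{thm:960}, the Homotopy Invariance, which guarantees that any two lifts of a morphism of pairs to morphisms of K-flat resolutions induce equal morphisms on squares. In effect, Theorem \ref{thm:960} is the real engine of this theorem: it kills the residual homotopy-level dependence on the morphism of resolutions $(\til{r},\til{s},\til{t})$, and without it the well-definedness step above (as well as Theorem \ref{thm:965} itself, upstream) would fail.
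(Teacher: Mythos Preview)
Your proof is correct and follows essentially the same route as the paper. The paper fixes once and for all a \emph{universal} commutative semi-free resolution $\til{A}_{\mrm{un}}\to\til{B}_{\mrm{un}}\to\til{C}_{\mrm{un}}$ of the triple, defines $\opn{Sq}_{v/\mrm{id}_A}(\th)$ via that resolution, and then for an arbitrary K-flat resolution uses Theorem~\ref{thm:306} to produce a morphism from the universal resolution into it; you instead compare two arbitrary resolutions via a third one mapping to both. These are the same idea, organized slightly differently, and both rest on the same ingredients: the lifting property of semi-free resolutions, condition ($*$) of Theorem~\ref{thm:965}, and Proposition~\ref{prop:940}. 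One small caution on citation: Proposition~\ref{prop:1011} as stated only asserts existence of a single K-flat resolution of a triple, not of a common roof mapping into two given ones; for that step you should invoke Theorems~\ref{thm:305} and~\ref{thm:306} directly (build the third resolution semi-free at each stage, then lift), rather than Proposition~\ref{prop:1011}.
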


This theorem is repeated as Theorem \ref{thm:871}. The statement already 
appeared as \cite[Theorem 2.3]{YZ2} (for rings only, not DG rings). But the 
proof in loc.\ cit.\ was also incorrect. 

The known functoriality of the operation 
$\opn{Sq}_{B / A}^{\til{B} / \til{A}}$
(from Section \ref{sec:pairs-DG-modules}) now implies that the assignments 
$M \mapsto \opn{Sq}_{B / A}(M)$ and
$\th \mapsto \opn{Sq}_{\mrm{id}_B / \mrm{id}_A}(\th)$
are a functor 
\[ \opn{Sq}_{B / A} : \cat{D}(B) \to  \cat{D}(B) , \]
called the {\em squaring operation  over $B$ relative to $A$}. 

The functor $\opn{Sq}_{B / A}$ is not linear; in fact it is a 
{\em quadratic functor}, in the following sense. Given a morphism 
$\th : N \to M$ in $\cat{D}(B)$ and an element 
$b \in \opn{H}^0(B)$, we have 
\[ \opn{Sq}_{B / A}(b \cd \th) = b^2 \cd \opn{Sq}_{B / A}(\th) , \]
as morphisms 
$\opn{Sq}_{B / A}(N) \to \opn{Sq}_{B / A}(M)$ 
in $\cat{D}(B)$. See Theorem \ref{thm:1010} for a slightly more general 
statement.

\subsection{On the Proofs} \label{subsec:proofs}
Theorems \ref{thm:965} and \ref{thm:966} are rather easy to prove, once we know 
Theorem \ref{thm:960} (and the existence of resolutions). 

However, the proof of Theorem \ref{thm:960} is quite long and difficult. 
As its name suggests, it involves a homotopy between the homomorphisms 
$\til{w}_0, \til{w}_1 : \til{B}' \to \til{B}$.
The only way we know to produce a homotopy 
$\til{w}_0 \twoto \til{w}_1$ is when the DG ring $\til{B}'$ is 
{\em noncommutative semi-free} over $\til{A}'$, and there is equality 
$\til{v}_0 = \til{v}_1$.  See Definition \ref{dfn:70} and 
Theorem \ref{thm:300}.

We are thus forced to deal with {\em noncommutative DG rings}. 
Instead of the pairs of commutative DG rings that were discussed above, 
the relevant noncommutative object is a {\em central pair of DG rings}, namely a 
central homomorphism $u : A \to B$ with commutative source $A$. See 
Definitions \ref{dfn:750} and \ref{dfn:910}. 

Let $B / A$ be a central pair of DG rings. A K-flat resolution of it is a 
central pair of DG rings $\til{B} / \til{A}$, such that $\til{B}$ is a K-flat 
DG $\til{A}$-module, together with a quasi-isomorphism 
$\til{B} / \til{A} \to B / A$
as in diagram (\ref{eqn:482}).
Now $B$ is a DG bimodule over 
$\til{B}$, namely it is a DG module over the {\em enveloping DG ring}
$\til{B} \ot_{\til{A}} \til{B}^{\mrm{op}}$. 
Because of this, we must replace the single DG module $M$ by a pair 
$(M^{\mrm{l}}, M^{\mrm{r}})$, consisting of a left DG $B$-module 
$M^{\mrm{l}}$ and a right DG $B$-module $M^{\mrm{r}}$.
In this situation, instead of a ``square'' we have a ``rectangle'':
\begin{equation} \label{eqn:965}
\opn{Rect}_{B / A}^{\til{B} / \til{A}}(M^{\mrm{l}}, M^{\mrm{r}})  := 
\opn{RHom}_{ \til{B} \ot_{\til{A}} \til{B}^{\mrm{op}} }
(B, M^{\mrm{l}} \ot^{\mrm{L}}_{\til{A}} M^{\mrm{r}}) . 
\end{equation}
This object lives in the derived category 
$\cat{D}(B^{\mrm{ce}})$, where $B^{\mrm{ce}}$ is the {\em center} of $B$ (in 
the graded sense, see Definition \ref{dfn:750}). 

In the noncommutative situation, the Homotopy Theorem (the noncommutative 
version of Theorem \ref{thm:960}) is Theorem \ref{thm:840}.
It is proved in two stages. First, an important special case 
($\til{B}' / \til{A}'$ is noncommutative semi-free and $\til{v}_0 = \til{v}_1$) 
is done in Lemma \ref{lem:827}. The proof of this 
lemma relies on Theorem \ref{thm:300}, and uses the cylinder DG ring. 
The general case is then reduced to that special case.
Theorem \ref{thm:985} (Existence of Rectangles) is the noncommutative version 
of Theorem \ref{thm:965}. 

The rectangle operation is interesting on its own, even when $B / A$ is a 
commutative pair of rings. This is due to its connection with the monoidal 
operation $\ot^!$. See Remark \ref{rem:720}.

\subsection{Discussion of Related Papers} \label{subsec:discussion}
Early versions of Theorems \ref{thm:965} and \ref{thm:966} had already appeared 
in our paper \cite{YZ2} with Zhang, as Theorems 2.2 and 2.3 respectively. 
However, to our great embarrassment, the proofs of these results in \cite{YZ2} 
had severe gaps in them. The gaps are well-hidden in the text; but one clear 
error is this: the homomorphism 
$\sbmat{\phi_0 & 0 \\ 0 & \phi_1}$
in the middle of page 3225 (in the proof  of Theorem 2.2) does not make any 
sense (unless $\phi_0 = \phi_1$). In order to give a correct treatment, it is 
necessary to work with noncommutative semi-free DG ring resolutions, as 
explained in Subsection \ref{subsec:proofs} above. 

The mistake in the proof of \cite[Theorem 2.2]{YZ2} was discovered by Avramov, 
Iyengar, Lipman and Nayak \cite{AILN}. They also found a way to fix it, and 
their \cite[Theorem 3.2]{AILN} is a generalization of our 
\cite[Theorem 2.2]{YZ2}. Indeed, \cite[Theorem 3.2]{AILN} establishes the 
rectangle operation in the case when $A \to B$ is a central pair of rings. 
The proof of \cite[Theorem 3.2]{AILN} relies on the Quillen 
model structure on the category $\cat{DGR} \centover A$ of noncommutative DG 
rings, central over a commutative base ring $A$, following \cite{BP}. 
More on this aspect can be found in Remark \ref{rem:780}.
Note that our Theorem \ref{thm:840} (the noncommutative version of Theorem 
\ref{thm:965}) is stronger than \cite[Theorem 3.2]{AILN}, because we allow $A 
\to B$ be a central pair of nonpositive DG rings. 

Our goals are different from those of \cite{AILN}, and hence we adopt a 
different strategy. For us the relative situation of a homomorphism of 
commutative rings $A \to B$ is crucial, and we must consider triples of 
commutative rings $A \to B \to C$. For this reason we stick to commutative 
K-flat DG ring resolutions, relying on Theorem \ref{thm:960}. 
We do not know whether the methods of \cite{AILN}
can be adapted to yield the trace functoriality of the squaring operation 
(Theorem \ref{thm:966}).

\medskip \noindent
{\em Acknowledgments}. 
I wish to thank James Zhang,  Bernhard Keller, Vladimir Hinich, 
Liran Shaul, Rishi Vyas, Asaf Yekutieli and Sharon Hollander for their help in 
writing this paper.

\section{Some Facts on DG Rings and Modules}
\label{sec:facts-DG-mods}
\numberwithin{equation}{section}

In this section we review some known facts about DG rings and modules. 
We talk about central homomorphisms of DG rings. Finally, we introduce 
the cylinder construction for DG rings and modules.  

A {\em differential graded ring}, or {\em DG ring} for short, is a 
graded ring $A = \bigoplus_{i \in \Z} A^i$, together with an additive 
endomorphism $\d_A$ of degree $1$ called the {\em differential}. 
The differential $\d_A$ satisfies $\d_A \circ \d_A = 0$, and it is a graded 
derivation of $A$, namely it satisfies the graded Leibniz rule
\begin{equation} \label{eqn:320}
\d_A(a \cd b) = \d_A(a) \cd b + (-1)^{i} \cd a \cd \d_A(b) 
\end{equation}
for $a \in A^{i}$ and $b \in A^{j}$. (Most other texts would call such 
$A$ an associative unital DG algebra.)   

\begin{dfn}
We denote by $\cat{DGR}$ the category of all DG rings,
where the morphisms are the graded ring homomorphisms $u : A \to B$
that commute with the differentials.
\end{dfn}

We consider rings as DG rings concentrated in degree $0$. 
Thus the category of rings becomes a full subcategory of $\cat{DGR}$.

\begin{dfn} \label{dfn:400}
Let $A$ be a DG ring. A {\em DG ring over $A$}, or a {\em DG $A$-ring}, 
is a pair $(B, u)$, where $B$ is a DG ring and $u : A \to B$ is a DG ring
homomorphism. 

Suppose $(B, u)$ and $(B', u')$ are DG rings over $A$. 
A {\em homomorphism of DG rings over $A$}, or a {\em DG $A$-ring homomorphism}, 
is a homomorphism of DG rings $v : B \to B'$ such that $u' = v \circ u$.

We denote by $\cat{DGR} / A$ the category of DG rings over $A$.
\end{dfn}
 
Thus $\cat{DGR} / A$ is the usual ``coslice category'', except that we say 
``over'' instead of ``under''. See Remark \ref{rem:1016} regarding the 
direction of arrows. There is an obvious forgetful functor 
$\cat{DGR} / A \to \cat{DGR}$.
A morphism $v$ in $\cat{DGR} / A$ is shown in this commutative diagram:
\[ \UseTips \xymatrix @C=6ex @R=6ex {
B
\ar[r]^{v}
&
B'
\\
A
\ar[u]^{u}
\ar[ur]_{u'}
} \]

\begin{rem} \label{rem:1025}
We should probably justify the use of ``DG ring'', instead of the traditional 
``DG algebra''. We maintain that more generally, whenever we have a 
homomorphism $A \to B$ in the category of rings, then $B$ ought to be called 
an {\em $A$-ring}. Below is an explanation. 

Consider a commutative base ring $\K$. Given a ring homomorphism 
$\K \to A$ such that the target $A$ is commutative, we simply 
say that ``$A$ is a commutative $\K$-ring''. In the noncommutative context 
things are more delicate. When one says that ``$A$ is a $\K$-algebra'' 
(associative and unital), the meaning is that the ring homomorphism 
$\K \to A$ is central; namely the image of $\K$ is in the center of $A$. We 
suggest instead to use ``$A$ is a central $\K$-ring''. 

In the DG context, one says that ``$A$ is a DG $\K$-algebra'' (again 
associative and unital) if the image of $\K$ lies inside the subring of degree 
$0$ central cocycles of $A$. We propose to use ``central DG $\K$-ring'' 
instead, as in Definition \ref{dfn:750} below. 

Notice that the name ``ring'' allows us to drop the adjectives ``associative'' 
and ``unital''; and it also separates DG rings from DG Lie algebras etc. 

In the present paper we actually need to consider DG ring 
homomorphisms $u : A \to B$, where the source $A$ is a genuine DG ring, and 
in some cases it is even noncommutative. Thus $u(A)$ is sometimes not in the  
center of $B$, nor in the subring of cocycles, and we can't say that ``$B$ is a 
DG $A$-algebra''. But we can, and do, say that ``$B$ is a DG $A$-ring''.  
The homomorphism $u$ might be central nonetheless, as in Definition 
\ref{dfn:750}.
\end{rem}

Consider a graded ring $A$. For homogeneous elements $a \in A^i$ and 
$b \in A^j$ we define the graded commutator to be 
\begin{equation} \label{eqn:750}
\opn{ad}(a)(b) := a \cd b - (-1)^{i j} \cd b \cd a . 
\end{equation}
This extends to all $a, b \in A$ using bilinearity. Namely, if 
$a = \sum_k a_k$ and $b = \sum_l b_l$, with $a_k \in A^{i_k}$ and 
$b_l \in A^{j_l}$, then 
\[ \opn{ad}(a)(b) := \sum_{k, l} \opn{ad}(a_k)(b_l) . \]
Often the notation $[a, b]$ is used for the graded commutator. 

\begin{dfn} \label{dfn:750}
Let $A$ be a DG ring. 
\begin{enumerate}
\item The {\em center} of $A$ is the DG subring 
\[ A^{\mrm{ce}} := \{ a \in A \mid \opn{ad}(a)(b) = 0 \ \, \tup{for all} \ \,
b \in A \} . \]

\item The DG ring $A$ is called {\em weakly commutative} if 
$A^{\mrm{ce}} = A$. In other words, if 
\[ a \cd b = (-1)^{i j} \cd b \cd a \]
for all $a \in A^i$ and $b \in A^j$.

\item A homomorphism $u : A \to B$ in $\cat{DGR}$ is called {\em central} if 
$u(A^{\mrm{ce}}) \subseteq B^{\mrm{ce}}$. 
In this case, the resulting DG ring homomorphism $A^{\mrm{ce}} \to B^{\mrm{ce}}$
is denoted by $u^{\mrm{ce}}$.

\item We denote by $\cat{DGR} \centover A$ to be the the full subcategory of 
$\cat{DGR} / A$ consisting of pairs $(B, u)$ such that $u : A \to B$ is 
central.  
\end{enumerate}
\end{dfn}

Implicit in item (1) of the definition above is the (easy to check) fact that 
$A^{\mrm{ce}}$ is indeed a DG subring of $A$. The DG ring $A^{\mrm{ce}}$ itself 
is weakly commutative. 
As for item (4): a morphism $v : B \to B'$ in the category
$\cat{DGR} \centover A$ is not required to be central; the centrality condition 
is only on $A \xar{u} B$ and $A \xar{u'} B'$. 

If $u : A \to B$ and $v : B \to C$ are central homomorphisms 
in $\cat{DGR}$, then $v \circ u : A \to C$ is also central.
Thus we get a subcategory $\cat{DGR}_{\vec{\mrm{ce}}}$ of $\cat{DGR}$,
consisting of all objects, but only the central homomorphisms. 
Letting $\cat{DGR}_{\mrm{wc}}$ be the full subcategory of  $\cat{DGR}$ on the 
weakly commutative DG rings, we get a functor 
\[ (-)^{\mrm{ce}} : \cat{DGR}_{\vec{\mrm{ce}}} \to \cat{DGR}_{\mrm{wc}} . \]

Let $A$ be a DG ring. Recall that a left DG $A$-module is a left graded
$A$-module $M = \bigoplus_{i \in \Z} M^i$, with 
differential $\d_M$ of degree $1$, that satisfies $\d_M \circ \d_M = 0$ and
the graded Leibniz rule (equation (\ref{eqn:320}), but with $m \in M^{j}$ 
instead of $b$). Right DG $A$-modules are defined similarly. 
By default all DG modules in this paper are left DG modules. 

Let $A$ be a DG ring, let $M$ and $N$ be left DG $A$-modules, and let 
$\phi : M \to N$ be a degree $k$ 
additive (i.e.\ $\Z$-linear) homomorphism. We say that $\phi$ is $A$-linear if 
\begin{equation} \label{eqn:310}
\phi(a \cd m) =  (-1)^{i k} \cd a \cd \phi(m)
\end{equation}
for all $a \in A^i$ and $m \in M^j$. 
(This is the sign convention of \cite{ML}.)
The abelian group of $A$-linear homomorphisms of degree $k$ is denoted by 
$\opn{Hom}_{A}(M, N)^k$, and we let 
\begin{equation} \label{eqn:316}
\opn{Hom}_{A}(M, N) := \bigoplus_{k \in \Z} \,
\opn{Hom}_{A}(M, N)^{k} .
\end{equation}
The graded abelian group $\opn{Hom}_{A}(M, N)$ has a  differential $\d$ given by
\begin{equation} \label{eqn:690}
\d(\phi) := \d_N \circ \phi - (-1)^{k} \cd \phi \circ \d_M 
\end{equation}
for $\phi \in \opn{Hom}_A(M, N)^k$.
In particular, for $N = M$ we get a DG ring 
\begin{equation} \label{eqn:664}
\opn{End}_{A}(M) := \opn{Hom}_{A}(M, M) . 
\end{equation}

Suppose $A$ is a DG ring and $M$ a DG abelian group (i.e.\ a DG $\Z$-module). 
From (\ref{eqn:310}) we see that a DG $A$-module structure on $M$ is the same 
as a DG ring homomorphism $A \to \opn{End}_{\Z}(M)$. 

If $M$ is a right DG $A$-module and $N$ is a left DG $A$-module, then 
the usual tensor product $M \ot_A N$ is a graded abelian group: 
\[ M \ot_A N = \bigoplus_{k \in \Z} \, (M \ot_A N)^k , \]
where $(M \ot_A N)^k$ is the subgroup generated by the tensors $m \ot n$ 
such that $m \in M^i$ and $n \in N^{k - i}$. 
The graded abelian group 
$M \ot_A N$ has a differential $\d$ satisfying
\begin{equation} \label{eqn:220}
\d(m \ot n) := \d_M(m) \ot n + (-1)^i \cd m \ot \d_N(n)
\end{equation}
for $m, n$ as above.
  
Let $A$ and $B$ be DG rings. The tensor product $A \ot_{\Z} B$ is a DG ring, 
with multiplication 
\begin{equation} \label{eqn:820}
(a_1 \ot b_1) \cd (a_2 \ot b_2) := 
(-1)^{i_2 j_1} \cd (a_1 \cd a_2) \ot (b_1 \cd b_2)
\end{equation}
for $a_k \in A^{i_k}$ and $b_k \in B^{j_k}$, and with differential like in 
(\ref{eqn:220}). 

Let $B$ be a DG ring. Its  opposite DG ring $B^{\mrm{op}}$ is the 
same DG abelian group, but we denote its elements by 
$b^{\mrm{op}}$, $b \in B$. The multiplication in $B^{\mrm{op}}$ is
\[ b_1^{\mrm{op}} \cd b_2^{\mrm{op}} := 
(-1)^{j_1 j_2} \cd (b_2 \cd b_1)^{\mrm{op}} \]
for $b_k \in B^{j_k}$. 
Note that the centers satisfy
$(B^{\mrm{op}})^{\mrm{ce}} = (B^{\mrm{ce}})^{\mrm{op}}$,
and $B^{\mrm{op}} = B$ iff $B$ is weakly commutative.

If $M$ is a right DG $B$-module, then it can be seen as a left DG 
$B^{\mrm{op}}$-module, with action 
\[ b^{\mrm{op}} \cd m := (-1)^{k j} \cd m \cd b \]
for $b \in B^j$ and $m \in M^k$. 
Because of this observation, working only with left DG modules is 
not a limitation. We shall switch between notations according to convenience. 

Now consider a DG $A$-$B$-bimodule $M$, namely $M$ has a left $A$ action and a 
right $B$ action, and they commute, i.e.\ 
$a \cd (m \cd b) = (a \cd m) \cd b$. Then $M$ can be 
viewed as a left 
$A \ot_{\Z} B^{\mrm{op}}$ -module with action 
\begin{equation} \label{eqn:321}
(a \ot b^{\mrm{op}}) \cd m := (-1)^{k j} \cd a \cd m \cd b
\end{equation}
for $a \in A^{i}$, $b \in B^{j}$ and $m \in M^k$.
Thus a DG $A$-$B$-bimodule structure on a DG abelian group $M$ is the 
same as a homomorphism of DG rings  
\[ A \ot_{\Z} B^{\mrm{op}} \to \opn{End}_{\Z}(M) . \]

Let $A$ be a weakly commutative DG ring. Any $B \in \cat{DGR} \centover A$ is a 
DG $A$-bimodule.
If $B, C \in \cat{DGR} \centover A$, then the DG $A$-module
$B \ot_A C$ has an obvious DG ring structure, and moreover
$B \ot_A C \in \cat{DGR} \centover A$. Given a DG $B$-module $M$ 
and and a DG $C$-module $N$, the tensor product $M \ot_A N$ 
is a DG module over $B \ot_A C$, with action 
\[ (b \ot c) \cd (m \ot n) := (-1)^{i j} \cd (b \cd m) \ot (c \cd n) \]
for $c \in C^i$ and $m \in M^j$.

\begin{dfn}
Let $A$ be a DG ring. 
We denote by $\cat{DGMod} A$ the category of left DG $A$-modules,
where the morphisms are the $A$-linear homomorphisms
$\phi : M \to N$ of degree $0$ that commute with the differentials. 
Since the name is too long, we mostly use the abbreviation
$\cat{M}(A) := \cat{DGMod} A$.
\end{dfn}

The category $\cat{M}(A)$ is abelian. 
The set $\cat{M}(A)$ also has a DG category structure on it, in which the 
set of morphisms $M \to N$ is the DG abelian group 
$\opn{Hom}_{A}(M, N)$. The relation between these structures is 
\begin{equation} \label{eqn:1015}
\opn{Hom}_{\cat{M}(A)}(M, N) = \mrm{Z}^0 \bigl( 
\opn{Hom}_A(M, N) \bigr) ,
\end{equation}
the group of $0$-cocycles.

Let $M = \bigoplus_{i \in \Z} \, M^i$ be a graded abelian group. Recall that 
the shift (or twist, or translation, or suspension) of $M$ is the graded 
abelian group $\opn{T}(M) = \bigoplus_{i \in \Z} \, \opn{T}(M)^i$ defined as 
follows. The graded component of degree $i$ of $\opn{T}(M)$ is 
$\opn{T}(M)^i := M^{i + 1}$.
Note that as (ungraded) abelian groups we have $\opn{T}(M) = M$, but the 
gradings are different. Thus the identity automorphism of $M$ becomes a 
degree $-1$ invertible homomorphism of graded abelian groups
$\opn{t} : M \to \opn{T}(M)$. 
We shall usually represent elements of $\opn{T}(M)^i$ as 
$\opn{t}(m)$, with $m \in M^{i + 1}$.

If $M$ is a DG $A$-module, then $\opn{T}(M)$ has the following structure of DG 
$A$-module. The differential $\d_{\opn{T}(M)}$ is 
\begin{equation} \label{eqn:680}
\d_{\opn{T}(M)}(\opn{t}(m)) := - \opn{t}(\d_M(m)) =  \opn{t}(- \d_M(m))
\end{equation}
for $m \in M^k$. The action of $A$ is 
\begin{equation} \label{eqn:681}
a \cd \opn{t}(m) := (-1)^i \cd \opn{t}(a \cd m) = \opn{t}((-1)^i \cd a \cd m)
\end{equation}
for $a \in A^i$. 
If $M$ is a DG right module, or a DG bimodule, then the structure of 
$\opn{T}(M)$ is determined by (\ref{eqn:680}), (\ref{eqn:681}) and 
(\ref{eqn:321}). Concretely, 
\[ a \cd \opn{t}(m) \cd b = (-1)^i \cd \opn{t}(a \cd m \cd b) \]
for $a \in A^i$, $m \in M^k$ and $b \in A^j$.
Note that our sign conventions for the shift are such that the canonical 
bijection
\[ \opn{T}(\Z) \ot_{\Z} M \to \opn{T}(M) , \
\opn{t}(c) \ot m \mapsto \opn{t}(c \cd m) = c \cd \opn{t}(m) , \]
for $c \in \Z$ and $m \in M$, is an isomorphism of DG $A$-bimodules. 

For any $k \in \Z$, the $k$-th power of $\opn{T}$ is denoted by $\opn{T}^k$. 
There is a corresponding degree $-k$ homomorphism of graded abelian groups 
\begin{equation} \label{eqn:1025}
\opn{t}^k : M \to \opn{T}^k(M) , 
\end{equation}
which is the identity on the underlying ungraded abelian group. 
Note that \lb $\opn{T}^{l}(\opn{T}^k(M)) = \opn{T}^{k + l}(M)$,
and $\opn{T}^k$ is an automorphism of the category $\cat{M}(A)$.
As usual, we write $M[k] := \opn{T}^k(M)$. 

We now recall the {\em cone} construction, as it looks using the operator 
$\opn{t}$. Let $A$ be a DG ring, and let 
$\phi : M \to N$ be a homomorphism in $\cat{M}(A)$. The cone of $\phi$ is 
the DG $A$-module 
$\opn{Cone}(\phi) := N \oplus M[1]$, 
whose differential, when we express this module as the column
$\sbmat{N \\ M[1]}$, is left multiplication by the matrix of degree $1$ 
abelian group homomorphisms
$\sbmat{\d_N & \phi \circ \opn{t}^{-1} \\ 0 & \d_{M[1]}}$.

We learned the concept of {\em cylinder DG ring}, defined below, from B. 
Keller. It appears (without this name) in his \cite{Ke2}. 

Consider the matrix DG ring 
\begin{equation} \label{eqn:817}
\opn{Cyl}(\Z) := \bmat{\Z & \Z[-1] \\ 0 & \Z} 
= \Bigl\{ \bmat{a_0 & b \cd \xi \\ 0 & a_1} 
\big| \, a_0, a_1, b \in \Z \Bigr\} ,
\end{equation}
where the element $\xi := \opn{t}^{-1}(1) \in \Z[-1]$ has degree $1$. 
The multiplication is 
\begin{equation} \label{eqn:704}
\bmat{a_0 & b \cd \xi \\ 0 & a_1} \cd 
\bmat{a'_0 & b' \cd \xi \\ 0 & a'_1} := 
\bmat{a_0 \cd a'_0 & (a_0 \cd b' + b \cd a'_1) \cd \xi 
\\ 0 & a_1 \cd a'_1}
\end{equation}
where $a'_0, a'_1, b' \in \Z$. 
The differential is 
$\d_{\mrm{cyl}} := \opn{ad}( \sbmat{0 & \xi \\ 0 & 0} )$,
the graded commutator with the element
$\sbmat{0 & \xi \\ 0 & 0}$. Explicitly we have  
\begin{equation} \label{eqn:210}
\d_{\mrm{cyl}} ( \bmat{a_0 & b \cd \xi \\ 0 & a_1} ) 
= \bmat{ 0 & (-a_0 + a_1) \cd \xi \\ 0 & 0 } .
\end{equation}
There are DG ring homomorphisms 
$\Z \xar{\ep} \opn{Cyl}(\Z) \xar{\eta_i} \Z$, for $i = 0, 1$, with formulas 
\begin{equation} \label{eqn:703}
\ep(a) := \bmat{a & 0 \\ 0 & a} \quad \text{and} \quad 
\eta_i(\bmat{a_0 & b \cd \xi \\ 0 & a_1}) := a_i .
\end{equation}
It is easy to see that $\ep$ and $\eta_i$ are quasi-isomorphisms, and 
$\eta_i \circ \ep = \opn{id}_{\Z}$, the identity automorphism of $\Z$.

\begin{dfn} \label{dfn:700}
Let $A$ be a DG ring. The {\em cylinder} of $A$ is the DG ring 
\[ \opn{Cyl}(A) := \opn{Cyl}(\Z) \ot_{\Z} A 
\cong \bmat{A & A[-1] \\ 0 & A} , \] 
with multiplication induced from (\ref{eqn:704}), and differential 
induced from (\ref{eqn:210}), using  formulas (\ref{eqn:820}) and
(\ref{eqn:220}).
\end{dfn}

It will be convenient to denote elements of $\opn{Cyl}(A)$ as matrices 
$\sbmat{a_0 & \xi \cd b \\ 0 & a_0}$, rather than as tensors. 
Note that the element $\xi$ is central and has degree $1$, so 
$b \cd \xi = (-1)^k \cd \xi \cd b$ for $b \in A^k$. 

\begin{prop} \label{prop:700}
Let $A$ be a DG ring. Consider the induced DG ring homomorphisms 
$A \xar{\ep} \opn{Cyl}(A) \xar{\eta_i} A$. 
\begin{enumerate}
\item The homomorphisms $\ep$ and $\eta_i$ are DG ring quasi-isomorphisms.

\item The homomorphisms $\ep$ and $\eta_i$ are central, and moreover the induced
homomorphisms 
$\ep^{\mrm{ce}} : A^{\mrm{ce}} \xar{} \opn{Cyl}(A)^{\mrm{ce}}$
and
$\eta_i^{\mrm{ce}} : \opn{Cyl}(A)^{\mrm{ce}} \to A^{\mrm{ce}}$
are bijective. 
\end{enumerate}
\end{prop}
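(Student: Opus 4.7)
The plan for part (1) is to reduce to the case $A = \Z$ and then apply a contracting homotopy. Since $\eta_i \circ \ep = \mrm{id}_A$ by (\ref{eqn:703}), the two-out-of-three property for quasi-isomorphisms reduces matters to showing that $\eta_i$ is a quasi-isomorphism. Because $\opn{Cyl}(A) = \opn{Cyl}(\Z) \ot_\Z A$ and $\eta_i$ is obtained from the corresponding $\eta_i^{\Z} : \opn{Cyl}(\Z) \to \Z$ by tensoring with $\mrm{id}_A$, it is enough to show that $\ker(\eta_i^{\Z})$ is contractible as a complex of abelian groups, since tensoring any such contracting homotopy with $A$ produces a contracting homotopy on $\ker(\eta_i) \cong \ker(\eta_i^{\Z}) \ot_\Z A$. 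For concreteness with $i = 0$, the complex $\ker(\eta_0^\Z)$ lives in degrees $0$ and $1$, and I would exhibit the contracting homotopy $h(\sbmat{0 & b \xi \\ 0 & a_1}) := \sbmat{0 & 0 \\ 0 & b}$ and verify $\mrm{d} \circ h + h \circ \mrm{d} = \mrm{id}$ by a routine direct calculation using (\ref{eqn:210}). The case $i = 1$ is symmetric.

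For part (2), my plan is to compute $\opn{Cyl}(A)^{\mrm{ce}}$ explicitly and show it equals $\ep(A^{\mrm{ce}})$; once this is done, every assertion in (2) follows immediately. Given a homogeneous element $x = \sbmat{a_0 & b \xi \\ 0 & a_1}$ of degree $n$ in $\opn{Cyl}(A)$, I would impose the centrality condition against two families of test elements. Graded-commuting $x$ with the element $\xi$ and computing via (\ref{eqn:820}) and (\ref{eqn:704}) produces (up to sign) the expression $(a_0 - a_1) \cd \xi$; its vanishing forces $a_0 = a_1 =: a$. Graded-commuting the resulting element $\sbmat{a & b \xi \\ 0 & a}$ with the matrix unit $\sbmat{c & 0 \\ 0 & 0}$ for arbitrary $c \in A$ then splits into two conditions: the diagonal piece $ac - (-1)^{n \abs{c}} ca = 0$ for all $c$ forces $a \in A^{\mrm{ce}}$, while the off-diagonal piece has the form $\pm \, cb \cd \xi$, which upon taking $c = 1$ forces $b = 0$. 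The converse inclusion $\ep(A^{\mrm{ce}}) \subseteq \opn{Cyl}(A)^{\mrm{ce}}$ is a direct check against the same matrix units.

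Once the equality $\opn{Cyl}(A)^{\mrm{ce}} = \ep(A^{\mrm{ce}})$ is in place, everything in (2) is immediate: $\ep$ is central by the very identification, $\eta_i$ is central because it sends $\sbmat{a & 0 \\ 0 & a}$ back to $a \in A^{\mrm{ce}}$, and the relation $\eta_i \circ \ep = \mrm{id}_A$ combined with $\opn{Cyl}(A)^{\mrm{ce}} = \ep(A^{\mrm{ce}})$ forces $\ep^{\mrm{ce}}$ and $\eta_i^{\mrm{ce}}$ to be mutually inverse bijections. The only mildly delicate step is the Koszul sign bookkeeping in the commutator computations for (2); once (\ref{eqn:820}) is applied cleanly those calculations are routine, and no conceptual obstacle arises.
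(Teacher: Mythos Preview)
Your proposal is correct and follows essentially the same strategy as the paper. The paper's own proof is a single sentence: since $\opn{Cyl}(\Z)$ is a free graded $\Z$-module, both assertions reduce to the case $A = \Z$, where they are clear. For part~(1) your reduction is identical to the paper's, only spelled out with an explicit contracting homotopy. For part~(2) you take a slightly different route --- rather than reducing to $\Z$ and invoking freeness, you compute $\opn{Cyl}(A)^{\mrm{ce}}$ directly for general $A$ by testing against $\Xi$ and $e_0 \ot c$ --- but this direct computation is exactly what underlies the paper's ``clear'' for $\Z$, and the added generality costs nothing. Your sign bookkeeping checks out: commuting with $\Xi$ gives $(-1)^n \Xi \ot a_0 = (-1)^n \Xi \ot a_1$, and commuting with $e_0 \ot c$ yields the diagonal condition $ac = (-1)^{nm} ca$ together with the off-diagonal condition $cb = 0$ (up to sign), so taking $c = 1$ kills $b$. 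There is no gap.
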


\begin{proof}
Since $\opn{Cyl}(\Z)$ is a free graded $\Z$-module (via $\ep$, and forgetting 
the differential), these are  direct consequences of the corresponding 
assertions for $\Z$; and those are clear.
\end{proof}

\begin{dfn} \label{dfn:815}
Let $A$ be a DG ring and $M$ a DG $A$-module. The {\em cylinder} of $M$ is 
the DG $\opn{Cyl}(A)$-module  
\[ \opn{Cyl}(M) := \opn{Cyl}(\Z) \ot_{\Z} M 
\cong \opn{Cyl}(A) \ot_{A} M
\cong \bmat{M & M[-1] \\ 0 & M} , \] 
with multiplication induced from (\ref{eqn:704}), and differential 
induced from (\ref{eqn:210}).
\end{dfn}

The definition above relates also to right DG modules and to bimodules, as 
explained in (\ref{eqn:321}). 

\begin{prop} \label{prop:701}
Let $A$ be a DG ring and let $M$ be a DG $A$-module.
The induced DG module homomorphisms $\ep : M \to \opn{Cyl}(M)$ and 
$\eta_i : \opn{Cyl}(M) \to M$ are quasi-isomorphisms. 
\end{prop}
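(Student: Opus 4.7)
The plan is to reduce the statement to the case $A = \Z$, exploiting the identification
$\opn{Cyl}(M) \cong \opn{Cyl}(\Z) \ot_{\Z} M$ built into Definition \ref{dfn:815}. Under this identification, the DG module homomorphisms $\ep : M \to \opn{Cyl}(M)$ and $\eta_i : \opn{Cyl}(M) \to M$ are nothing but $\ep_{\Z} \ot \opn{id}_M$ and $\eta_{i,\Z} \ot \opn{id}_M$ respectively, where $\ep_{\Z}$ and $\eta_{i,\Z}$ are the canonical DG ring maps of \textup{(\ref{eqn:703})}. (This identification commutes with both the $\opn{Cyl}(A)$-action, since $\opn{Cyl}(A) = \opn{Cyl}(\Z) \ot_{\Z} A$, and the differential, via \textup{(\ref{eqn:220})}.) First I would record this factorization carefully, so that the problem reduces to understanding $\ep_{\Z}$ and $\eta_{i,\Z}$ as morphisms in $\cat{M}(\Z)$.

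Next I would upgrade the quasi-isomorphisms $\ep_{\Z}$ and $\eta_{i,\Z}$ from Proposition \ref{prop:700}(1) to \emph{homotopy equivalences} of DG $\Z$-modules. The point is that $\opn{Cyl}(\Z)$ is a bounded complex of free $\Z$-modules (of rank $2$ in degree $0$ and rank $1$ in degree $1$), and both $\Z$ and $\opn{Cyl}(\Z)$ are bounded. Hence the mapping cones $\opn{Cone}(\ep_{\Z})$ and $\opn{Cone}(\eta_{i,\Z})$ are bounded acyclic complexes of free $\Z$-modules, and any such complex is contractible (one constructs a contracting homotopy by induction from the highest nonzero degree downwards, using projectivity to split the exact sequences of cycles). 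Contractibility of the cones is equivalent to $\ep_{\Z}$ and $\eta_{i,\Z}$ being homotopy equivalences in $\cat{M}(\Z)$.

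Finally, the functor $- \ot_{\Z} M : \cat{M}(\Z) \to \cat{M}(\Z)$ is additive and preserves chain homotopies: if $s$ is a homotopy in $\cat{M}(\Z)$, then $s \ot \opn{id}_M$ is the corresponding homotopy between the tensored maps. Applying this functor to the homotopy equivalences $\ep_{\Z}$ and $\eta_{i,\Z}$ produces homotopy equivalences $\ep$ and $\eta_i$ in $\cat{M}(\Z)$, and every homotopy equivalence is a quasi-isomorphism. Since a quasi-isomorphism in $\cat{M}(\opn{Cyl}(A))$ or $\cat{M}(A)$ is, by definition, one whose underlying map in $\cat{M}(\Z)$ is a quasi-isomorphism, this concludes the proof.

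I do not anticipate any serious obstacle; the argument is essentially bookkeeping plus the standard fact that a bounded acyclic complex of projectives is contractible. The only point requiring care is verifying the module-level identification $\opn{Cyl}(M) \cong \opn{Cyl}(\Z) \ot_{\Z} M$ and checking that $\ep$ and $\eta_i$ are indeed obtained from $\ep_{\Z}$ and $\eta_{i,\Z}$ by $- \ot_{\Z} \opn{id}_M$, which is immediate from the construction.
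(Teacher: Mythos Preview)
Your proposal is correct and follows essentially the same reduction as the paper: both arguments factor through the identification $\opn{Cyl}(M) \cong \opn{Cyl}(\Z) \ot_{\Z} M$ and appeal to the base case over $\Z$. The paper's proof is terser---it simply cites the freeness of $\opn{Cyl}(\Z)$ as a graded $\Z$-module (so tensoring with $M$ preserves the quasi-isomorphisms $\ep_{\Z}$, $\eta_{i,\Z}$)---whereas you take the slightly stronger step of upgrading $\ep_{\Z}$ and $\eta_{i,\Z}$ to homotopy equivalences via contractibility of the cones; either route works and the underlying reason (bounded complexes of free $\Z$-modules) is the same.
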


When considering the homomorphism 
$\ep : M \to \opn{Cyl}(M)$, we view $\opn{Cyl}(M)$ as 
a DG $A$-module via the DG ring homomorphism $\ep : A \to \opn{Cyl}(A)$.
Likewise, when considering the homomorphism 
$\eta_i : \opn{Cyl}(M) \to M$, we view $M$ as 
a DG $\opn{Cyl}(A)$-module via the DG ring homomorphism 
$\eta_i : \opn{Cyl}(A) \to A$.

\begin{proof}
As in Proposition \ref{prop:700}(1), this is immediate from the fact that 
 $\ep : \Z \to \opn{Cyl}(\Z)$ and 
$\eta_i : \opn{Cyl}(\Z) \to \Z$ are quasi-isomorphisms. 
\end{proof}

The cylinder operation (on DG rings and modules) has the following 
functoriality. If $u : A \to B$ is a DG ring homomorphism, then there is an 
induced DG ring homomorphism 
$\opn{Cyl}(u) : \opn{Cyl}(A) \to \opn{Cyl}(B)$, 
$\opn{Cyl}(u) := \opn{id}_{\opn{Cyl}(\Z)} \ot \, u$. 
Explicitly the formula is
\begin{equation} \label{eqn:702}
\opn{Cyl}(u)( \bmat{a_0 & \xi \cd b \\ 0 & a_1}) =
\bmat{u(a_0) & \xi \cd u(b) \\ 0 & u(a_1)} . 
\end{equation}
Likewise for modules.

\begin{rem} \label{rem:1016}
In the abstract categorical sense, the DG ring $\opn{Cyl}(A)$
plays a role dual to that of a cylinder; so perhaps we should have called it  
``the path object of $A$''. 
Indeed, this is precisely the path object described on page 503 of \cite{ScSh}.

We decided to adhere to the name ``cylinder'' for two reasons. First, this is 
the name used in \cite{YZ2}. The second reason is that we are dealing with 
rings here, and so (as happens in algebraic geometry) arrows tend to be 
reversed. The same reversal occurs in Definition \ref{dfn:400}: we talk about 
``DG rings over $A$'', rather than about ``DG rings under $A$'', as the 
categorical convention would dictate. 
\end{rem}

\section{Resolutions of DG Modules}
\label{sec:resolutions-modules}

Let $A$ be a DG ring. 
We already mentioned that $\cat{M}(A) = \cat{DGMod} A$ is a DG category, 
equipped with a translation automorphism $\opn{T}$. The homotopy category of 
$\cat{DGMod} A$ is 
$\til{\cat{K}}(\cat{DGMod} A)$, where by definition 
\begin{equation} \label{eqn:1016}
\opn{Hom}_{\til{\cat{K}}(\cat{DGMod} A)}(M, N) := 
\mrm{H}^0 \bigl( \opn{Hom}_A(M, N) \bigr) .
\end{equation}
This is a triangulated category. 
The derived category $\til{\cat{D}}(\cat{DGMod} A)$ is  gotten
from $\til{\cat{K}}(\cat{DGMod} A)$ by inverting the 
quasi-isomorphisms. The localization functor (identity on objects) is
\begin{equation} \label{eqn:450}
\opn{Q} : \cat{DGMod} A \to \til{\cat{D}}(\cat{DGMod} A) . 
\end{equation}
If $A$ is a ring, i.e.\ $A = A^0$, with module category $\cat{Mod} A$, 
then $\cat{DGMod} A = \cat{C}(\cat{Mod} A)$,
$\til{\cat{K}}(\cat{DGMod} A) = \cat{K}(\cat{Mod} A)$
and $\til{\cat{D}}(\cat{DGMod} A) = \cat{D}(\cat{Mod} A)$.
Here are some references for the derived category of DG modules: 
\cite[Section 10]{BL}, 
\cite[Section 2]{Ke1}, and 
\cite[\href{http://stacks.math.columbia.edu/tag/09KV}{Section 09KV}]{SP}.
Another reference is our course 
notes \cite{Ye4}, but there (in Sections 4 and 9) the DG category 
$\cat{C}(\cat{M})$, associated to an abelian category $\cat{M}$, should be 
replaced by the DG category $\cat{DGMod} A$. 

\begin{dfn}   \label{dfn:875}   
We shall use the following abbreviations for the triangulated categories 
related 
to a DG ring $A$: 
$\cat{K}(A) := \til{\cat{K}}(\cat{DGMod} A)$ and 
$\cat{D}(A) := \til{\cat{D}}(\cat{DGMod} A)$. 
\end{dfn}

Let us recall a few facts about resolutions of DG $A$-modules. 
A DG module $N$ is called acyclic if $\opn{H}(N) = 0$. A DG $A$-module 
$P$ (resp.\ $I$) is called {\em K-projective} (resp.\ {\em K-injective})
if for any acyclic DG $A$-module $N$, the DG $\Z$-module 
$\opn{Hom}_{A}(P, N)$ (resp.\ $\opn{Hom}_{A}(N, I)$) is also acyclic. 
The DG module $P$ is called {\em K-flat} if for every acyclic 
DG $A^{\mrm{op}}$-module $N$, the $\Z$-module  $N \ot_A P$ is acyclic. 
As in the ``classical'' situation, K-projective implies K-flat. 
These definitions were introduced in \cite{Sp}. 
In \cite[Section 3]{Ke1} it is shown that ``K-projective'' is the same as 
``having property (P)'', and ``K-injective'' is the same as ``having property 
(I)''. Every $M \in \cat{M}(A)$ admits K-projective resolutions $P \to M$ and 
K-injective resolutions $M \to I$. 
See also \cite{BL}, \cite{BN}, \cite{AFH} and
\cite[\href{http://stacks.math.columbia.edu/tag/09JD}{Chapter 09JD}]{SP}.
(Semi-free DG modules will be discussed in Section \ref{sec:resolutions-rings}.)

Recall that a diagram  
\[ \UseTips \xymatrix @C=12ex @R=6ex {
M
\ar @/^1.5em/ [r]^{ \phi_0 }
\ar @/_1.5em/ [r]_{ \phi_1 }
&
N
} \]
in $\cat{M}(A)$ is called {\em commutative up to homotopy} if the homomorphisms 
$\phi_0$ and $\phi_1$ are homotopic. Or in other words, if the corresponding 
diagram in  $\cat{K}(A)$ is commutative. 

The next proposition is essentially a standard fact. We state it here because 
it will play a key role, especially in Section \ref{sec:pairs-DG-modules}.  

\begin{prop} \label{prop:460} 
Let $P, I \in \cat{M}(A)$. Assume either $P$ is K-projective or $I$ is 
K-injective. 
\begin{enumerate}
\item The additive homomorphism 
\[ \opn{Q} : \opn{Hom}_{\cat{M}(A)}(P, I) \to 
\opn{Hom}_{\cat{D}(A)}(P, I) \]
is surjective, and its kernel is the group 
$\opn{B}^0(\opn{Hom}_A(P, I))$ of null-homotopic homomorphisms.

\item Let $\psi : P \to I$ be a morphism in $\cat{D}(A)$. There exists a 
homomorphism $\phi : P \to I$ in $\cat{M}(A)$, unique up to homotopy, such that 
$\opn{Q}(\phi) = \psi$. 

\item Let $\phi_0, \phi_1 : P \to I$ be homomorphisms in $\cat{M}(A)$. The 
homomorphisms $\phi_0$ and $\phi_1$ are homotopic iff 
$\opn{Q}(\phi_0) = \opn{Q}(\phi_1)$ in $\cat{D}(A)$. 
\end{enumerate}
\end{prop}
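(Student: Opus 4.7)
The plan is to reduce all three claims to the single standard fact that when $P$ is K-projective or $I$ is K-injective, the canonical functor $\cat{K}(A) \to \cat{D}(A)$ induces a bijection
\[ \opn{Hom}_{\cat{K}(A)}(P, I) \iso \opn{Hom}_{\cat{D}(A)}(P, I) . \]
This is exactly the defining property of K-projective/K-injective objects (cf.\ \cite{Sp}, \cite[Section 3]{Ke1}), and we may invoke it as a black box from the references already cited in the text.

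First, I would unwind the definitions. By (\ref{eqn:1016}),
\[ \opn{Hom}_{\cat{K}(A)}(P, I) = \mrm{H}^0\bigl(\opn{Hom}_A(P, I)\bigr)
= \mrm{Z}^0\bigl(\opn{Hom}_A(P, I)\bigr) / \mrm{B}^0\bigl(\opn{Hom}_A(P, I)\bigr) , \]
while by (\ref{eqn:1015}),
\[ \opn{Hom}_{\cat{M}(A)}(P, I) = \mrm{Z}^0\bigl(\opn{Hom}_A(P, I)\bigr) . \]
The localization functor $\opn{Q}$ of (\ref{eqn:450}) factors through $\cat{K}(A)$, and on these Hom-groups it becomes the composition of the canonical surjection $\mrm{Z}^0 \surj \mrm{Z}^0 / \mrm{B}^0 = \opn{Hom}_{\cat{K}(A)}(P,I)$ with the bijection to $\opn{Hom}_{\cat{D}(A)}(P,I)$ supplied above. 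This composite is therefore surjective, and its kernel is precisely $\mrm{B}^0(\opn{Hom}_A(P, I))$, i.e.\ the null-homotopic homomorphisms. That gives part (1).

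Parts (2) and (3) are then formal rephrasings. For (2): given $\psi \in \opn{Hom}_{\cat{D}(A)}(P,I)$, surjectivity in (1) produces some $\phi \in \opn{Hom}_{\cat{M}(A)}(P,I)$ with $\opn{Q}(\phi) = \psi$; if $\phi'$ is another such lift, then $\opn{Q}(\phi - \phi') = 0$, so by the kernel description $\phi - \phi'$ is null-homotopic, i.e.\ $\phi$ and $\phi'$ are homotopic. For (3): applying the kernel description to $\phi_0 - \phi_1$ gives that $\opn{Q}(\phi_0) = \opn{Q}(\phi_1)$ if and only if $\phi_0 - \phi_1 \in \mrm{B}^0(\opn{Hom}_A(P,I))$, which is exactly the condition that $\phi_0$ and $\phi_1$ are homotopic.

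There is no real obstacle here beyond correctly matching sign/shift conventions between the definition of the DG Hom-complex in (\ref{eqn:690}) and the identifications (\ref{eqn:1015}) and (\ref{eqn:1016}); the substantive content is outsourced to the standard theorem on K-projective and K-injective resolutions. One might remark that the ``either\ldots or'' in the hypothesis is harmless because the two cases are handled by the same argument, with the acyclicity of $\opn{Hom}_A(P, N)$ or $\opn{Hom}_A(N, I)$ (for acyclic $N$) feeding into the same standard proof that $\cat{K}(A) \to \cat{D}(A)$ is fully faithful on the relevant Hom-group.
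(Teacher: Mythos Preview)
Your proof is correct and follows essentially the same approach as the paper: both invoke the standard result from \cite{Sp} that $\opn{Hom}_{\cat{K}(A)}(P,I) \to \opn{Hom}_{\cat{D}(A)}(P,I)$ is bijective under the given hypothesis, and then deduce (1) by factoring $\opn{Q}$ through $\cat{K}(A)$, with (2) and (3) as immediate consequences. Your write-up is somewhat more explicit about the identifications (\ref{eqn:1015}) and (\ref{eqn:1016}), but the substance is identical.
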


In other words, item (2) says that given a morphism $\psi$ in $\cat{D}(A)$, 
as in the first diagram below, there exists a homomorphism $\phi$ in 
$\cat{M}(A)$ making that diagram commutative; and such $\phi$ is unique up to 
homotopy. Item (3) says that the second diagram below, in $\cat{M}(A)$, is 
commutative up to homotopy, iff the third diagram below, in $\cat{D}(A)$, is 
commutative. 
\[ \UseTips \xymatrix @C=12ex @R=6ex {
P
\ar @/^1.5em/ [r]^{ \psi }
\ar@{-->} @/_1.5em/ [r]_{ \opn{Q}(\phi) }
&
I
}
\qquad \qquad 
\UseTips \xymatrix @C=10ex @R=6ex {
P
\ar @/^1.5em/ [r]^{ \phi_0 }
\ar @/_1.5em/ [r]_{ \phi_1 }
&
I
}
\qquad 
\UseTips \xymatrix @C=10ex @R=6ex {
P
\ar @/^1.5em/ [r]^{ \opn{Q}(\phi_0) }
\ar @/_1.5em/ [r]_{ \opn{Q}(\phi_1) }
&
I
} \]

\begin{proof}
(1) According to \cite[Propositions 1.4 and 1.5]{Sp}, the additive homomorphism 
\[ \opn{Hom}_{\cat{K}(A)}(P, I) = \opn{H}^0(\opn{Hom}_A(P, I)) \to 
\opn{Hom}_{\cat{D}(A)}(P, I) \]
is bijective. Note that in \cite{Sp} $A$ is a ring; but the definitions and 
proofs pass without change to the case of a DG ring.

\medskip \noindent 
(2, 3) These are immediate from (1). 
\end{proof}

A DG ring homomorphism $u : A \to B$ induces the forgetful (or restriction of 
scalars) functor 
$\opn{For}_{u} : \cat{M}(B) \to \cat{M}(A)$, 
which is exact, and hence gives rise to a triangulated functor
$\opn{For}_{u} : \cat{D} (B) \to \cat{D} (A)$. 
We usually do not mention the functor $\opn{For}_{u}$ explicitly, unless it is 
important for the discussion at hand. 

If $u : A \to B$ is a quasi-isomorphism then 
$\opn{For}_{u} : \cat{D} (B) \to \cat{D} (A)$ is an equivalence, with 
quasi-inverse  $M \mapsto B \ot^{\mrm{L}}_A M$.
For any $M_0 \in \cat{D}(A^{\mrm{op}})$ and $M_1, M_2 \in \cat{D} (A)$
there are bi-functorial isomorphisms 
$M_0 \ot^{\mrm{L}}_A M_1 \cong M_0 \ot^{\mrm{L}}_B M_1$ and 
$\opn{RHom}_A(M_1, M_2) \cong \opn{RHom}_B(M_1, M_2)$
in $\cat{D}(\Z)$; cf.\ \cite[Proposition 1.4]{YZ2}.

Here is a result that will be used a lot in our paper. The restriction 
functors are suppressed in it. The easy proof is left to the reader. 

\begin{prop} \label{prop:461} 
Let $A_0 \xar{u_1} A_1 \xar{u_2} A_2$ and 
$B_0 \xar{v_1} B_1 \xar{v_2} B_2$ 
be homomorphisms in $\cat{DGR}$.
For $k = 0, 1, 2$ let $M_k \in \cat{M}(B_k \ot_{\Z} A_k)$ and
$N_k \in \cat{M}(B_k)$; and for $k = 1, 2$ let 
$\ze_k : M_{k - 1} \to M_k$ and $\th_k : N_k \to N_{k - 1}$ 
be homomorphisms in $\cat{M}(B_{k - 1} \ot_{\Z} A_{k - 1})$ and 
$\cat{M}(B_{k - 1})$ respectively.
\begin{enumerate}
\item The abelian group $\opn{Hom}_{B_0}(M_0, N_0)$ is a DG 
$A_0^{\mrm{op}}$-module,  with action 
\[ (\phi \cd a)(m) :=  \phi(a \cd m) \]
for $\phi \in \opn{Hom}_{B_0}(M_0, N_0)$, $a \in A_0$  
and $m \in M_0$.

\item The function 
\[ \opn{Hom}_{v_1}(\ze_1, \th_1) : \opn{Hom}_{B_1}(M_1, N_1) \to
\opn{Hom}_{B_0}(M_0, N_0) , \]
with formula 
\[ \opn{Hom}_{v_1}(\ze_1, \th_1)(\phi) := \th_1 \circ \phi \circ \ze_1 \]
for $\phi \in \opn{Hom}_{B_1}(M_1, N_1)$,
is a homomorphism in $\cat{M}(A_0^{\mrm{op}})$.

\item There is equality 
\[ \opn{Hom}_{v_1}(\ze_1, \th_1) \circ \opn{Hom}_{v_2}(\ze_2, \th_2) =
\opn{Hom}_{v_2 \circ v_1}(\ze_2 \circ \ze_1, \th_1 \circ \th_2) . \]

\item Say 
$\ze'_1 : M_{0} \to M_1$ and $\th'_1 : N_1 \to N_{0}$ are also homomorphisms in 
\lb $\cat{M}(B_0 \ot_{\Z} A_0)$ and $\cat{M}(B_{0})$ respectively, and there 
are homotopies 
$\ze_1 \twoto \ze'_1$ and $\th_1 \twoto \th'_1$ in
$\cat{M}(B_0 \ot_{\Z} A_0)$ and $\cat{M}(B_{0})$ respectively.
Then there is a homotopy 
\[ \opn{Hom}_{v_1}(\ze_1, \th_1) \twoto \opn{Hom}_{v_1}(\ze'_1, \th'_1) \]
in $\cat{M}(A_0^{\mrm{op}})$.
\end{enumerate}
\end{prop}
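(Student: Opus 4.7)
My plan is to check the four items essentially by direct computation, keeping careful track of signs. Throughout, the key conceptual input is that $M_0 \in \cat{M}(B_0 \ot_{\Z} A_0)$ means the left $B_0$- and $A_0$-actions on $M_0$ ``commute up to the Koszul sign'' $(-1)^{i j}$ dictated by the multiplication (\ref{eqn:820}) in the tensor product DG ring.

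For part (1), I would begin by verifying that for $\phi \in \opn{Hom}_{B_0}(M_0, N_0)^k$ and $a \in A_0^i$, the map $\phi \cd a$ defined by $(\phi \cd a)(m) := \phi(a \cd m)$ is still $B_0$-linear of degree $k + i$. The check uses the sign $a \cd (b \cd m) = (-1)^{i j} b \cd (a \cd m)$, which combines with the sign in (\ref{eqn:310}) for $\phi$ to give the required sign for $\phi \cd a$. Right $A_0$-associativity $(\phi \cd a_1) \cd a_2 = \phi \cd (a_1 \cd a_2)$ is immediate. The Leibniz rule is a short calculation: expanding $\d(\phi \cd a)$ via (\ref{eqn:690}) and using that $\d_M$ is a derivation over $A_0$ gives $\d(\phi \cd a) = \d(\phi) \cd a + (-1)^k \, \phi \cd \d(a)$, which is the correct Leibniz rule for a right DG $A_0$-module. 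Converting this right action to a left $A_0^{\mrm{op}}$-action via the paper's convention $a^{\mrm{op}} \cd \phi := (-1)^{k i} \phi \cd a$ finishes item (1).

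For part (2), I would verify the three things: $B_0$-linearity of $\th_1 \circ \phi \circ \ze_1$ uses that $\ze_1$ is $B_0$-linear through $v_1$, $\phi$ is $B_1$-linear, and $\th_1$ is $B_0$-linear. Compatibility with differentials is automatic since $\d(\ze_1) = \d(\th_1) = 0$ (they are morphisms in the DG module categories), so $\d(\th_1 \circ \phi \circ \ze_1) = \th_1 \circ \d(\phi) \circ \ze_1$. For $A_0^{\mrm{op}}$-linearity: the action of $a \in A_0$ on $M_1$ is via $u_1$, and $\ze_1$ is $A_0$-linear (through $u_1$), so $a \cd \ze_1(m) = \ze_1(a \cd m)$, giving $(\th_1 \circ (\phi \cd a) \circ \ze_1)(m) = \th_1(\phi(\ze_1(a \cd m))) = (\opn{Hom}_{v_1}(\ze_1, \th_1)(\phi) \cd a)(m)$. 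Item (3) is just associativity of composition, with the order $\th_1 \circ \th_2$ arising because $\phi$ sits to the right of both $\th_k$'s.

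For part (4), given homotopies $h_1 : M_0 \to M_1$ of degree $-1$ with $\d(h_1) = \ze_1' - \ze_1$ in $\cat{M}(B_0 \ot_{\Z} A_0)$, and $k_1 : N_1 \to N_0$ of degree $-1$ with $\d(k_1) = \th_1' - \th_1$ in $\cat{M}(B_0)$, I would take
\[
H(\phi) := k_1 \circ \phi \circ \ze_1 + (-1)^k \, \th_1' \circ \phi \circ h_1 ,
\]
for $\phi$ of degree $k$, and verify by a Leibniz expansion that $\d(H(\phi)) + H(\d(\phi)) = \opn{Hom}_{v_1}(\ze_1', \th_1')(\phi) - \opn{Hom}_{v_1}(\ze_1, \th_1)(\phi)$. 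A separate line of the calculation shows $H$ is $A_0^{\mrm{op}}$-linear, using that $h_1$ is $A_0$-linear. There is no real obstacle; the only thing to be careful about is sign bookkeeping in the Leibniz expansion for part (4) and the check in part (1), but both reduce to straightforward applications of (\ref{eqn:320}), (\ref{eqn:310}), (\ref{eqn:690}) and the Koszul rule for $B_0 \ot_{\Z} A_0$.
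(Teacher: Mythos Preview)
Your proposal is correct: each item is indeed a direct verification, and your outline for parts (1)--(3) and your explicit homotopy $H(\phi) = k_1 \circ \phi \circ \ze_1 + (-1)^k \, \th_1' \circ \phi \circ h_1$ in part (4) all check out (in particular, the computation $\d(H(\phi)) + H(\d(\phi)) = \th_1' \circ \phi \circ \ze_1' - \th_1 \circ \phi \circ \ze_1$ goes through, and $H(\phi \cd a) = H(\phi) \cd a$ follows from the $A_0$-linearity of $h_1$). The paper itself gives no proof --- it simply states ``the easy proof is left to the reader'' --- so your direct computational verification is exactly the intended argument.
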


Item (2) of the proposition is illustrated here:

\[ \UseTips \xymatrix @C=10ex @R=6ex {
M_0
\ar[r]^{\ze_1}
\ar[d]_{\opn{Hom}_{v_1}(\ze_1, \th_1)(\phi)}
&
M_1
\ar[d]^{\phi}
\\
N_0
&
N_1
\ar[l]_{\th_1}
} \]

\begin{prop} \label{prop:462}
Let $v : A' \to A$ be a quasi-isomorphism between DG rings.
Let $Q, I \in \cat{M}(A)$, $P \in \cat{M}(A^{\mrm{op}})$, 
$Q', I' \in \cat{M}(A')$ and $P' \in \cat{M}(A'^{\, \mrm{op}})$.
Let $\phi : P' \to P$ be a quasi-isomorphism in  
$\cat{M}(A'^{\, \mrm{op}})$, and let 
$\psi : Q' \to Q$ and $\chi : I \to I'$ be quasi-isomorphisms in  $\cat{M}(A')$.
\begin{enumerate}
\item If \tup{((}$P$ is K-flat over $A^{\mrm{op}}$ or $Q$ is K-flat over 
$A$\tup{)} and
\tup{(}$P'$ is K-flat over $A'^{\mrm{\, op}}$ or $Q'$ is K-flat over 
$A'$\tup{))} then 
\[ \phi \ot_v \psi : P' \ot_{A'} Q' \to P \ot_A Q \] 
is a quasi-isomorphism.

\item If \tup{((}$Q$ is K-projective or $I$ is K-injective over $A$\tup{)} 
and
\tup{(}$Q'$ is K-projective or $I'$ is K-injective over $A'$\tup{))} then
\[ \opn{Hom}_v(\psi, \chi) : \opn{Hom}_A(Q, I) \to \opn{Hom}_{A'}(Q', I') \]
is a quasi-isomorphism.
\end{enumerate}
\end{prop}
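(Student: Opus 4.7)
The overall strategy is to reduce both parts to the standard invariance of derived tensor product and derived Hom under quasi-isomorphisms of DG rings, recalled in the excerpt just before the proposition. Under the K-flatness hypothesis of (1), the canonical maps $P' \ot_{A'} Q' \to P' \ot^{\mrm{L}}_{A'} Q'$ and $P \ot_A Q \to P \ot^{\mrm{L}}_A Q$ are isomorphisms in $\cat{D}(\Z)$; similarly, under the K-projectivity/K-injectivity hypothesis of (2), the maps $\opn{Hom}_A(Q, I) \to \opn{RHom}_A(Q, I)$ and $\opn{Hom}_{A'}(Q', I') \to \opn{RHom}_{A'}(Q', I')$ are isomorphisms.

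For (1) I would factor $\phi \ot_v \psi$ through the intermediate underived tensor product $P \ot_{A'} Q$ (with $P$ and $Q$ viewed as DG $A'$-modules via $v$), namely as $\phi \ot_{A'} \psi$ followed by the canonical surjection $P \ot_{A'} Q \surj P \ot_A Q$. Passing to $\cat{D}(\Z)$ via the identifications above, this corresponds to the derived composition
\[ P' \ot^{\mrm{L}}_{A'} Q' \xar{\phi \,\ot^{\mrm{L}}\, \psi} P \ot^{\mrm{L}}_{A'} Q \xar{\alpha} P \ot^{\mrm{L}}_A Q , \]
whose first arrow is an isomorphism because $\phi$ and $\psi$ are quasi-isomorphisms, and whose second arrow $\alpha$ is an isomorphism because $v$ is a quasi-isomorphism (the standard change-of-rings fact). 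For (2) the analogous factorization is
\[ \opn{Hom}_v(\psi, \chi) = \opn{Hom}_{A'}(\psi, \chi) \circ \opn{For}_v , \]
where $\opn{For}_v : \opn{Hom}_A(Q, I) \to \opn{Hom}_{A'}(Q, I)$ is the restriction of scalars; in $\cat{D}(\Z)$ this becomes
\[ \opn{RHom}_A(Q, I) \cong \opn{RHom}_{A'}(Q, I) \xar{\opn{RHom}(\psi, \chi)} \opn{RHom}_{A'}(Q', I') , \]
and both arrows are isomorphisms for the reasons given.

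The main obstacle is verifying, carefully at the level of honest DG modules rather than in $\cat{D}(\Z)$, that the underived maps $\phi \ot_v \psi$ and $\opn{Hom}_v(\psi, \chi)$ correspond under the identifications above to the derived compositions just described. This boils down to a case analysis over which of the modules in each pair is K-flat (respectively K-projective or K-injective); in some cases one may first need to select an auxiliary resolution of the non-K-flat (resp. non-K-proj/non-K-inj) module and use Proposition \ref{prop:461} to lift $\phi, \psi, \chi$ to the resolutions before invoking the invariance results.
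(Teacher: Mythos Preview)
Your proposal is correct and follows essentially the same strategy as the paper: reduce to the invariance of $\ot^{\mrm{L}}$ and $\opn{RHom}$ under the quasi-isomorphism $v$, after checking that the given underived objects compute the derived ones. The paper carries out your ``main obstacle'' explicitly, with one tactical difference worth noting: in part (1) it factors through $P' \ot_{A'} Q$ (and the auxiliary map $P' \ot_{A'} A \to P$) rather than your $P \ot_{A'} Q$, which is cleaner because $P'$ is already K-flat over $A'$ after the initial reduction, whereas neither $P$ nor $Q$ need be; in part (2) it similarly first replaces all four modules by K-projective/K-injective resolutions and then uses the adjunction with $\opn{Hom}_{A'}(A, \til{I}')$ for the change-of-rings step.
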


\begin{proof}
This is implicit in the proof of \cite[Proposition 1.4]{YZ2}. Here is a 
detailed proof. 

\medskip \noindent 
(1) By the symmetry of the situation we can assume that either $P'$ and $P$ 
are K-flat (over the respective DG rings), or that $Q'$ and $P$ are
K-flat. We begin by reducing the second case to the first case. Choose a K-flat 
resolution $\mu : \til{P}' \to P'$. Then 
$\til{P}' \ot_{A'} Q' \to P' \ot_{A'} Q'$ is a quasi-isomorphism, and it 
suffices to prove that 
\[ (\phi \circ \mu) \ot_v \psi : \til{P}' \ot_{A'} Q' \to P \ot_A Q \]
is a quasi-isomorphism, which is the first case.

Now we assume that  $P'$ and $P$ are K-flat. Consider the homomorphism 
$\phi \cd \opn{id}_A : P' \ot_{A'} A \to P$, 
$(\phi \cd \opn{id}_A)(p' \ot a)  := \phi(p') \cd a$. 
We claim this is a quasi-isomorphism. To see why, let us look at the first 
commutative diagram below. The homomorphism $\opn{id}_{P'} \ot v$ is a 
quasi-isomorphism since $v$ is a quasi-isomorphism and $P'$ is K-flat; 
the homomorphism $\opn{id}_{P'} \cd \opn{id}_{A'}$ is an isomorphism; and 
$\phi$ is a quasi-isomorphism; therefore $\phi \cd \opn{id}_A$ 
is a quasi-isomorphism.
\[ \UseTips \xymatrix @C=10ex @R=6ex {
P' \ot_{A'} A' 
\ar[r]^(0.6){ \opn{id}_{P'} \cd \opn{id}_{A'} }
\ar[d]_{ \opn{id}_{P'} \ot v }
&
P'
\ar[d]^{\phi}
\\
P' \ot_{A'} A
\ar[r]^(0.6){\phi \cd \opn{id}_{A}  }
&
P 
} 
\qquad 
\UseTips \xymatrix @C=8ex @R=6ex {
P' \ot_{A'} Q'
\ar[r]^{ \phi \ot_v \psi }
\ar[d]_{ \opn{id}_{P'} \ot \psi }
&
P \ot_A Q 
\\
P' \ot_{A'} Q
\ar[r]^(0.45){ \lambda }
&
P' \ot_{A'} A \ot_A Q 
\ar[u]_{ (\phi \cd \opn{id}_{A}) \ot \opn{id}_{Q} }
} \]

Next look at the second commutative diagram above.
The homomorphism  $\lambda$, defined by
$\lambda(p' \ot q) := p' \ot 1 \ot q$, is an isomorphism.
The homomorphism $\opn{id}_{P'} \ot \psi$ is a quasi-isomorphism because 
$\psi$ is a  quasi-isomorphism and $P'$ is K-flat; 
and $(\phi \cd \opn{id}_{A}) \ot \opn{id}_{Q}$ is a quasi-isomorphism because 
$\phi \cd \opn{id}_{A}$ a quasi-isomorphism between K-flat DG 
$A^{\mrm{op}}$-modules. Therefore $\phi \ot_v \psi$ is a quasi-isomorphism.

\medskip \noindent 
(2) Choose K-projective resolutions 
$\al: \til{Q} \to Q$ and $\al' : \til{Q}' \to Q'$,
and K-injective resolutions 
$\be : I \to \til{I}$ and $\be' : I' \to \til{I}'$,
in $\cat{M}(A)$ and $\cat{M}(A')$ respectively. 
By Proposition \ref{prop:460}, we can lift $\psi$ and $\chi$ 
to quasi-isomorphisms $\til{\psi}$ and $\til{\chi}$, such that the diagrams 
\[ \UseTips \xymatrix @C=6ex @R=6ex {
\til{Q}'
\ar[d]_{\al'}
\ar[r]^{\til{\psi}}
&
\til{Q}
\ar[d]^{\al}
\\
Q'
\ar[r]^{\psi}
&
Q
} 
\qquad \qquad 
\UseTips \xymatrix @C=8ex @R=6ex {
\til{I}
\ar[r]^{\til{\chi}}
&
\til{I}'
\\
I
\ar[u]^{\be}
\ar[r]^{\chi}
&
I'
\ar[u]_{\be'}
} \]
in $\cat{M}(A')$ are commutative up to homotopy. We get a diagram 
\begin{equation} \label{eqn:970}
\UseTips \xymatrix @C=14ex @R=6ex {
\opn{Hom}_A(Q, I)
\ar[r]^{\opn{Hom}_v(\psi, \chi)}
\ar[d]_{ \opn{Hom}_{\mrm{id}}(\al, \be) }
&
\opn{Hom}_{A'}(Q', I')
\ar[d]^ {\opn{Hom}_{\mrm{id}}(\al', \be') }
\\
\opn{Hom}_A(\til{Q}, \til{I})
\ar[r]^{\opn{Hom}_v(\til{\psi}, \til{\chi})}
&
\opn{Hom}_{A'}(\til{Q}', \til{I}')
} 
\end{equation}
in $\cat{M}(\Z)$ that is commutative up to homotopy. 
Now let us examine the commutative diagram 
\[ \UseTips \xymatrix @C=14ex @R=8ex {
\opn{Hom}_A(Q, I)
\ar[r]^{ \opn{Hom}_{\mrm{id}}(\al, \mrm{id}_I) }
\ar[d]_{ \opn{Hom}_{\mrm{id}}(\mrm{id}_Q, \be) }
\ar[dr]^{ \ \opn{Hom}_{\mrm{id}}(\al, \be) }
&
\opn{Hom}_{A}(\til{Q}, I)
\ar[d]^{ \opn{Hom}_{\mrm{id}}(\mrm{id}_{\til{Q}}, \be) }
\\
\opn{Hom}_A(Q, \til{I})
\ar[r]^{ \opn{Hom}_{\mrm{id}}(\al, \mrm{id}_{\til{I}}) }
&
\opn{Hom}_A(\til{Q}, \til{I})
} 
\]
in $\cat{M}(\Z)$.
The homomorphisms 
$\opn{Hom}_{\mrm{id}}(\al, \mrm{id}_{\til{I}})$ 
and 
$\opn{Hom}_{\mrm{id}}(\mrm{id}_{\til{Q}}, \be)$
are both quasi-isomorphisms. At least one of the homomorphisms
$\opn{Hom}_{\mrm{id}}(\mrm{id}_Q, \be)$
and 
$\opn{Hom}_{\mrm{id}}(\al, \mrm{id}_I)$
is a quasi-isomorphism. It follows that the diagonal homomorphism 
$\opn{Hom}_{\mrm{id}}(\al, \be)$ is a quasi-isomorphism. 
A similar argument shows that 
$\opn{Hom}_{\mrm{id}}(\al', \be')$ is a quasi-isomorphism. Going back to 
diagram (\ref{eqn:970}), we see that it is enough to prove that 
$\opn{Hom}_v(\til{\psi}, \til{\chi})$
is a quasi-isomorphism.

Let 
$\la : \til{I} \to \opn{Hom}_{A'}(A, \til{I}')$
be the homomorphism 
$\la(i)(a) := \til{\chi}(a \cd i)$
in $\cat{M}(A)$.
It fits into a commutative diagram 
\[ \UseTips \xymatrix @C=6ex @R=6ex {
\til{I}
\ar[r]^(0.3){\la}
\ar[d]_{\til{\chi}}
& 
\opn{Hom}_{A'}(A, \til{I}')
\ar[d]^{ \opn{Hom}_{\mrm{id}}(v, \mrm{id}) }
\\
\til{I}'
& 
\opn{Hom}_{A'}(A', \til{I}')
\ar[l]_(0.7){ \opn{can} }
} 
\]
in $\cat{M}(A')$, where $\opn{can}$ is the canonical isomorphism. 
Because $\til{\chi}$ and $v$ are quasi-isomorphisms, and $\til{I}'$ is 
K-injective, we see that $\la$ is a quasi-isomorphism. 

Finally, consider the commutative diagram 
\[ \UseTips \xymatrix @C=8ex @R=6ex {
\opn{Hom}_A(\til{Q}, \til{I})
\ar[r]^{ \opn{Hom}_v(\til{\psi}, \til{\chi}) }
\ar[d]_{ \opn{Hom}_{\mrm{id}}(\mrm{id}, \la) }
&
\opn{Hom}_{A'}(\til{Q}', \til{I}')
\\
\opn{Hom}_A(\til{Q}, \opn{Hom}_{A'}(A, \til{I}'))
\ar[r]^(0.6){ \opn{adj} }
&
\opn{Hom}_{A'}(\til{Q}, \til{I}')
\ar[u]_{ \opn{Hom}_{\mrm{id}}(\til{\psi}, \mrm{id}) }
} 
\]
in $\cat{M}(\Z)$, where $\opn{adj}$ is the adjunction isomorphism. 
Since $\la$ and $\til{\psi}$ are quasi-isomorphisms, $\til{Q}$ is K-projective, 
and  $\til{I}'$ is K-injective, it follows that 
$\opn{Hom}_v(\til{\psi}, \til{\chi})$
is a quasi-isomorphism.
\end{proof}

\section{Central Pairs of DG Rings and their Resolutions}
\label{sec:resolutions-rings}

In this section we introduce several special kinds of DG rings and 
homomorphisms. Recall that $\cat{DGR}$ is the category of DG rings. 

\begin{dfn}
A DG ring $A = \bigoplus_{i \in \Z} A^i$ is called {\em nonpositive} if 
$A^i = 0$ for all $i > 0$. 
The full subcategory of $\cat{DGR}$ consisting of nonpositive DG rings is 
denoted by $\cat{DGR}^{\leq 0}$.
\end{dfn}

\begin{dfn} \label{dfn:330}
A DG ring $A = \bigoplus_{i \in \Z} A^i$ is called {\em strongly commutative} 
if it satisfies these two conditions:
\begin{enumerate}
\rmitem{i} $A$ is weakly commutative (Definition \ref{dfn:750}(2)), namely 
$b \cd a = (-1)^{i j} \cd a \cd b$ for all $a \in A^{i}$ and $b \in A^{j}$. 

\rmitem{ii} $a \cd a = 0$ for all $a \in A^i$ and $i$ odd. 
\end{enumerate}
We denote by $\cat{DGR}_{\mrm{sc}}$ the full subcategory of 
$\cat{DGR}$ on the strongly commutative DG rings.
\end{dfn}

\begin{dfn} \label{dfn:876}
A DG ring $A$ is called a {\em commutative DG ring} if it is both nonpositive 
and strongly commutative. The full subcategory of $\cat{DGR}$ consisting of 
commutative DG rings is denoted by $\cat{DGR}_{\mrm{sc}}^{\leq 0}$. 
\end{dfn}

In other words, 
\[ \cat{DGR}_{\mrm{sc}}^{\leq 0} = \cat{DGR}^{\leq 0} \cap
\cat{DGR}_{\mrm{sc}} . \]

\begin{exa}
A commutative DG ring $A$ concentrated in degree $0$ (i.e.\ $A = A^0$) is just 
a commutative ring. 
\end{exa}

\begin{rem}
The name ``strongly commutative DG ring'' was suggested to us by J. 
Palmieri. In an earlier version of this paper we used the name 
``strictly commutative DG ring'', following \cite{ML}; and in \cite{YZ2}
it was ``super-commutative DG algebra''. 

The name ``weakly commutative DG ring'' is ``commutative DG algebra'' in 
\cite{ML}, and ``graded-commutative DG algebra'' in \cite{AILN}. 

Of course when the number $2$ is invertible in $A^0$ (e.g.\ if $A^0$  contains 
$\Q$), then condition (i) of Definition \ref{dfn:330} implies condition (ii), 
so that the distinction between weak and strong commutativity disappears. See 
also Remark \ref{rem:750}.
\end{rem}

Central homomorphisms were introduced in Definition \ref{dfn:750}(3). 

\begin{dfn} \label{dfn:910}
A {\em central pair of DG rings} is a central homomorphism 
$u : A \to B$ in $\cat{DGR}$, where the DG ring $A$ is commutative. 
We usually denote this pair by $B / A$, keeping $u$ implicit. 

Suppose $A \xar{u} B$ and $A' \xar{u'} B'$
are central pairs of DG rings. A {\em morphism of central pairs} 
$w / v : B' / A' \to B / A$
consists of homomorphisms 
$v : A' \to A$ and $w : B' \to B$
in $\cat{DGR}$, such that 
$u \circ v = w \circ u'$. 
The resulting category is called the {\em category of central 
pairs of DG rings}, and is denoted by $\cat{PDGR}$. 
\end{dfn}

A morphism $w / v$ in $\cat{PDGR}$ is shown in diagram (\ref{eqn:985}). 
We do not require the homomorphism $w : B' \to B$ to be 
central. The homomorphism $v : A' \to A$ is automatically central, because $A$ 
is commutative. 

\begin{dfn} \label{dfn:911}
\mbox{}
\begin{enumerate}
\item A {\em nonpositive central pair} is a central pair $B / A$ such that both 
$A$ and $B$ are nonpositive DG rings. The full subcategory of nonpositive 
central pairs is denoted by $\cat{PDGR}^{\leq 0}$.

\item A {\em commutative central pair} is a central pair $B / A$ such that the 
DG ring $B$ is also commutative. The full subcategory of commutative 
central pairs is denoted by $\cat{PDGR}_{\mrm{sc}}^{\leq 0}$.

\item A morphism $w / v : B' / A' \to B / A$ in $\cat{PDGR}$
is called {\em central} if the homomorphism $w : B' \to B$ is central.
The subcategory of $\cat{PDGR}$ on all objects, but only with central 
homomorphisms, is denoted by $\cat{PDGR}_{\vec{\mrm{ce}}}$.

\item A morphism $w / v : B' / A' \to B / A$ in $\cat{PDGR}$
is called a {\em quasi-iso\-morphism} if the homomorphisms 
$v : A' \to A$ and $w : B' \to B$ are both quasi-isomorphisms.
\end{enumerate}
\end{dfn}

\begin{dfn} \label{dfn:912}
Let $B / A$ be a central pair of DG rings. 
A {\em resolution} of $B / A$ is a morphism 
$s / r : \til{B} / \til{A} \to B / A$ in $\cat{PDGR}$,
such that:
\begin{itemize}
\item $r : \til{A} \to A$ is a quasi-isomorphism.
\item $s : \til{B} \to B$ is a surjective quasi-isomorphism.
\end{itemize}

The resolution is called {\em strict} if $\til{A} = A$ and $r = \opn{id}_A$, 
the identity automorphism of $A$.
\end{dfn}

See diagram (\ref{eqn:482}) for an illustration. 

\begin{dfn} \label{dfn:913}
Let $w / v : B' / A' \to B / A$ be a morphism in $\cat{PDGR}$,
let 
$s / r : \til{B} / \til{A} \to B / A$
be a resolution of $B / A$, and let 
$s' / r' : \til{B}' / \til{A}' \to B' / A'$
be a resolution of $B' / A'$. A morphism 
$\til{w} / \til{v} : \til{B}' / \til{A}' \to \til{B} / \til{A}$
in $\cat{PDGR}$ is said to be {\em morphism of resolutions above $w / v$}
if 
\[ (w / v) \circ (s' / r') = (s / r) \circ (\til{w} / \til{v}) \]
in $\cat{PDGR}$.
In this case, we also say that $\til{w} / \til{v}$ is a {\em resolution of 
$w / v$}.

If $B' / A' = B / A$ and $w / v$ is the identity automorphism, then 
we say that $\til{w} / \til{v}$ is a morphism of resolutions of $B / A$. 
\end{dfn}

A morphism $\til{w} / \til{v}$ is shown in diagram (\ref{eqn:986}).

By {\em graded set} we mean a set $X$ 
with a partition $X = \coprod_{i \in \Z} X^i$; the elements of $X^i$ are 
said to have degree $i$. We say that $X$ is a {\em nonpositive graded set} if 
$X^i = \varnothing$ for all $i > 0$. 
A {\em filtered graded set} is a graded set $X$, with 
an ascending filtration 
$F = \{ F_j(X) \}_{j \in \Z}$
by graded subsets, such that $X = \bigcup_j F_j(X)$ and 
$F_{-1}(X) = \varnothing$. 

Let $A$ be a DG ring. We denote by $A^{\natural}$ the graded ring gotten from 
$A$ by forgetting the differential. The same for DG modules. 
When talking about free modules, we assume the DG ring $A$ is nonzero (to avoid 
nonsense).

For a variable $x$ of degree $i$, we let 
$A \ot x := A[-i]$, the shift by $-i$ of the DG $A$-module $A$.
For any element $a \in A^j$ we denote by $a \ot x$ or $a \cd x$ 
the element $\opn{t}^{-i}(a) \in A[-i]^{j + i}$; cf.\ (\ref{eqn:1025}).
Note that $\d(x) = 0$, and $\d(a \cd x) = \d(a) \cd x$ for any $a \in A$.  
Now consider a graded set of variables $X$. The {\em free DG $A$-module on $X$} 
is 
\[ A \ot X := \bigoplus_{x \in X} \ A \ot x . \]
A {\em free DG $A$-module} is a DG $A$-module $P$ that is isomorphic to 
$A \ot X$ for some graded set of variables $X$. The image of $X$ in $P$ is 
called a {\em basis} of $P$. 

Let $P$ be a DG $A$-module. A {\em semi-free filtration} on $P$ 
is an ascending filtration 
$F = \{ F_j(P) \}_{j \in \Z}$
of $P$ by DG submodules, such that $P = \bigcup_j F_j(P)$, 
$F_{-1}(P) = 0$, and each 
$\opn{gr}^F_j(P) := F_{j}(P) / F_{j - 1}(P)$ 
is a free DG $A$-module. 
We say that $P$ is a {\em semi-free DG $A$-module} if it admits some semi-free 
filtration. See \cite{AFH} and \cite{Hi}.
A semi-free DG module is K-projective, and hence also K-flat. 

Let $P$ be a semi-free DG $A$-module, with semi-free filtration $F$. 
For each $j \geq 0$ choose a graded subset $Y_j \subseteq F_j(P)$
whose image $\bar{Y}_j$
in $\opn{gr}^F_j(P)$ is a basis of this free DG $A$-module.
The  graded set  
$X := \coprod_{i \in \Z} \, Y_j$
is filtered by 
$F_j(X) := \bigcup_{j' \leq j} Y_{j'}$. 
Such a filtered graded set is called a {\em semi-basis of $P$}.
Note that for every $j$ there is an isomorphism of graded 
$A^{\natural}$-modules 
$A^{\natural} \ot F_j(X) \cong F_j(P)^{\natural}$;
and so $A^{\natural} \ot X \cong P^{\natural}$. 
For any element $y \in Y_j$, its image
$\bar{y} \in \bar{Y}_j \subseteq \opn{gr}^F_j(P)$
satisfies $\d(\bar{y}) = 0$; and hence 
$\d(y) \in F_{j - 1}(X)$. 
We see that 
$\d(F_{j}(X)) \subseteq F_{j - 1}(X)$. 

Given a graded set of variables $X$, we can form the 
noncommutative polynomial ring $\Z \bra{X}$ over $\Z$, which is the free 
$\Z$-module on the set of monomials $x_1 \cdots x_m$ in the elements of $X$,
and with the obvious multiplication and grading. 

The next definition is not standard. Its first occurrence seems to have  been 
in \cite{YZ2}. 

\begin{dfn}  \label{dfn:751}
Let $X = \coprod_{i \in \Z} X^i$ be a graded set of variables.
The {\em strongly commutative polynomial ring} $\Z[X]$ is the quotient of 
$\Z \bra{X}$ by the two-sided ideal generated by the elements 
$y \cd x - (-1)^{i j} \cd x \cd y$ and $z^2$, for all $x \in X^{i}$, 
$y \in X^{j}$ and $z \in X^k$, with $i, j, k \in \Z$ and $k$ odd. 
\end{dfn}

\begin{dfn} \label{dfn:60}
Consider a nonpositive central pair of DG rings $A \xar{u} B$. 
\begin{enumerate}
\item We say that $B$ is a {\em commutative semi-free DG ring over $A$},  
that $u$ is a {\em commutative semi-free DG ring homomorphism}, and that 
$B / A$ is a {\em commutative semi-free pair of DG rings}, if there is 
an isomorphism of graded $A^{\natural}$-rings
$B^{\natural} \cong A^{\natural} \ot_{\Z} \Z[X]$,
where $X$ is a nonpositive graded set, and $\Z[X]$ is the strongly commutative 
polynomial ring. Such a graded set $X$ is called a {\em set of commutative 
semi-free ring generators} of $B$ over $A$. 

\item We say that $B$ is a {\em noncommutative semi-free DG ring over $A$}, 
that $u$ is a {\em noncommutative semi-free DG ring homomorphism}, and that 
$B / A$ is a {\em noncommutative semi-free pair of DG rings}, if there 
is an isomorphism of graded $A^{\natural}$-rings
$B^{\natural} \cong A^{\natural} \ot_{\Z} \Z \bra{X}$,
where $X$ is a nonpositive graded set, and $\Z\bra{X}$ is the noncommutative 
polynomial ring. Such a graded set $X$ is called a {\em set of noncommutative 
semi-free ring generators} of $B$ over $A$. 

\item  We say that  $B$ is a {\em K-flat DG ring over $A$}, that 
$u$ is a {\em K-flat DG ring homomorphism}, and that 
$B / A$ is a {\em K-flat pair of DG rings}, if $B$ is K-flat
as a DG $A$-module (left or right -- it does not matter). 
\end{enumerate}
\end{dfn}

\begin{prop} \label{prop:750}
Let $u : A \to B$ a nonpositive central pair of DG rings, which is either 
commutative semi-free or a noncommutative semi-free. Then $u : A \to B$ is 
K-flat. \end{prop}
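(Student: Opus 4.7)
My plan is to construct a semi-free filtration of $B$ as a DG $A$-module; the proposition then follows from the previously recorded fact that every semi-free DG module is K-projective, hence K-flat. In both the commutative and the noncommutative semi-free cases, $B^{\natural}$ is free over $A^{\natural}$ on the canonical basis $M$, consisting respectively of the ordered monomials in $\Z[X]$ or of the words in $\Z\bra{X}$. Each $m \in M$ is homogeneous of some degree $\leq 0$, and I define its \emph{weight} to be the nonnegative integer $\opn{wt}(m) := -\deg(m)$. Set
\[ F_w(B) \,:=\, \bigoplus_{m \in M,\ \opn{wt}(m) \leq w} A \cdot m \,\subseteq\, B, \qquad F_{-1}(B) := 0; \]
this is an exhaustive ascending filtration of $B^{\natural}$ by graded $A^{\natural}$-submodules.

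The heart of the argument is verifying that $\d$ strictly decreases weight. For a single variable $x \in X^{-d}$ with $d \geq 0$: when $d = 0$, nonpositivity of $B$ gives $\d(x) \in B^{1} = 0$; when $d \geq 1$, expand $\d(x) = \sum_l a_l \cdot m'_l$ with $a_l \in A$ and $m'_l \in M$, so that $\deg(a_l) + \deg(m'_l) = -d+1$, and hence by nonpositivity of $A$ one has $\opn{wt}(m'_l) \leq d - 1 < \opn{wt}(x)$. For a general $m = x_{i_1} \cdots x_{i_k} \in M$, the graded Leibniz rule writes $\d(m)$ as a sum of terms of the form $\pm\, x_{i_1} \cdots \d(x_{i_j}) \cdots x_{i_k}$; using that $A$ lies in the graded center of $B$ (since $u$ is central) to pull coefficients out, and then rewriting the resulting products of variables in the basis $M$ (which preserves the multiset of constituent variables and thus the weight), each basis element in the expansion carries weight at most $\opn{wt}(m) - 1$. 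Thus $\d(m) \in F_{\opn{wt}(m) - 1}(B)$ for every $m \in M$.

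It follows that each $F_w(B)$ is a DG $A$-submodule of $B$, and that the induced differential on $F_w(B) / F_{w-1}(B)$ annihilates the class of every basis element. Hence
\[ F_w(B) / F_{w-1}(B) \,\cong\, \bigoplus_{m \in M,\ \opn{wt}(m) = w} A \cdot \bar m \]
is a free DG $A$-module, namely a direct sum of shifts of $A$. This exhibits $\{F_w(B)\}_{w \geq 0}$ as a semi-free filtration of $B$ over $A$, so $B$ is semi-free, and therefore K-flat, as a DG $A$-module. The main (modest) obstacle is the weight calculation for single variables, which is exactly where the nonpositivity hypothesis enters; everything else is bookkeeping around the Leibniz rule.
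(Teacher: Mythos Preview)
Your proof is correct and takes essentially the same approach as the paper: both show that $B$ is a semi-free DG $A$-module on the monomial basis $M$, and then invoke the fact that semi-free implies K-flat. The paper simply asserts that the monomials form a semi-basis and remarks that nonpositivity of $X$ is used; you make the implicit filtration explicit (by weight $= -\deg$) and carry out the verification that $\d$ strictly decreases weight via the Leibniz rule, which is exactly the detail the paper leaves to the reader.
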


\begin{proof}
In the noncommutative case, let $X$ be a set of noncommutative semi-free ring 
generators of $B$ over $A$. Then the monomials $x_1 \cdots x_m$, 
with  $x_1, \ldots, x_m \in X$, are a semi-basis for the DG $A$-module $B$.
We are using the fact that the graded set $X$ is nonpositive. 
 
In the commutative case, let $X$ be a set of commutative semi-free ring 
generators of $B$ over $A$. Choose some ordering of the set $X$. 
Then the monomials $x_1 \cdots x_m$, with  $x_1 \leq \cdots \leq x_m$
in $X$, and $x_i < x_{i + 1}$ if $x_i$ is odd, are a semi-basis for the DG 
$A$-module $B$. Again, we are using the fact that the graded set $X$ is 
nonpositive. 

We see that in both cases $B$ is a semi-free DG $A$-module, and thus it is 
K-flat. 
\end{proof}

\begin{exa} \label{exa:690}
Let $A$ be a commutative ring, and let $\bsym{a} = (a_1, \ldots, a_n)$ be a 
sequence of elements in $A$. The Koszul complex 
$B := \opn{K}(A; \bsym{a})$ associated to this sequence is a commutative
semi-free DG ring over $A$. It has a set of commutative 
semi-free ring generators $X = \{ x_1, \ldots, x_n \}$ of degree $-1$,
such that $\d_B(x_i) = a_i$.   
\end{exa}

\begin{rem} \label{rem:750}
The strongly commutative polynomial ring $\Z[X]$ is a free graded $\Z$-module. 
This is what makes Proposition \ref{prop:750} work. 

The ``weakly commutative polynomial ring'', which is the quotient of 
$\Z \bra{X}$ by the two-sided ideal generated by the elements 
$y \cd x - (-1)^{i j} \cd x \cd y$ for all $x \in X^{i}$ and $y \in X^{j}$,
{\em is not flat over $\Z$}. Indeed, for any odd element $z \in X$ we have 
$z^2 \neq 0$ but $2 \cd z^2 = 0$. 
\end{rem}

\begin{dfn} \label{dfn:52}
Let $B / A$ be a central pair of DG rings, and let 
$s / r : \til{B} / \til{A} \to B / A$ be a resolution of $B / A$
(Definition \ref{dfn:912}). 
\begin{enumerate}
\item Assume $B / A$ is a commutative pair. We say that $\til{B} / \til{A}$
is a {\em commutative semi-free resolution} of $B / A$ if the pair 
$\til{B} / \til{A}$ is commutative semi-free (Definition \ref{dfn:60}(1)). 

\item We say that $\til{B} / \til{A}$ is a {\em noncommutative semi-free 
resolution} of $B / A$ if the pair 
$\til{B} / \til{A}$ is noncommutative semi-free (Definition \ref{dfn:60}(2)).

\item We say that $\til{B} / \til{A}$ is a {\em K-flat resolution} of $B / A$ 
if the pair $\til{B} / \til{A}$ is K-flat (Definition \ref{dfn:60}(3)).
\end{enumerate}
\end{dfn}

\begin{exa}
If $B$ is K-flat over $A$, then $B / A$ is a K-flat ring resolution of itself. 
It is terminal among all K-flat ring resolutions of $B / A$.
\end{exa}

\begin{exa}
Let $u : A \to B$ be a surjective homomorphism of commutative rings, and let 
$\bsym{a} = (a_1, \ldots, a_n)$ be a sequence of elements of $A$ that generates 
the ideal $\opn{Ker}(u)$. Assume that the sequence $\bsym{a}$ is regular. 
Define $\til{B} := \opn{K}(A; \bsym{a})$, the Koszul complex. 
Then $\til{B} / A$ is a strict commutative semi-free DG ring 
resolution of $B / A$. 
\end{exa}

\begin{rem}
In \cite{YZ2} we did not require the quasi-isomorphism $w$ to be surjective. 
That omission was not significant there, because we were mostly concerned with 
the case when $B$ is a ring (cf.\ \cite[Proposition 1.8]{YZ2}). When $B$ has 
nonzero negative components, the surjectivity is needed (cf.\ proof of Theorem 
\ref{thm:305} below).
\end{rem}

The next results in this section are enhanced versions of 
\cite[Propositions 1.7 and 1.8]{YZ2}.
The category $\cat{DGR}^{\leq 0} \centover A$ was introduced in Definition 
\ref{dfn:750}(4). 

\begin{lem} \label{lem:305}
Let $A$ be a commutative DG ring, and let 
$B, B' \in  \cat{DGR}^{\leq 0} \centover A$. 
Assume either of these two conditions holds\tup{:}
\begin{enumerate}
\rmitem{i} $B$ is commutative, and $B'$ is commutative semi-free over $A$, 
with commutative semi-free ring generating set $X$. 

\rmitem{ii} $B'$ is noncommutative semi-free over $A$, with noncommutative 
semi-free ring generating set $X$. 
\end{enumerate}
Let $w : X \to B$ be a degree $0$ function. Then\tup{:}
\begin{enumerate}
\item The function $w$ extends uniquely to a graded $A^{\natural}$-ring 
homomorphism $w : B'^{\, \natural} \to B^{\natural}$.

\item The graded ring homomorphism $w : B'^{\, \natural} \to B^{\natural}$
extends to a DG ring homomorphism $w : B' \to B$ iff 
$\d(w(x)) = w(\d(x))$ for all $x \in X$. 
\end{enumerate}
\end{lem}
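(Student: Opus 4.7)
The plan is to treat the two parts separately, viewing (1) as a direct application of the universal property of (strongly commutative or noncommutative) polynomial rings, and (2) as a Leibniz-rule argument on a generating set.

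For part (1), by Definition \ref{dfn:60} the graded $A^{\natural}$-ring $B'^{\, \natural}$ is isomorphic to $A^{\natural} \ot_{\Z} \Z[X]$ in case (i), and to $A^{\natural} \ot_{\Z} \Z\bra{X}$ in case (ii). In both cases, giving a graded $A^{\natural}$-ring homomorphism $B'^{\, \natural} \to B^{\natural}$ is equivalent to giving a degree-preserving function $w : X \to B^{\natural}$ whose images, together with the image of $A^{\natural}$, satisfy the defining relations of the corresponding polynomial ring over $A^{\natural}$. The hypothesis that $A \to B$ is central (together with the commutativity of $A$) takes care of the relations that express centrality of $A^{\natural}$ in the polynomial ring. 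In case (i), the remaining relations $y \cd x = (-1)^{ij} \cd x \cd y$ and $z^2 = 0$ for odd $z$ are automatic because $B$ is strongly commutative. Thus the extension exists and is unique.

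For part (2), the ``only if'' direction is immediate. For ``if'', consider the graded subset
\[ C := \bigl\{ b \in B' \bigm| \d_B(w(b)) = w(\d_{B'}(b)) \bigr\} . \]
I would show $C = B'$ by verifying that $C$ is a graded $\Z$-subring of $B'$ that contains $A$ and $X$. The inclusion $X \subseteq C$ is the hypothesis, and $A \subseteq C$ holds because both $A \to B'$ and $A \to B$ are DG ring homomorphisms, so the restrictions of $\d_{B'}$ and $\d_B$ to $A$ agree with $\d_A$ under $w$. Closure under addition is immediate, and closure under multiplication is a direct application of the graded Leibniz rule (\ref{eqn:320}) in $B$ and $B'$: for $b_1 \in B'^{\, i}$ and $b_2 \in B'$ both in $C$,
\[ \ali{ \d_B(w(b_1 \cd b_2)) &= \d_B(w(b_1)) \cd w(b_2) + (-1)^i \cd w(b_1) \cd \d_B(w(b_2)) \\
&= w(\d_{B'}(b_1)) \cd w(b_2) + (-1)^i \cd w(b_1) \cd w(\d_{B'}(b_2)) = w(\d_{B'}(b_1 \cd b_2)) . } \]
Since $B'$ is generated as a graded $\Z$-ring by $A \cup X$, this forces $C = B'$.

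There is no real obstacle: the content of the lemma is the universal property of the relevant polynomial ring (which depends crucially on the flatness/freeness observations of Proposition \ref{prop:750} and Remark \ref{rem:750}, but only through Definition \ref{dfn:60}), together with the elementary fact that a ring map defined on generators commutes with a derivation on the whole iff it does so on those generators. The only point requiring minor care is the bookkeeping of signs in the Leibniz computation and the verification that the strong commutativity and centrality hypotheses exactly match the defining relations of $\Z[X]$ and $\Z\bra{X}$.
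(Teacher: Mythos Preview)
Your proof is correct and is exactly the standard argument the paper has in mind when it writes ``This is an easy exercise.'' The only minor quibble is your parenthetical remark that the argument ``depends crucially on the flatness/freeness observations of Proposition \ref{prop:750} and Remark \ref{rem:750}'': in fact flatness plays no role here; what you use is purely the universal property of $\Z\bra{X}$ (resp.\ $\Z[X]$) as a free graded ring (resp.\ free strongly commutative graded ring) on $X$, which is built into Definition \ref{dfn:60}.
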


\begin{proof}
This is an easy exercise. 
\end{proof}

\begin{lem} \label{lem:306}
Let $X$ be a nonpositive graded set, and let $A$ be a commutative DG 
ring. Consider the graded ring $B := A^{\natural} \ot_{\Z} \Z[X]$ or 
$B := A^{\natural} \ot_{\Z} \Z\bra{X}$. 
\begin{enumerate}
\item Let $\d_X : X \to B$ be a degree $1$ function. The function $\d_X$ 
extends uniquely to a degree $1$ derivation $\d_B : B \to B$, such 
that $\d_B|_A = \d_A$, the differential of $A$. 

\item The derivation $\d_B$ is a differential on $B$, i.e.\ 
$\d_B \circ \d_B = 0$, iff  
$(\d_B \circ \d_B)(x) = 0$ for all $x \in X$. 
\end{enumerate}
\end{lem}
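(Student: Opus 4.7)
The plan is to reduce both parts to the universal property of free/polynomial rings together with the observation that derivations on such rings are determined by their values on ring generators.

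For part (1), uniqueness is immediate: by the Leibniz rule (\ref{eqn:320}), any graded derivation of $B$ is determined by its restriction to any set of graded ring generators, and $A \cup X$ generates $B$ as a graded ring. For existence, in the noncommutative case $B^{\natural} = A^{\natural} \ot_{\Z} \Z\bra{X}$ is the free product over $\Z$, so defining $\d_B$ on each monomial $a \cd x_{i_1} \cdots x_{i_n}$ by iterated application of the Leibniz rule produces a well-defined degree $1$ $\Z$-linear endomorphism of $B$, with no relations to check. The resulting $\d_B$ is a derivation because the Leibniz rule is built in by construction. In the commutative case one proceeds the same way but must verify that $\d_B$ descends from $A^{\natural} \ot_{\Z} \Z\bra{X}$ to the quotient $A^{\natural} \ot_{\Z} \Z[X]$, i.e. that the relations $yx - (-1)^{ij} xy$ (for $x \in X^i$, $y \in X^j$) and $z^2$ (for $z \in X^k$, $k$ odd) are annihilated. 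Both checks are short consequences of the Leibniz rule combined with the strong commutativity of $B^{\natural}$ that is already available in the target; for instance, for odd $z$ one has $\d_B(z^2) = \d_B(z) \cd z + (-1)^{k} z \cd \d_B(z) = \d_B(z) \cd z - z \cd \d_B(z)$, and the two terms cancel because $\d_B(z)$ has even degree and $B^{\natural}$ is (strongly) commutative.

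For part (2), the key computation is that $\d_B \circ \d_B$ is itself a derivation of degree $2$. Indeed, for any derivation $d$ of odd degree $k$ on a graded ring, a direct expansion of $d(d(ab))$ via the graded Leibniz rule yields
\[ d^2(ab) = d^2(a) \cd b + \bigl( (-1)^{(i+k)k} + (-1)^{ik} \bigr) \cd d(a) \cd d(b) + a \cd d^2(b) , \]
and the middle coefficient vanishes precisely because $k$ is odd, leaving the sign-free Leibniz rule $d^2(ab) = d^2(a) \cd b + a \cd d^2(b)$. Applied to $d = \d_B$, this shows $\d_B \circ \d_B$ is a degree $2$ derivation of $B$.

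Now apply the uniqueness principle from part (1) to this degree $2$ derivation: it is determined by its values on the ring generating set $A \cup X$. On $A$ it restricts to $\d_A \circ \d_A = 0$, which is free, so the vanishing of $\d_B \circ \d_B$ on all of $B$ is equivalent to its vanishing on $X$. The main (minor) obstacle is the verification that $\d_B$ respects the strong-commutativity relations in the commutative case; everything else is formal. I would record the relevant sign calculation (both for $yx - (-1)^{ij} xy$ and for $z^2$) once, and then the rest of the lemma follows by the universal extension principle for derivations on free graded rings.
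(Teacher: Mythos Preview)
Your proof is correct and follows essentially the same approach as the paper: define $\d_B$ on monomials via the Leibniz rule and extend $A$-linearly, checking well-definedness in the commutative case. For part (2) the paper simply says ``this is an easy exercise,'' whereas you supply the standard argument that $\d_B \circ \d_B$ is a degree $2$ derivation (using that $\d_B$ has odd degree) and hence is determined by its values on the generating set $A \cup X$; this is exactly the intended argument and is more explicit than the paper's treatment.
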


\begin{proof}
(1) Take elements $x_j \in X^{n_j}$ for $j = 1, \ldots, m$, and consider the 
monomial $x_1 \cdots x_m \in \Z[X]$ (resp.\ in $\Z \bra{X}$). 
We let 
\[ \d_B(x_1 \cdots x_m) := 
\sum_{j = 1}^m (-1)^{n_1 + \cdots n_{j-1}} \cd x_1 \cdots x_{j-1} \cd 
\d_X(x_{j}) \cd x_{j+1} \cdots x_m \in B .  \]
This is well-defined in the commutative case (and there is nothing to check in 
the noncommutative case). Extending it additively we obtain a degree $1$ 
$\Z$-linear homomorphism 
$\Z[X] \to B$ (resp.\ $ \Z \bra{X} \to B$). This extends to 
a degree $1$ $A$-linear homomorphism $\d_B : B \to B$
that satisfies the graded Leibniz rule. 

\medskip \noindent
(2) This is an easy exercise. 
\end{proof}

For a DG ring $A$ we denote by $\opn{Z}(A) = \bigoplus_i \opn{Z}^i(A)$ 
the set of cocycles, and by 
$\opn{B}(A) = \bigoplus_i \opn{B}^i(A)$ 
the set of coboundaries (i.e.\ the kernel and image of $\d_A$, 
respectively).
So $\opn{Z}(A)$ a DG subring of $A$ (with zero differential of course),
and $\opn{B}(A)$ is a two-sided graded ideal of $\opn{Z}(A)$.

\begin{thm} \label{thm:305}
Let $A \xar{u} B$ be a nonpositive central pair of DG rings.
\begin{enumerate}
\item If $B$ is commutative, then there exists a strict commutative semi-free 
DG ring resolution 
$A \xar{\til{u}} \til{B}$ of $A \xar{u}  B$.

\item There exists a strict noncommutative semi-free DG ring resolution 
$A \xar{\til{u}} \til{B}$ of $A \xar{u}  B$.
\end{enumerate} 
\end{thm}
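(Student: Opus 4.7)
My plan is to construct $\tilde{B}$ by the standard procedure of adjoining variables to an iteratively growing semi-free DG $A$-ring, making the map to $B$ an increasingly good approximation. Set $\tilde{B}^{(-1)} := A$ and $s^{(-1)} := u$, and define inductively nonpositive semi-free DG $A$-subrings
\[ A = \tilde{B}^{(-1)} \subseteq \tilde{B}^{(0)} \subseteq \tilde{B}^{(1)} \subseteq \cdots \]
together with compatible DG $A$-ring homomorphisms $s^{(j)} : \tilde{B}^{(j)} \to B$ extending $u$. Both items (1) and (2) are handled in parallel: in the commutative case one uses the strongly commutative polynomial ring $\Z[X]$, and in the noncommutative case the noncommutative polynomial ring $\Z\bra{X}$. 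At each step, $\tilde{B}^{(j+1)}$ is obtained from $\tilde{B}^{(j)}$ by freely adjoining a graded set of new variables of negative degree. Lemma \ref{lem:305} extends $s^{(j)}$ to $s^{(j+1)}$ once we specify images of the new variables in $B$, while Lemma \ref{lem:306} extends the differential once we specify $\d$ on the new variables; the relations $\d^2 = 0$ and $s^{(j+1)} \circ \d = \d \circ s^{(j+1)}$ then reduce to checkable conditions on the new generators alone.

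In the base stage I adjoin a graded set $X_0$ of degree $0$, with $\d(x) = 0$ and $s(x) := b_x$, where the assignment $x \mapsto b_x$ surjects onto $B^0$; this makes $s^{(0)}$ surjective in degree $0$. In the inductive step, maintaining the invariants that $s^{(j)}$ is surjective in degrees $\geq -j$ and that $\DH^i(s^{(j)})$ is bijective for $i \geq -j+1$, I adjoin at stage $j+1$ a graded set $X_{j+1}$ in degree $-(j+1)$ consisting of two pools. The first pool, intended to improve surjectivity of $s$, consists of variables $x_b$ indexed by a set of coset representatives of $B^{-(j+1)} / s^{(j)}(\tilde{B}^{(j), -(j+1)})$, with $s(x_b) := b$ and $\d(x_b) := z_b \in \opn{Z}^{-j}(\tilde{B}^{(j)})$ a cocycle preimage of $\d(b) \in B^{-j}$. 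The second pool, intended to improve the cohomology matching, consists of variables $y_z$ indexed by a generating set of $\ker(\DH^{-j}(s^{(j)}))$, with $\d(y_z) := z$ and $s(y_z) := b_z$ a chosen preimage in $B^{-(j+1)}$ of $s^{(j)}(z)$. Taking $\tilde{B} := \bigcup_j \tilde{B}^{(j)}$ and $s := \varinjlim s^{(j)}$ yields a nonpositive commutative (resp.\ noncommutative) semi-free DG $A$-ring together with a surjective DG ring homomorphism $s : \tilde{B} \to B$; surjectivity is immediate from the first invariant, and bijectivity of $\DH(s)$ in every degree follows in the limit from the two pools.

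The main obstacle is verifying that the cocycle preimages $z_b$ required to define $\d(x_b)$ always exist. Starting from any preimage $z' \in \tilde{B}^{(j), -j}$ of $\d(b)$ (which exists by the first invariant), the element $\d(z') \in \tilde{B}^{(j), -j+1}$ is a cocycle that maps to $\d(\d(b)) = 0$ in $B$; by the injectivity part of the second invariant in degree $-j+1$ we can write $\d(z') = \d(w)$, so that $z' - w$ is a cocycle. However, its image in $B$ differs from $\d(b)$ by the cocycle $s^{(j)}(w) \in \opn{Z}^{-j}(B)$. This residual discrepancy must be absorbed, either by first enlarging the second pool of an earlier stage to kill the corresponding obstruction class, or by iterating the construction over successive stages; a careful scheduling of the two pools across the inductive indices (together with verification that the compatibility conditions of Lemmas \ref{lem:305} and \ref{lem:306} are satisfied on each new generator) is the technical heart of the proof.
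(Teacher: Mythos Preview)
Your overall strategy---build $\til{B}$ as a union of semi-free stages, maintaining surjectivity and cohomological bijectivity invariants---matches the paper's. You also correctly locate the crux: to define $\d(x_b)$ for a generator $x_b$ in the surjectivity pool, you need a \emph{cocycle} lift $z_b \in \opn{Z}^{-j}(\til{B}^{(j)})$ of $\d(b) \in \opn{B}^{-j}(B)$, and your two invariants alone do not supply one. Your correction attempt produces a cocycle $z'-w$ whose image differs from $\d(b)$ by an uncontrolled cocycle $s^{(j)}(w)$, and your final paragraph (``careful scheduling \ldots\ is the technical heart of the proof'') defers rather than resolves this; as written, the induction does not close.

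The paper fills exactly this gap by strengthening the inductive hypothesis: in addition to surjectivity of $s^{(j)}$ and bijectivity of $\DH^i(s^{(j)})$ for $i \geq -j+1$, it also maintains that the induced map on \emph{coboundaries} $\opn{B}^{-i}(\til{B}^{(j)}) \to \opn{B}^{-i}(B)$ is surjective for $i \leq j$. To arrange this, at each stage one adjoins a third pool: a set $Z''$ of generators in degree $-(j+1)$ together with a set $Y''$ in degree $-j$, with $\d$ a bijection $Z'' \to Y''$ and with $s$ chosen so that $\d_B(s(Z''))$ generates $\opn{B}^{-j}(B)$. These pairs contribute an acyclic summand to $\til{B}$ (so they do not disturb the cohomology), but they guarantee that every $\d(b) \in \opn{B}^{-j}(B)$ has a preimage that is already a coboundary---hence a cocycle---in $\til{B}^{(j)}$. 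With that extra invariant in hand, your definition $\d(x_b) := z_b$ goes through with no residual discrepancy, and the induction proceeds cleanly. The paper also separates out a pool of generators mapping to cocycles representing $\DH^{-(j+1)}(B)$ (with $\d = 0$), which makes the verification of surjectivity on cohomology at the new degree straightforward; you should check that your single surjectivity pool really covers this.
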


\begin{proof}
(1) We are looking for a commutative semi-free DG $A$-ring $\til{B}$,
with a quasi-isomorphism $v : \til{B} \to B$ in 
$\cat{DGR}_{\mrm{sc}}^{\leq 0} / A$. 
We shall construct an increasing sequence of 
commutative DG $A$-rings 
$F_0(\til{B}) \subseteq F_1(\til{B}) \subseteq \cdots$. 
The differential of $F_i(\til{B})$ will be denoted by $\d_i$. At the same time 
we shall construct an increasing sequence of graded sets 
$F_0(X) \subseteq F_1(X) \subseteq \cdots$,
with a compatible sequence of isomorphisms 
$F_i(\til{B})^{\natural} \cong A^{\natural} \ot_{\Z} \Z[F_i(X)]$.
We shall also construct a compatible sequence of DG ring homomorphisms 
$v_i : F_i(\til{B}) \to B$.
The DG $A$-ring $\til{B} := \bigcup_{i} F_i(\til{B})$ and the 
homomorphism $v := \lim_{i \to} v_i$ will have the desired properties. 

The construction is by recursion on $i \in \N$. Moreover, for every $i$ the 
following conditions will hold:
\begin{enumerate}
\rmitem{i} The homomorphisms $v_i : F_i(\til{B}) \to B$,
$\opn{B}(v_i) : \opn{B}(F_i(\til{B})) \to \opn{B}(B)$ and 
$\opn{H}(v_i) : \opn{H}(F_i(\til{B})) \to \opn{H}(B)$ are surjective
in degrees $\geq -i$. 

\rmitem{ii} The homomorphism 
$\opn{H}(v_i) : \opn{H}(F_i(\til{B})) \to \opn{H}(B)$ is 
bijective in degrees $\geq -i + 1$.
\end{enumerate}

We start by choosing a set $Z_0''$ of degree $-1$ elements, and a function 
$v_0 : Z''_0 \to B^{-1}$, such that the set 
$\d_B(v_0(Z''_0))$ generates  $\opn{B}^{0}(B)$ as an $A^0$-module. 
Let $Y''_0$ be a set of degree $0$ elements, with a bijection 
$\d_0 : Z''_0 \to Y''_0$. 
Let $v_0 : Y''_0 \to B^{0}$ 
be the unique function such that 
$v_0(\d_0(z)) = \d_B(v_0(z))$ for all $z \in Z''_0$.

Choose a set $Y'_0$ of degree $0$ elements, and a function 
$v_0 : Y'_0 \to B^0$, such that the set $v_0(Y'_0 \sqcup Y''_0)$
generates $B^0$ as an $A^0$-ring. 

Define the graded set 
$F_0(X) := Y'_0 \sqcup Y''_0 \sqcup Z''_0$,
and the graded ring 
$F_0(\til{B})^{\natural} := A^{\natural} \ot_{\Z} \Z[F_0(X)]$.
Setting $\d_0(y) := 0$ for $y \in Y'_0 \sqcup Y''_0$
gives a degree $1$ function 
$\d_0 : F_0(X) \to F_0(\til{B})^{\natural}$.
According to Lemma \ref{lem:306} we get a differential $\d_0$ on 
the graded ring $F_0(\til{B})^{\natural}$, and it becomes the DG $A$-ring 
$F_0(\til{B})$. 
Lemma \ref{lem:305} says that the function $v_0 : F_0(X) \to B$ 
extends uniquely to a DG $A$-ring homomorphism
$v_0 : F_0(\til{B}) \to B$. Condition (i) holds, and condition (ii) holds 
trivially, for $i = 0$. 

Now take any $i \geq 0$, and assume that 
$v_i : F_i(\til{B}) \to B$ is already defined, and it satisfies conditions 
(i)-(ii). 
Choose a graded set $Y'_{i+1}$ of degree $-i-1$ elements, and a 
function $v_{i+1} : Y'_{i+1} \to \opn{Z}^{-i-1}(B)$, such that  
the cohomology classes of the elements of $v_{i+1}(Y'_{i+1})$
generate $\opn{H}^{-i-1}(B)$ as $\opn{H}^{0}(A)$-module. 
For $y \in Y'_{i+1}$ let $\d_{i+1}(y) := 0 \in F_i(\til{B})$.

Consider the $A^0$-module
\[ J_{i+1} := \bigl\{ b \in \opn{Z}^{-i} (F_i(\til{B})) \mid 
\mrm{H}^{-i}(v_i)(b) = 0 \bigr\} . \]
Choose a graded set $Y'''_{i+1}$ of degree $-i-1$ elements, and a 
function $\d_{i+1} : Y'''_{i+1} \to J_{i+1}$, such that the cohomology 
classes of the elements of $\d_{i+1}(Y'''_{i+1})$ generate the 
$\mrm{H}^{0}(A)$-module 
\[ \opn{Ker} \bigl( \mrm{H}^{-i}(v_i) : 
\mrm{H}^{-i}(F_i(\til{B})) \to \mrm{H}^{-i}(B) \bigr) . \]
For any $y \in Y'''_{i+1}$  there exists some $b \in B^{-i-1}$ such that 
$v_i(\d_{i+1}(y)) = \d(b)$, and we let $v_{i+1}(y) := b$. 

Choose a graded set $Z''_{i+1}$ of degree $-i-2$ elements, and a
function $v_{i+1} :  Z''_{i+1} \to B^{-i-2}$, such that the set 
$\d_B(v_{i+1}(Z''_{i+1}))$ generates $\opn{B}^{-i-1}(B)$ as 
$A^0$-module. 
Let $Y''_{i+1}$ be a set of degree $-i-1$ elements, with a bijection 
$\d_{i+1} : Z''_{i+1} \to Y''_{i+1}$. 
Let 
$v_{i+1} : Y''_{i+1} \to B^{-i-1}$ 
be the unique function such that 
$v_{i+1}(\d_{i+1}(z)) = \d_B(v_{i+1}(z))$
for all $z \in Z''_{i+1}$.
For $y \in Y''_{i+1}$ we define $\d_{i+1}(y) := 0$. 

Lastly choose a graded set $Y'_{i+1}$ of degree $-i-1$ elements, and a 
function $v_{i+1} : Y'_{i+1} \to B^{-i-1}$, such that, 
letting 
\[ Y_{i+1} := Y'_{i+1} \sqcup Y''_{i+1} \sqcup Y'''_{i+1}  , \] 
the set $v_{i+1}(Y_{i+1})$ generates $B^{-i-1}$ as an $A^0$-module. 
Since 
$v_i : \opn{B}^{-i}(F_i(\til{B})) \to \opn{B}^{-i}(B)$ 
is surjective, for any $y \in Y'_{i+1}$ there exists 
$b \in F_i(\til{B})^{-i}$ such that 
$v_i(b) = \d_B(v_{i+1}(y))$, and we define $\d_{i+1}(y) := b$.

Define the graded set 
\[ F_{i+1}(X) := F_{i}(X) \sqcup Y_{i+1} \sqcup Z''_{i+1} \]
and the commutative graded ring 
\[ F_{i+1}(\til{B})^{\natural} := A^{\natural} \ot_{\Z} \Z[F_{i+1}(X)] . \]
There is a degree $1$ function 
$\d_{i+1} : F_{i+1}(X) \to F_{i+1}(\til{B})^{\natural}$
such that $\d_{i+1}|_{F_i(X)} = d_i$. 
According to Lemma \ref{lem:306} the function $\d_{i+1}$ induces 
a differential $\d_{i+1}$ on $F_{i+1}(\til{B})^{\natural}$;
so we get a DG ring $F_{i+1}(\til{B})$. 
Likewise we have a degree $0$ function 
$v_{i+1} : F_{i+1}(X) \to B$ such that 
$v_{i+1}|_{F_{i}(X)} = v_i|_{F_{i}(X)}$.
By Lemma \ref{lem:305} there is an induced DG 
$A$-ring homomorphism $v_{i+1} : F_{i+1}(\til{B}) \to B$, 
and it satisfies 
$v_{i+1}|_{F_{i}(\til{B})} = v_i$.

\medskip \noindent 
(2) Here we are looking for a noncommutative semi-free DG $A$-ring $\til{B}$,
with a quasi-isomorphism $v : \til{B} \to B$ in 
$\cat{DGR}^{\leq 0} \centover A$. 
The proof here is the same as in part (1), except that we now replace 
$\Z[F_i(X)]$ with $\Z \bra{F_{i}(X)}$ everywhere.
\end{proof}

\begin{thm} \label{thm:306}
Let $A \xar{u} B$ be a central pair of nonpositive DG rings. 
Suppose we are given two factorizations 
$A \xar{\til{u}} \til{B} \xar{v} B$ and 
$A \xar{\til{u}'} \til{B}' \xar{v'} B$
of $u$ in $\cat{DGR}^{\leq 0} \centover A$, such that
$v$ is a surjective quasi-isomorphism, and either of these two conditions 
holds\tup{:}
\begin{enumerate}
\rmitem{i} The DG ring $\til{B}$ is commutative, and the DG ring 
$\til{B}'$ is commutative semi-free over $A$.
\rmitem{ii} The DG ring $\til{B}'$ is noncommutative semi-free over $A$.
\end{enumerate}
Then there exists a homomorphism 
$w : \til{B}' \to \til{B}$ in $\cat{DGR}^{\leq 0} \centover A$ such that 
$w \circ \til{u}' = \til{u}$ and $v \circ w = v'$.
\end{thm}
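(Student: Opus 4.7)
The plan is to construct $w \colon \til{B}' \to \til{B}$ by prescribing its values on a semi-free ring-generating set $X$ of $\til{B}'$ over $A$, then appeal to Lemma \ref{lem:305}. Explicitly, I will build a degree-$0$ function $w \colon X \to \til{B}$ satisfying (a) $v(w(x)) = v'(x)$ for all $x \in X$, and (b) the graded $A^{\natural}$-ring extension of $w$ intertwines the differentials on $X$, i.e.\ $\d(w(x)) = w(\d(x))$. Lemma \ref{lem:305} then promotes $w$ to a DG $A$-ring homomorphism $\til{B}' \to \til{B}$, in both cases (i) and (ii). The relation $w \circ \til{u}' = \til{u}$ holds automatically, since $w$ is an $A$-ring homomorphism. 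The relation $v \circ w = v'$ follows because $v \circ w$ and $v'$ are DG $A$-ring homomorphisms $\til{B}' \to B$ that agree on the generating set $X$, hence are equal by the uniqueness clause in Lemma \ref{lem:305}.

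Filter $X$ by cohomological degree: $F_j(X) := \bigsqcup_{i \geq -j} X^i$. Since $X$ is nonpositive and every monomial in $X$ of total degree $-j+1$ has each factor of degree $\geq -j+1$, the element $\d(x)$ for $x \in X^{-j}$ lies in the sub-DG-$A$-ring of $\til{B}'$ generated by $F_{j-1}(X)$. This allows us to define $w$ recursively on $F_j(X)$. Base case $j = 0$: for $x \in X^0$ we have $\d(x) \in (\til{B}')^1 = 0$; using surjectivity of $v$, choose $b_0 \in \til{B}^0$ with $v(b_0) = v'(x)$, note that $\d(b_0) \in \til{B}^1 = 0$, and set $w(x) := b_0$.

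For the inductive step $j \geq 1$, assume $w$ has been defined on $F_{j-1}(X)$ satisfying (a) and (b). A short calculation using the graded Leibniz rule shows that the graded-ring extension of $w$ to the sub-DG-ring generated by $F_{j-1}(X)$ commutes with the differentials and satisfies $v \circ w = v'$ there. Fix $x \in X^{-j}$ and set $c := w(\d(x)) \in \til{B}^{-j+1}$. Then $\d(c) = w(\d^2(x)) = 0$ and $v(c) = v'(\d(x)) = \d(v'(x))$. By surjectivity of $v$, choose $b_0 \in \til{B}^{-j}$ with $v(b_0) = v'(x)$. The element $\d(b_0) - c$ is a cocycle in the DG ideal $K := \ker(v)$ of $\til{B}$. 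Because $v$ is a surjective quasi-isomorphism, the short exact sequence $0 \to K \to \til{B} \to B \to 0$ and its cohomology long exact sequence yield that $K$ is acyclic; hence $\d(b_0) - c = \d(b_1)$ for some $b_1 \in K^{-j}$. Set $w(x) := b_0 - b_1$; then $v(w(x)) = v'(x)$ and $\d(w(x)) = \d(b_0) - \d(b_1) = c = w(\d(x))$, as required.

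The main obstacle is the correction step from the naive lift $b_0$ to the cocycle-respecting value $w(x)$. This hinges on the acyclicity of $K = \ker(v)$, which relies precisely on the \emph{surjectivity} of the quasi-isomorphism $v$, the hypothesis flagged in the remark immediately before the theorem as indispensable when $B$ has nonzero negative components. Note that cases (i) and (ii) are handled uniformly, because Lemma \ref{lem:305} covers both the commutative polynomial ring $\Z[X]$ and the noncommutative polynomial ring $\Z\bra{X}$, and the filtration argument above depends only on nonpositivity of $X$.
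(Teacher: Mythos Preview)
Your proof is correct and follows essentially the same recursive strategy as the paper: filter the semi-free generating set $X$ by degree, lift generators one degree at a time using surjectivity of $v$, and correct by a coboundary so that the lift respects the differential, invoking Lemma~\ref{lem:305} to extend to a DG $A$-ring homomorphism. Your phrasing via the acyclicity of $K = \ker(v)$ is a mild streamlining of the paper's argument (which argues with $\mathrm{H}(v)$ directly), and it has the pleasant side effect of making the verification $v \circ w = v'$ automatic at each step, since the correction term $b_1$ is chosen in $K$.
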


The situation is shown in this commutative diagram in 
$\cat{DGR} \centover A$ below. 

\begin{equation} \label{eqn:23}
\UseTips \xymatrix @C=5ex @R=5ex {
& 
\til{B}'
\ar[dr]^{v'}
\\
A
\ar[rr]^{u}
\ar[ur]^{\til{u}'} ="utilp"
\ar[dr]_{\til{u}}
{} \save 
[]+<-6ex,6ex> *+[F-:<3pt>]{\scriptstyle \text{semi-free}} 
\ar@{..} "utilp" 
\restore
& 
&
B
\\
&
\til{B}
\ar[ur]_{v} ="v"
{} \save 
[]+<10ex,-2ex> *+[F-:<3pt>]{\scriptstyle \text{surj qu-isom}} 
\ar@{..} "v" 
\restore
}
\qquad  
\xymatrix @C=5ex @R=5ex {
& 
\til{B}'
\ar[dr]^{v'}
\ar[dd]_{w}
\\
A
\ar[ur]^{\til{u}'}
\ar[dr]_{\til{u}}
& 
&
B
\\
&
\til{B}
\ar[ur]_{v}
}
\end{equation}

\begin{proof}
The proof is similar to that of \cite[Proposition 1.8]{YZ2}.
Let $X = \coprod_{i \leq 0} X^i$ be a set of commutative (resp.\ 
noncommutative) 
semi-free DG $A$-ring generators of $\til{B}'$ over $A$. Define 
$F_i(X) := \bigcup_{-i \leq j \leq 0} X^i$, 
and let $F_i(\til{B}')$ be the $A$-subring of $\til{B}'$ generated by the set 
$F_i(X)$. So $F_i(\til{B}')$ is a DG $A$-ring, 
$F_i(\til{B}')^{\natural} \cong A^{\natural} \ot_{\Z} \Z[F_i(X)]$
(resp.\ $F_i(\til{B}')^{\natural} \cong A^{\natural} \ot_{\Z} \Z \bra{F_i(X)}$),
and $\til{B}' = \bigcup_i F_i(\til{B}')$. 
We will construct a consistent sequence of homomorphisms 
$w_i : F_i(\til{B}') \to \til{B}$ in 
$\cat{DGR}^{\leq 0} \centover A$, 
satisfying 
$w_i \circ \til{u}' = \til{u}$ and $v \circ w_i = v'$.
The construction is by recursion on $i \in \N$.
Then $w := \lim_{i \to} w_i$ will have the required properties. 

We start with $i = 0$. Take any $x \in X^0$. Since $v$ is surjective, 
there exists $b \in \til{B}^{0}$ such that 
$v(b) = v'(x)$ in $B^{0}$. We let 
$w_0(x) := b$. The resulting function $w_0 : X^0 \to \til{B}$
extends uniquely to a DG $A$-ring homomorphism 
$w_0 : F_i(\til{B}') \to \til{B}$, by Lemma \ref{lem:305}. 

Next consider any $i \in \N$, and 
assume a DG $A$-ring homomorphism $w_{i} : F_i(\til{B}') \to \til{B}$ is 
defined, satisfying the required conditions. 
Take any element $x \in X^{-i-1}$. 
Since $v$ is surjective, there exists $b \in \til{B}^{-i-1}$ such 
that 
$v(b) = v'(x)$ in $B^{-i-1}$. 
Now $\d(x) \in F_i(\til{B}')^{-i}$ is a cocycle in $F_i(\til{B}')$, so 
$w_i(\d(x)) \in \til{B}^{-i}$ is a cocycle. 
Let $c := w_i(\d(x)) - \d(b) \in \til{B}^{-i}$, which is also a cocycle. 
We have 
\[ v(c) = v(w_i(\d(x)) - v(\d(b)) = 
v'(\d(x)) - v(\d(b)) = 
\d(v'(x) - v(b)) = 0 . \]
Thus the cohomology class $[c]$ satisfies 
$\opn{H}(v)([c]) = [v(c)] = 0$.
Because $ \opn{H}(v)$ is bijective we conclude that $[c] = 0$ in 
$\opn{H}^{-i}(\til{B})$. Hence there is some $b' \in \til{B}^{-i-1}$ such that 
$\d(b') = c$. We define $w_{i+1}(x) := b + b'$. Then 
\begin{equation} \label{eqn:305}
 \d(w_{i+1}(x)) = \d(b + b') = \d(b) + c = w_{i}(\d(x)) . 
\end{equation}

In this way we obtain a function 
$w_{i+1} : F_{i+1}(X) \to \til{B}$ that extends $w_i$.
According to Lemma \ref{lem:305}(1), the function $w_{i+1}$ extends uniquely 
to a homomorphism of graded $A^{\natural}$-rings
$F_{i+1}(w) : F_{i+1}(\til{B}')^{\natural} \to \til{B}^{\natural}$. 
Equation (\ref{eqn:305}) and Lemma \ref{lem:305}(2) imply that 
$w_{i+1} : F_{i+1}(\til{B}')^{} \to \til{B}^{}$
is in fact a homomorphism of DG rings. 
\end{proof}

\begin{cor} \label{cor:980} \mbox{}
\begin{enumerate}
\item Any object $B / A$ of $\cat{PDGR}^{\leq 0}$ admits a nonpositive 
K-flat resolution $\til{B} / \til{A}$.

\item Any morphism 
$w / v : B' / A' \to B / A$ in $\cat{PDGR}^{\leq 0}$ admits a nonpositive 
K-flat resolution $\til{w} / \til{v}$.
\end{enumerate}
\end{cor}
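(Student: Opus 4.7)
The plan is to handle both parts by fixing the base resolution to be the identity, and then invoking Theorem \ref{thm:305}(2) for the upper DG ring and Theorem \ref{thm:306}(2) for the lift in part (2). Choosing strict resolutions of the base sidesteps any lifting of $v$, which is what makes this corollary painless.

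For part (1), given $B / A$ in $\cat{PDGR}^{\leq 0}$, I would set $\til{A} := A$ and $r := \mrm{id}_A$. Theorem \ref{thm:305}(2), applied to the nonpositive central pair $A \to B$, produces a strict noncommutative semi-free resolution $s : \til{B} \to B$ with $\til{B}$ noncommutative semi-free over $A$. Proposition \ref{prop:750} then gives that $\til{B}$ is K-flat as a DG $\til{A}$-module, so $\til{B} / \til{A} \to B / A$ is a nonpositive K-flat resolution.

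For part (2), I would set $\til{A} := A$, $\til{A}' := A'$, $\til{v} := v$, and use part (1) to build strict noncommutative semi-free resolutions $s : \til{B} \to B$ over $A$ and $s' : \til{B}' \to B'$ over $A'$. What remains is to produce a DG ring homomorphism $\til{w} : \til{B}' \to \til{B}$ making the outer square commute and satisfying $s \circ \til{w} = w \circ s'$. I would regard $\til{B}$ as an object of $\cat{DGR}^{\leq 0} \centover A'$ via the composite $A' \xar{v} A \xar{\til{u}} \til{B}$; this is central because $u(v(A')) \subseteq u(A) \subseteq B^{\mrm{ce}}$. The identity $u \circ v = w \circ u'$ coming from the morphism $w / v$ then turns
\[ A' \xar{\til{u}'} \til{B}' \xar{w \circ s'} B \quad \text{and} \quad A' \xar{\til{u} \circ v} \til{B} \xar{s} B \]
into two factorizations in $\cat{DGR}^{\leq 0} \centover A'$ of the same central homomorphism $A' \to B$. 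Since $\til{B}'$ is noncommutative semi-free over $A'$ and $s$ is a surjective quasi-isomorphism, Theorem \ref{thm:306}(2) supplies a DG $A'$-ring homomorphism $\til{w} : \til{B}' \to \til{B}$ with $\til{w} \circ \til{u}' = \til{u} \circ v$ and $s \circ \til{w} = w \circ s'$. The first equality says $\til{w} / \til{v}$ is a morphism of central pairs $\til{B}' / \til{A}' \to \til{B} / \til{A}$; the second says it lies above $w / v$.

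There is no real obstacle: the homotopical content is entirely in Theorems \ref{thm:305} and \ref{thm:306}, and the strict choices $\til{A} = A$, $\til{A}' = A'$ eliminate any need to lift $v$. The only bookkeeping is to check that each relevant composite lands in the appropriate center, which follows mechanically from the centrality conditions built into $\cat{PDGR}^{\leq 0}$ and from the commutativity of $A$.
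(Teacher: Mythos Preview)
Your proof is correct. Part (1) matches the paper exactly. For part (2) you take a slightly different route: the paper first builds a commutative semi-free resolution $\til{A}$ of $A$ over $A'$, then resolves $B$ over $\til{A}$, and obtains $\til{v} : A' \to \til{A}$ as the structure map of that resolution; you instead keep both base resolutions strict ($\til{A} = A$, $\til{A}' = A'$, $\til{v} = v$) and apply Theorem~\ref{thm:306} directly over $A'$. Your choice is more economical for this statement, while the paper's construction yields a resolution in which $\til{B}$ is semi-free over a DG ring that is itself semi-free over $A'$, a shape that recurs in later arguments. One cosmetic slip: in your centrality check you wrote $u(A) \subseteq B^{\mrm{ce}}$ where you meant $\til{u}(A) \subseteq \til{B}^{\mrm{ce}}$; the reasoning is unaffected, since the composite of central homomorphisms is central.
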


\begin{proof}
(1) Let $\til{B}$ be a noncommutative semi-free DG ring resolution of $B$ over 
$A$. See Theorem \ref{thm:305}.
Then $\til{B} / A$ is strict nonpositive K-flat resolution of $B / A$. 

\medskip \noindent
(2) Let $\til{B}'$ be a noncommutative semi-free DG ring resolution of $B'$ 
over 
$A'$, and let $\til{A}$ be a commutative semi-free DG ring resolution of $A$ 
over $A'$. Next, let $\til{B}$ be a noncommutative semi-free DG ring resolution 
of $B$ over $\til{A}$. So $\til{B}' / A'$ is a K-flat resolution of $B' / A'$, 
and $\til{B} / \til{A}$ is a K-flat resolution of $B / A$. Also, we have a 
homomorphism $\til{v} : A' \to \til{A}$ above $v : A' \to A$. Finally, 
according to Theorem \ref{thm:306}, we can find a 
homomorphism $\til{w} : \til{B}' \to \til{B}$
that respects all other homomorphisms. 
\end{proof}

\section{DG Ring Homotopies}
\label{sec:homotopies-rings}

In this section we introduce the concept of homotopy between DG ring 
homomorphisms. Many of the ideas in this section were  communicated to us 
privately by B. Keller (but of course the responsibility for correctness is 
ours). 

\begin{dfn} \label{dfn:70}
Suppose $u_0, u_1 : A \to B$ are homomorphisms of DG rings.  
\begin{enumerate}
\item An additive homomorphism $\ga : A \to B$ of degree $-1$ is called a {\em 
$u_0$-$u_1$-derivation} if it satisfies this twisted graded Leibniz formula
\[ \ga(a_0 \cd a_1) = \ga(a_0) \cd u_1(a_1) 
+ (-1)^{k_0} \cd u_0(a_0) \cd \ga(a_1) \]
for all $a_i \in A^{k_i}$.

\item A {\em DG ring homotopy} $\ga : u_0 \twoto u_1$ is 
$u_0$-$u_1$-derivation $\ga : A \to B$ of degree $-1$, satisfying the homotopy 
formula
\[ \d_B \circ \ga + \ga \circ \d_A = u_1 - u_0 . \]
\end{enumerate}
\end{dfn}

Here are homotopy results, that are only valid for noncommutative semi-free DG 
rings (Definition \ref{dfn:60}(2)). 
Recall the notions of commutative DG ring, central homomorphism, 
and the category $\cat{DGR}^{\leq 0} \centover A$, from 
Definitions \ref{dfn:750} and \ref{dfn:876}. 

\begin{lem} \label{lem:300}
Let $A$ be a commutative DG ring, and let
$w_0, w_1 : \til{B}' \to \til{B}$ be homomorphisms in 
$\cat{DGR}^{\leq 0} \centover A$. Assume that $\til{B}'$ 
is noncommutative semi-free over $A$, with set of noncommutative semi-free ring 
generators $X$. Let $\ga : X \to \til{B}$ be a degree $-1$ function.
\begin{enumerate}
\item There is a unique $A$-linear  $w_0$-$w_1$-derivation 
$\ga : \til{B}' \to \til{B}$ of degree $-1$ that extends $\ga : X \to \til{B}$.

\item The homomorphism $\ga : \til{B}' \to \til{B}$ is a DG ring homotopy 
$w_0 \twoto w_1$ iff 
\[ (\d \circ \ga + \ga \circ \d)(x) = (w_1 - w_0)(x) \]
for all $x \in X$. 
\end{enumerate}
\end{lem}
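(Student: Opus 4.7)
The plan is to mimic the extension arguments of Lemmas \ref{lem:305} and \ref{lem:306}, exploiting the freeness of $\til{B}'^{\, \natural} \cong A^{\natural} \ot_{\Z} \Z\bra{X}$ as a graded ring. Since there are no relations among the generators beyond associativity and $A$-centrality, every construction that respects the twisted Leibniz rule on $X$ extends uniquely to $\til{B}'$.

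For part (1), uniqueness is immediate: the twisted Leibniz formula expresses $\ga(x_1 \cdots x_m)$ in terms of the values $\ga(x_j)$, $w_0(x_i)$, $w_1(x_i)$, so $\ga$ is determined on all monomials, and then $A$-linearity pins it down on all of $\til{B}'$. For existence, I would define $\ga$ on a monomial $x_1 \cdots x_m$ with $x_j \in X^{n_j}$ by
\[ \ga(x_1 \cdots x_m) := \sum_{j=1}^m (-1)^{n_1 + \cdots + n_{j-1}} \cd w_0(x_1) \cdots w_0(x_{j-1}) \cd \ga(x_j) \cd w_1(x_{j+1}) \cdots w_1(x_m), \]
extend $A$-linearly (this makes sense because $A$ maps to the center of both $\til{B}'$ and $\til{B}$, and $w_0$, $w_1$, $\ga$ are all $A$-linear), and verify the twisted Leibniz rule $\ga(p \cd q) = \ga(p) \cd w_1(q) + (-1)^{|p|} \cd w_0(p) \cd \ga(q)$ for two monomials $p, q$ by a direct computation on the defining sum.

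For part (2), the ``only if'' direction is trivial. For ``if'', set $h := \d_{\til{B}} \circ \ga + \ga \circ \d_{\til{B}'}$ and let
\[ S := \bigl\{ b \in \til{B}' \mid h(b) = (w_1 - w_0)(b) \bigr\} . \]
Then $S$ is an $A$-submodule of $\til{B}'$ containing $1$ (trivially) and containing $X$ (by hypothesis). The key point is that $S$ is closed under multiplication: a direct calculation using the twisted Leibniz rule for $\ga$ and the ordinary graded Leibniz rules for $\d$ and for the ring homomorphisms $w_i$ shows that for any $a, b \in \til{B}'$,
\[ h(a \cd b) = h(a) \cd w_1(b) + w_0(a) \cd h(b) , \]
the mixed terms involving $\ga(a) \cd w_1(\d(b))$ and $w_0(\d(a)) \cd \ga(b)$ cancelling in pairs. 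If $a, b \in S$, substituting $h(a) = w_1(a) - w_0(a)$ and $h(b) = w_1(b) - w_0(b)$ and expanding gives exactly $w_1(a \cd b) - w_0(a \cd b)$, so $a \cd b \in S$. Since $S$ is an $A$-subring containing the ring generators $X$, and $\til{B}'$ is generated by $X$ over $A$, we conclude $S = \til{B}'$, proving that $\ga$ is a homotopy.

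The main obstacle is purely bookkeeping: keeping track of all the sign $(-1)^{|{-}|}$ factors in the identity $h(ab) = h(a) w_1(b) + w_0(a) h(b)$, so that the cross-cancellation is transparent. No conceptual difficulty arises, because the noncommutativity of $\Z \bra{X}$ means we never have to check compatibility with (strong) commutativity relations, which is where the corresponding statement for \emph{commutative} semi-free DG rings would fail without extra hypotheses (cf.\ Remark \ref{rem:750}).
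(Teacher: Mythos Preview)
Your proposal is correct and follows essentially the same route as the paper: the explicit formula for $\ga$ on monomials in part (1) is identical, and your part (2) argument via the identity $h(ab) = h(a)\cd w_1(b) + w_0(a)\cd h(b)$ is precisely what underlies the paper's ``easy calculation, using induction on $m$''. Your subring formulation is a cleaner packaging of that induction, and your remark on why this fails for commutative semi-free rings matches the paper's parenthetical observation.
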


\begin{proof}
(1) Define an additive homomorphism 
$\ga : \Z \bra{X} \to \til{B}$ of degree $-1$ 
by letting 
\[ \ali{ 
& \ga(x_1 \cdots x_m) :=
\\ & \qquad 
\sum_{i = 1}^m (-1)^{k_1 + \cdots + k_{i - 1}} \cd
w_0(x_1) \cdots w_0(x_{i - 1}) \cd \ga(x_i) \cd 
w_1(x_{i + 1}) \cdots w_1(x_{m})
} \]
for elements $x_i \in X^{k_i}$.
(Here is where we need $\til{B}'$ to be noncommutative semi-free -- this won't 
work for $\Z[X]$ unless $w_0 = w_1$.)
Then extend $\ga$ $A$-linearly to all elements of $\til{B}'$, using the  
isomorphism 
$\til{B}'^{\, \natural} \cong A^{\natural} \ot_{\Z} \Z \bra{X}$. 

\medskip \noindent
(2) An easy  calculation, using induction on $m$, shows that 
\[ (\d \circ \ga + \ga \circ \d)(a \cd x_1 \cdots x_m) = 
(w_1 - w_0)(a \cd x_1 \cdots x_m)  \]
for all $a \in A$ and $x_i \in X$.
\end{proof}

\begin{thm} \label{thm:300}
Let $A$ be a commutative DG ring, and let 
$u : A \to B$ be a central homomorphism in $\cat{DGR}^{\leq 0}$.
Suppose we are given two factorizations 
$A \xar{\til{u}} \til{B} \xar{v} B$ and 
$A \xar{\til{u}'} \til{B}' \xar{v'} B$
of $u$ in $\cat{DGR}^{\leq 0} \centover A$, such that 
$v$ is a surjective quasi-isomorphism, and $\til{u}'$ is noncommutative 
semi-free. Let $w_0, w_1 : \til{B}' \to \til{B}$
be homomorphisms in $\cat{DGR}^{\leq 0} \centover A$
that satisfy 
$w_i \circ \til{u}' = \til{u}$ and $v \circ w_i = v'$. 
Then there is an $A$-linear DG ring homotopy $\ga : w_0 \twoto w_1$,
that satisfies $v \circ \ga = 0$. 
\end{thm}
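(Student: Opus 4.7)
The plan is to reduce the problem to defining $\ga$ on a set of noncommutative semi-free ring generators of $\til{B}'$ over $A$, via Lemma \ref{lem:300}, and then to carry out that construction by induction on the degree of the generator. Fix a set $X = \coprod_{i \leq 0} X^i$ of noncommutative semi-free ring generators of $\til{B}'$ over $A$, and for each $j \geq 0$ let $F_j(X) := \bigcup_{-j \leq i \leq 0} X^i$ and let $\til{B}'_j \subseteq \til{B}'$ be the DG sub-$A$-ring generated by $F_j(X)$. Because $X$ is nonpositive, for any $x \in X^{-j}$ the element $\d(x)$ is of degree $-j+1$, and can only be a sum of monomials in generators whose degrees lie in $[-j+1, 0]$; so $\d$ carries $\til{B}'_j$ into $\til{B}'_{j-1}$, confirming that each $\til{B}'_j$ is a DG subring and $\til{B}' = \bigcup_j \til{B}'_j$. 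The essential input from the hypothesis is that the kernel $K := \opn{Ker}(v)$ is a DG ideal of $\til{B}$ which is acyclic, because $v$ is a surjective quasi-isomorphism.

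I would construct by recursion on $j \geq 0$ a function $\ga : F_j(X) \to K$ together with its unique $A$-linear $w_0$-$w_1$-derivation extension $\ga : \til{B}'_j \to \til{B}$ provided by Lemma \ref{lem:300}(1), satisfying
\[ (\d_{\til{B}} \circ \ga + \ga \circ \d_{\til{B}'}) |_{\til{B}'_j} = (w_1 - w_0) |_{\til{B}'_j}
\quad \text{and} \quad
v \circ \ga |_{\til{B}'_j} = 0 . \]
For the induction step, given $x \in X^{-j}$, the element $\d(x)$ lies in $\til{B}'_{j-1}$, so $\ga(\d x)$ is already defined, and I set
\[ c := (w_1 - w_0)(x) - \ga(\d x) \in \til{B}^{-j} . \]
A direct computation using $\d \circ w_i = w_i \circ \d$, the inductive homotopy identity applied to $\d x \in \til{B}'_{j-1}$, and $\d^2 = 0$, shows that $\d(c) = 0$; and $v(c) = 0$ follows from $v \circ w_i = v'$ together with the inductive vanishing of $v \circ \ga$ on $\til{B}'_{j-1}$. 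Hence $c$ is a cocycle in $K^{-j}$, and by acyclicity of $K$ there exists $b \in K^{-j-1}$ with $\d(b) = c$; set $\ga(x) := b$. The base case $j = 0$ is the same, with $\d x = 0$ simplifying the formula for $c$. Consistency across recursion steps is automatic from the uniqueness clause of Lemma \ref{lem:300}(1).

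Once $\ga$ is defined on all of $X$, Lemma \ref{lem:300}(2) promotes the resulting $A$-linear $w_0$-$w_1$-derivation $\ga : \til{B}' \to \til{B}$ to a DG ring homotopy $w_0 \twoto w_1$, because the homotopy identity has been verified on the generating set. The equality $v \circ \ga = 0$ on all of $\til{B}'$ then follows automatically: the composition $v \circ \ga$ is a $v'$-$v'$-derivation from $\til{B}'$ to $B$ that vanishes on $A$ (since $\ga|_A = 0$) and on the ring generators $X$, hence vanishes identically. The main obstacle I anticipate is ensuring that the cocycle $c$ actually lifts to an element of the kernel $K$; this is where both the surjectivity and the acyclicity of $K$ are indispensable, and it is also the point at which the noncommutative hypothesis on $\til{B}'$ becomes crucial, since Lemma \ref{lem:300}(1) fails for strongly commutative semi-free DG rings when $w_0 \neq w_1$.
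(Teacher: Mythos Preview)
Your proof is correct and follows the same inductive scheme as the paper: filter $\til{B}'$ by the degree of the generators and use Lemma~\ref{lem:300} to reduce the construction of $\ga$ to its values on $X$. The one substantive difference is in how you solve the lifting problem at each step. You observe once and for all that $K = \opn{Ker}(v)$ is an acyclic DG ideal (from the long exact sequence of the surjective quasi-isomorphism $v$), so that the cocycle $c \in K^{-j}$ bounds in $K$ in a single stroke. The paper does not isolate this fact; instead it first finds $b \in \til{B}^{-j-1}$ with $\d(b) = c$ using only that $\opn{H}(v)$ is bijective, and then corrects $b$ by a cocycle $b'$ and a further term $\d(a')$ (using surjectivity of $v$) to force the result into $K$. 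Your packaging is cleaner and makes the role of the hypotheses more transparent; the paper's argument is more hands-on but ultimately encodes the same cohomological vanishing.
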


The situation is depicted in the commutative diagrams below, where $i = 0, 1$. 
\begin{equation} \label{eqn:358}
\UseTips \xymatrix @C=5ex @R=5ex {
& 
\til{B}'
\ar[dr]^{v'}
\\
A
\ar[rr]^{u}
\ar[ur]^{\til{u}'} ="utilp"
{} \save 
[]+<-6ex,8ex> *+[F-:<3pt>]{\scriptstyle \text{NC semi-free}} 
\ar@{..} "utilp" 
\restore
\ar[dr]_{\til{u}}
& 
&
B
\\
&
\til{B}
\ar[ur]_{v} ="v"
{} \save 
[]+<10ex,-2ex> *+[F-:<3pt>]{\scriptstyle \text{surj qu-isom}} 
\ar@{..} "v" 
\restore
}
\qquad  
\xymatrix @C=5ex @R=5ex {
& 
\til{B}' 
\ar[dr]^{v'}
\ar[dd]_{w_i} ="wi"
\\
A
\ar[ur]^{\til{u}'}
\ar[dr]_{\til{u}}
{} \save 
[]+<-6ex,6ex> *+[F-:<3pt>]{\scriptstyle i = 0, 1} 
\ar@{..} "wi" 
\restore
& 
&
B
\\
&
\til{B}
\ar[ur]_{v}
} 
\end{equation}

\begin{proof}
Choose a set of noncommutative semi-free ring generators  
$X = \coprod_{i \leq 0} X^i$ for $\til{B}'$; so  
$\til{B}'^{\, \natural} \cong A^{\natural} \ot_{\Z} \Z \bra{X}$
as graded rings. 
For any $k \geq 0$ let 
$F_k(X) :=  \bigcup_{-k \leq i \leq 0} X^i$, and let
$F_k(\til{B}')$ be the $A$-subring of $\til{B}'$ generated by $F_k(X)$.
Thus $F_k(\til{B}')$ is in $\cat{DGR}^{\leq 0} \centover A$, and  
$F_k(\til{B}')^{\natural} \cong A^{\natural} \ot_{\Z} \Z \bra{F_k(X)}$. 
We will define an $A$-linear homomorphism 
$\ga_k : F_k(\til{B}') \to \til{B}$ of degree $-1$,
which is a DG ring homotopy 
$w_0|_{F_k(\til{B}')} \twoto w_1|_{F_k(\til{B}')}$,
recursively on $k$, such that 
$\ga_k|_{F_{k-1}(\til{B}')} = \ga_{k-1}$ if $k \geq 1$, 
and $v \circ \ga_k = 0$. Then we take $\ga := \lim_{k \to} \ga_k$. 

Since $v \circ w_i = v'$ it follows that 
$\opn{H}(v) \circ \opn{H}(w_i) = \opn{H}(v')$. Because 
$\opn{H}(v)$ is an isomorphism of graded rings, we see that 
$\opn{H}(w_0) = \opn{H}(w_1)$.  

Let us construct $\ga_0$. We will use the fact that all elements of 
$\til{B}'^{\, 0}$, $\til{B}^0$ and $B^0$ are cocycles.
Take any $x \in X^{0}$, and define 
$c := w_0(x) - w_1(x) \in \til{B}^{0}$.
Since $v(c) = 0$, so is its cohomology class 
$[v(c)] \in \mrm{H}^0(B)$. 
Because $\opn{H}(v)$ is an isomorphism, we see that 
$[c] = 0$ in $\mrm{H}^0(\til{B})$. Therefore 
$c = \d(b)$ for some $b \in \til{B}^{-1}$.
Consider the element $v(b) \in B^{-1}$. We have
$\d(v(b)) = v(c) = 0$, namely  $v(b)$ is a cocycle. 
Again using the fact that $\opn{H}(v)$ is bijective, 
we can find  a cocycle $b' \in \til{B}^{-1}$ such that 
$\opn{H}(v)([b']) = [v(b)]$ in $\opn{H}^{-1}(B)$.
But then $[v(b - b')] = 0$ in $\opn{H}^{-1}(B)$, 
so there exists some $a \in B^{-2}$ such that 
$\d(a) = v(b - b')$.  The surjectivity of $v$ says that there is
some $a' \in \til{B}^{-2}$ with $v(a') = a$. 
We define 
\[ \ga_0(x) := b - b' - \d(a') \in \til{B}^{-1} . \]
Then
\[ \d(\ga_0(x))  = \d(b) = c = w_0(x) - w_1(x) , \]
and 
\[ v(\ga_0(x)) = v(b - b') - \d(a) = 0 . \]

In this way we get a function 
$\ga_0 : X^0 \to \til{B}^{-1}$. Lemma \ref{lem:300} shows that 
this function extends uniquely to a homomorphism 
$\ga_0 : F_0(\til{B}') \to \til{B}$, which is a DG ring homotopy
$w_0|_{F_0(\til{B}')} \twoto w_1|_{F_0(\til{B}')}$, and 
$v \circ \ga_0 = 0$.

Now consider $k \geq 1$, and assume we already have $\ga_{k - 1}$. Take any 
$x \in X^{-k}$. Note that $\d(x) \in F_{k - 1}(\til{B}')$. 
We claim that the element 
\[ c := w_0(x) - w_1(x) - \ga_{k - 1}(\d(x)) \in \til{B}^{-k} \]
is a cocycle. Indeed, since $\ga_{k - 1}$ is a DG ring homotopy
$w_0|_{F_{k - 1}(\til{B}')} \twoto w_1|_{F_{k - 1}(\til{B}')}$,
we have 
\[ (\d \circ \ga_{k - 1} + \ga_{k - 1} \circ \d)(\d(x)) = 
(w_0 - w_1)(\d(x)) . \]
But $\d(\d(x)) = 0$ and $w_i(\d(x)) = \d(w_i(x))$, so 
$\d(c) = 0$ as claimed.

{}From here the proof is just like in the case $k = 0$. 
Because $v \circ w_i = v'$ and 
$v \circ \ga_{k - 1} = 0$, we see that $v(c) = 0$.
Therefore the cohomology class 
$[v(c)] \in  \opn{H}^{-k}(B)$ is $0$. 
Using the fact that $\opn{H}(v)$ is an isomorphism, we conclude that 
$[c] = 0$ in $\opn{H}^{-k}(\til{B})$.
Hence there is some $b \in \til{B}^{-k - 1}$ such that 
$\d(b) = c$. 
Now 
$\d(v(b)) = v(\d(b)) =  v(c) = 0$,
so $v(b) \in B^{-k - 1}$ is a cocycle.
Again using the fact that $\opn{H}(v)$ is bijective, 
we can find  a cocycle $b' \in \til{B}^{-k - 1}$ such that 
$\opn{H}(v)([b']) = [v(b)]$ in $\opn{H}^{-k - 1}(B)$.
But then $[v(b - b')] = 0$ in $\opn{H}^{-k  - 1}(B)$, 
so there exists some $a \in B^{-k - 2}$ such that 
$\d(a) = v(b - b')$.  The surjectivity of $v$ says that there is
some $a' \in \til{B}^{-k - 2}$ with $v(a') = a$. 
We define 
\[ \ga_k(x) := b - b' - \d(a') \in \til{B}^{-k - 1} . \]
Then
\[ (\d \circ \ga_k + \ga_{k - 1} \circ \d)(x)  = 
c  + \ga_{k - 1}(\d(x))
= w_0(x) - w_1(x) , \]
and 
$v(\ga_k(x)) = 0$.

In this way we get a function 
$\ga_k : X^{-k} \to \til{B}^{-k - 1}$. We extend it to a function 
$\ga_k : F_k(X) \to \til{B}$ by defining 
$\ga_k(x) := \ga_{k - 1}(x)$ for $x \in F_{k - 1}(X)$. 
Lemma \ref{lem:300} shows that this function extends uniquely to an $A$-linear 
homomorphism 
$\ga_k : F_k(\til{B}') \to \til{B}$, which is a DG ring homotopy
$w_k|_{F_k(\til{B}')} \twoto w_1|_{F_k(\til{B}')}$, and 
$v \circ \ga_k = 0$.
\end{proof}

DG ring homotopies can be expressed using the cylinder construction
(see Definition \ref{dfn:700}). The next result is similar to 
\cite[Theorem 4.3(c)]{Ke3}.

\begin{prop} \label{prop:300}
Let $u_0, u_1 : A \to B$ be DG ring homomorphisms, and let 
$\ga : A \to B$ be a $\Z$-linear homomorphism of degree $-1$. 
The following conditions are equivalent. 
\begin{enumerate}
\rmitem{i} The homomorphism $\ga$ is a DG ring homotopy $\ga : u_0 \twoto u_1$.

\rmitem{ii} The $\Z$-linear homomorphism 
$u_{\mrm{cyl}} : A \to \opn{Cyl}(B)$ 
with formula 
\[ u_{\mrm{cyl}}(a) := 
\bmat{u_0(a) & \ \xi \cd \ga(a) \\[0.3em] 0 & u_1(a)} \]
is a DG ring homomorphism. 
\end{enumerate}
\end{prop}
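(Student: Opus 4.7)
The plan is to check directly that each of the three defining properties of a DG ring homomorphism for $u_{\mrm{cyl}}$ translates into one of the ingredients in Definition \ref{dfn:70}. The degree condition is essentially automatic: writing $u_{\mrm{cyl}}(a)$ using the tensor decomposition $\opn{Cyl}(B) = \opn{Cyl}(\Z) \ot_{\Z} B$ with basis $e_0 = \sbmat{1 & 0 \\ 0 & 0}$, $e_1 = \sbmat{0 & 0 \\ 0 & 1}$, $f = \sbmat{0 & \xi \\ 0 & 0}$ of degrees $0, 0, 1$, the three components of $u_{\mrm{cyl}}(a)$ are $e_0 \ot u_0(a) + f \ot \ga(a) + e_1 \ot u_1(a)$; the central $\xi$ absorbs the shift and all three summands sit in degree $\abs{a}$. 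Unitality $u_{\mrm{cyl}}(1) = 1$ follows once $\ga(1) = 0$, and that identity is forced by the derivation rule applied to $1 \cd 1 = 1$.

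Next I would verify multiplicativity. Using the multiplication table $e_i e_j = \de_{ij} e_i$, $e_0 f = f = f e_1$, $f e_0 = e_1 f = 0$, $f^2 = 0$ in $\opn{Cyl}(\Z)$, and the sign rule (\ref{eqn:820}) for the tensor product of DG rings, I would expand $u_{\mrm{cyl}}(a) \cd u_{\mrm{cyl}}(a')$ into nine summands. The $e_0$- and $e_1$-components collapse to $e_0 \ot u_0(a \cd a')$ and $e_1 \ot u_1(a \cd a')$ because $u_0, u_1$ are ring homomorphisms. The only nontrivial contributions to the $f$-component come from the cross terms $(e_0 \ot u_0(a)) \cd (f \ot \ga(a'))$ and $(f \ot \ga(a)) \cd (e_1 \ot u_1(a'))$; using (\ref{eqn:820}) this yields $f \ot \bigl((-1)^{\abs{a}} u_0(a) \ga(a') + \ga(a) u_1(a')\bigr)$. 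Setting this equal to $f \ot \ga(a \cd a')$ gives precisely the twisted Leibniz rule of Definition \ref{dfn:70}(1).

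Then I would check compatibility with the differential. From the explicit formula (\ref{eqn:210}) one reads off $\d_{\mrm{cyl}}(e_0) = -f$, $\d_{\mrm{cyl}}(e_1) = f$, $\d_{\mrm{cyl}}(f) = 0$, and the tensor-product rule (\ref{eqn:220}) then gives $\d(u_{\mrm{cyl}}(a))$ in the $e_0, e_1, f$ basis. The $e_0$ and $e_1$ components of $\d \circ u_{\mrm{cyl}} - u_{\mrm{cyl}} \circ \d_A$ vanish by the hypothesis that $u_0, u_1$ are DG ring homomorphisms, while the $f$-component equals $\bigl(u_1(a) - u_0(a) - \d_B(\ga(a)) - \ga(\d_A a)\bigr)$; this vanishes for every $a$ iff $\d_B \circ \ga + \ga \circ \d_A = u_1 - u_0$, i.e.\ the homotopy identity of Definition \ref{dfn:70}(2).

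Assembling the three steps gives the equivalence (i)$\Leftrightarrow$(ii). The main obstacle is purely bookkeeping: the Koszul signs coming from (\ref{eqn:820}) and (\ref{eqn:220}) must be tracked carefully in two places — the sign $(-1)^{\abs{a}}$ in the cross term $e_0 f$ that produces the twisted Leibniz rule, and the signs $\mp f$ in $\d_{\mrm{cyl}}(e_i)$ that produce the sign $u_1 - u_0$ (rather than $u_0 - u_1$) in the homotopy formula. Everything else is a routine expansion in the three-element basis $\{e_0, f, e_1\}$ of $\opn{Cyl}(\Z)$.
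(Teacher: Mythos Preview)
Your proof is correct and follows exactly the approach the paper intends: the paper's own proof is the single sentence ``This is a straightforward calculation,'' and you have carried out precisely that calculation, correctly tracking the Koszul signs from (\ref{eqn:820}) and (\ref{eqn:220}) so that multiplicativity of $u_{\mrm{cyl}}$ matches the twisted Leibniz rule and compatibility with the differential matches the homotopy identity.
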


\begin{proof}
This is a straightforward calculation.
\end{proof}

\begin{rem} \label{rem:780}
Let $A$ be a commutative DG ring. If $A$ is a ring (i.e.\ $A = A^0$), then the 
categories  $\cat{DGR}^{\leq 0} \centover A$ and
$\cat{DGR} \centover A$ admit Quillen model structures, in which the 
quasi-isomorphisms are the weak equivalences, and the
noncommutative semi-free DG rings are cofibrant; see \cite[Theorem A.3.1]{BP}. 
We think it is quite plausible that the 
same is true even when $A$ is not a ring (i.e.\ it has a nontrivial negative 
part). Evidence for this is provided by Theorems \ref{thm:305}, 
\ref{thm:306} and \ref{thm:300}.

On the other hand, we do not know  whether the categories 
$\cat{DGR}_{\mrm{sc}}^{\leq 0} / A$ and $\cat{DGR}_{\mrm{sc}} / A$ 
admit similar Quillen model structures. A negative indication is that Theorem 
\ref{thm:300} does not seem to hold for commutative semi-free DG rings.
Another negative indication is that even when $A$ is a ring, this is not known
(except when $A$ contains $\Q$; see \cite{Hi}). 
 
Recall \cite[Theorem 3.2]{AILN}, that was discussed in Subsection 
\ref{subsec:discussion} of the Introduction. It deals with a commutative 
base ring $A$. The proof of this theorem hinges on the Quillen model 
structure on $\cat{DGR}^{\leq 0} \centover A$ that was produced in \cite{BP}.  
If such a model structure does exist in the more 
general case of a commutative DG ring $A$ (as we predict above), then it would 
most likely imply Theorem \ref{thm:965} (by a proof similar to that in 
\cite{AILN}). 

However, even if we had model structures on the categories 
$\cat{DGR}^{\leq 0} \centover A$ and $\cat{DGR}_{\mrm{sc}}^{\leq 0} / A$, 
that would probably not be sufficient to imply Theorem \ref{thm:966}.
This is because the proof of Theorem \ref{thm:966} (actually, the proof of the 
noncommutative variant Theorem \ref{thm:840}) requires a very delicate 
treatment of central morphisms of central pairs (see Setup \ref{set:930} and 
Definition \ref{dfn:911}), 
something that a Quillen model structure (being a rather coarse structure) does 
not appear able to provide.
\end{rem}

\section{Pairs of DG Modules, Compound Resolutions, and Rectangles} 
\label{sec:pairs-DG-modules}

Recall that $\cat{PDGR}$ is the category of central pairs DG rings.
See Definition \ref{dfn:910}. 
Resolutions in $\cat{PDGR}$ were defined in Definition \ref{dfn:912} and 
\ref{dfn:52}. The category of central DG $A$-rings  
$\cat{DGR} \centover A$
was defined in Definition \ref{dfn:750}(4). 

\begin{dfn} \label{dfn:914}
Let $\til{B} / \til{A}$ be an object of $\cat{PDGR}$. 
The {\em enveloping DG ring} of $\til{B}$ over $\til{A}$ is the DG ring 
\[ \til{B}^{\mrm{en}} := 
\til{B}^{} \ot_{\til{A}^{}} \til{B}^{\mrm{op}}
\in \cat{DGR} \centover \til{A} . \]

Let 
$\til{w} / \til{v} : \til{B}_1 / \til{A}_1 \to \til{B}_0 / \til{A}_0$
be a morphism in $\cat{PDGR}$. There is an induced DG ring homomorphism 
\[ \til{w}^{\mrm{en}} := \til{w} \ot_{\til{v}} \til{w}^{\mrm{op}} : 
\til{B}^{\mrm{en}}_1 \to \til{B}^{\mrm{en}}_0  \]
in $\cat{DGR} \centover \til{A}_1$. 
\end{dfn}

In this way we obtain a functor 
$\cat{PDGR} \to \cat{DGR}$.

\begin{lem} \label{lem:911}
Let $w / v : B_1 / A_1 \to B_0 / A_0$ be a quasi-isomorphism in $\cat{PDGR}$,
and let 
$\til{w} / \til{v} : \til{B}_1 / \til{A}_1 \to \til{B}_0 / \til{A}_0$
be a K-flat resolution of $w / v$. Then 
$\til{w}^{\mrm{en}} : \til{B}^{\mrm{en}}_1 \to \til{B}^{\mrm{en}}_0$
is a quasi-isomorphism.
\end{lem}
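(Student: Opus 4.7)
The plan is to deduce this from Proposition \ref{prop:462}(1). First I would observe that the components $\til{w}: \til{B}_1 \to \til{B}_0$ and $\til{v}: \til{A}_1 \to \til{A}_0$ are themselves quasi-isomorphisms. Indeed, in the square (\ref{eqn:986}) that exhibits $\til{w}/\til{v}$ as a resolution of $w/v$, the vertical arrows $s/r$ and $s'/r'$ are K-flat resolutions, so $s, s', r, r'$ are all quasi-isomorphisms; combined with the hypothesis that $w$ and $v$ are quasi-isomorphisms, two-out-of-three forces $\til{w}$ and $\til{v}$ to be quasi-isomorphisms as well.

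Next I would apply Proposition \ref{prop:462}(1). Because the DG rings $\til{A}_0$ and $\til{A}_1$ are commutative, K-flatness of $\til{B}_i$ as a DG $\til{A}_i$-module is left/right symmetric, so $\til{B}_i$ is K-flat over both $\til{A}_i$ and $\til{A}_i^{\mrm{op}}$. Taking base change map $v := \til{v}$, source modules $P' := \til{B}_1^{\mrm{op}}$ and $Q' := \til{B}_1$ over $\til{A}_1$, target modules $P := \til{B}_0^{\mrm{op}}$ and $Q := \til{B}_0$ over $\til{A}_0$, and quasi-isomorphisms $\phi := \til{w}^{\mrm{op}}$, $\psi := \til{w}$, Proposition \ref{prop:462}(1) yields that
\[
\til{w}^{\mrm{en}} \;=\; \til{w} \ot_{\til{v}} \til{w}^{\mrm{op}} \;:\; \til{B}_1 \ot_{\til{A}_1} \til{B}_1^{\mrm{op}} \;\to\; \til{B}_0 \ot_{\til{A}_0} \til{B}_0^{\mrm{op}}
\]
is a quasi-isomorphism of underlying DG abelian groups, which is precisely the assertion of the lemma.

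No substantial obstacle is expected: the serious analytic work has already been done in Proposition \ref{prop:462}(1). The only items of bookkeeping are the left/right symmetry of K-flatness over the commutative base DG rings $\til{A}_i$, and the identification of $\til{w}^{\mrm{en}}$ with the tensor map $\til{w} \ot_{\til{v}} \til{w}^{\mrm{op}}$ on underlying DG $\Z$-modules. The extra DG ring structure carried by the enveloping rings is irrelevant here, since ``quasi-isomorphism'' is a property of the underlying map of complexes of abelian groups.
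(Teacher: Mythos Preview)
Your proposal is correct and follows essentially the same route as the paper: first observe that $\til{v}$ and $\til{w}$ are quasi-isomorphisms by two-out-of-three, then forget the ring structure and invoke Proposition~\ref{prop:462}(1) with the K-flat DG $\til{A}_i$-modules $\til{B}_i$ and $\til{B}_i^{\mrm{op}}$. (A tiny bookkeeping slip: with your labeling $P' := \til{B}_1^{\mrm{op}}$, $Q' := \til{B}_1$ the tensor in Proposition~\ref{prop:462}(1) would read $\til{B}_1^{\mrm{op}} \ot_{\til{A}_1} \til{B}_1$ rather than $\til{B}_1 \ot_{\til{A}_1} \til{B}_1^{\mrm{op}}$; swapping the roles of $P'$ and $Q'$ fixes this, and over the commutative base it makes no real difference.)
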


\begin{proof}
The homomorphisms 
$\til{v} : \til{A}_1 \to \til{A}_0$
and 
$\til{w} : \til{B}_1 \to \til{B}_0$
are also quasi-iso\-morphisms. 
We can forget the DG rings structures of $\til{B}_0$ and $\til{B}_1$, and just 
view them as K-flat DG modules over $\til{A}_0$ and $\til{A}_1$ respectively. 
Then Proposition \ref{prop:462}(1) applies. 
\end{proof}
 
\begin{dfn} \label{dfn:915}
Let $\til{B} / \til{A}$ be an object of $\cat{PDGR}$, and let 
$(\til{M}^{\mrm{l}}, \til{M}^{\mrm{r}})$
be an object of 
$\cat{M}(\til{B}) \times \cat{M}(\til{B}^{\mrm{op}})$;
namely $\til{M}^{\mrm{l}}$ is a left DG $\til{B}$-module, and 
$\til{M}^{\mrm{r}}$ is a right DG $\til{B}$-module.
We define the DG module
\[ \til{M}^{\mrm{en}} := \til{M}^{\mrm{l}} \ot_{\til{A}} 
\til{M}^{\mrm{r}} \in \cat{M}(\til{B}^{\mrm{en}}) . \]

Let 
$\til{w} / \til{v} : \til{B}_1 / \til{A}_1 \to \til{B}_0 / \til{A}_0$
be a morphism in $\cat{PDGR}$, let 
\[ (\til{M}^{\mrm{l}}_k, \til{M}^{\mrm{r}}_k) \in 
\cat{M}(\til{B}_k) \times \cat{M}(\til{B}^{\mrm{op}}_k) \]
for $k = 0, 1$, and let 
\[ (\til{\th}^{\mrm{l}}, \til{\th}^{\mrm{r}}) : 
(\til{M}^{\mrm{l}}_0, \til{M}^{\mrm{r}}_0) \to
(\til{M}^{\mrm{l}}_1, \til{M}^{\mrm{r}}_1) \]
be a morphism in 
$\cat{M}(\til{B}_1) \times \cat{M}(\til{B}^{\mrm{op}}_1)$.
We define the morphism 
\[ \til{\th}^{\mrm{en}} := 
\til{\th}^{\mrm{l}} \ot_{\til{v}} \til{\th}^{\mrm{r}} : 
\til{M}^{\mrm{en}}_0 \to \til{M}^{\mrm{en}}_1  \]
in $\cat{M}(\til{B}^{\mrm{en}}_1)$.
\end{dfn}

\begin{rem}
We have omitted reference to the forgetful functors in the definition 
above. Thus, the homomorphism 
$\til{\th}^{\mrm{l}}$ in $\cat{M}(\til{B}_1)$ 
is actually a homomorphism 
$\til{\th}^{\mrm{l}} : \opn{For}_{w}(\til{M}^{\mrm{l}}_0) \to M^{\mrm{l}}_1$;
and the homomorphism 
$\til{\th}^{\mrm{r}}$ in $\cat{M}(\til{B}^{\mrm{op}}_1)$ 
is actually a homomorphism 
$\til{\th}^{\mrm{r}} : \opn{For}_{w^{\mrm{op}}}(\til{M}^{\mrm{r}}_0) \to 
M^{\mrm{r}}_1$.

We shall continue with these omissions, for the sake of clarity. 
The forgetful functors will only be made explicit when there is more than one 
given homomorphism between two DG rings. We hope this shortcut will not be a 
cause for confusion. 
\end{rem}

\begin{dfn} \label{dfn:917}
Let $B/ A$ be an object of $\cat{PDGR}$, let 
$(M^{\mrm{l}}, M^{\mrm{r}})$ be an object of 
$\cat{D}(B) \times \cat{D}(B^{\mrm{op}})$, 
and let $\til{B} / \til{A}$ be a K-flat resolution of $B / A$. 
A {\em compound resolution of $(M^{\mrm{l}}, M^{\mrm{r}})$ over 
$\til{B} / \til{A}$} is the following data:
\begin{itemize}
\item A K-flat resolution 
$\al^{\mrm{l}} : \til{P}^{\mrm{l}} \to M^{\mrm{l}}$
in $\cat{M}(\til{B})$. 

\item A K-flat resolution 
$\al^{\mrm{r}} : \til{P}^{\mrm{r}} \to M^{\mrm{r}}$
in $\cat{M}(\til{B}^{\mrm{op}})$. 

\item A K-injective resolution 
$\be : \til{P}^{\mrm{en}} \to \til{I}$
in $\cat{M}(\til{B}^{\mrm{en}})$.
\end{itemize}
We denote this compound resolution by 
\[ \bsym{P} := (\til{P}^{\mrm{l}}, \til{P}^{\mrm{r}}, \til{I}; 
\al^{\mrm{l}}, \al^{\mrm{r}}, \be ) . \]
\end{dfn}

It is easy to see that compound resolutions exist. 

Consider a K-flat resolution
$s / r : \til{B} / \til{A} \to B / A$ in $\cat{PDGR}$.
The left and right actions of $\til{B}$ on $B$ commute with each other, and 
they commute with the action of $B^{\mrm{ce}}$ on $B$ (all in the graded 
sense). Thus $B$ is a DG module over
$\til{B}^{\mrm{en}} \ot_{\til{A}} B^{\mrm{ce}}$. Hence we can make the next 
definition, in the spirit of Proposition \ref{prop:461}. 

\begin{dfn} \label{dfn:918}
In the situation of Definition \ref{dfn:917}, we define the DG module
\[ \opn{Rect}_{B / A}^{\til{B} / \til{A}}(\bsym{P}) :=
\opn{Hom}_{B^{\mrm{en}}}(B, \til{I}) \in \cat{M}(B^{\mrm{ce}}) . \]
\end{dfn}

The next setup shall be used in the rest of this section. 

\begin{setup} \label{set:940}
For $k = 0, 1, 2$ we are given a central pair
$B_k / A_k \in \cat{PDGR}$, 
a K-flat resolution
$\til{B}_k / \til{A}_k \to B_k / A_k$ in $\cat{PDGR}$, 
and a pair of DG modules 
\[ (M^{\mrm{l}}_k, M^{\mrm{r}}_k) \in
\cat{D}(B_k) \times \cat{D}(B^{\mrm{op}}_k) . \]

For $k = 1, 2$ we are given a central morphism
\[ w_k / v_k : B_k / A_k \to B_{k - 1} / A_{k - 1}  \] 
in $\cat{PDGR}$ (i.e.\ a morphism in $\cat{PDGR}_{\vec{\mrm{ce}}}$, see 
Definition \ref{dfn:911}(3)), a morphism 
\[ (\th_{k}^{\mrm{l}},  \th_{k}^{\mrm{r}}) : 
(M_{k - 1}^{\mrm{l}},  M_{k - 1}^{\mrm{r}}) \to 
(M_{k}^{\mrm{l}},  M_{k}^{\mrm{r}}) \]
in $\cat{D}(B_k) \times \cat{D}(B_k^{\mrm{op}})$,
and a morphism 
\[ \til{w}_k / \til{v}_k : \til{B}_k / \til{A}_k \to 
\til{B}_{k - 1} / \til{A}_{k - 1} \]
above $w_k / v_k$. 

When we only consider $k = 0, 1$, the index $1$ on the morphisms 
$w_1$, $v_1$, $\th_{1}^{\mrm{l}}$, $\th_{1}^{\mrm{r}}$,  
$\til{w}_1$ and $\til{v}_1$ is redundant, and hence we will suppress it. 
\end{setup}

\begin{dfn} \label{dfn:919}
Consider Setup \ref{set:940}, with $k = 0, 1$ only. 
Let $\bsym{P}_k$ be a compound resolution of 
$(M^{\mrm{l}}_k, M^{\mrm{r}}_k)$
over $\til{B}_k / \til{A}_k$. 
A {\em compound morphism}
$\bsym{\eta} : \bsym{P}_0 \to \bsym{P}_1$
above $(\th_{}^{\mrm{l}},  \th_{}^{\mrm{r}})$
and $\til{w} / \til{v}$
is the following data:
\begin{itemize}
\item K-projective resolutions 
$\ga_{0}^{\mrm{l}} : \til{Q}_{0}^{\mrm{l}} \to \til{P}_{0}^{\mrm{l}}$
and 
$\ga_{0}^{\mrm{r}} : \til{Q}_{0}^{\mrm{r}} \to \til{P}_{0}^{\mrm{r}}$
in $\cat{M}(\til{B}_1)$ and $\cat{M}(\til{B}_1^{\mrm{op}})$
respectively. 

\item  Homomorphisms 
$\til{\th}^{\mrm{l}} : \til{Q}_{0}^{\mrm{l}} \to \til{P}_1^{\mrm{l}}$
and 
$\til{\th}^{\mrm{r}} : \til{Q}_{0}^{\mrm{r}} \to \til{P}_1^{\mrm{r}}$
in $\cat{M}(\til{B}_1)$ and
$\cat{M}(\til{B}_1^{\mrm{op}})$ respectively,
such that 
\[ \th_{}^{\mrm{l}} \circ \opn{Q}(\al_{0}^{\mrm{l}})
\circ \opn{Q}(\ga_{0}^{\mrm{l}}) =
\opn{Q}(\al_{1}^{\mrm{l}}) \circ \opn{Q}(\til{\th}_{}^{\mrm{l}}) \]
and
\[ \th_{}^{\mrm{r}} \circ \opn{Q}(\al_{0}^{\mrm{r}})
\circ \opn{Q}(\ga_{0}^{\mrm{r}}) =
\opn{Q}(\al_{1}^{\mrm{r}}) \circ \opn{Q}(\til{\th}_{}^{\mrm{r}}) \]
in $\cat{D}(\til{B}_1)$ and $\cat{D}(\til{B}_1^{\mrm{op}})$ respectively.

\item A homomorphism $\eta : \til{I}_{0} \to \til{I}_1$
in $\cat{M}(\til{B}_1^{\mrm{en}})$, such that 
\[ \opn{Q}(\eta) \circ \opn{Q}(\be_{0}) 
\circ \opn{Q}( \ga_{0}^{\mrm{l}} \ot_{\til{v}} \ga_{0}^{\mrm{r}} ) = 
\opn{Q}(\be_{1}) \circ \opn{Q}(\til{\th}_{}^{\mrm{en}}) , \]
as morphisms 
$\til{Q}_{0}^{\mrm{l}} \ot_{\til{A}_1} \til{Q}_{0}^{\mrm{r}}
\to \til{I}_1$
in $\cat{D}(\til{B}_1^{\mrm{en}})$.
\end{itemize}
We denote this compound morphism by 
\[ \bsym{\eta} = (\til{Q}_{0}^{\mrm{l}}, \til{Q}_{0}^{\mrm{r}};
\ga_{0}^{\mrm{l}}, \ga_{0}^{\mrm{r}},
\til{\th}^{\mrm{l}}, \til{\th}^{\mrm{r}}, \eta ) . \]
We also say that $\bsym{\eta}$ is a {\em compound resolution of 
$(\th_{}^{\mrm{l}},  \th_{}^{\mrm{r}})$}.
\end{dfn}

This definition is illustrated in the next diagrams, that are commutative
diagrams in the categories $\cat{D}(\til{B}_1)$,
$\cat{D}(\til{B}_1^{\mrm{op}})$
and $\cat{D}(\til{B}_1^{\mrm{en}})$ respectively.
The vertical arrows in these diagrams (namely those whose names do not contain 
the letters ``$\th$'' or ``$\eta$'') are isomorphisms. 
(Note that the letter $\opn{Q}$ denotes the localization functor, 
not to be confused with the DG modules $\til{Q}_0^{\mrm{l}}$ etc.)

\[ \UseTips \xymatrix @C=7ex @R=6ex {
\til{Q}_0^{\mrm{l}}
\ar[d]_{ \opn{Q}(\ga_0^{\mrm{l}}) }
\ar[dr]^{ \opn{Q}(\til{\th}^{\mrm{l}}) }
\\
\til{P}_0^{\mrm{l}}
\ar[d]_{ \opn{Q}(\al_0^{\mrm{l}}) }
&
\til{P}_1^{\mrm{l}}
\ar[d]_{ \opn{Q}(\al_1^{\mrm{l}}) }
\\
M_0^{\mrm{l}}
\ar[r]_{ \th^{\mrm{l}} }
&
M_1^{\mrm{r}}
} 
\qquad 
\xymatrix @C=6ex @R=7ex {
\til{Q}_0^{\mrm{r}}
\ar[d]_{ \opn{Q}(\ga_0^{\mrm{r}}) }
\ar[dr]^{ \opn{Q}(\til{\th}^{\mrm{r}}) }
\\
\til{P}_0^{\mrm{r}}
\ar[d]_{ \opn{Q}(\al_0^{\mrm{r}}) }
&
\til{P}_1^{\mrm{r}}
\ar[d]_{ \opn{Q}(\al_1^{\mrm{r}}) }
\\
M_0^{\mrm{r}}
\ar[r]_{ \th^{\mrm{r}} }
&
M_1^{\mrm{r}}
} 
\qquad 
\UseTips \xymatrix @C=6ex @R=6ex {
\til{Q}_{0}^{\mrm{l}} \ot_{\til{A}_1} \til{Q}_{0}^{\mrm{r}}
\ar[d]_{ \opn{Q}(\ga_{0}^{\mrm{l}} \ot_{\til{v}} \ga_{0}^{\mrm{r}}) }
\ar[dr]^{ \opn{Q}(\til{\th}^{\mrm{en}}) }
\\
\til{P}_0^{\mrm{en}}
\ar[d]_{ \opn{Q}(\be_0) }
&
\til{P}_1^{\mrm{en}}
\ar[d]_{ \opn{Q}(\be_1) }
\\
\til{I}_0^{}
\ar[r]_{ \opn{Q}(\eta) }
&
\til{I}_1^{}
} 
\]

There is no composition rule for the compound morphisms $\bsym{\eta}$; 
but this is taken care of by the following lemmas. 

\begin{lem} \label{lem:915}
In the situation of Definition \tup{\ref{dfn:919}}, compound
resolutions $\bsym{\eta}$ of the morphism 
$(\th_{}^{\mrm{l}},  \th_{}^{\mrm{r}})$ exist.
\end{lem}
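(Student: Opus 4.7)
The strategy is to construct the three components of the compound morphism in sequence, using the lifting property of K-projective and K-injective resolutions (Proposition \ref{prop:460}) together with the quasi-isomorphism property of the tensor product (Proposition \ref{prop:462}).

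First, I would choose K-projective resolutions
$\ga_0^{\mrm{l}}: \til{Q}_0^{\mrm{l}} \to \til{P}_0^{\mrm{l}}$ in $\cat{M}(\til{B}_1)$
and $\ga_0^{\mrm{r}}: \til{Q}_0^{\mrm{r}} \to \til{P}_0^{\mrm{r}}$ in $\cat{M}(\til{B}_1^{\mrm{op}})$,
where the targets are viewed over $\til{B}_1$ and $\til{B}_1^{\mrm{op}}$ via restriction of scalars along $\til{w}$.
Next, the morphism $\th^{\mrm{l}} \circ \opn{Q}(\al_0^{\mrm{l}}) \circ \opn{Q}(\ga_0^{\mrm{l}}): \til{Q}_0^{\mrm{l}} \to M_1^{\mrm{l}}$ in $\cat{D}(\til{B}_1)$ factors in the derived category through the quasi-isomorphism $\al_1^{\mrm{l}}$ (which is invertible in $\cat{D}(\til{B}_1)$). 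Since $\til{Q}_0^{\mrm{l}}$ is K-projective, Proposition \ref{prop:460}(2) lifts this factorization to an honest homomorphism
$\til{\th}^{\mrm{l}}: \til{Q}_0^{\mrm{l}} \to \til{P}_1^{\mrm{l}}$ in $\cat{M}(\til{B}_1)$ satisfying the required compatibility. The right-side homomorphism $\til{\th}^{\mrm{r}}$ is obtained analogously.

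To construct $\eta$, I first observe that $\ga_0^{\mrm{l}} \ot_{\til{v}} \ga_0^{\mrm{r}}: \til{Q}_0^{\mrm{l}} \ot_{\til{A}_1} \til{Q}_0^{\mrm{r}} \to \til{P}_0^{\mrm{en}}$ is a quasi-isomorphism. This follows from Proposition \ref{prop:462}(1) applied to the quasi-isomorphism $\til{v}: \til{A}_1 \to \til{A}_0$, using that K-projectivity over $\til{B}_1$ implies K-flatness over $\til{B}_1$, and transitivity with the K-flatness of $\til{B}_1/\til{A}_1$ gives K-flatness over $\til{A}_1$; the same argument handles $\til{P}_0^{\mrm{l}}$ and $\til{P}_0^{\mrm{r}}$ over $\til{A}_0$. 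Consequently $\be_0 \circ (\ga_0^{\mrm{l}} \ot_{\til{v}} \ga_0^{\mrm{r}})$ is a quasi-isomorphism in $\cat{M}(\til{B}_1^{\mrm{en}})$, hence invertible in $\cat{D}(\til{B}_1^{\mrm{en}})$, and we may define
\[ [\eta] := \opn{Q}(\be_1 \circ \til{\th}^{\mrm{en}}) \circ
\opn{Q}\bigl(\be_0 \circ (\ga_0^{\mrm{l}} \ot_{\til{v}} \ga_0^{\mrm{r}})\bigr)^{-1} \]
in $\cat{D}(\til{B}_1^{\mrm{en}})$. Using that $\til{I}_1$ is K-injective over $\til{B}_1^{\mrm{en}}$, Proposition \ref{prop:460}(2) lifts $[\eta]$ to a homomorphism $\eta: \til{I}_0 \to \til{I}_1$ in $\cat{M}(\til{B}_1^{\mrm{en}})$, and the required compatibility from Definition \ref{dfn:919} holds by construction.

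The only genuine technical point is the K-flatness transitivity needed to apply Proposition \ref{prop:462}(1) to $\ga_0^{\mrm{l}} \ot_{\til{v}} \ga_0^{\mrm{r}}$; once that is in hand, everything reduces to the standard lifting of derived morphisms through K-projective or K-injective resolutions.
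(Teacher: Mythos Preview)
Your proposal is correct and follows essentially the same approach as the paper: use K-projectivity of $\til{Q}_0^{\mrm{l}}$ and $\til{Q}_0^{\mrm{r}}$ to obtain $\til{\th}^{\mrm{l}}$ and $\til{\th}^{\mrm{r}}$, and K-injectivity of $\til{I}_1$ to obtain $\eta$, all via Proposition~\ref{prop:460}. The paper's proof is a two-line sketch; your version spells out the one step the paper leaves implicit, namely that $\ga_0^{\mrm{l}} \ot_{\til{v}} \ga_0^{\mrm{r}}$ is a quasi-isomorphism (needed so that the required morphism $\til{I}_0 \to \til{I}_1$ is well-defined in $\cat{D}(\til{B}_1^{\mrm{en}})$ before lifting), and your justification via K-flatness transitivity and Proposition~\ref{prop:462}(1) is the right one.
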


\begin{proof}
The homomorphisms $\til{\th}^{\mrm{l}}$ and $\til{\th}^{\mrm{r}}$ exist because 
$\til{Q}_0^{\mrm{l}}$ and $\til{Q}_0^{\mrm{r}}$ are K-projective in the 
respective categories. The homomorphism $\eta$ exists because 
$\til{I}_1$ is K-injective. See Proposition \ref{prop:460}. 
\end{proof}

In the next definition, it is crucial that the homomorphism 
$w : B_1 \to B_0$ is central; this is guaranteed by Setup \ref{set:940}, in 
which we demand that $w / v$ is a morphism in $\cat{PDGR}_{\vec{\mrm{ce}}}$.
Since $w$ is central, we get an induced DG ring homomorphism 
$w^{\mrm{ce}} : B^{\mrm{ce}}_1 \to B^{\mrm{ce}}_0$. 
So Proposition \ref{prop:461}(2) applies.

\begin{dfn} \label{dfn:920}
In the situation of Definition \tup{\ref{dfn:919}}, we let 
\[ \opn{Rect}_{w / v}^{\til{w} / \til{v}}(\bsym{\eta}) : 
\opn{Rect}_{B_0 / A_0}^{\til{B}_{0} / \til{A}_{0}}(\bsym{P}_{0}) \to
\opn{Rect}_{B_1 / A_1}^{\til{B}_{1} / \til{A}_{1}}(\bsym{P}_{1}) \]
be the morphism 
\[ \opn{Rect}_{w / v}^{\til{w} / \til{v}}(\bsym{\eta})  :=
\opn{Q} \bigl( \opn{Hom}_{w^{\mrm{en}}}(w, \eta) \bigr) \]
in $\cat{D}(B_1^{\mrm{ce}})$.  
\end{dfn}

\begin{lem} \label{lem:916}
The morphism 
$\opn{Rect}_{w / v}^{\til{w} / \til{v}}(\bsym{\eta})$
is independent of the compound resolution $\bsym{\eta}$. Namely, if 
$\bsym{\eta}'$ is another compound resolution of 
$(\th^{\mrm{l}},  \th^{\mrm{r}})$, 
then there is equality
\[ \opn{Rect}_{w / v}^{\til{w} / \til{v}}(\bsym{\eta}) = 
\opn{Rect}_{w / v}^{\til{w} / \til{v}}(\bsym{\eta}') \]
of morphisms 
\[ \opn{Rect}_{B_0 / A_0}^{\til{B}_0 / \til{A}_{0}}(\bsym{P}_{0}) \to
\opn{Rect}_{B_1 / A_1}^{\til{B}_{1} / \til{A}_{1}}(\bsym{P}_{1}) \]
in $\cat{D}(B_1^{\mrm{ce}})$.
\end{lem}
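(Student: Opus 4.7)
The strategy is to show that any two compound resolutions of $(\th^{\mrm{l}}, \th^{\mrm{r}})$ give rise to morphisms $\eta$ that are homotopic in $\cat{M}(\til{B}_1^{\mrm{en}})$; Proposition \ref{prop:461}(4) then transports this homotopy across the functor $\opn{Hom}_{w^{\mrm{en}}}(w, -)$ to deliver the required equality in $\cat{D}(B_1^{\mrm{ce}})$. The plan is first to reduce to the case where the two compound resolutions share their K-projective resolutions of $M_0^{\mrm{l}}$ and $M_0^{\mrm{r}}$, and only then to invoke the uniqueness-up-to-homotopy supplied by Proposition \ref{prop:460}.

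Write $\bsym{\eta} = (\til{Q}_{0}^{\mrm{l}}, \til{Q}_{0}^{\mrm{r}}; \ga_{0}^{\mrm{l}}, \ga_{0}^{\mrm{r}}, \til{\th}^{\mrm{l}}, \til{\th}^{\mrm{r}}, \eta)$ and analogously for $\bsym{\eta}'$ with primed components. Since $\til{Q}_{0}^{\mrm{l}}$ is K-projective and $\ga_{0}^{\mrm{l}'} : \til{Q}_{0}^{\mrm{l}'} \to \til{P}_{0}^{\mrm{l}}$ is a quasi-isomorphism, Proposition \ref{prop:460}(2) produces a morphism $\de^{\mrm{l}} : \til{Q}_{0}^{\mrm{l}} \to \til{Q}_{0}^{\mrm{l}'}$ in $\cat{M}(\til{B}_1)$ satisfying $\opn{Q}(\ga_{0}^{\mrm{l}'} \circ \de^{\mrm{l}}) = \opn{Q}(\ga_{0}^{\mrm{l}})$; symmetrically build $\de^{\mrm{r}}$. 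I form a hybrid compound resolution $\bsym{\eta}''$ that reuses $\til{Q}_{0}^{\mrm{l}}, \til{Q}_{0}^{\mrm{r}}, \ga_{0}^{\mrm{l}}, \ga_{0}^{\mrm{r}}$ from $\bsym{\eta}$ but sets $\til{\th}^{\mrm{l}''} := \til{\th}^{\mrm{l}'} \circ \de^{\mrm{l}}$, $\til{\th}^{\mrm{r}''} := \til{\th}^{\mrm{r}'} \circ \de^{\mrm{r}}$, and $\eta'' := \eta'$. A chase through the derived-category identities defining $\bsym{\eta}'$, combined with $\opn{Q}(\ga_{0}^{\mrm{l}'} \circ \de^{\mrm{l}}) = \opn{Q}(\ga_{0}^{\mrm{l}})$ and its right-module counterpart, verifies that $\bsym{\eta}''$ satisfies the three defining conditions of Definition \ref{dfn:919}. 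Since $\eta'' = \eta'$, Definition \ref{dfn:920} gives $\opn{Rect}_{w/v}^{\til{w}/\til{v}}(\bsym{\eta}'') = \opn{Rect}_{w/v}^{\til{w}/\til{v}}(\bsym{\eta}')$ on the nose.

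It remains to compare $\bsym{\eta}$ with $\bsym{\eta}''$, which share the data $\til{Q}_{0}^{\mrm{l}}, \til{Q}_{0}^{\mrm{r}}, \ga_{0}^{\mrm{l}}, \ga_{0}^{\mrm{r}}$. The defining identity for $\til{\th}^{\mrm{l}}$ in $\cat{D}(\til{B}_1)$ determines $\opn{Q}(\til{\th}^{\mrm{l}})$ uniquely (cancel the isomorphism $\opn{Q}(\al_1^{\mrm{l}})$), so $\opn{Q}(\til{\th}^{\mrm{l}}) = \opn{Q}(\til{\th}^{\mrm{l}''})$; since $\til{Q}_{0}^{\mrm{l}}$ is K-projective, Proposition \ref{prop:460}(3) yields a homotopy $\til{\th}^{\mrm{l}} \twoto \til{\th}^{\mrm{l}''}$, and symmetrically $\til{\th}^{\mrm{r}} \twoto \til{\th}^{\mrm{r}''}$. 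The standard fact that the tensor product of homotopic maps is again homotopic then gives $\til{\th}^{\mrm{en}} \twoto \til{\th}^{\mrm{en}''}$ in $\cat{M}(\til{B}_1^{\mrm{en}})$, so $\opn{Q}(\eta) = \opn{Q}(\eta'')$ in $\cat{D}(\til{B}_1^{\mrm{en}})$. Because $\til{I}_1$ is K-injective, Proposition \ref{prop:460}(3) supplies a homotopy $\eta \twoto \eta''$ in $\cat{M}(\til{B}_1^{\mrm{en}})$; applying Proposition \ref{prop:461}(4) with $\ze_1 := w$ fixed transports this to a homotopy $\opn{Hom}_{w^{\mrm{en}}}(w, \eta) \twoto \opn{Hom}_{w^{\mrm{en}}}(w, \eta'')$ in $\cat{M}(B_1^{\mrm{ce}})$, so these agree in $\cat{D}(B_1^{\mrm{ce}})$.

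The principal obstacle is bookkeeping: nearly every equality above lives in a derived category, so Proposition \ref{prop:460} must be invoked repeatedly to shuttle between $\cat{D}$-equalities and $\cat{M}$-homotopies. The only ingredient not already isolated in the preceding sections is the compatibility of homotopies with the tensor product $\ot_{\til{v}}$ over the quasi-isomorphism $\til{v} : \til{A}_1 \to \til{A}_0$ (the tensor analog of Proposition \ref{prop:461}(4)), which is a standard and routine DG-category computation.
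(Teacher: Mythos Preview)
Your argument is correct and follows essentially the same route as the paper's proof. The paper's version is terser: it simply observes that the K-projective resolutions $\til{Q}_0^{\mrm{l}}$ and $\til{Q}_0'^{\,\mrm{l}}$ are homotopy equivalent in $\cat{M}(\til{B}_1)$, that under this equivalence $\til{\th}^{\mrm{l}}$ and $\til{\th}'^{\,\mrm{l}}$ become homotopic, tensors up, and concludes that $\eta$ and $\eta'$ are homotopic in $\cat{M}(\til{B}_1^{\mrm{en}})$. Your intermediate hybrid $\bsym{\eta}''$ (built from the comparison maps $\de^{\mrm{l}},\de^{\mrm{r}}$) is just an explicit way of unpacking the phrase ``under this equivalence'', and your final appeal to Proposition~\ref{prop:461}(4) makes explicit the passage from a homotopy on $\eta$ to equality in $\cat{D}(B_1^{\mrm{ce}})$, which the paper leaves implicit.
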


\begin{proof}
Say 
\[ \bsym{\eta}' = (\til{Q}_{0}'^{\, \mrm{l}}, \til{Q}_{0}'^{\, \mrm{r}};
\ga_{0}'^{\, \mrm{l}}, \ga_{0}'^{\, \mrm{r}},
\til{\th}'^{\, \mrm{l}}, \til{\th}'^{\, \mrm{r}}, \eta' ) . \]
The DG modules $\til{Q}_{0}'^{\, \mrm{l}}$ and $\til{Q}_{0}^{\mrm{l}}$
are homotopy equivalent in $\cat{M}(\til{B}_1)$, 
and under this equivalence the homomorphisms 
$\til{\th}'^{\, \mrm{l}} : \til{Q}_{0}'^{\, \mrm{l}} \to
\til{P}_{1}^{\mrm{l}}$
and 
$\til{\th}^{\mrm{l}} : \til{Q}_{0}^{\mrm{l}} \to \til{P}_{1}^{\mrm{l}}$
are homotopic. Likewise for ``r''. Therefore the DG modules 
$\til{Q}_{0}'^{\, \mrm{en}}$ and $\til{Q}_{0}^{\mrm{en}}$
are homotopy equivalent in $\cat{M}(\til{B}_1^{\mrm{en}})$, 
and under this equivalence the homomorphisms 
$\til{\th}'^{\, \mrm{en}} : \til{Q}_{0}'^{\, \mrm{en}} \to
\til{P}_{1}^{\mrm{en}}$
and 
$\til{\th}^{\mrm{en}} : \til{Q}_{0}^{\mrm{en}} \to \til{P}_{1}^{\mrm{en}}$
are homotopic.
This implies that the homomorphisms $\eta'$ and $\eta$ are homotopic.
\end{proof}

\begin{lem} \label{lem:925}
Assume that $B_1 / A_1 = B_0 / A_0$, 
$\til{B}_1 / \til{A}_1 = \til{B}_0 / \til{A}_0$,
$(M^{\mrm{l}}_1, M^{\mrm{r}}_1) = (M^{\mrm{l}}_0, M^{\mrm{r}}_0)$, 
and all the morphisms between them are the identities. 
Also assume that $\bsym{P}_1 = \bsym{P}_0$. Then for any compound morphism
$\bsym{\eta} : \bsym{P}_{0} \to \bsym{P}_{0}$
above 
$(\opn{id}_{M^{\mrm{l}}_0}, \opn{id}_{M^{\mrm{r}}_0})$,
the morphism 
$\opn{Rect}_{\mrm{id} / \mrm{id}}^{\mrm{id} / \mrm{id}}(\bsym{\eta})$
is the identity automorphism of
$\opn{Rect}_{B_0 / A_0}^{\til{B}_0 / \til{A}_{0}}(\bsym{P}_{0})$.
\end{lem}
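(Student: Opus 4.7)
The plan is to invoke Lemma \ref{lem:916}: since $\opn{Rect}_{\mrm{id}/\mrm{id}}^{\mrm{id}/\mrm{id}}(\bsym{\eta})$ does not depend on the choice of compound resolution $\bsym{\eta}$ of the identity pair $(\mrm{id}_{M_0^{\mrm{l}}}, \mrm{id}_{M_0^{\mrm{r}}})$, it suffices to produce a \emph{single} tautological choice for which the morphism is visibly the identity. The obvious candidate is the one in which $\eta : \til{I}_0 \to \til{I}_0$ is literally $\opn{id}_{\til{I}_0}$.

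Concretely, I would begin by choosing K-projective resolutions $\ga_{0}^{\mrm{l}} : \til{Q}_{0}^{\mrm{l}} \to \til{P}_{0}^{\mrm{l}}$ in $\cat{M}(\til{B}_0)$ and $\ga_{0}^{\mrm{r}} : \til{Q}_{0}^{\mrm{r}} \to \til{P}_{0}^{\mrm{r}}$ in $\cat{M}(\til{B}_0^{\mrm{op}})$, which exist by the standard theory recalled in Section \ref{sec:resolutions-modules}. I then set $\til{\th}^{\mrm{l}} := \ga_{0}^{\mrm{l}}$, $\til{\th}^{\mrm{r}} := \ga_{0}^{\mrm{r}}$, and $\eta := \opn{id}_{\til{I}_0}$. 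The two commutativity conditions of Definition \ref{dfn:919} involving $\til{\th}^{\mrm{l}}$ and $\til{\th}^{\mrm{r}}$ become the tautologies $\opn{Q}(\al_0^{\mrm{l}}) \circ \opn{Q}(\ga_0^{\mrm{l}}) = \opn{Q}(\al_0^{\mrm{l}}) \circ \opn{Q}(\ga_0^{\mrm{l}})$ and its right-module analogue. Since $\til{v} = \opn{id}_{\til{A}_0}$, the induced $\til{\th}^{\mrm{en}}$ coincides with $\ga_0^{\mrm{l}} \ot_{\til{v}} \ga_0^{\mrm{r}}$, so the third condition also reduces to a tautology, making $\eta = \opn{id}_{\til{I}_0}$ a legitimate choice.

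With this particular compound morphism, Definition \ref{dfn:920} yields
\[ \opn{Rect}_{\mrm{id}/\mrm{id}}^{\mrm{id}/\mrm{id}}(\bsym{\eta})
= \opn{Q}\bigl(\opn{Hom}_{\mrm{id}_B^{\mrm{en}}}(\opn{id}_B,\, \opn{id}_{\til{I}_0})\bigr)
= \opn{Q}\bigl(\opn{id}_{\opn{Hom}_{\til{B}_0^{\mrm{en}}}(B, \til{I}_0)}\bigr), \]
which is the identity automorphism of $\opn{Rect}_{B_0/A_0}^{\til{B}_0/\til{A}_0}(\bsym{P}_0)$. Lemma \ref{lem:916} then promotes this equality to one valid for the originally given $\bsym{\eta}$, finishing the proof.

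There is no real obstacle here: the only mildly delicate point is verifying that the specific data above really constitutes a compound morphism in the sense of Definition \ref{dfn:919}, but this is essentially bookkeeping because all the relevant diagrams degenerate once $w = v = \opn{id}$, $\th^{\mrm{l}} = \opn{id}$, $\th^{\mrm{r}} = \opn{id}$, and we take the $\til{\th}$'s to coincide with the $\ga$'s. The heart of the argument is the invariance statement already established in Lemma \ref{lem:916}.
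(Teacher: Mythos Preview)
Your proposal is correct and is precisely the natural way to unpack what the paper means: its entire proof of this lemma is the single word ``Clear.'' Your argument---exhibit the tautological compound morphism with $\eta = \opn{id}_{\til{I}_0}$, observe that $\opn{Hom}_{\mrm{id}}(\opn{id},\opn{id})$ is the identity, then invoke Lemma~\ref{lem:916} to cover an arbitrary $\bsym{\eta}$---is exactly the verification the author had in mind but did not bother to spell out.
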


\begin{proof}
Clear. 
\end{proof}

\begin{lem} \label{lem:919}
Consider Setup \tup{\ref{set:940}}, with $k = 0, 1, 2$. 
We are given a compound resolution
$\bsym{P}_k$ of $(M^{\mrm{l}}_k, M^{\mrm{r}}_k)$
over $\til{B} / \til{A}$. 
For $k = 1, 2$ we are given a compound morphism  
$\bsym{\eta}_k : \bsym{P}_{k - 1} \to \bsym{P}_{k}$
above $(\th_{k}^{\mrm{l}},  \th_{k}^{\mrm{r}})$ 
and $\til{w}_k / \til{v}_k$.
Also, we are given a compound morphism 
$\bsym{\eta}_{0, 2} : \bsym{P}_{0} \to \bsym{P}_{2}$
above 
$(\th_{2}^{\mrm{l}}, \th_{2}^{\mrm{r}}) \circ 
(\th_{1}^{\mrm{l}}, \th_{1}^{\mrm{r}})$
and 
$(\til{w}_1 / \til{v}_1) \circ  (\til{w}_2 / \til{v}_2)$.
Then there is equality
\[ \opn{Rect}_{(w_1 / v_1) \circ (w_2 / v_2)}
^{(\til{w}_1 / \til{v}_1) \circ (\til{w}_2 / \til{v}_2)}(\bsym{\eta}_{0, 2}) 
= \opn{Rect}_{w_2 / v_2}^{\til{w}_2 / \til{v}_2}(\bsym{\eta}_2) 
\circ 
\opn{Rect}_{w_1 / v_1}^{\til{w}_1 / \til{v}_1}(\bsym{\eta}_1) \]
of morphisms 
\[ \opn{Rect}_{B_0 / A_0}^{\til{B}_0 / \til{A}_{0}}(\bsym{P}_{0}) \to
\opn{Rect}_{B_2 / A_2}^{\til{B}_{2} / \til{A}_{2}}(\bsym{P}_{2})  \]
in $\cat{D}(B_2^{\mrm{ce}})$.
\end{lem}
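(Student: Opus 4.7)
The plan is to exploit Lemma \ref{lem:916}: the rectangle morphism $\opn{Rect}_{w / v}^{\til{w} / \til{v}}(\bsym{\eta})$ depends only on the underlying morphism $(\th^{\mrm{l}}, \th^{\mrm{r}})$ of DG module pairs and the morphism $\til{w}/\til{v}$ of resolved pairs, \emph{not} on the choice of compound resolution $\bsym{\eta}$. So it suffices to construct \emph{some} compound resolution $\bsym{\eta}'$ of $(\th_2^{\mrm{l}} \circ \th_1^{\mrm{l}}, \th_2^{\mrm{r}} \circ \th_1^{\mrm{r}})$ above $(\til{w}_1 / \til{v}_1) \circ (\til{w}_2 / \til{v}_2)$, built by splicing $\bsym{\eta}_1$ and $\bsym{\eta}_2$, for which the rectangle morphism is manifestly equal to $\opn{Rect}_{w_2/v_2}^{\til{w}_2/\til{v}_2}(\bsym{\eta}_2) \circ \opn{Rect}_{w_1/v_1}^{\til{w}_1/\til{v}_1}(\bsym{\eta}_1)$; Lemma \ref{lem:916} applied to $\bsym{\eta}'$ and $\bsym{\eta}_{0, 2}$ then delivers the required equality.

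To construct $\bsym{\eta}'$, I will first choose K-projective resolutions $\de'^{\, \mrm{l}} : \til{Q}'^{\, \mrm{l}} \to \til{Q}_0^{\mrm{l}}$ in $\cat{M}(\til{B}_2)$ and $\de'^{\, \mrm{r}} : \til{Q}'^{\, \mrm{r}} \to \til{Q}_0^{\mrm{r}}$ in $\cat{M}(\til{B}_2^{\mrm{op}})$, after restricting scalars along $\til{w}_2$ and $\til{w}_2^{\mrm{op}}$ respectively. The composites $\ga_0^{\mrm{l}} \circ \de'^{\, \mrm{l}}$ and $\ga_0^{\mrm{r}} \circ \de'^{\, \mrm{r}}$ are then K-projective resolutions of $\til{P}_0^{\mrm{l}}$ and $\til{P}_0^{\mrm{r}}$ over $\til{B}_2$ and $\til{B}_2^{\mrm{op}}$. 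Using K-projectivity of $\til{Q}'^{\, \mrm{l}}$ and Proposition \ref{prop:460}, I will lift the homomorphism $\til{\th}_1^{\mrm{l}} \circ \de'^{\, \mrm{l}} : \til{Q}'^{\, \mrm{l}} \to \til{P}_1^{\mrm{l}}$ through the quasi-isomorphism $\ga_1^{\mrm{l}}$ to a homomorphism $\ep'^{\, \mrm{l}} : \til{Q}'^{\, \mrm{l}} \to \til{Q}_1^{\mrm{l}}$ in $\cat{M}(\til{B}_2)$, unique up to homotopy; and analogously $\ep'^{\, \mrm{r}}$. I then set $\til{\th}'^{\, \mrm{l}} := \til{\th}_2^{\mrm{l}} \circ \ep'^{\, \mrm{l}}$, $\til{\th}'^{\, \mrm{r}} := \til{\th}_2^{\mrm{r}} \circ \ep'^{\, \mrm{r}}$, and take the obvious $\eta' := \eta_2 \circ \eta_1 : \til{I}_0 \to \til{I}_2$ in $\cat{M}(\til{B}_2^{\mrm{en}})$.

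Checking that $\bsym{\eta}'$ satisfies Definition \ref{dfn:919} is a diagram chase in the derived categories $\cat{D}(\til{B}_2)$, $\cat{D}(\til{B}_2^{\mrm{op}})$, and $\cat{D}(\til{B}_2^{\mrm{en}})$. The ``$\mrm{l}$'' and ``$\mrm{r}$'' compatibilities follow at once from the lift property of $\ep'^{\, \mrm{l}}, \ep'^{\, \mrm{r}}$ together with those of $\bsym{\eta}_1$ and $\bsym{\eta}_2$. The $\eta'$-compatibility is obtained by stacking the identity for $\eta_1$ (applied after tensoring with $\de'^{\, \mrm{l}} \ot_{\til{v}_2} \de'^{\, \mrm{r}}$) on top of the identity for $\eta_2$ (applied after tensoring with $\ep'^{\, \mrm{l}} \ot \ep'^{\, \mrm{r}}$), using bifunctoriality of the $(-)^{\mrm{en}}$ construction from Definition \ref{dfn:915}. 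I expect this $\eta'$-check to be the main obstacle: the bookkeeping must track how restriction of scalars along $\til{w}_2^{\mrm{en}}$ intertwines with the compatibilities of $\bsym{\eta}_1$ and $\bsym{\eta}_2$, and the equalities hold only in $\cat{D}(\til{B}_2^{\mrm{en}})$, so one must pass through the comparison of homotopy-commuting lifts.

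With $\bsym{\eta}'$ in hand, the conclusion is a formal calculation: by Definition \ref{dfn:920} and the composition rule of Proposition \ref{prop:461}(3),
\[ \opn{Rect}_{w_2/v_2}^{\til{w}_2/\til{v}_2}(\bsym{\eta}_2) \circ \opn{Rect}_{w_1/v_1}^{\til{w}_1/\til{v}_1}(\bsym{\eta}_1) = \opn{Q}\bigl(\opn{Hom}_{(w_1 \circ w_2)^{\mrm{en}}}(w_1 \circ w_2,\, \eta_2 \circ \eta_1)\bigr), \]
and the right hand side is exactly $\opn{Rect}_{(w_1/v_1) \circ (w_2/v_2)}^{(\til{w}_1/\til{v}_1) \circ (\til{w}_2/\til{v}_2)}(\bsym{\eta}')$ by construction of $\bsym{\eta}'$. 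Invoking Lemma \ref{lem:916} to identify this with $\opn{Rect}(\bsym{\eta}_{0, 2})$ closes the argument.
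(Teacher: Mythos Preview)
Your proposal is correct and follows essentially the same approach as the paper's proof. The paper is terser: it simply says one can ``concoct'' a composed compound morphism $\bsym{\eta}_2 \circ \bsym{\eta}_1$ whose $\til{I}$-component is $\eta_2 \circ \eta_1$, then invokes Lemma~\ref{lem:916}; your construction of $\bsym{\eta}'$ via the auxiliary K-projective resolutions $\til{Q}'^{\,\mrm{l}}, \til{Q}'^{\,\mrm{r}}$ over $\til{B}_2$ and the lifts $\ep'^{\,\mrm{l}}, \ep'^{\,\mrm{r}}$ is exactly what that concoction amounts to when spelled out carefully, and your final step via Proposition~\ref{prop:461}(3) is the same as the paper's.
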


\begin{proof}
By choosing homomorphisms 
$\til{Q}_{0}^{\mrm{l}} \to \til{Q}_{1}^{\mrm{l}}$
and 
$\til{Q}_{0}^{\mrm{r}} \to \til{Q}_{1}^{\mrm{r}}$
that lift $\th_1^{\mrm{l}}$ and $\th_1^{\mrm{r}}$
respectively, we can concoct a ``composed'' morphism 
$\bsym{\eta}_2 \circ \bsym{\eta}_1$,
well-defined up to homotopy, whose component 
$\til{I}_0 \to \til{I}_2$ is  $\eta_2 \circ \eta_1$. 
Thus 
\[ \opn{Rect}_{(w_1 / v_1) \circ (w_2 / v_2)}
^{(\til{w}_1 / \til{v}_1) \circ (\til{w}_2 / \til{v}_2)}
(\bsym{\eta}_2 \circ \bsym{\eta}_1) = 
\opn{Rect}_{w_2 / v_2}^{\til{w}_2 / \til{v}_2}(\bsym{\eta}_2) 
\circ 
\opn{Rect}_{w_1 / v_1}^{\til{w}_1 / \til{v}_1}(\bsym{\eta}_1) . \]
On the other hand, by Lemma \ref{lem:916} we know that 
\[ \opn{Rect}_{(w_1 / v_1) \circ (w_2 / v_2)}
^{(\til{w}_1 / \til{v}_1) \circ (\til{w}_2 / \til{v}_2)}
(\bsym{\eta}_2 \circ \bsym{\eta}_1) = 
\opn{Rect}_{(w_1 / v_1) \circ (w_2 / v_2)}
^{(\til{w}_1 / \til{v}_1) \circ (\til{w}_2 / \til{v}_2)}
(\bsym{\eta}_{0, 2}) . \] 
\end{proof}

\begin{lem} \label{lem:917}
If 
$w / v : B_1 / A_1 \to B_0 / A_0$ is a quasi-isomorphism in $\cat{PDGR}$,
and if 
\[ (\th_{}^{\mrm{l}},  \th_{}^{\mrm{r}}) : 
(M_{0}^{\mrm{l}},  M_{0}^{\mrm{r}}) \to 
(M_{1}^{\mrm{l}},  M_{1}^{\mrm{r}}) \]
is an isomorphism in $\cat{D}(B_1) \times \cat{D}(B_1^{\mrm{op}})$,
then 
$\opn{Rect}_{w / v}^{\til{w} / \til{v}}(\bsym{\eta})$
is an isomorphism in $\cat{D}(B_1^{\mrm{ce}})$.
\end{lem}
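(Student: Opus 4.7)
The plan is to show that at the chain level the map $\opn{Hom}_{w^{\mrm{en}}}(w, \eta)$ is a quasi-isomorphism; since the localization functor $\opn{Q}$ sends quasi-isomorphisms to isomorphisms, this will immediately yield the conclusion. The main tool will be Proposition \ref{prop:462}(2), applied to the DG ring quasi-isomorphism $\til{w}^{\mrm{en}} : \til{B}_1^{\mrm{en}} \to \til{B}_0^{\mrm{en}}$ that Lemma \ref{lem:911} provides.

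First I would establish that $\til{\th}^{\mrm{l}}$ and $\til{\th}^{\mrm{r}}$ are quasi-isomorphisms. The compatibility relation built into Definition \ref{dfn:919} reads
\[ \opn{Q}(\al_1^{\mrm{l}}) \circ \opn{Q}(\til{\th}^{\mrm{l}}) = \th^{\mrm{l}} \circ \opn{Q}(\al_0^{\mrm{l}}) \circ \opn{Q}(\ga_0^{\mrm{l}}) \]
in $\cat{D}(\til{B}_1)$. The maps $\opn{Q}(\al_0^{\mrm{l}})$, $\opn{Q}(\al_1^{\mrm{l}})$ and $\opn{Q}(\ga_0^{\mrm{l}})$ are isomorphisms since their underlying homomorphisms are quasi-isomorphisms, and $\th^{\mrm{l}}$ is an isomorphism by hypothesis; hence $\opn{Q}(\til{\th}^{\mrm{l}})$ is an isomorphism in $\cat{D}(\til{B}_1)$, so $\til{\th}^{\mrm{l}}$ is a quasi-isomorphism. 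The same argument handles $\til{\th}^{\mrm{r}}$.

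Next I would prove that $\eta : \til{I}_0 \to \til{I}_1$ is a quasi-isomorphism in $\cat{M}(\til{B}_1^{\mrm{en}})$. Proposition \ref{prop:462}(1) applied along the quasi-isomorphism $\til{v} : \til{A}_1 \to \til{A}_0$, using the K-flatness of $\til{P}_0^{\mrm{l}}$, $\til{P}_0^{\mrm{r}}$ over $\til{B}_0$, $\til{B}_0^{\mrm{op}}$ (automatic from the compound resolution) and the K-projectivity of $\til{Q}_0^{\mrm{l}}$, $\til{Q}_0^{\mrm{r}}$, shows that
\[ \ga_0^{\mrm{l}} \ot_{\til{v}} \ga_0^{\mrm{r}} : \til{Q}_0^{\mrm{l}} \ot_{\til{A}_1} \til{Q}_0^{\mrm{r}} \to \til{P}_0^{\mrm{en}} \]
is a quasi-isomorphism; and the same proposition applied to $\til{\th}^{\mrm{l}}$ and $\til{\th}^{\mrm{r}}$ (over the identity of $\til{A}_1$) shows that $\til{\th}^{\mrm{en}}$ is a quasi-isomorphism. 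Combined with the quasi-isomorphisms $\be_0$, $\be_1$ and the compatibility relation on $\eta$ from Definition \ref{dfn:919}, this forces $\opn{Q}(\eta)$ to be an isomorphism, so $\eta$ is a quasi-isomorphism.

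Finally I would apply Proposition \ref{prop:462}(2) with $v = \til{w}^{\mrm{en}}$ (a quasi-isomorphism by Lemma \ref{lem:911}), $\psi = w : B_1 \to B_0$ (a quasi-isomorphism by assumption on $w/v$), and $\chi = \eta$; the K-injectivity hypothesis is met because $\til{I}_0$ and $\til{I}_1$ are K-injective over $\til{B}_0^{\mrm{en}}$ and $\til{B}_1^{\mrm{en}}$ respectively. This yields that $\opn{Hom}_{w^{\mrm{en}}}(w, \eta)$ is a quasi-isomorphism, whence $\opn{Rect}_{w/v}^{\til{w}/\til{v}}(\bsym{\eta})$ is an isomorphism in $\cat{D}(B_1^{\mrm{ce}})$. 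The main obstacle is merely the bookkeeping of which DG ring each module lives over when invoking Proposition \ref{prop:462}; the setup of Definitions \ref{dfn:917} and \ref{dfn:919} has arranged matters so that all K-flatness and K-injectivity hypotheses are automatic.
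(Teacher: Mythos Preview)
Your proof is correct and follows essentially the same approach as the paper: show that $\til{\th}^{\mrm{l}}$, $\til{\th}^{\mrm{r}}$, $\til{\th}^{\mrm{en}}$ and $\eta$ are quasi-isomorphisms, then invoke Proposition~\ref{prop:462}(2) along the DG ring quasi-isomorphism $\til{w}^{\mrm{en}}$ supplied by Lemma~\ref{lem:911}. You simply spell out in full the intermediate verifications that the paper compresses into a single sentence.
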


\begin{proof}
Here 
$w : B_1 \to B_0$,
$\til{w}^{\mrm{en}} : B_1^{\mrm{en}} \to B_0^{\mrm{en}}$,
$\til{\th}^{\mrm{l}}$, $\til{\th}^{\mrm{r}}$,
$\til{\th}^{\mrm{en}}$ and
$\eta : \til{I}_0 \to \til{I}_1$ are quasi-iso\-morphism.
According to Proposition \ref{prop:462}(2), 
$\opn{Hom}_{w^{\mrm{en}}}(w, \eta)$
is a quasi-isomorphism.
\end{proof}

\begin{prop} \label{prop:915}
Let $B / A$ be a pair in $\cat{PDGR}$, with K-flat resolution
$\til{B} / \til{A}$. 
Let $(M^{\mrm{l}}, M^{\mrm{r}})$ be a pair in  
$\cat{D}(B) \times \cat{D}(B^{\mrm{op}})$. 
There is an object 
\[ \opn{Rect}_{B / A}^{\til{B} / \til{A}}(M^{\mrm{l}}, M^{\mrm{r}})
\in \cat{D}(B^{\mrm{ce}}) , \]
unique up to a unique isomorphism, together with an isomorphism 
\[ \opn{rect}(\bsym{P}) : 
\opn{Rect}_{B / A}^{\til{B} / \til{A}}(M^{\mrm{l}}, M^{\mrm{r}})
\iso \opn{Rect}_{B / A}^{\til{B} / \til{A}}(\bsym{P}) \]
in $\cat{D}(B^{\mrm{ce}})$ 
for every compound resolution $\bsym{P}$ of the pair 
$(M^{\mrm{l}}, M^{\mrm{r}})$ over $\til{B} / \til{A}$,
satisfying this condition\tup{:}
\begin{enumerate}
\item[($\dag$)] Let $\bsym{P}_0$ and $\bsym{P}_1$ be compound resolutions of 
the pair $(M^{\mrm{l}}, M^{\mrm{r}})$ over $\til{B} / \til{A}$,
and let 
$\bsym{\eta} : \bsym{P}_0 \to \bsym{P}_1$ 
be a compound morphism above 
$(\opn{id}_{M^{\mrm{l}}}, \opn{id}_{M^{\mrm{r}}})$
and
$\opn{id}_{\til{B}} / \opn{id}_{\til{A}}$. 
Then  the diagram 
\[ \UseTips \xymatrix @C=12ex @R=6ex {
\opn{Rect}_{B / A}^{\til{B} / \til{A}}(M^{\mrm{l}}, M^{\mrm{r}})
\ar[d]_{ \opn{rect}(\bsym{P}_0) }
\ar[dr]^{ \opn{rect}(\bsym{P}_1) }
\\
\opn{Rect}_{B / A}^{\til{B} / \til{A}}(\bsym{P}_0) 
\ar[r]_{ \opn{Rect}_{\mrm{id} / \mrm{id}}^{\mrm{id} / \mrm{id}}(\bsym{\eta}) }
&
\opn{Rect}_{B / A}^{\til{B} / \til{A}}(\bsym{P}_1) 
} \]
of isomorphisms in $\cat{D}(B^{\mrm{ce}})$ is commutative. 
\end{enumerate}
\end{prop}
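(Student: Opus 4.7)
The plan is to fix one auxiliary compound resolution and transport everything to it via the canonical isomorphisms supplied by Lemmas \ref{lem:915}--\ref{lem:919}. Since compound resolutions of $(M^{\mrm{l}}, M^{\mrm{r}})$ over $\til{B} / \til{A}$ exist (remark following Definition \ref{dfn:917}), I would pick one such $\bsym{P}_{\ast}$, and simply declare
\[
\opn{Rect}_{B / A}^{\til{B} / \til{A}}(M^{\mrm{l}}, M^{\mrm{r}}) :=
\opn{Rect}_{B / A}^{\til{B} / \til{A}}(\bsym{P}_{\ast})
\in \cat{D}(B^{\mrm{ce}}).
\]
For any other compound resolution $\bsym{P}$, Lemma \ref{lem:915} produces a compound morphism $\bsym{\eta}_{\ast, \bsym{P}} : \bsym{P}_{\ast} \to \bsym{P}$ above $(\mrm{id}_{M^{\mrm{l}}}, \mrm{id}_{M^{\mrm{r}}})$ and $\mrm{id}_{\til{B}} / \mrm{id}_{\til{A}}$. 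By Lemma \ref{lem:917} (applied with $w / v = \mrm{id}$, which is trivially a quasi-isomorphism), the induced morphism
$\opn{Rect}_{\mrm{id} / \mrm{id}}^{\mrm{id} / \mrm{id}}(\bsym{\eta}_{\ast, \bsym{P}})$
is an isomorphism in $\cat{D}(B^{\mrm{ce}})$, and by Lemma \ref{lem:916} it is independent of the choice of $\bsym{\eta}_{\ast, \bsym{P}}$. I then define $\opn{rect}(\bsym{P})$ to be this canonical isomorphism; for $\bsym{P} = \bsym{P}_{\ast}$, Lemma \ref{lem:925} gives $\opn{rect}(\bsym{P}_{\ast}) = \mrm{id}$.

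Next I would verify condition $(\dag)$. Given compound resolutions $\bsym{P}_0, \bsym{P}_1$ and a compound morphism $\bsym{\eta} : \bsym{P}_0 \to \bsym{P}_1$ above the identities, the composition
$(\mrm{id} / \mrm{id}) \circ (\mrm{id} / \mrm{id}) = \mrm{id} / \mrm{id}$ together with Lemma \ref{lem:919} applied to $\bsym{\eta}_{\ast, \bsym{P}_0}$ and $\bsym{\eta}$ (versus any compound morphism from $\bsym{P}_{\ast}$ to $\bsym{P}_1$, which by Lemma \ref{lem:916} may be chosen to be $\bsym{\eta}_{\ast, \bsym{P}_1}$) yields
\[
\opn{Rect}_{\mrm{id} / \mrm{id}}^{\mrm{id} / \mrm{id}}(\bsym{\eta}) \circ \opn{rect}(\bsym{P}_0)
= \opn{rect}(\bsym{P}_1),
\]
which is exactly the commutativity of the diagram in $(\dag)$.

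For the uniqueness clause, suppose $(R', \{\opn{rect}'(\bsym{P})\})$ is another such pair. Then $\opn{rect}'(\bsym{P}_{\ast}) : R' \iso \opn{Rect}_{B / A}^{\til{B} / \til{A}}(\bsym{P}_{\ast}) = \opn{Rect}_{B / A}^{\til{B} / \til{A}}(M^{\mrm{l}}, M^{\mrm{r}})$ is forced (by taking $\bsym{P} = \bsym{P}_{\ast}$ in $(\dag)$) and one checks, using $(\dag)$ applied to $\bsym{\eta}_{\ast, \bsym{P}}$ on both sides, that this single isomorphism is compatible with \emph{every} $\opn{rect}'(\bsym{P})$ and $\opn{rect}(\bsym{P})$; hence it is the unique isomorphism with that property.

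The only conceptual obstacle is the composition step for $(\dag)$: one must know that the ``composition'' of compound morphisms $\bsym{\eta} \circ \bsym{\eta}_{\ast, \bsym{P}_0}$, even if not produced on the nose from the data, agrees in $\opn{Rect}$ with some $\bsym{\eta}_{\ast, \bsym{P}_1}$. This is precisely the content of Lemma \ref{lem:919} combined with the choice-independence in Lemma \ref{lem:916}, so no new work is needed; the proposition is essentially a formal consequence of the four preceding lemmas.
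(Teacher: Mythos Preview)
Your proof is correct and follows essentially the same approach as the paper: fix a base compound resolution, define the object as its rectangle, and define the comparison isomorphisms via compound morphisms from the base, using Lemmas \ref{lem:915}, \ref{lem:916}, and \ref{lem:919} to verify well-definedness and condition ($\dag$). You are slightly more explicit than the paper (invoking Lemma \ref{lem:917} for the isomorphism claim, Lemma \ref{lem:925} for the identity on the base, and spelling out the uniqueness argument), but the structure is the same.
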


\begin{proof}
The uniqueness is clear. As for existence, let us fix some 
resolution $\bsym{P}$ of $(M^{\mrm{l}}, M^{\mrm{r}})$, 
and define 
\[ \opn{Rect}_{B / A}^{\til{B} / \til{A}}(M^{\mrm{l}}, M^{\mrm{r}}) :=
\opn{Rect}_{B / A}^{\til{B} / \til{A}}(\bsym{P}) . \]
The isomorphism $\opn{rect}(\bsym{P})$ is the identity. 

Given any resolution $\bsym{P}_{0}$ of $(M^{\mrm{l}}, M^{\mrm{r}})$, 
let $\bsym{\eta}_0 : \bsym{P} \to \bsym{P}_0$ 
be any morphism above the identity. Define the isomorphism
\[ \opn{rect}(\bsym{P}_0) :=
\opn{Rect}_{\mrm{id} / \mrm{id}}^{\mrm{id} / \mrm{id}}(\bsym{\eta}_0) . \]
By Lemma \ref{lem:916} this does not depend on the choice of 
$\bsym{\eta}_0$.

To verify condition ($\dag$), let 
$\bsym{\eta}_1 : \bsym{P} \to \bsym{P}_1$
be any morphism above the identity. Then, according to Lemma \ref{lem:919},
we have
\[ \opn{Rect}_{\mrm{id} / \mrm{id}}^{\mrm{id} / \mrm{id}}(\bsym{\eta}_1) =
\opn{Rect}_{\mrm{id} / \mrm{id}}^{\mrm{id} / \mrm{id}}(\bsym{\eta}) \circ
\opn{Rect}_{\mrm{id} / \mrm{id}}^{\mrm{id} / \mrm{id}}(\bsym{\eta}_0) . \]
\end{proof}

\begin{prop} \label{prop:916}
In the situation of Setup \tup{\ref{set:940}}, with $k = 0, 1$ only, there is a 
unique morphism 
\[  \opn{Rect}_{w / v}^{\til{w} / \til{v}}
(\th^{\mrm{l}}, \th^{\mrm{r}}) :
\opn{Rect}_{B_0 / A_0}^{\til{B}_0 / \til{A}_0}(M_0^{\mrm{l}}, M_0^{\mrm{r}}) \to
\opn{Rect}_{B_1 / A_1}^{\til{B}_1 / \til{A}_1}(M_1^{\mrm{l}}, M_1^{\mrm{r}}) \]
in $\cat{D}(B_1^{\mrm{ce}})$
satisfying this condition\tup{:}
\begin{enumerate}
\item[($\dag \dag$)]
For $k = 0, 1$ let $\bsym{P}_k$ be a compound resolution of 
$(M_k^{\mrm{l}},  M_k^{\mrm{r}})$ over $\til{B}_k / \til{A}_k$, 
and let  
$\bsym{\eta} : \bsym{P}_0 \to \bsym{P}_1$ 
be a compound morphism above 
$(\th^{\mrm{l}},  \th^{\mrm{r}})$ and $\til{w} / \til{v}$. 
Then the diagram 
\[ \UseTips \xymatrix @C=16ex @R=6ex {
\opn{Rect}_{B_0 / A_0}^{\til{B}_0 / \til{A}_0}(M_0^{\mrm{l}}, M_0^{\mrm{r}})
\ar[d]_{ \opn{rect}(\bsym{P}_0) }
\ar[r]^{ \opn{Rect}_{w / v}^{\til{w} / \til{v}}
(\th^{\mrm{l}}, \th^{\mrm{r}}) }
&
\opn{Rect}_{B_1 / A_1}^{\til{B}_1 / \til{A}_1}(M_1^{\mrm{l}}, M_1^{\mrm{r}})
\ar[d]^{ \opn{rect}(\bsym{P}_1) }
\\
\opn{Rect}_{B_0 / A_0}^{\til{B}_0 / \til{A}_0}(\bsym{P}_0) 
\ar[r]^{ \opn{Rect}_{w / v}^{\til{w} / \til{v}}(\bsym{\eta}) }
&
\opn{Rect}_{B_1 / A_1}^{\til{B}_1 / \til{A}_1}(\bsym{P}_1) 
} \]
of morphisms in $\cat{D}(B_1^{\mrm{ce}})$ is commutative.
\end{enumerate} 
\end{prop}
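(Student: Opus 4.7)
The plan is to first establish uniqueness, then existence via a distinguished choice of compound data, and finally verify condition $(\dag\dag)$ for arbitrary compound data by appealing to the composition law.

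Uniqueness is immediate: the vertical arrows $\opn{rect}(\bsym{P}_0)$ and $\opn{rect}(\bsym{P}_1)$ in the diagram of $(\dag\dag)$ are isomorphisms by Proposition \ref{prop:915}, so the condition forces
\[ \opn{Rect}_{w/v}^{\til{w}/\til{v}}(\th^{\mrm{l}}, \th^{\mrm{r}})
= \opn{rect}(\bsym{P}_1)^{-1} \circ \opn{Rect}_{w/v}^{\til{w}/\til{v}}(\bsym{\eta}) \circ \opn{rect}(\bsym{P}_0) . \]
For existence, I would use the reference compound resolutions $\bsym{P}_0^{\star}$ and $\bsym{P}_1^{\star}$ that are fixed at the start of the proof of Proposition \ref{prop:915}, for which $\opn{rect}(\bsym{P}_k^{\star})$ is the identity by construction. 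By Lemma \ref{lem:915} there exists a compound morphism $\bsym{\eta}^{\star} : \bsym{P}_0^{\star} \to \bsym{P}_1^{\star}$ above $(\th^{\mrm{l}}, \th^{\mrm{r}})$ and $\til{w} / \til{v}$; set
\[ \opn{Rect}_{w/v}^{\til{w}/\til{v}}(\th^{\mrm{l}}, \th^{\mrm{r}})
:= \opn{Rect}_{w/v}^{\til{w}/\til{v}}(\bsym{\eta}^{\star}) . \]
Then $(\dag\dag)$ holds tautologically for the triple $(\bsym{P}_0^{\star}, \bsym{P}_1^{\star}, \bsym{\eta}^{\star})$.

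The main step is verifying $(\dag\dag)$ for an arbitrary triple $(\bsym{P}_0, \bsym{P}_1, \bsym{\eta})$. Here I would invoke Lemma \ref{lem:915} twice more to obtain compound morphisms $\bsym{\epsilon}_k : \bsym{P}_k^{\star} \to \bsym{P}_k$ above the respective identities; by the construction of $\opn{rect}$ in the proof of Proposition \ref{prop:915}, we may arrange that $\opn{rect}(\bsym{P}_k)$ equals $\opn{Rect}_{\mrm{id}/\mrm{id}}^{\mrm{id}/\mrm{id}}(\bsym{\epsilon}_k)$. Both $\bsym{\eta} \circ \bsym{\epsilon}_0$ and $\bsym{\epsilon}_1 \circ \bsym{\eta}^{\star}$ (or more formally, any chosen compound morphism $\bsym{P}_0^{\star} \to \bsym{P}_1$ above the common data) are compound morphisms above $(\th^{\mrm{l}}, \th^{\mrm{r}})$ and $\til{w}/\til{v}$, so Lemma \ref{lem:919} applied to each of the two factorizations, combined with Lemma \ref{lem:916} to identify the two outcomes, yields
\[ \opn{Rect}_{w/v}^{\til{w}/\til{v}}(\bsym{\eta}) \circ \opn{rect}(\bsym{P}_0)
= \opn{rect}(\bsym{P}_1) \circ \opn{Rect}_{w/v}^{\til{w}/\til{v}}(\bsym{\eta}^{\star}) , \]
which is precisely condition $(\dag\dag)$ for the given triple.

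The argument is essentially formal; the main obstacle is not mathematical depth but careful bookkeeping of compound morphisms above various identities and compositions. The proof rests on three ingredients established earlier in this section: existence of enough compound morphisms (Lemma \ref{lem:915}), their independence of the specific choice (Lemma \ref{lem:916}), and compatibility with composition (Lemma \ref{lem:919}). Once these are in place, Proposition \ref{prop:916} is an exercise in transporting isomorphisms across a commutative cube.
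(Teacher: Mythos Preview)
Your proposal is correct and follows essentially the same approach as the paper's proof: fix reference compound resolutions and a compound morphism between them to define the map, then for arbitrary data use auxiliary compound morphisms over the identities together with Lemmas \ref{lem:915}, \ref{lem:916}, \ref{lem:919} and condition $(\dag)$ of Proposition \ref{prop:915} to show the square commutes. The only cosmetic difference is that you tie the reference resolutions $\bsym{P}_k^{\star}$ to those chosen in the proof of Proposition \ref{prop:915} (so that $\opn{rect}(\bsym{P}_k^{\star})$ is literally the identity), whereas the paper picks fresh ``basic'' resolutions $\bsym{P}_k^{\mrm{bs}}$ and inverts $\opn{rect}(\bsym{P}_k^{\mrm{bs}})$; this is a harmless simplification.
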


\begin{proof}
Choose a compound resolution $\bsym{P}_k^{\mrm{bs}}$ of  
$(M_k^{\mrm{l}},  M_k^{\mrm{r}})$ over $\til{B}_k / \til{A}_k$, 
for $k = 0, 1$. Then choose a compound morphism 
$\bsym{\eta}^{\mrm{bs}} : \bsym{P}^{\mrm{bs}}_0 \to \bsym{P}^{\mrm{bs}}_1$.
(The superscript ``bs'' stands for ``basic''.)
This can be done by Lemma \ref{lem:915}. 
Let 
$\opn{Rect}_{w / v}^{\til{w} / \til{v}}(\th^{\mrm{l}}, \th^{\mrm{r}})$
be the the unique morphism for which condition ($\dag \dag$) holds, 
with respect to these choices.

We have to prove that condition ($\dag \dag$) holds for an arbitrary choice of 
compound resolutions $\bsym{P}_k$ and a compound morphism 
$\bsym{\eta}$. Let us choose compound morphisms
$\bsym{\eta}_k :  \bsym{P}^{\mrm{bs}}_k \to \bsym{P}_k$
above $(\opn{id}_{M^{\mrm{l}}}, \opn{id}_{M^{\mrm{r}}})$
and
$\opn{id}_{\til{B}_k} / \opn{id}_{\til{A}_k}$. 
Consider the following diagram in $\cat{D}(B_1^{\mrm{ce}})$.

\[ \UseTips \xymatrix @C=16ex @R=8ex {
\opn{Rect}_{B_0 / A_0}^{\til{B}_0 / \til{A}_0}(M_0^{\mrm{l}}, M_0^{\mrm{r}})
\ar[d]^{ \opn{rect}(\bsym{P}^{\mrm{bs}}_0) }
\ar[r]^{ \opn{Rect}_{w / v}^{\til{w} / \til{v}}
(\th^{\mrm{l}}, \th^{\mrm{r}}) }
\ar @/_4.5em/ [dd]_(0.7){ \opn{rect}(\bsym{P}_0) }
&
\opn{Rect}_{B_1 / A_1}^{\til{B}_1 / \til{A}_1}(M_1^{\mrm{l}}, M_1^{\mrm{r}})
\ar[d]_{ \opn{rect}(\bsym{P}^{\mrm{bs}}_1) }
\ar @/^4.5em/ [dd]^(0.7){ \opn{rect}(\bsym{P}_1) }
\\
\opn{Rect}_{B_0 / A_0}^{\til{B}_0 / \til{A}_0}(\bsym{P}^{\mrm{bs}}_0) 
\ar[r]^{ \opn{Rect}_{w / v}^{\til{w} / \til{v}}(\bsym{\eta}^{\mrm{bs}}) }
\ar[d]^{ \opn{Rect}_{\mrm{id} / \mrm{id}}^{\mrm{id} / \mrm{id}} (\bsym{\eta}_0) 
}
&
\opn{Rect}_{B_1 / A_1}^{\til{B}_1 / \til{A}_1}(\bsym{P}^{\mrm{bs}}_1)
\ar[d]_{ \opn{Rect}_{\mrm{id} / \mrm{id}}^{\mrm{id} / \mrm{id}}(\bsym{\eta}_1)}
\\
\opn{Rect}_{B_0 / A_0}^{\til{B}_0 / \til{A}_0}(\bsym{P}_0) 
\ar[r]^{ \opn{Rect}_{w / v}^{\til{w} / \til{v}}(\bsym{\eta}) }
&
\opn{Rect}_{B_1 / A_1}^{\til{B}_1 / \til{A}_1}(\bsym{P}_1)
} \]

\noindent 
The top square is commutative by definition. The bottom square is commutative 
by Lemmas \ref{lem:915} and \ref{lem:919}. The half-moons are commutative by 
condition ($\dag$) of Proposition \ref{prop:915}. Therefore the outer paths are 
equal; and this is what we had to prove. 
\end{proof}

\begin{prop} \label{prop:940}
In the situation of Setup \tup{\ref{set:940}}, with $k = 0, 1, 2$,
the diagram 
\[ \UseTips \xymatrix @C=18ex @R=8ex {
\opn{Rect}_{B_0 / A_0}^{\til{B}_0 / \til{A}_0}(M_0^{\mrm{l}}, M_0^{\mrm{r}})
\ar[dr]_{ \opn{Rect}_{(w_1 / v_1) \circ (w_2 / v_2)}
^{(\til{w}_1 / \til{v}_1) \circ (\til{w}_2 / \til{v}_2)}
(\th_2^{\mrm{l}} \circ \th_1^{\mrm{l}}, \th_2^{\mrm{r}}  \circ 
\th_1^{\mrm{r}}) \quad \quad \quad \quad }
\ar[r]^{ \opn{Rect}_{w_1 / v_1}^{\til{w}_1 / \til{v}_1}
(\th_1^{\mrm{l}}, \th_1^{\mrm{r}}) }
&
\opn{Rect}_{B_1 / A_1}^{\til{B}_1 / \til{A}_1}(M_1^{\mrm{l}}, M_1^{\mrm{r}})
\ar[d]^{ \opn{Rect}_{w_2 / v_2}^{\til{w}_2 / \til{v}_2}
(\th_2^{\mrm{l}}, \th_2^{\mrm{r}}) }
\\
&
\opn{Rect}_{B_2 / A_2}^{\til{B}_2 / \til{A}_2}(M_2^{\mrm{l}}, M_2^{\mrm{r}})
} \]
of morphisms in $\cat{D}(B_2^{\mrm{ce}})$ is commutative.

If $B_1 / A_1 = B_0 / A_0$, 
$\til{B}_1 / \til{A}_1 = \til{B}_0 / \til{A}_0$,
$(M^{\mrm{l}}_1, M^{\mrm{r}}_1) = (M^{\mrm{l}}_0, M^{\mrm{r}}_0)$, 
and the morphisms 
$w_1 / v_1$, 
$\til{w}_1 / \til{v}_1$ and 
$(\th_1^{\mrm{l}}, \th_1^{\mrm{r}})$
are the identity automorphisms, then 
$\opn{Rect}_{w_1 / v_1}^{\til{w}_1 / \til{v}_1}
(\th_1^{\mrm{l}}, \th_1^{\mrm{r}})$
is the identity automorphism of
$\opn{Rect}_{B_0 / A_0}^{\til{B}_0 / \til{A}_0}(M_0^{\mrm{l}}, M_0^{\mrm{r}})$. 
\end{prop}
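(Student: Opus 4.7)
The plan is to deduce both assertions of Proposition \ref{prop:940} from the defining condition ($\dag\dag$) of Proposition \ref{prop:916}, by reducing them to the corresponding statements about compound resolutions, which are Lemmas \ref{lem:919} and \ref{lem:925}.

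For the composition assertion, I would start by choosing, for each $k=0,1,2$, a compound resolution $\bsym{P}_k$ of $(M_k^{\mrm{l}}, M_k^{\mrm{r}})$ over $\til{B}_k/\til{A}_k$; then, using Lemma \ref{lem:915}, choose compound morphisms $\bsym{\eta}_1 : \bsym{P}_0 \to \bsym{P}_1$ above $(\th_1^{\mrm{l}}, \th_1^{\mrm{r}})$ and $\til{w}_1/\til{v}_1$, and $\bsym{\eta}_2 : \bsym{P}_1 \to \bsym{P}_2$ above $(\th_2^{\mrm{l}}, \th_2^{\mrm{r}})$ and $\til{w}_2/\til{v}_2$, and also a compound morphism $\bsym{\eta}_{0,2} : \bsym{P}_0 \to \bsym{P}_2$ above the composition data. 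Condition ($\dag\dag$) then converts the triangle in $\cat{D}(B_2^{\mrm{ce}})$ we want to prove commutative into the outer paths of a diagram whose inner faces are the three instances of ($\dag\dag$) applied to $\bsym{\eta}_1$, $\bsym{\eta}_2$ and $\bsym{\eta}_{0,2}$. After pasting these squares together, commutativity of the desired triangle is equivalent to the equality
\[ \opn{Rect}_{(w_1/v_1)\circ(w_2/v_2)}^{(\til{w}_1/\til{v}_1)\circ(\til{w}_2/\til{v}_2)}(\bsym{\eta}_{0,2})
= \opn{Rect}_{w_2/v_2}^{\til{w}_2/\til{v}_2}(\bsym{\eta}_2) \circ
\opn{Rect}_{w_1/v_1}^{\til{w}_1/\til{v}_1}(\bsym{\eta}_1) , \]
which is exactly Lemma \ref{lem:919}.

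For the identity assertion, I would pick any compound resolution $\bsym{P}_0$ of $(M_0^{\mrm{l}}, M_0^{\mrm{r}})$ over $\til{B}_0/\til{A}_0$, and take the tautological compound morphism $\bsym{\eta} : \bsym{P}_0 \to \bsym{P}_0$ whose data $\til{Q}_0^{\mrm{l}}, \til{Q}_0^{\mrm{r}}, \ga_0^{\mrm{l}}, \ga_0^{\mrm{r}}, \til{\th}^{\mrm{l}}, \til{\th}^{\mrm{r}}, \eta$ are all identities (with K-projective approximations of $\til{P}_0^{\mrm{l}}, \til{P}_0^{\mrm{r}}$ chosen equal to $\til{P}_0^{\mrm{l}}, \til{P}_0^{\mrm{r}}$ themselves if they are already K-projective, or via Proposition \ref{prop:460} in general). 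By Lemma \ref{lem:925}, $\opn{Rect}_{\mrm{id}/\mrm{id}}^{\mrm{id}/\mrm{id}}(\bsym{\eta})$ is the identity of $\opn{Rect}_{B_0/A_0}^{\til{B}_0/\til{A}_0}(\bsym{P}_0)$. Feeding this through the commutative square of condition ($\dag\dag$), in which both vertical arrows $\opn{rect}(\bsym{P}_0)$ are the same isomorphism, forces $\opn{Rect}_{\mrm{id}/\mrm{id}}^{\mrm{id}/\mrm{id}}(\opn{id}^{\mrm{l}}, \opn{id}^{\mrm{r}})$ to be the identity of $\opn{Rect}_{B_0/A_0}^{\til{B}_0/\til{A}_0}(M_0^{\mrm{l}}, M_0^{\mrm{r}})$.

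The main obstacle is purely bookkeeping: one must verify that the choices of compound morphisms $\bsym{\eta}_1$, $\bsym{\eta}_2$, and $\bsym{\eta}_{0,2}$ in the first part can all be made compatibly, and that the three separate applications of ($\dag\dag$) really paste to give the sought-after triangle without hidden sign or direction mismatches. This is handled by Lemma \ref{lem:916}, which asserts that $\opn{Rect}_{w/v}^{\til{w}/\til{v}}(\bsym{\eta})$ is independent of the choice of compound resolution of $(\th^{\mrm{l}}, \th^{\mrm{r}})$: thus the $\bsym{\eta}_{0,2}$ we pick need not be literally $\bsym{\eta}_2 \circ \bsym{\eta}_1$ (an object which, as the text notes, is only defined up to homotopy), and Lemma \ref{lem:919} supplies the remaining equality in $\cat{D}(B_2^{\mrm{ce}})$.
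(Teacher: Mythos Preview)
Your proposal is correct and follows essentially the same route as the paper: the paper's proof simply says that the triangular diagram is commutative by Lemmas \ref{lem:915} and \ref{lem:919} together with condition ($\dag\dag$) of Proposition \ref{prop:916}, and that the identity assertion holds by Lemma \ref{lem:925}. You have unpacked this in more detail (and correctly noted that Lemma \ref{lem:916} absorbs any ambiguity in the choice of $\bsym{\eta}_{0,2}$), but the argument is the same.
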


\begin{proof}
The triangular diagram is commutative by Lemmas \ref{lem:915} and \ref{lem:919},
with condition ($\dag \dag$) of Proposition \ref{prop:916}.
The assertion about the identity automorphisms is true by Lemma \ref{lem:925}.
\end{proof}

\begin{rem} \label{rem:950}
We could have defined the rectangle object directly as 
\[ \opn{Rect}_{B / A}^{\til{B} / \til{A}}(M^{\mrm{l}},  M^{\mrm{r}}) := 
\opn{RHom}_{\til{B}^{\mrm{en}}}(B, M^{\mrm{l}} \ot_{\til{A}}^{\mrm{L}}  
M^{\mrm{r}}) \in \cat{D}(B^{\mrm{ce}}) , \]
i.e.\ as the composition of these three functors: 
\[ \opn{For}_{} \times \opn{For} :
\cat{D}(B) \times \cat{D}(B^{\mrm{op}}) \to 
\cat{D}(\til{B}) \times \cat{D}(\til{B}^{\mrm{op}}) , \]
\[ - \ot_{\til{A}}^{\mrm{L}} - : 
\cat{D}(\til{B}) \times \cat{D}(\til{B}^{\mrm{op}}) \to 
\cat{D}(\til{B}^{\mrm{en}}) \]
and 
\[ \opn{RHom}_{\til{B}^{\mrm{en}}}(B, -) : \cat{D}(\til{B}^{\mrm{en}})
\to \cat{D}(B^{\mrm{ce}}) . \]
In other words, Proposition \ref{prop:915} would have become a definition. 

However, this approach would have made it hard to make precise sense of the 
morphism 
$\opn{Rect}_{w / v}^{\til{w} / \til{v}}(\th^{\mrm{l}}, \th^{\mrm{r}})$,
and to prove the functoriality result (Proposition \ref{prop:940}).
\end{rem}

\section{The Rectangle Operation}
\label{sec:rect-oper}

In this section we continue with the material of Section 
\ref{sec:pairs-DG-modules}, but with slightly different notation.
Recall that $\cat{PDGR}$ is the category of central pairs of DG rings, and 
$\cat{PDGR}^{\leq 0}$ is the full subcategory of nonpositive central pairs. 
See Definitions \ref{dfn:910} and \ref{dfn:911}.

The main result here is Theorem \ref{thm:840}. 
Throughout this section we work in the following setup.

\begin{setup} \label{set:930}
We are given:
\begin{enumerate}
\item A central morphism 
$w / v : B' / A' \to B / A$ in $\cat{PDGR}^{\leq 0}$
(namely a morphism in $\cat{PDGR}^{\leq 0}_{\vec{\mrm{ce}}}$, 
see Definition \ref{dfn:911}(3)).

\item K-flat resolutions 
$s / r : \til{B} / \til{A} \to B / A$
and 
$s' / r' : \til{B}' / \til{A}' \to B' / A'$
in $\cat{PDGR}^{\leq 0}$, and morphisms 
\[ \til{w}_0 / \til{v}_0 , \, \til{w}_1 / \til{v}_1 : 
\til{B}' / \til{A}' \to \til{B} / \til{A} \]
in $\cat{PDGR}^{\leq 0}$ above $w / v$. 
See Definitions \ref{dfn:912}, \ref{dfn:913} and \ref{dfn:52}.

\item An object 
$(M^{\mrm{l}}, M^{\mrm{r}})$ in 
$\cat{D}(B) \times \cat{D}(B^{\mrm{op}})$,
an  object 
$(M'^{\, \mrm{l}}, M'^{\, \mrm{r}})$ in 
$\cat{D}(B') \times \cat{D}(B'^{\mrm{\, op}})$,
and a morphism 
\[ (\th^{\mrm{l}},  \th^{\mrm{r}}) : (M^{\mrm{l}}, M^{\mrm{r}}) \to 
(M'^{\, \mrm{l}}, M'^{\, \mrm{r}}) \]
in 
$\cat{D}(B') \times \cat{D}(B'^{\mrm{\, op}})$.
\end{enumerate}
\end{setup}

The DG ring input is depicted in the following commutative diagram in the 
category $\cat{PDGR}^{\leq 0}$, for $i = 0, 1$. 

\[ \UseTips \xymatrix @C=8ex @R=6ex {
\til{B}' / \til{A}'
\ar[r]^{ \til{w}_i / \til{v}_i }
\ar[d]_{ s' / r' }
&
\til{B} / \til{A}
\ar[d]^{ s / r }
\\
B' / A'
\ar[r]^{ w / v }
&
B / A
} \]

The four central homomorphisms, belonging to the four central pairs above, are
$u : A \to B$, $u' : A' \to B'$,
$\til{u} : \til{A} \to \til{B}$ and 
$\til{u}' : \til{A}' \to \til{B}'$.

From here until Lemma \ref{lem:827} (inclusive), we also assume the next 
condition. 

\begin{cond} \label{cond:930} \mbox{}
\begin{enumerate}
\rmitem{i} The homomorphism $\til{u}' : \til{A}' \to \til{B}'$ is 
noncommutative semi-free (Definition \ref{dfn:60}(2)). 
\rmitem{ii} The homomorphisms 
$\til{v}_0, \til{v}_1 : \til{A}' \to \til{A}$ are equal. 
\end{enumerate}
\end{cond}

In view of item (ii) in Condition \ref{cond:930}, we may write 
$\til{v} := \til{v}_0 = \til{v}_1$; this is a homomorphism 
$\til{v} : \til{A}' \to \til{A}$
in $\cat{DGR}^{\leq 0}_{\mrm{sc}}$,
lying above $v : A' \to A$. 
The DG ring input of Setup \ref{set:930} and Condition \ref{cond:930} can be 
summarized as the following commutative diagrams in the category 
$\cat{DGR}^{\leq 0}\centover \til{A}'$,
for $i = 0, 1$:

\[ \UseTips \xymatrix @C=5ex @R=5ex {
& 
\til{B}'
\ar[dr]^{w \circ s'}
\\
\til{A}'
\ar[rr]
\ar[ur]^{\til{u}'} 
\ar[dr]_{\til{u} \circ \til{v}}
& 
&
B
\\
&
\til{B}
\ar[ur]_{s} 
}
\qquad  \qquad 
\xymatrix @C=5ex @R=5ex {
& 
\til{B}' 
\ar[dr]^{w \circ s'}
\ar[dd]_{\til{w}_i} 
\\
\til{A}'
\ar[ur]^{\til{u}'}
\ar[dr]_{\til{u} \circ \til{v}}
& 
&
B
\\
&
\til{B}
\ar[ur]_{s}
} \]

Recall the cylinder DG ring from Definition \ref{dfn:700}. 

\begin{lem} \label{lem:640}
Under Condition \tup{\ref{cond:930}}, there is a homomorphism 
\[ \til{w}_{\mrm{cyl}} : \til{B}' \to \opn{Cyl}(\til{B}) \] 
in $\cat{DGR} \centover \til{A}'$, such that diagram \tup{(\ref{eqn:359})}
is commutative for $i = 0, 1$.
\end{lem}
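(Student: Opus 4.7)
The strategy is to first produce an $\til{A}'$-linear DG ring homotopy $\til{w}_0 \twoto \til{w}_1$ via Theorem \ref{thm:300}, and then repackage it as a DG ring homomorphism into $\opn{Cyl}(\til{B})$ via Proposition \ref{prop:300}.

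To apply Theorem \ref{thm:300}, I would view the common composite $\til{A}' \to B$ in two ways: as $\til{A}' \xar{\til{u}\circ\til{v}} \til{B} \xar{s} B$ and as $\til{A}' \xar{\til{u}'} \til{B}' \xar{w\circ s'} B$, both in $\cat{DGR}^{\leq 0} \centover \til{A}'$. By Setup \ref{set:930}, the map $s$ is a surjective quasi-isomorphism; by Condition \ref{cond:930}(i), $\til{u}'$ is noncommutative semi-free; and under Condition \ref{cond:930}(ii) (writing $\til{v} := \til{v}_0 = \til{v}_1$), both $\til{w}_0$ and $\til{w}_1$ satisfy $\til{w}_i \circ \til{u}' = \til{u} \circ \til{v}$ and $s \circ \til{w}_i = w \circ s'$. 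Hence Theorem \ref{thm:300} applies, producing an $\til{A}'$-linear DG ring homotopy $\ga : \til{w}_0 \twoto \til{w}_1$ with $s \circ \ga = 0$.

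Next, Proposition \ref{prop:300} converts $\ga$ into a DG ring homomorphism
\[ \til{w}_{\mrm{cyl}} : \til{B}' \to \opn{Cyl}(\til{B}) , \qquad
\til{w}_{\mrm{cyl}}(b') = \bmat{\til{w}_0(b') & \xi \cd \ga(b') \\ 0 & \til{w}_1(b')} . \]
To see that $\til{w}_{\mrm{cyl}}$ is a morphism in $\cat{DGR} \centover \til{A}'$, I need to verify that the composite $\til{w}_{\mrm{cyl}} \circ \til{u}'$ is central. Applying the $\til{w}_0$-$\til{w}_1$-derivation identity to $1 = 1 \cd 1$ forces $\ga(1) = 2 \, \ga(1)$, so $\ga(1) = 0$. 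Combined with the $\til{A}'$-linearity of $\ga$, this yields $\ga \circ \til{u}' = 0$, since for any $a \in \til{A}'$ we have $\ga(\til{u}'(a)) = \ga(\til{u}'(a) \cd 1) = \pm (\til{u} \circ \til{v})(a) \cd \ga(1) = 0$. Hence $\til{w}_{\mrm{cyl}} \circ \til{u}' = \ep \circ \til{u} \circ \til{v}$, and this is central in $\opn{Cyl}(\til{B})$ by Proposition \ref{prop:700}(2). The commutativity of diagram (\ref{eqn:359}) reduces to the identities $\eta_i \circ \til{w}_{\mrm{cyl}} = \til{w}_i$ for $i = 0, 1$, which are immediate from the matrix formula for $\til{w}_{\mrm{cyl}}$ and formula (\ref{eqn:703}) for $\eta_i$.

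The main work is entirely absorbed into the invocation of Theorem \ref{thm:300}; the rest is a direct assembly of definitions. Condition \ref{cond:930} is exactly what makes Theorem \ref{thm:300} available — the noncommutative semi-freeness to construct the homotopy $\ga$ generator by generator, and the equality $\til{v}_0 = \til{v}_1$ so that $\til{w}_0$ and $\til{w}_1$ restrict to the same homomorphism on $\til{A}'$, which is a prerequisite for a $\til{w}_0$-$\til{w}_1$-derivation to be $\til{A}'$-linear.
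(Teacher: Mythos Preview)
Your approach is essentially the same as the paper's: invoke Theorem \ref{thm:300} to produce an $\til{A}'$-linear DG ring homotopy $\ga : \til{w}_0 \twoto \til{w}_1$ with $s \circ \ga = 0$, then apply Proposition \ref{prop:300} to package it as $\til{w}_{\mrm{cyl}}$. Your verification that $\til{w}_{\mrm{cyl}}$ lies in $\cat{DGR} \centover \til{A}'$ (via $\ga \circ \til{u}' = 0$) is a nice touch that the paper leaves implicit.

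There is, however, a small gap. You claim that the commutativity of diagram (\ref{eqn:359}) ``reduces to the identities $\eta_i \circ \til{w}_{\mrm{cyl}} = \til{w}_i$.'' That covers only the top half-moon. The diagram also contains the left square
\[
\opn{Cyl}(s) \circ \til{w}_{\mrm{cyl}} = (\ep \circ w) \circ s' ,
\]
and this is \emph{not} automatic: computing $\opn{Cyl}(s)(\til{w}_{\mrm{cyl}}(b'))$ from the matrix formula gives
\[
\bmat{s(\til{w}_0(b')) & \xi \cd s(\ga(b')) \\ 0 & s(\til{w}_1(b'))}
= \bmat{w(s'(b')) & \xi \cd s(\ga(b')) \\ 0 & w(s'(b'))} ,
\]
which equals $(\ep \circ w)(s'(b'))$ precisely because $s \circ \ga = 0$. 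You do record this property of $\ga$ when citing Theorem \ref{thm:300}, but you never use it; it is exactly here that it earns its keep. The right square $s \circ \eta_i = \eta_i \circ \opn{Cyl}(s)$ is indeed trivial functoriality of $\opn{Cyl}$. Once you add this one-line check of the left square, your proof matches the paper's.
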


\begin{equation} \label{eqn:359}
\UseTips \xymatrix @C=8ex @R=6ex {
\til{B}'
\ar[r]^(0.4){ \til{w}_{\mrm{cyl}} }
\ar[d]_{s'}
\ar@(u,u)[rr]^{ \til{w}_i }
&
\opn{Cyl}(\til{B})
\ar[r]^(0.6){ \eta_i }
\ar[d]_{ \opn{Cyl}(s) }
&
\til{B}
\ar[d]_{s}
\\
B'
\ar[r]^(0.4){\ep \circ w}
&
\opn{Cyl}(B)
\ar[r]^(0.6){\eta_i}
&
B
}
\end{equation}
 
\begin{proof}
We know that $\til{u}' : \til{A}' \to \til{B}'$ is noncommutative semi-free, 
and $s$ is a surjective quasi-iso\-morphism. So Theorem \ref{thm:300} applies.
We obtain an $\til{A}'$-linear DG ring homotopy 
$\ga : \til{w}_0 \twoto \til{w}_1$ such that $s \circ \ga = 0$.
By Proposition \ref{prop:300} we deduce the 
existence of a DG ring homomorphism 
$\til{w}_{\mrm{cyl}} : \til{B}' \to \opn{Cyl}(\til{B})$. 
The formula, for $b \in \til{B}'$, is 
\[ \til{w}_{\mrm{cyl}}(b) = 
\bmat{\til{w}_0(b) & \ \xi \cd \ga(b) \\[0.3em] 0 & \til{w}_1(b)} 
. \]
This shows the commutativity of the half-moon in the diagram. 
Next, because the element $\xi$ commutes with the homomorphism $s$, and 
because $s \circ \ga = 0$, we have  
\[ \opn{Cyl}(s) (\til{w}_{\mrm{cyl}}(b)) = 
\bmat{ s(\til{w}_0(b)) & \ s(\xi \cd \ga(b)) 
\\[0.3em] 0 & s(\til{w}_1(b))} =  
\bmat{ w(s'(b)) & 0 \\ 0 & w(s'(b)) } . \]
We deduce the commutativity of the first square in the diagram. 
The second square is trivially commutative.
\end{proof}

There are central homomorphisms 
$\ep \circ u : A \to \opn{Cyl}(B)$
and 
$\ep \circ \til{u} : \til{A} \to \opn{Cyl}(\til{B})$, 
and thus two more central pairs
$\opn{Cyl}(B) / A$ and $\opn{Cyl}(\til{B}) / \til{A}$. 
These central pairs fit into a commutative diagram 

\begin{equation} \label{eqn:942}
\UseTips \xymatrix @C=10ex @R=8ex {
\til{B}' / \til{A}'
\ar[r]^(0.45){ \til{w}_{\mrm{cyl}} / \til{v} }
\ar[d]_{ s' / r' }
\ar@(u,u)[rr]^{ \til{w}_i / \til{v} }
&
\opn{Cyl}(\til{B}) / \til{A}
\ar[r]^(0.55){ \eta_i / \opn{id} }
\ar[d]_{ \opn{Cyl}(s) / r }
&
\til{B} / \til{A}
\ar[d]_{ s / r }
\\
B' / A'
\ar[r]^(0.45){ (\ep \circ w) / v }
\ar@(d,d)[rr]_{ w / v }
&
\opn{Cyl}(B) / A
\ar[r]^(0.55){ \eta_i / \opn{id} }
&
B / A
} 
\end{equation}

\medskip \noindent
in the category $\cat{PDGR}$, for $i = 0, 1$. Moreover, the morphisms between 
the pairs in the lower row are central, namely they are in 
$\cat{PDGR}_{\vec{\mrm{ce}}}$. 
The pairs in the upper row are K-flat, and the vertical morphisms are 
quasi-isomorphisms. 

Consider the DG module 
$\opn{Cyl}(M^{\mrm{l}}) \in \cat{M}(\opn{Cyl}(B))$. 
Define homomorphisms
\[ \eta_0^{\vee} : M^{\mrm{l}} \to \opn{Cyl}(M^{\mrm{l}}) , \quad
\eta_0^{\vee}(m) := \bmat{ m & 0 \\ 0 & 0 } \]
and 
\[ \eta_1^{\vee} : M^{\mrm{l}} \to \opn{Cyl}(M^{\mrm{l}}) , \quad
\eta_1^{\vee}(m) := \bmat{ 0 & 0 \\ 0 & m } . \]
Then 
$\eta_i^{\vee} : \opn{For}_{\eta_i}(M^{\mrm{l}}) \to \opn{Cyl}(M^{\mrm{l}})$
is a homomorphism in $\cat{M}(\opn{Cyl}(B))$. 
Next, define a homomorphism
\[ \ep^{\vee} : \opn{Cyl}(M^{\mrm{l}}) \to M^{\mrm{l}} , \quad
\ep^{\vee}(\bmat{ m_0 & n \\ 0 & m_1 }) :=  m_0 + m_1 . \]
Then 
$\ep^{\vee} : \opn{For}_{\ep}(\opn{Cyl}(M^{\mrm{l}})) \to M^{\mrm{l}}$
is a homomorphism in $\cat{M}(B)$. 
And 
$\ep^{\vee} \circ \eta_i^{\vee} = \opn{id}$. 
All this can be done for 
$M^{\mrm{r}}$ too. 

Consider the next commutative diagram, for $i = 0, 1$, 
in the category 
$\cat{D}(B') \times \cat{D}(B'^{\mrm{\, op}})$.

\begin{equation} \label{eqn:943}
 \UseTips \xymatrix @C=14ex @R=6ex {
(M'^{\, \mrm{l}}, M'^{\, \mrm{r}})
&
\bigl( \opn{Cyl}(M^{\mrm{l}}), \opn{Cyl}(M^{\mrm{r}}) \bigr)
\ar[l]_(0.55){ ( \th^{\mrm{l}} \circ \ep^{\vee}, 
\th^{\mrm{r}} \circ \ep^{\vee}) } 
&
(M^{\mrm{l}}, M^{\mrm{r}})
\ar[l]_(0.4){ (\eta_i^{\vee}, \eta_i^{\vee}) } 
\ar@(d,d)[ll]^{ (\th^{\mrm{l}},  \th^{\mrm{r}}) }
} 
\end{equation}

\begin{lem} \label{lem:827}
Under Condition \tup{\ref{cond:930}}, there is equality 
\begin{equation} \label{eqn:944}
\opn{Rect}_{w / v}^{\til{w}_0 / \til{v}}(\th^{\mrm{l}},  \th^{\mrm{r}}) 
= 
\opn{Rect}_{w / v}^{\til{w}_1 / \til{v}}(\th^{\mrm{l}},  \th^{\mrm{r}})
\end{equation}
of morphisms 
\[ \opn{Rect}_{B / A}^{\til{B} / \til{A}}(M^{\mrm{l}}, M^{\mrm{r}}) \to
\opn{Rect}_{B' / A'}^{\til{B}' / \til{A}'}(M'^{\, \mrm{l}}, M'^{\, \mrm{r}}) \]
in $\cat{D}(B'^{\mrm{\, ce}})$.
\end{lem}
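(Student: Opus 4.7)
The plan is to collapse the two homomorphisms $\til{w}_0, \til{w}_1$ into a single cylinder homomorphism, and then use the functoriality of $\opn{Rect}$ to reduce everything to a universal statement about the cylinder.

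First, I would invoke Lemma~\ref{lem:640} to produce a homomorphism $\til{w}_{\mrm{cyl}}: \til{B}' \to \opn{Cyl}(\til{B})$ in $\cat{DGR} \centover \til{A}'$ with $\eta_i \circ \til{w}_{\mrm{cyl}} = \til{w}_i$ for both $i \in \{0,1\}$, together with the corresponding morphism of central pairs $\til{w}_{\mrm{cyl}} / \til{v} : \til{B}'/\til{A}' \to \opn{Cyl}(\til{B})/\til{A}$ fitting into diagram~(\ref{eqn:942}). This rests on Theorem~\ref{thm:300}, which supplies a DG ring homotopy $\til{w}_0 \twoto \til{w}_1$ thanks to the noncommutative semi-freeness of $\til{u}'$ (Condition~\ref{cond:930}), packaged via Proposition~\ref{prop:300}. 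The module-level cylinder maps $\eta_i^{\vee}$ and $\ep^{\vee}$ from diagram~(\ref{eqn:943}) satisfy $\ep^{\vee} \circ \eta_i^{\vee} = \opn{id}_{M^{\mrm{l}}}$ and the analogous identity on the right.

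Second, I would apply Proposition~\ref{prop:940} (functoriality of rectangles under composition) to the two-step factorizations of the pair morphism $w/v$ through $\opn{Cyl}(B)/A$ and of the module morphism $(\th^{\mrm{l}}, \th^{\mrm{r}})$ through $(\opn{Cyl}(M^{\mrm{l}}), \opn{Cyl}(M^{\mrm{r}}))$. For each $i \in \{0,1\}$ this yields
\[
\opn{Rect}_{w/v}^{\til{w}_i/\til{v}}(\th^{\mrm{l}}, \th^{\mrm{r}}) \;=\; G \circ F_i ,
\]
where the cylinder-side factor
\[
G \;:=\; \opn{Rect}_{(\ep \circ w)/v}^{\til{w}_{\mrm{cyl}}/\til{v}} \bigl( \th^{\mrm{l}} \circ \ep^{\vee},\, \th^{\mrm{r}} \circ \ep^{\vee} \bigr)
\]
is manifestly independent of $i$, and
\[
F_i \;:=\; \opn{Rect}_{\eta_i/\opn{id}}^{\eta_i/\opn{id}}(\eta_i^{\vee}, \eta_i^{\vee})
\]
gathers all $i$-dependence. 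The lemma thus reduces to proving $F_0 = F_1$.

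Third, to establish $F_0 = F_1$, I would apply Proposition~\ref{prop:940} a second time, now to the identity-realizing compositions $\opn{id}_B = \eta_i \circ \ep$ on pairs (Proposition~\ref{prop:700}) and $\opn{id}_{M^{\mrm{l}}} = \ep^{\vee} \circ \eta_i^{\vee}$ on modules. This gives
\[
\opn{Rect}_{\ep/\opn{id}}^{\ep/\opn{id}}(\ep^{\vee}, \ep^{\vee}) \circ F_i \;=\; \opn{id}_{\opn{Rect}_{B/A}^{\til{B}/\til{A}}(M^{\mrm{l}}, M^{\mrm{r}})}
\]
for both $i$, exhibiting $F_0$ and $F_1$ as right inverses of the common morphism $H := \opn{Rect}_{\ep/\opn{id}}^{\ep/\opn{id}}(\ep^{\vee}, \ep^{\vee})$. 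Since $\ep/\opn{id}$ is a quasi-isomorphism of pairs, Lemma~\ref{lem:917} would deliver that $H$ is an isomorphism provided $(\ep^{\vee}, \ep^{\vee})$ is an isomorphism in the derived category, after which the right inverse is unique, forcing $F_0 = F_1$. The main obstacle is precisely this last verification; if the direct cohomological argument (using the splittings $\eta_i^{\vee}$ and the quasi-isomorphism of $\opn{Cyl}(\Z)$ with $\Z$) runs into characteristic-dependent trouble, the fallback is to argue directly at the compound resolution level: build compound morphisms $\bsym{\eta}_0, \bsym{\eta}_1$ above $(\th^{\mrm{l}}, \th^{\mrm{r}})$ and $\til{w}_0/\til{v}, \til{w}_1/\til{v}$ respectively, and use the cylinder homotopy implicit in $\til{w}_{\mrm{cyl}}$ together with Proposition~\ref{prop:461}(4) to exhibit an explicit chain homotopy between the two K-injective-level maps $\til{I}_0 \to \til{I}_1$ representing $\bsym{\eta}_0$ and $\bsym{\eta}_1$, so they induce the same $\opn{Rect}$ morphism.
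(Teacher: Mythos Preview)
Your approach is essentially the same as the paper's: factor through the cylinder via Lemma~\ref{lem:640}, write $\opn{Rect}_{w/v}^{\til{w}_i/\til{v}}(\th^{\mrm{l}},\th^{\mrm{r}}) = G \circ F_i$ with $G$ independent of $i$, and then reduce to $F_0 = F_1$. The paper's Step~1 (the ``degenerate'' case) is precisely your computation $H \circ F_i = \opn{id}$, since in that case the paper's $G$ specializes to your $H = \opn{Rect}_{\ep/\opn{id}}^{\ep/\opn{id}}(\ep^{\vee},\ep^{\vee})$.

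There is one genuine gap in your final step. You propose to show $H$ is an isomorphism by applying Lemma~\ref{lem:917}, which requires $(\ep^{\vee},\ep^{\vee})$ to be an isomorphism in $\cat{D}(B)\times\cat{D}(B^{\mrm{op}})$. But $\ep^{\vee}$ is \emph{not} a quasi-isomorphism in general: from the formulas one computes $\ep^{\vee}\circ\ep = 2\cdot\opn{id}_{M^{\mrm{l}}}$, so on cohomology $\opn{H}(\ep^{\vee})$ is multiplication by $2$ composed with $\opn{H}(\ep)^{-1}$, which can fail to be invertible (e.g.\ whenever $\opn{H}(M^{\mrm{l}})$ has $2$-torsion). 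Your suspicion about ``characteristic-dependent trouble'' is exactly right, and the proposed fallback through explicit compound-resolution homotopies is unnecessary.

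The fix is a one-line reorientation that is also what the paper does: show instead that each $F_i$ is an isomorphism, and then conclude $F_0 = H^{-1} = F_1$ from $H\circ F_i = \opn{id}$. Lemma~\ref{lem:917} applies cleanly to $F_i = \opn{Rect}_{\eta_i/\opn{id}}^{\eta_i/\opn{id}}(\eta_i^{\vee},\eta_i^{\vee})$: the pair morphism $\eta_i/\opn{id}$ is a quasi-isomorphism by Proposition~\ref{prop:700}, and $\eta_i^{\vee}$ is a quasi-isomorphism because $\eta_i\circ\eta_i^{\vee} = \opn{id}_{M^{\mrm{l}}}$ (check directly from the matrix formulas) while $\eta_i$ is a quasi-isomorphism by Proposition~\ref{prop:701}. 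With this correction your argument is complete and matches the paper's.
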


\begin{proof}
Step 1. Here we assume that
$B' / A' = B / A$, 
$\til{B}' / \til{A}' = \til{B} / \til{A}$, 
the morphisms $w / v$,  $\til{w}_0 / \til{v}$ and $\til{w}_1 / \til{v}$
are the identity automorphisms,
and there is equality $s' / r' = s / r$.  
Also we assume that 
$(M'^{\, \mrm{l}}, M'^{\, \mrm{r}}) = (M^{\mrm{l}}, M^{\mrm{r}})$, 
and the morphism 
$(\th^{\mrm{l}},  \th^{\mrm{r}})$ is the identity automorphism. 
We may also assume that 
$\til{w}_{\mrm{cyl}} = \ep \circ \til{w}$,
but that is not important. 
By functoriality (Proposition \ref{prop:940}), we have
\begin{equation} \label{eqn:940}
\opn{Rect}_{\ep \circ w / v}^{\til{w}_{\mrm{cyl}} / \til{v}}
(\th^{\mrm{l}} \circ \ep^{\vee},  \th^{\mrm{r}} \circ \ep^{\vee})
\circ 
\opn{Rect}_{\eta_i / \opn{id}}^{\eta_i / \opn{id}}
(\eta_i^{\vee}, \eta_i^{\vee})
= \opn{Rect}_{w / v}^{\til{w}_i / \til{v}}(\th^{\mrm{l}},  \th^{\mrm{r}}) ,
\end{equation}
in $\cat{D}(B^{\mrm{ce}})$, for $i = 0, 1$. 
Compare to diagrams (\ref{eqn:942}) and (\ref{eqn:943}). 
By Lemma \ref{lem:917}, all three morphisms above are isomorphisms.
Some caution is needed here: the morphism 
$\opn{Rect}_{\eta_i / \opn{id}}^{\eta_i / \opn{id}}
(\eta_i^{\vee}, \eta_i^{\vee})$
is actually in the category 
$\cat{D}(\opn{Cyl}(B)^{\mrm{ce}})$, and the forgetful functor 
\[ \opn{For}_{\ep^{\mrm{ce}}} : \cat{D}(\opn{Cyl}(B)^{\mrm{ce}}) \to 
\cat{D}(B^{\mrm{ce}}) \]
has been hidden. But but we know (see Proposition \ref{prop:700}) that the DG 
ring homomorphism 
$\ep : B \to \opn{Cyl}(B)$ induces an isomorphism 
$\ep^{\mrm{ce}} :  B^{\mrm{ce}} \to \opn{Cyl}(B)^{\mrm{ce}}$.
Therefore the functor 
$\opn{For}_{\ep^{\mrm{ce}}}$ is an equivalence. 

Since $\til{w}_1 = \til{w}_0$ here, formula (\ref{eqn:940}) for $i = 0, 1$
leads us to conclude that 
\begin{equation} \label{eqn:930}
\opn{Rect}_{\eta_1 / \opn{id}}^{\eta_1 / \opn{id}}
(\eta_1^{\vee}, \eta_1^{\vee}) = 
\opn{Rect}_{\eta_0 / \opn{id}}^{\eta_0 / \opn{id}} 
(\eta_0^{\vee}, \eta_0^{\vee}), 
\end{equation}
as isomorphisms 
\[ \opn{Rect}_{B / A}^{\til{B} / \til{A}}(M^{\mrm{l}}, M^{\mrm{r}}) \to
\opn{Rect}_{\opn{Cyl}(B) / A}^{\opn{Cyl}(\til{B}) / \til{A}}
\bigl( \opn{Cyl}(M^{\mrm{l}}), \opn{Cyl}(M^{\mrm{r}}) \bigr) \]
in $\cat{D}(\opn{Cyl}(B)^{\mrm{ce}})$.
 
\medskip \noindent 
Step 2. Now there are no special assumptions. We choose a homomorphism 
$\til{w}_{\mrm{cyl}}$ as in Lemma \ref{lem:640}.
By functoriality, we again have the equalities (\ref{eqn:940}), for 
$i = 0, 1$. Formula (\ref{eqn:930}) from part (1) applies here, because it does 
not depend on $B' / A'$. We deduce that (\ref{eqn:944}) holds. 
\end{proof}

Here is the key technical result of the paper (the noncommutative version). 

\begin{thm}[Homotopy Invariance] \label{thm:840}
In the situation of Setup \tup{\ref{set:930}}, there is equality 
\[ \opn{Rect}_{w / v}^{\til{w}_0 / \til{v}_0}(\th^{\mrm{l}},  \th^{\mrm{r}}) 
= 
\opn{Rect}_{w / v}^{\til{w}_1 / \til{v}_1}(\th^{\mrm{l}},  \th^{\mrm{r}}) \]
of morphisms 
\[ \opn{Rect}_{B / A}^{\til{B} / \til{A}}(M^{\mrm{l}}, M^{\mrm{r}}) \to
\opn{Rect}_{B' / A'}^{\til{B}' / \til{A}'}(M'^{\, \mrm{l}}, M'^{\, \mrm{r}}) \]
in $\cat{D}(B'^{\mrm{\, ce}})$.
\end{thm}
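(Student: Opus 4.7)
The plan is to reduce the general case to the special case handled in Lemma~\ref{lem:827}, namely to the situation where both parts of Condition~\ref{cond:930} hold. The reduction proceeds in two successive stages, stripping away the two conditions one at a time; Stage~2 will be the main obstacle.

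\textbf{Stage 1 (achieve Condition~\ref{cond:930}(i)).} By Theorem~\ref{thm:305}(2) applied to the central homomorphism $\til{u}' : \til{A}' \to \til{B}'$, choose a noncommutative semi-free resolution $\ttil{B}'$ of $\til{B}'$ over $\til{A}'$; equivalently, a noncommutative semi-free resolution of $B' / A'$ equipped with a surjective quasi-isomorphism $\ttil{B}' \to \til{B}'$ over $\til{A}'$. This data constitutes a morphism of resolutions $\sigma : \ttil{B}' / \til{A}' \to \til{B}' / \til{A}'$ of $B' / A'$, inducing (by Proposition~\ref{prop:916} together with Lemma~\ref{lem:917}) an isomorphism of the corresponding rectangle objects. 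Composing, I set $\ttil{w}_i := \til{w}_i \circ \sigma$ for $i = 0, 1$. By the functoriality established in Proposition~\ref{prop:940}, proving the desired equality for the pair $(\til{w}_i / \til{v}_i)$ is equivalent to proving it for the pair $(\ttil{w}_i / \til{v}_i)$, whose source $\ttil{B}' / \til{A}'$ is now noncommutative semi-free. So I may assume Condition~\ref{cond:930}(i) from the start.

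\textbf{Stage 2 (achieve Condition~\ref{cond:930}(ii)).} With Condition~\ref{cond:930}(i) in place, the remaining job is to arrange $\til{v}_0 = \til{v}_1$. The strategy is to construct a commutative DG ring $\ttil{A}'$ together with a quasi-isomorphism $\pi : \ttil{A}' \to \til{A}'$ over $A'$ such that $\til{v}_0 \circ \pi = \til{v}_1 \circ \pi$ as DG ring homomorphisms $\ttil{A}' \to \til{A}$. Granted such $\pi$, Theorem~\ref{thm:305}(2) furnishes a noncommutative semi-free resolution $\ttil{B}''$ of $B'$ over $\ttil{A}'$, and Theorem~\ref{thm:306} yields a lift of $\pi$ to a morphism of resolutions $\ttil{B}'' / \ttil{A}' \to \til{B}' / \til{A}'$ of $B' / A'$. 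Composing with $\til{w}_i$ produces two morphisms $\ttil{B}'' / \ttil{A}' \to \til{B} / \til{A}$ above $w / v$, whose $\til{A}$-side is the single map $\til{v}_0 \circ \pi = \til{v}_1 \circ \pi$. Both parts of Condition~\ref{cond:930} now hold, so Lemma~\ref{lem:827} applies and gives the equality of the two rectangle morphisms. Invoking functoriality (Proposition~\ref{prop:940}) once more, together with Lemma~\ref{lem:917} which ensures that the lifting morphism $\ttil{B}'' / \ttil{A}' \to \til{B}' / \til{A}'$ induces an isomorphism on rectangles, the equality propagates back to the original pair $(\til{w}_i / \til{v}_i)$.

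\textbf{The main obstacle.} The crux of the proof is the construction in Stage~2 of the commutative quasi-isomorphism $\pi : \ttil{A}' \to \til{A}'$ equalizing $\til{v}_0$ and $\til{v}_1$. The natural approach is an iterative procedure analogous to the proof of Theorem~\ref{thm:305}, where $\ttil{A}'$ is built as a commutative semi-free extension by successively adjoining variables of increasingly negative degree, whose differentials are chosen to enforce the equation $\til{v}_0 \circ \pi = \til{v}_1 \circ \pi$ on the generators introduced at earlier stages. The feasibility hinges on the fact that the additive chain map $f := \til{v}_0 - \til{v}_1 : \til{A}' \to \til{A}$ factors through the acyclic ideal $\opn{Ker}(r)$ (since both $\til{v}_i$ lift $v$) and satisfies a twisted Leibniz rule $f(a b) = f(a) \til{v}_0(b) + \til{v}_1(a) f(b)$; the acyclicity of $\opn{Ker}(r)$ supplies, at each inductive step, the homotopies needed to kill the obstructions and extend $\pi$ while preserving its quasi-isomorphism property. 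This semi-free inductive construction is the technical heart of the reduction and the delicate point one must grind through.
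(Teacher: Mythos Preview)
Your Stage~1 is correct and matches the paper's introduction of the noncommutative semi-free resolution $\til{B}'^{\, \diamondsuit} / \til{A}'$ of $\til{B}' / \til{A}'$.

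Stage~2, however, has a genuine gap. You propose to build a commutative $\ttil{A}'$ with a quasi-isomorphism $\pi : \ttil{A}' \to \til{A}'$ such that $\til{v}_0 \circ \pi = \til{v}_1 \circ \pi$. But ``adjoining variables whose differentials enforce the equation'' does not do this: differentials are internal data of $\ttil{A}'$, whereas the constraint $\til{v}_0 \circ \pi = \til{v}_1 \circ \pi$ concerns the \emph{values} of $\pi$ in $\til{A}'$. Enforcing that constraint on generators forces $\pi$ to land in the equalizer sub-DG-ring $E = \{a \in \til{A}' \mid \til{v}_0(a) = \til{v}_1(a)\}$, and for $\pi$ to be a quasi-isomorphism one would need $E \hookrightarrow \til{A}'$ to be a quasi-isomorphism. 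You only know that $\til{A}'/E$ injects (additively, via $f = \til{v}_0 - \til{v}_1$) into the acyclic ideal $\opn{Ker}(r)$; a subcomplex of an acyclic complex need not be acyclic, so there is no reason $E \hookrightarrow \til{A}'$ is a quasi-isomorphism. This is precisely the phenomenon that obstructs DG-ring homotopies in the \emph{commutative} semi-free setting (cf.\ the parenthetical remark in the proof of Lemma~\ref{lem:300}(1) and Remark~\ref{rem:780}), and your sketch does not overcome it.

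The paper sidesteps this entirely by modifying the \emph{target} rather than the source. It introduces a \emph{strict} K-flat resolution $\til{B}^{\diamondsuit} / A$ of $B / A$ (so the base is $A$ itself, not $\til{A}$), together with an auxiliary noncommutative semi-free resolution $\til{B}^{\heartsuit} / \til{A}$ of $\til{B} / \til{A}$ and a comparison map $s^{\dag} : \til{B}^{\heartsuit} \to \til{B}^{\diamondsuit}$. Lifting the $\til{w}_i$ through $\til{B}^{\heartsuit}$ and composing with $s^{\dag}$, one obtains two morphisms
\[
(s^{\dag} \circ \til{w}_i^{\diamondsuit}) \, / \, (r \circ \til{v}_i) :
\til{B}'^{\, \diamondsuit} / \til{A}' \to \til{B}^{\diamondsuit} / A .
\]
But $r \circ \til{v}_0 = v \circ r' = r \circ \til{v}_1$ tautologically, so Condition~\ref{cond:930}(ii) holds for free --- no equalizer construction is needed. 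Lemma~\ref{lem:827} then applies, and functoriality (Proposition~\ref{prop:940}) together with the invertibility of the comparison isomorphisms propagates the equality back to the original $\til{w}_i / \til{v}_i$. The moral: achieve Condition~\ref{cond:930}(ii) by collapsing the target base to $A$, not by resolving the source base.
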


\begin{proof}
Choose the following resolutions in $\cat{PDGR}^{\leq 0}$: 
a strict K-flat resolution \lb
$s^{\diamondsuit} / \mrm{id} : \til{B}^{\diamondsuit} / A \to B / A$, 
a strict noncommutative semi-free resolution 
$s^{\heartsuit} / \mrm{id} : \til{B}^{\heartsuit} / \til{A} \to 
\til{B} / \til{A}$, 
and a strict noncommutative semi-free resolution
$s'^{\, \diamondsuit} / \mrm{id} : 
\til{B}'^{\, \diamondsuit} / \til{A}' \lb \to \til{B}' / \til{A}'$. 
These exist by Theorem \ref{thm:305}. 

There is a homomorphism 
$s^{\dag} : \til{B}^{\heartsuit} \to \til{B}^{\diamondsuit}$
in $\cat{DGR}^{\leq 0} \centover \til{A}$
such that 
$s^{\diamondsuit} \circ s^{\dag} = s \circ s^{\heartsuit}$.
For $i = 0, 1$ there is a homomorphism 
$\til{w}_i^{\diamondsuit} : \til{B}'^{\, \diamondsuit} \to 
\til{B}^{\heartsuit}$
in $\cat{DGR}^{\leq 0} \centover \til{A}'$, such that 
$s^{\heartsuit} \circ \til{w}_i^{\diamondsuit} = \til{w}_i 
\circ s'^{\, \diamondsuit}$.
These homomorphisms exist by Theorem \ref{thm:306}. 

The resolutions and the homomorphisms between them, from the two paragraphs 
above, fit into the following commutative diagram in $\cat{PDGR}^{\leq 0}$, for 
$i = 0, 1$. 
The central pairs
$\til{B}'^{\, \diamondsuit} / \til{A}'$ and 
$\til{B}' / \til{A}'$ are K-flat resolutions of $B' / A'$, and 
the central pairs
$\til{B}^{\heartsuit} / \til{A}$, 
$\til{B}^{\diamondsuit} / A$ and $\til{B} / \til{A}$
are K-flat resolutions of $B / A$.

\begin{equation} \label{eqn:946}
\UseTips \xymatrix @C=10ex @R=8ex {
&
&
\til{B}^{\heartsuit} / \til{A}
\ar[d]_{ s^{\heartsuit} / \mrm{id} }
\ar[dr]^{ s^{\dag} / r }
\\
\til{B}'^{\, \diamondsuit} / \til{A}'
\ar @/^1.5em/ [urr]^{ \til{w}_i^{\diamondsuit} / \til{v}_i }
\ar[r]^{ s'^{\, \diamondsuit} / \mrm{id} }
\ar[dr]_{ (s' \circ s'^{\, \diamondsuit}) / r' }
&
\til{B}' / \til{A}'
\ar[r]^{ \til{w}_i / \til{v}_i }
\ar[d]^{ s' / r' }
&
\til{B} / \til{A}
\ar[d]^{ s / r }
&
\til{B}^{\diamondsuit} / A
\ar[dl]^{ s^{\diamondsuit} / \mrm{id} }
\\
&
B' / A'
\ar[r]^{ w / v }
&
B / A
}
\end{equation}

\medskip
Let us introduce the temporary abbreviations 
\[ \opn{R}(\til{B} / \til{A}) := 
\opn{Rect}_{B / A}^{\til{B} / \til{A}}(M^{\mrm{l}}, M^{\mrm{r}}) , \quad
\opn{R}(\til{B}' / \til{A}') := 
\opn{Rect}_{B' / A'}^{\til{B}' / \til{A}'}(M'^{\, \mrm{l}}, M'^{\, \mrm{r}}) , 
\]
\[ \opn{R}(\til{w}_i / \til{v}_i) :=
\opn{Rect}_{w / v}^{\til{w}_i / \til{v}_i}(\th^{\mrm{l}},  \th^{\mrm{r}}) , 
\quad
\opn{R}(s^{\dag} / r) :=
\opn{Rect}_{\mrm{id} / \mrm{id}}^{s^{\dag} / r}(\mrm{id} , \mrm{id}) , \]
etc. Applying functoriality (Proposition \ref{prop:940}) to the commutative 
diagram (\ref{eqn:946}), we obtain a commutative diagram 

\begin{equation} \label{eqn:947}
\UseTips \xymatrix @C=10ex @R=8ex {
&
&
\opn{R}(\til{B}^{\heartsuit} / \til{A})
\ar @/_1.5em/ [dll]_{ \opn{R}(\til{w}_i^{\diamondsuit} / \til{v}_i) }
\\
\opn{R}(\til{B}'^{\, \diamondsuit} / \til{A}')
&
\opn{R}(\til{B}' / \til{A}')
\ar[l]_(0.45){ \opn{R}(s'^{\, \diamondsuit} / \mrm{id}) }
&
\opn{R}(\til{B} / \til{A})
\ar[u]^{ \opn{R}(s^{\heartsuit} / \mrm{id}) }
\ar[l]_{ \opn{R}(\til{w}_i / \til{v}_i) }
&
\opn{R}(\til{B}^{\diamondsuit} / A)
\ar[ul]_{ \opn{R}(s^{\dag} / r) }
}
\end{equation}

\noindent
in $\cat{D}(B'^{\, \mrm{ce}})$, for $i = 0, 1$. All the morphisms whose names 
contain the letter ``s'' are isomorphisms. 

Note that 
$r \circ \til{v}_i = v \circ r'$. We get a commutative diagram 

\begin{equation} \label{eqn:948}
\UseTips \xymatrix @C=10ex @R=8ex {
&
&
\opn{R}(\til{B}^{\heartsuit} / \til{A})
\ar @/_1.5em/ [dll]_{ \opn{R}(\til{w}_i^{\diamondsuit} / \til{v}_i) }
\\
\opn{R}(\til{B}'^{\, \diamondsuit} / \til{A}')
&
&
&
\opn{R}(\til{B}^{\diamondsuit} / A)
\ar[ul]_{ \opn{R}(s^{\dag} / r) }
\ar[lll]_{ \opn{R}( (s^{\dag} \circ \til{w}_i^{\diamondsuit}) / (v \circ r')) } 
}
\end{equation}

\noindent
in $\cat{D}(B'^{\, \mrm{ce}})$, for $i = 0, 1$.
Because Condition \ref{cond:930} is satisfied for the morphisms of resolutions 
\[ (s^{\dag} \circ \til{w}_i^{\diamondsuit}) / (v \circ r') : 
\til{B}'^{\, \diamondsuit} / \til{A}' \to 
\til{B}^{\diamondsuit} / A , \]
Lemma \ref{lem:827} tells us that 
\[ \opn{R}( (s^{\dag} \circ \til{w}_0^{\diamondsuit}) / (v \circ r')) =
\opn{R}( (s^{\dag} \circ \til{w}_1^{\diamondsuit}) / (v \circ r')) . \]
Since $\opn{R}(y^{\dag} / r)$ is an isomorphism, we conclude that 
\[ \opn{R}(\til{w}_0^{\diamondsuit} / \til{v}_0) = 
\opn{R}(\til{w}_1^{\diamondsuit} / \til{v}_1) . \]
Going back to diagram (\ref{eqn:947}), and using the fact that both 
$\opn{R}(s^{\heartsuit} / \mrm{id})$ and 
$\opn{R}(s'^{\, \diamondsuit} / \mrm{id})$
are isomorphisms, we see that 
\[ \opn{R}(\til{w}_0^{} / \til{v}_0) = \opn{R}(\til{w}_1^{} / \til{v}_1) . \]
\end{proof}

\begin{thm}[Existence of Rectangles] \label{thm:985}
Let $A$ be a commutative DG ring, and let $A \to B$ be a central homomorphism 
of nonpositive DG rings. Given a pair 
\[ (M^{\mrm{l}}, M^{\mrm{r}}) \in \cat{D}(B) \times \cat{D}(B^{\mrm{op}}) , \]
there is a DG module 
\[ \opn{Rect}_{B / A}(M^{\mrm{l}}, M^{\mrm{r}}) \in \cat{D}(B^{\mrm{ce}}) , \]
unique up to a unique isomorphism, 
together with an isomorphism
\[ \opn{rect}^{\til{B} / \til{A}} : 
\opn{Rect}_{B / A}(M^{\mrm{l}}, M^{\mrm{r}}) \iso 
\opn{Rect}_{B / A}^{\til{B} / \til{A}}(M^{\mrm{l}}, M^{\mrm{r}})  \]
for every K-flat resolution 
$\til{B} / \til{A}$ of $B / A$ in  $\cat{PDGR}^{\leq 0}$,
such that the following condition holds. 

\begin{enumerate}
\item[($*$)] Let 
$\til{w} / \til{v} : \til{B}' / \til{A}' \to  \til{B} / \til{A}$
be a morphism of K-flat resolutions of $B / A$ in  $\cat{PDGR}^{\leq 0}$. Then  
the diagram 
\[ \UseTips \xymatrix @C=20ex @R=8ex {
\opn{Rect}_{B / A}(M^{\mrm{l}}, M^{\mrm{r}})
\ar[d]_{ \opn{rect}^{\til{B} / \til{A}} }
\ar[dr]^{ \quad \opn{rect}^{\til{B}' / \til{A}'} }
\\
\opn{Rect}_{B / A}^{\til{B} / \til{A}}(M^{\mrm{l}}, M^{\mrm{r}})
\ar[r]_{ \opn{Rect}_{w / v}^{\til{w} / \til{v}}
(\opn{id}_{M^{\mrm{l}}}, \opn{id}_{M^{\mrm{r}}}) }
&
\opn{Rect}_{B / A}^{\til{B}' / \til{A}'}(M^{\mrm{l}}, M^{\mrm{r}}) 
} \]
of isomorphisms in $\cat{D}(B^{\mrm{ce}})$ is commutative. 
\end{enumerate}
\end{thm}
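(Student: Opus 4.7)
The proof follows the same template as Proposition \ref{prop:915}, with Homotopy Invariance (Theorem \ref{thm:840}) playing the role that Lemma \ref{lem:916} plays there. Uniqueness of the pair $\bigl( \opn{Rect}_{B/A}(M^{\mrm{l}}, M^{\mrm{r}}), \{ \opn{rect}^{\til{B}/\til{A}} \} \bigr)$ is formal: given any two candidates, condition ($*$) applied to every K-flat resolution $\til{B}/\til{A}$ forces a unique compatible isomorphism between them.

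For existence, the plan is to fix a base K-flat resolution $\til{B}^{\mrm{bs}}/\til{A}^{\mrm{bs}}$ of $B/A$, chosen to be noncommutative semi-free so that it enjoys the lifting property of Theorem \ref{thm:306}; such a resolution exists by Corollary \ref{cor:980}(1) combined with Theorem \ref{thm:305}. Set $\opn{Rect}_{B/A}(M^{\mrm{l}}, M^{\mrm{r}}) := \opn{Rect}_{B/A}^{\til{B}^{\mrm{bs}}/\til{A}^{\mrm{bs}}}(M^{\mrm{l}}, M^{\mrm{r}})$ and let $\opn{rect}^{\til{B}^{\mrm{bs}}/\til{A}^{\mrm{bs}}}$ be the identity. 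For an arbitrary K-flat resolution $\til{B}/\til{A}$ of $B/A$, a direct morphism $\til{B}^{\mrm{bs}}/\til{A}^{\mrm{bs}} \to \til{B}/\til{A}$ above $\opn{id}_{B/A}$ need not exist, so the plan is to construct an auxiliary K-flat resolution $\til{B}^{\ddag}/\til{A}^{\ddag}$ together with morphisms above $\opn{id}_{B/A}$ to both $\til{B}^{\mrm{bs}}/\til{A}^{\mrm{bs}}$ and $\til{B}/\til{A}$. One builds $\til{A}^{\ddag}$ first as a commutative semi-free DG ring admitting lifts to both $\til{A}^{\mrm{bs}}$ and $\til{A}$ (via a variant of the recursive construction in Theorem \ref{thm:305}(1)), and then $\til{B}^{\ddag}$ as a noncommutative semi-free DG $\til{A}^{\ddag}$-ring admitting lifts to both $\til{B}^{\mrm{bs}}$ and $\til{B}$ by Theorem \ref{thm:306}(2). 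Applying Proposition \ref{prop:916} and Lemma \ref{lem:917} yields isomorphisms from $\opn{Rect}_{B/A}^{\til{B}^{\ddag}/\til{A}^{\ddag}}(M^{\mrm{l}}, M^{\mrm{r}})$ to each of the two rectangles, whose composition defines $\opn{rect}^{\til{B}/\til{A}}$.

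The independence of $\opn{rect}^{\til{B}/\til{A}}$ from the auxiliary data (the resolution $\til{B}^{\ddag}/\til{A}^{\ddag}$ and the chosen morphisms) follows from Homotopy Invariance: given two candidate auxiliaries, one constructs a third dominating both and invokes Proposition \ref{prop:940} combined with Theorem \ref{thm:840}. Condition ($*$) is then verified in the same spirit: given $\til{w}/\til{v} : \til{B}'/\til{A}' \to \til{B}/\til{A}$, one constructs a K-flat resolution $\til{B}^{\star}/\til{A}^{\star}$ mapping compatibly to all three of $\til{B}^{\mrm{bs}}/\til{A}^{\mrm{bs}}$, $\til{B}/\til{A}$ and $\til{B}'/\til{A}'$ above the respective identities; functoriality (Proposition \ref{prop:940}) converts the commutative diagram of resolutions into the required triangle in $\cat{D}(B^{\mrm{ce}})$, with any remaining ambiguity absorbed by Theorem \ref{thm:840}.

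The main obstacle is a mildly delicate technical point in the construction of $\til{A}^{\ddag}$: Definition \ref{dfn:912} only requires $\til{A} \to A$ to be a quasi-isomorphism, not a surjection, so the hypothesis of Theorem \ref{thm:306}(1) needed to lift maps into $\til{A}$ from a commutative semi-free DG ring is not immediately met. The remedy is to construct $\til{A}^{\ddag}$ directly, by a degree-by-degree recursion in the style of Theorem \ref{thm:305}(1), adjoining commutative semi-free variables that simultaneously map to chosen lifts in $\til{A}^{\mrm{bs}}$ and $\til{A}$ and kill any discrepancy modulo the common map to $A$. Once this is in place, the rest of the argument becomes a clean application of the results of Sections \ref{sec:pairs-DG-modules} and \ref{sec:rect-oper}.
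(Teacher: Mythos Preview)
Your overall plan is sound, but it is more elaborate than the paper's argument, and the extra complexity stems from a choice you make at the very start. You take the base resolution $\til{B}^{\mrm{bs}}/\til{A}^{\mrm{bs}}$ to be strict (so $\til{A}^{\mrm{bs}} = A$), and then, correctly, observe that a direct morphism $\til{A}^{\mrm{bs}} = A \to \til{A}$ splitting $r$ need not exist; this forces the whole zigzag through an auxiliary $\til{B}^{\ddag}/\til{A}^{\ddag}$, together with the bookkeeping of dominating resolutions needed to prove independence and condition~($*$). The paper avoids all of this by choosing the base resolution differently: it takes $\til{A}_{\mrm{un}}$ to be commutative semi-free \emph{over $\Z$} (resolving $A$), and then $\til{B}_{\mrm{un}}$ noncommutative semi-free over $\til{A}_{\mrm{un}}$. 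With this ``universal'' choice, for an arbitrary K-flat resolution $\til{B}/\til{A}$ one invokes Theorem~\ref{thm:306} twice to get a \emph{direct} morphism $\til{w}_{\mrm{un}}/\til{v}_{\mrm{un}} : \til{B}_{\mrm{un}}/\til{A}_{\mrm{un}} \to \til{B}/\til{A}$, sets $\opn{rect}^{\til{B}/\til{A}} := \opn{Rect}_{\mrm{id}/\mrm{id}}^{\til{w}_{\mrm{un}}/\til{v}_{\mrm{un}}}(\opn{id},\opn{id})^{-1}$, and both independence of the lift and condition~($*$) fall out immediately from Theorem~\ref{thm:840} and Proposition~\ref{prop:940}. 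No auxiliary resolutions, no zigzags.

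That said, your observation about surjectivity is on the mark: Theorem~\ref{thm:306}, as stated, requires the target map to be a \emph{surjective} quasi-isomorphism, whereas Definition~\ref{dfn:912} only asks $r : \til{A} \to A$ to be a quasi-isomorphism. The paper's citation of Theorem~\ref{thm:306} for the lift $\til{A}_{\mrm{un}} \to \til{A}$ glosses over this, so the issue you flag is genuine and applies there too. But your remedy---a bespoke degree-by-degree construction of $\til{A}^{\ddag}$---is heavier than necessary. Within your own framework the clean fix is simply to take $\til{A}^{\ddag}$ to be a commutative semi-free resolution \emph{of $\til{A}$} over $\Z$ (via Theorem~\ref{thm:305}); then $\til{A}^{\ddag} \to \til{A}$ is the surjective resolution map itself, the composite $\til{A}^{\ddag} \to \til{A} \xrightarrow{r} A$ is the required quasi-isomorphism to $A$, and both morphisms $\til{B}^{\ddag} \to \til{B}^{\mrm{bs}}$ and $\til{B}^{\ddag} \to \til{B}$ come from Theorem~\ref{thm:306} applied to the (genuinely surjective) maps $\til{B}^{\mrm{bs}} \to B$ and $\til{B} \to B$.
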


\begin{proof}
Choose a commutative semi-free resolution $\til{A}_{\mrm{un}} / \Z$ 
of $A / \Z$, 
and then choose a noncommutative semi-free resolution
$\til{B}_{\mrm{un}} / \til{A}_{\mrm{un}}$
of $B / \til{A}_{\mrm{un}}$, both in $\cat{PDGR}^{\leq 0}$.
This is possible by Theorem \ref{thm:305}. (The subscript ``un'' stands for 
``universal''.) Note that 
$\til{B}_{\mrm{un}} / \til{A}_{\mrm{un}}$
is a K-flat resolution of $B / A$. Define 
\[ \opn{Rect}_{B / A}(M^{\mrm{l}}, M^{\mrm{r}}) :=
\opn{Rect}_{B / A}^{\til{B}_{\mrm{un}} / \til{A}_{\mrm{un}}}
(M^{\mrm{l}}, M^{\mrm{r}}) . \]

Consider any K-flat resolution $\til{B} / \til{A}$ of $B / A$. According to 
Theorem \ref{thm:306} there is a morphism 
\[ \til{w}_{\mrm{un}} / \til{v}_{\mrm{un}} :
\til{B}_{\mrm{un}} / \til{A}_{\mrm{un}} \to \til{B} / \til{A}  \]
of resolutions of $B / A$. We define
\[ \opn{rect}^{\til{B} / \til{A}} : 
\opn{Rect}_{B / A}^{\til{B}_{\mrm{un}} / \til{A}_{\mrm{un}}}
(M^{\mrm{l}}, M^{\mrm{r}}) \iso 
\opn{Rect}_{B / A}^{\til{B} / \til{A}}(M^{\mrm{l}}, M^{\mrm{r}}) \]
to be 
\[ \opn{rect}^{\til{B} / \til{A}} := 
\opn{Rect}_{\mrm{id} / \mrm{id}}
^{\til{w}_{\mrm{un}} / \til{v}_{\mrm{un}}}
(\opn{id}_{M^{\mrm{l}}}, \opn{id}_{M^{\mrm{r}}})^{-1} . \]
Theorem \ref{thm:840} tells us that the isomorphism
$\opn{rect}^{\til{B} / \til{A}}$
does not depend on the choice of 
$\til{w}_{\mrm{un}} / \til{v}_{\mrm{un}}$. 

It remains to verify condition ($*$). Suppose 
$\til{B}' / \til{A}'$ is another K-flat resolution of $B / A$, and 
$\til{w} / \til{v} : \til{B}' / \til{A}' \to  \til{B} / \til{A}$
is a morphism of resolutions of $B / A$. Choose a morphism of resolutions 
\[ \til{w}'_{\mrm{un}} / \til{v}'_{\mrm{un}} :
\til{B}_{\mrm{un}} / \til{A}_{\mrm{un}} \to \til{B}' / \til{A}'  . \]
We have a diagram 
\[ \UseTips \xymatrix @C=20ex @R=8ex {
\opn{Rect}_{B / A}^{\til{B}_{\mrm{un}} / \til{A}_{\mrm{un}}}
(M^{\mrm{l}}, M^{\mrm{r}}) 
\\
\opn{Rect}_{B / A}^{\til{B} / \til{A}}(M^{\mrm{l}}, M^{\mrm{r}})
\ar[u]^{ \opn{Rect}_{\mrm{id} / \mrm{id}}
^{\til{w}_{\mrm{un}} / \til{v}_{\mrm{un}}}(\opn{id}, \opn{id}) }
\ar[r]_{ \opn{Rect}_{w / v}^{\til{w} / \til{v}}(\opn{id}, \opn{id}) }
&
\opn{Rect}_{B / A}^{\til{B}' / \til{A}'}(M^{\mrm{l}}, M^{\mrm{r}}) 
\ar[ul]_{ \quad \opn{Rect}_{\mrm{id} / \mrm{id}}
^{\til{w}'_{\mrm{un}} / \til{v}'_{\mrm{un}}}(\opn{id}, \opn{id}) }
} \]
of isomorphisms in $\cat{D}(B^{\mrm{ce}})$.
Proposition \ref{prop:940} says that 
\[ \opn{Rect}_{\mrm{id} / \mrm{id}}
^{\til{w}'_{\mrm{un}} / \til{v}'_{\mrm{un}}}(\opn{id}, \opn{id})
\circ 
\opn{Rect}_{w / v}^{\til{w} / \til{v}}(\opn{id}, \opn{id}) = 
\opn{Rect}_{\mrm{id} / \mrm{id}}
^{ (\til{w} \circ \til{w}'_{\mrm{un}}) / 
(\til{v} \circ \til{v}'_{\mrm{un}})}(\opn{id}, \opn{id}) . \]
Finally, Theorem \ref{thm:840} says that 
\[ \opn{Rect}_{\mrm{id} / \mrm{id}}
^{ (\til{w} \circ \til{w}'_{\mrm{un}}) / 
(\til{v} \circ \til{v}'_{\mrm{un}})}(\opn{id}, \opn{id}) = 
\opn{Rect}_{\mrm{id} / \mrm{id}}
^{\til{w}_{\mrm{un}} / \til{v}_{\mrm{un}}}(\opn{id}, \opn{id}) . \]
We that the diagram in condition ($*$) is indeed commutative.
\end{proof}

\begin{rem} \label{rem:720}
Let $\K$ be a regular finite dimensional noetherian commutative ring, and let 
$A$ be a cohomologically pseudo-noetherian commutative DG ring 
(see \cite[Section 3]{Ye2}). Suppose $\K \to A$ is a DG ring homomorphism, 
such that $\K \to \opn{H}^0(A)$ is essentially finite type.
Let $\cat{D}^+_{\mrm{f}}(A)$ be the full subcategory of $\cat{D}(A)$
consisting of DG modules whose cohomology is bounded below, 
and whose cohomology modules are finite  over $\opn{H}^0(A)$.

For $M, N \in \cat{D}^+_{\mrm{f}}(A)$ let us write 
\[ M \ot_{A / \K}^! N := \opn{Rect}_{A / \K}(M, N) \in \cat{D}(A) . \]
In \cite{Ga}, D. Gaitsgory states that when $\K$ is a field of 
characteristic $0$, this is operation makes $\cat{D}^+_{\mrm{f}}(A)$ into a 
{\em monoidal category}. No proof of this statement is given in \cite{Ga}.

Now assume $A$ is a ring (so it is just a noetherian commutative ring, 
essentially finite type over $\K$). According to \cite{YZ3} and its corrections 
in \cite{Ye5}, there is a {\em rigid dualizing complex} $R_{A / \K}$ over $A$ 
relative to $\K$. The functor $D := \opn{RHom}_A(-, R_{A / \K})$ is a duality 
of the category 
$\cat{D}_{\mrm{f}}(A)$, and it exchanges $\cat{D}^+_{\mrm{f}}(A)$
with $\cat{D}^-_{\mrm{f}}(A)$.
Recently L. Shaul \cite{Sh1} showed that there is a bifunctorial isomorphism 
\begin{equation} \label{eqn:390}
 M \ot_{A / \K}^! N \cong D \bigl( D(M) \ot^{\mrm{L}}_A D(N) \bigr)
\end{equation}
for $M, N \in \cat{D}^{\mrm{b}}_{\mrm{f}}(A)$. The proof relies on the 
reduction formula for Hochschild cohomology \cite[Theorem 1]{AILN}, \cite{Sh2}. 
The isomorphism (\ref{eqn:390}) implies that the operation 
$- \ot_{A / \K}^! -$ is a monoidal structure on the subcategory
$\cat{D}^{\mrm{b}}_{\mrm{f}}(A)_{\mrm{fid}}$
of $\cat{D}^{\mrm{b}}_{\mrm{f}}(A)$
consisting of complexes with finite injective dimension. The monoidal 
unit is $R_{A / \K}$. 

Work in progress by Shaul indicates that the result above 
should hold in greater generality: $A$ can be as in the first paragraph of 
the remark, and $M, N$ can be any objects in  $\cat{D}^+_{\mrm{f}}(A)$.
\end{rem}

\section{The Squaring Operation}
\label{sec:squaring}

In this section all DG rings are commutative; namely, we work 
inside the category $\cat{DGR}_{\mrm{sc}}^{\leq 0}$. See Definition 
\ref{dfn:876}. 
The category $\cat{PDGR}_{\mrm{sc}}^{\leq 0}$ of commutative pairs of DG rings 
was introduced in Definition \ref{dfn:911}(2); it is a full subcategory of the 
category $\cat{PDGR}$ of central pairs of DG rings. 

\begin{dfn} \label{dfn:955}
\begin{enumerate}
\item Let $B / A$ be an object of $\cat{PDGR}_{\mrm{sc}}^{\leq 0}$.
A {\em K-flat resolution} of $B / A$ in 
$\cat{PDGR}_{\mrm{sc}}^{\leq 0}$
is a resolution 
$s / r : \til{B} / \til{A} \to B / A$
in the sense of Definition \ref{dfn:912}, such that the pair
$\til{B} / \til{A}$ is commutative and K-flat (Definition \ref{dfn:60}(3)). 

\item Morphisms of resolutions in $\cat{PDGR}_{\mrm{sc}}^{\leq 0}$,
and resolutions of morphisms in $\cat{PDGR}_{\mrm{sc}}^{\leq 0}$,
are as in Definition \ref{dfn:913}, but all the pairs are now commutative. 
\end{enumerate}
\end{dfn}

\begin{prop} \label{prop:1010}
Any object and any morphism in $\cat{PDGR}_{\mrm{sc}}^{\leq 0}$
admits a K-flat resolution.
\end{prop}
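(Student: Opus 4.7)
The plan is to adapt the argument of Corollary \ref{cor:980} to the strongly commutative setting, replacing noncommutative semi-free resolutions everywhere by commutative ones. The three tools at hand are: Theorem \ref{thm:305}(1), producing strict commutative semi-free DG ring resolutions of any nonpositive central pair with commutative target; Theorem \ref{thm:306}(i), giving the lifting property for commutative semi-free DG rings against surjective quasi-isomorphisms in $\cat{DGR}^{\leq 0} \centover A'$; and Proposition \ref{prop:750}, which ensures that a commutative semi-free DG ring is K-flat over its base.

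For an object $B / A$ of $\cat{PDGR}_{\mrm{sc}}^{\leq 0}$, I would simply invoke Theorem \ref{thm:305}(1) on the homomorphism $A \to B$ to obtain a strict commutative semi-free DG ring resolution $\til{B} / A$. Proposition \ref{prop:750} then says that $\til{B}$ is K-flat over $A$, so $\til{B} / A$ is a K-flat resolution in $\cat{PDGR}_{\mrm{sc}}^{\leq 0}$ (with $\til{A} := A$ and $r := \mrm{id}_A$).

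For a morphism $w / v : B' / A' \to B / A$ in $\cat{PDGR}_{\mrm{sc}}^{\leq 0}$, I would proceed in three stages. First, apply Theorem \ref{thm:305}(1) to $A' \to B'$ to obtain a strict commutative semi-free resolution $s' : \til{B}' \to B'$ over $A'$, setting $\til{A}' := A'$. Second, apply the same theorem to the homomorphism $v : A' \to A$ to produce a commutative semi-free factorization $A' \xar{\til{v}} \til{A} \xar{r} A$ with $r$ a surjective quasi-isomorphism. Third, view $B$ as a commutative DG $\til{A}$-ring via $\til{A} \xar{r} A \xar{u} B$, and apply Theorem \ref{thm:305}(1) again to produce a strict commutative semi-free resolution $s : \til{B} \to B$ over $\til{A}$. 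By Proposition \ref{prop:750}, both $\til{B}'$ and $\til{B}$ are K-flat over their respective bases, so $\til{B}' / A'$ and $\til{B} / \til{A}$ are K-flat resolutions in $\cat{PDGR}_{\mrm{sc}}^{\leq 0}$.

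To construct the lifting $\til{w} : \til{B}' \to \til{B}$ above $w$, note that the homomorphism $A' \to B$ admits two factorizations in $\cat{DGR}^{\leq 0}_{\mrm{sc}} \centover A'$: one through $\til{B}'$, which is commutative semi-free over $A'$, and one through $\til{B}$, which is commutative and maps to $B$ via the surjective quasi-isomorphism $s$, with the $A'$-structure on $\til{B}$ coming from the composition $A' \to \til{A} \to \til{B}$. Theorem \ref{thm:306}(i) therefore produces a homomorphism $\til{w} : \til{B}' \to \til{B}$ in $\cat{DGR}^{\leq 0}_{\mrm{sc}} \centover A'$ such that $s \circ \til{w} = w \circ s'$; its $A'$-linearity is exactly the relation $\til{w} \circ \til{u}' = \til{u} \circ \til{v}$ defining a morphism of pairs. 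The main point to verify is only that the hypotheses of Theorem \ref{thm:306}(i) truly apply in this setup, and this is immediate since every DG ring in sight is commutative and $\til{B}'$ has been arranged to be commutative semi-free over $A'$. No essential difficulty is anticipated beyond assembling these ingredients correctly.
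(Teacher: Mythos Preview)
Your proposal is correct and follows exactly the approach the paper takes: the paper's proof is simply ``Like the proof of Corollary \ref{cor:980}, but using commutative semi-free resolutions,'' and you have written out precisely that argument in detail, invoking Theorem \ref{thm:305}(1), Proposition \ref{prop:750}, and Theorem \ref{thm:306}(i) in the appropriate places.
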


\begin{proof}
Like the proof of Corollary \ref{cor:980}, but using commutative semi-free 
resolutions. 
\end{proof}

For a commutative DG ring $B$ we have $B^{\mrm{ce}} = B$ of course. This is 
used in the following definitions. 

\begin{dfn} \label{dfn:950}
Let $B / A$ be a pair of commutative DG rings, and let $M$ be a DG 
$B$-module. Given a K-flat resolution $\til{B} / \til{A}$ of $B / A$ in  
$\cat{PDGR}_{\mrm{sc}}^{\leq 0}$, 
we let 
\[ \opn{Sq}_{B / A}^{\til{B} / \til{A}}(M) := 
\opn{Rect}_{B / A}^{\til{B} / \til{A}}(M, M) \in \cat{D}(B) . \]
See Proposition \ref{prop:915}. 
\end{dfn}

\begin{dfn} \label{dfn:956}
Let $w / v : B' / A' \to B / A$ be a morphism in  
$\cat{PDGR}_{\mrm{sc}}^{\leq 0}$, let 
$M \in \cat{D}(B)$, let $M' \in \cat{D}(B')$, and let 
$\th : M \to M'$
be a morphism in $\cat{D}(B')$.
Given a K-flat resolution
$\til{w} / \til{v} : \til{B}' / \til{A}' \to \til{B} / \til{A}$
of $w / v$ in $\cat{PDGR}_{\mrm{sc}}^{\leq 0}$,
we define
\[ \opn{Sq}_{w / v}^{\til{w} / \til{v}}(\th) :
\opn{Sq}_{B / A}^{\til{B} / \til{A}}(M) \to 
\opn{Sq}_{B' / A'}^{\til{B}' / \til{A}'}(M') \]
to be the morphism 
\[ \opn{Sq}_{w / v}^{\til{w} / \til{v}}(\th) :=
\opn{Rect}_{w / v}^{\til{w} / \til{v}}(\th, \th) \]
in $\cat{D}(B')$ from Proposition \ref{prop:916}.
\end{dfn}

\begin{rem} \label{rem:965}
As explained in Remark \ref{rem:950}, we can write 
\[ \opn{Sq}_{B / A}^{\til{B} / \til{A}}(M) = 
\opn{RHom}_{\til{B}^{\mrm{en}}}(B, M \ot_{\til{A}}^{\mrm{L}} M) . \]
However, the morphism $\opn{Sq}_{w / v}^{\til{w} / \til{v}}(\th)$
in Definition \ref{dfn:956} is not standard, so we 
have to use the more explicit approach with compound resolutions, as was 
done in Section \ref{sec:pairs-DG-modules}. 

Still, here we can simplify matters by using symmetric compound resolutions.
A symmetric compound resolution of a DG $B$-module $M$ is data 
$\bsym{P} = (\til{P}, \til{I}; \al, \be)$, 
consisting of a K-flat resolution 
$\al : \til{P} \to M$ in $\cat{M}(\til{B})$, 
and a K-injective resolution 
$\be : \til{P}^{\mrm{en}} = \til{P} \ot_{\til{A}} \til{P} \to \til{I}$
in $\cat{M}(\til{B}^{\mrm{en}})$.

Likewise, a morphism $\th : M \to M'$ can be resolved by a symmetric compound
morphism $\bsym{\eta} : \bsym{P} \to \bsym{P}'$,
where 
$\bsym{\eta} = (\til{Q}; \ga, \til{\th}, \eta)$
consists of  a K-projective resolution
$\ga : \til{Q} \to \til{P}$ in $\cat{M}(\til{B}')$;
a homomorphism
$\til{\th} : \til{Q} \to \til{P}'$
in $\cat{M}(\til{B}')$ lifting $\th$; and a homomorphism
$\eta : \til{I} \to \til{I}'$ in 
$\cat{M}(\til{B}'^{\, \mrm{en}})$ lifting 
\[ \til{\th} \ot \til{\th} : \til{Q} \ot_{\til{A}'} \til{Q} \to 
\til{P}' \ot_{\til{A}'} \til{P}' . \]
\end{rem}

Here is the key technical result of our paper (Theorem \ref{thm:960} in the 
Introduction). Actually, it is just a special case of Theorem \ref{thm:840}. 

\begin{thm}[Homotopy Invariance] \label{thm:950}
Let $w / v : B' / A' \to B / A$ be a morphism in  
$\cat{PDGR}_{\mrm{sc}}^{\leq 0}$, let 
$M \in \cat{D}(B)$, let $M' \in \cat{D}(B')$, and let 
$\th : M \to M'$ be a morphism in $\cat{D}(B')$.
Suppose
$\til{B} / \til{A}$ and $\til{B}' / \til{A}'$
are K-flat resolutions of 
$B / A$ and $B' / A'$ respectively in $\cat{PDGR}_{\mrm{sc}}^{\leq 0}$,
and  
\[ \til{w}_0 / \til{v}_0,  \, \til{w}_1 / \til{v}_1 :
\til{B}' / \til{A}' \to \til{B} / \til{A} \]
are morphisms of resolutions above $w / v$. Then the morphisms 
\[ \opn{Sq}_{w / v}^{\til{w}_0 / \til{v}_0}(\th), \,
\opn{Sq}_{w / v}^{\til{w}_1 / \til{v}_1}(\th) :
\opn{Sq}_{B / A}^{\til{B} / \til{A}}(M) \to 
\opn{Sq}_{B' / A'}^{\til{B}' / \til{A}'}(M') \]
in $\cat{D}(B')$ are equal. 
\end{thm}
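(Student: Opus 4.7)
The plan is to derive Theorem \ref{thm:950} as an immediate corollary of Theorem \ref{thm:840}, the noncommutative Homotopy Invariance theorem for the rectangle operation. The whole point of developing the more elaborate rectangle machinery in Sections \ref{sec:pairs-DG-modules} and \ref{sec:rect-oper} was precisely to have the harder homotopy invariance statement already available, so this deduction should be essentially formal.

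First I would verify that the commutative data satisfies the hypotheses of Setup \ref{set:930}. Since $\cat{PDGR}_{\mrm{sc}}^{\leq 0}$ is a full subcategory of $\cat{PDGR}^{\leq 0}$ (every commutative DG ring $B$ is its own center, so every homomorphism of commutative DG rings is automatically central), the morphism $w/v : B'/A' \to B/A$ is a morphism in $\cat{PDGR}^{\leq 0}_{\vec{\mrm{ce}}}$. Likewise, the K-flat resolutions $\til{B}/\til{A}$, $\til{B}'/\til{A}'$ of Definition \ref{dfn:955} are K-flat resolutions in the sense of Definition \ref{dfn:52}(3), and the morphisms $\til{w}_i/\til{v}_i$ above $w/v$ are morphisms in $\cat{PDGR}^{\leq 0}$ above $w/v$. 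Thus all the ingredients of Setup \ref{set:930} are in place.

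Second, I would specialize the rectangle data. Take $M^{\mrm{l}} = M^{\mrm{r}} := M \in \cat{D}(B)$ (viewing $M$ as a right DG $B$-module via the commutativity of $B$), $M'^{\,\mrm{l}} = M'^{\,\mrm{r}} := M' \in \cat{D}(B')$, and $\th^{\mrm{l}} = \th^{\mrm{r}} := \th$. Then by Definition \ref{dfn:950} we have
\[ \opn{Sq}_{B/A}^{\til{B}/\til{A}}(M) = \opn{Rect}_{B/A}^{\til{B}/\til{A}}(M,M) \in \cat{D}(B^{\mrm{ce}}) = \cat{D}(B), \]
and similarly for $M'$; by Definition \ref{dfn:956},
\[ \opn{Sq}_{w/v}^{\til{w}_i/\til{v}_i}(\th) = \opn{Rect}_{w/v}^{\til{w}_i/\til{v}_i}(\th,\th). \]
Now Theorem \ref{thm:840} asserts that the two morphisms $\opn{Rect}_{w/v}^{\til{w}_i/\til{v}_i}(\th,\th)$, for $i=0,1$, are equal as morphisms in $\cat{D}(B'^{\,\mrm{ce}}) = \cat{D}(B')$. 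Translating back via the definitions, this gives exactly the desired equality
\[ \opn{Sq}_{w/v}^{\til{w}_0/\til{v}_0}(\th) = \opn{Sq}_{w/v}^{\til{w}_1/\til{v}_1}(\th). \]

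There is essentially no obstacle here, since all the hard work lies in Theorem \ref{thm:840} (whose proof involves the cylinder construction, Theorem \ref{thm:300} on homotopies between DG ring maps out of a noncommutative semi-free source, and the reduction from general K-flat resolutions to the semi-free situation via the diagram (\ref{eqn:946})). The only mildly nontrivial point to flag in the write-up is the compatibility between the commutative and noncommutative frameworks—specifically, that a K-flat resolution in $\cat{PDGR}_{\mrm{sc}}^{\leq 0}$ is the same datum as a K-flat resolution in $\cat{PDGR}^{\leq 0}$ whose top pair happens to be commutative—and that a left DG $B$-module $M$ over a commutative $B$ canonically gives a pair $(M,M) \in \cat{D}(B) \times \cat{D}(B^{\mrm{op}})$, so that specializing the rectangle theorem is legitimate.
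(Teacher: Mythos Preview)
Your proposal is correct and is exactly the paper's own proof: the paper simply sets $M^{\mrm{l}} = M^{\mrm{r}} := M$, $M'^{\,\mrm{l}} = M'^{\,\mrm{r}} := M'$, $\th^{\mrm{l}} = \th^{\mrm{r}} := \th$ and invokes Theorem~\ref{thm:840}. Your additional remarks about checking that the commutative data fits Setup~\ref{set:930} and that $B^{\mrm{ce}} = B$ are correct and make the specialization explicit, though the paper leaves them implicit.
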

 
\begin{proof}
Take $M^{\mrm{l}} = M^{\mrm{r}} := M$, 
$M'^{\, \mrm{l}} = M'^{\, \mrm{r}} := M'$, and
$\th^{\mrm{l}} = \th^{\mrm{r}} := \th$
in Theorem \ref{thm:840}.
\end{proof}

This brings us to the first main theorem of the paper (which is Theorem 
\ref{thm:965} in the Introduction). 

\begin{thm}[Existence of Squares] \label{thm:860} 
Let $A \to B$ be a homomorphism of commutative DG rings, and let $M$ be a DG 
$B$-module. There is a DG module 
$\opn{Sq}_{B / A}(M)$, 
unique up to a unique isomorphism in $\cat{D}(B)$, 
together with an isomorphism 
\[ \opn{sq}^{\til{B} / \til{A}} : \opn{Sq}_{B / A}(M) \iso 
\opn{Sq}_{B / A}^{\til{B} / \til{A}}(M) \]
in $\cat{D}(B)$ for each K-flat resolution 
$\til{B} / \til{A}$ of $B / A$ in $\cat{PDGR}_{\mrm{sc}}^{\leq 0}$,
satisfying the following condition. 

\begin{enumerate}
\item[($*$)] Let 
$\til{w} / \til{v} : \til{B}' / \til{A}' \to  \til{B} / \til{A}$
be a morphism of K-flat resolutions of $B / A$ in 
$\cat{PDGR}_{\mrm{sc}}^{\leq 0}$. 
Then there is equality
\[ \opn{Sq}_{\mrm{id} / \mrm{id}}^{\til{w} / \til{v}}(\opn{id}_{M}) = 
\opn{sq}_{}^{\til{B}' / \til{A}'} \circ \,
(\opn{sq}_{}^{\til{B} / \til{A}})^{-1} \]
of isomorphisms
$\opn{Sq}_{B / A}^{\til{B} / \til{A}}(M) \iso 
\opn{Sq}_{B / A}^{\til{B}' / \til{A}'}(M)$
in $\cat{D}(B)$.
\end{enumerate}
\end{thm}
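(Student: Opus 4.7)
The plan is to mimic almost verbatim the proof of Theorem \ref{thm:985} (Existence of Rectangles), now living entirely inside the commutative world $\cat{PDGR}_{\mrm{sc}}^{\leq 0}$. The engine is Theorem \ref{thm:950} (the commutative Homotopy Invariance), together with Proposition \ref{prop:940} (functoriality) and existence of commutative semi-free resolutions. In fact, since the square is the diagonal of the rectangle construction, one could deduce the theorem by just taking $M^{\mrm{l}} = M^{\mrm{r}} = M$ in Theorem \ref{thm:985}, provided we check that the chosen universal resolution can be taken to be commutative (so that the target of the rectangle lives in $\cat{D}(B) = \cat{D}(B^{\mrm{ce}})$). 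This is the conceptual shortcut; below I describe the plan as a self-contained argument.

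First I would fix once and for all a ``universal'' commutative K-flat resolution $\til{B}_{\mrm{un}} / \til{A}_{\mrm{un}}$ of $B / A$ in $\cat{PDGR}_{\mrm{sc}}^{\leq 0}$. Its existence is Proposition \ref{prop:1010}: concretely, choose a commutative semi-free resolution $\til{A}_{\mrm{un}} \to A$ over $\Z$, and then a commutative semi-free resolution $\til{B}_{\mrm{un}} \to B$ over $\til{A}_{\mrm{un}}$, invoking Theorem \ref{thm:305}(1). Then declare
\[
\opn{Sq}_{B / A}(M) := \opn{Sq}_{B / A}^{\til{B}_{\mrm{un}} / \til{A}_{\mrm{un}}}(M) \in \cat{D}(B).
\]
For any other K-flat resolution $\til{B} / \til{A}$ of $B / A$ in $\cat{PDGR}_{\mrm{sc}}^{\leq 0}$, Theorem \ref{thm:306}(1) produces a morphism of resolutions
\[
\til{w}_{\mrm{un}} / \til{v}_{\mrm{un}} : \til{B}_{\mrm{un}} / \til{A}_{\mrm{un}} \to \til{B} / \til{A},
\]
and I would then define $\opn{sq}^{\til{B} / \til{A}}$ as the inverse of
$\opn{Sq}_{\mrm{id} / \mrm{id}}^{\til{w}_{\mrm{un}} / \til{v}_{\mrm{un}}}(\opn{id}_M)$. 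By Theorem \ref{thm:950}, any two choices of $\til{w}_{\mrm{un}} / \til{v}_{\mrm{un}}$ give the same morphism on squares, so $\opn{sq}^{\til{B} / \til{A}}$ is well-defined. It is an isomorphism because both the source and target are computed from quasi-isomorphic K-flat resolutions, so Lemma \ref{lem:917} applies.

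Next I would verify condition $(*)$. Given $\til{w} / \til{v} : \til{B}' / \til{A}' \to \til{B} / \til{A}$, choose additionally a morphism of resolutions $\til{w}'_{\mrm{un}} / \til{v}'_{\mrm{un}} : \til{B}_{\mrm{un}} / \til{A}_{\mrm{un}} \to \til{B}' / \til{A}'$ (again by Theorem \ref{thm:306}(1)). Both $(\til{w} / \til{v}) \circ (\til{w}'_{\mrm{un}} / \til{v}'_{\mrm{un}})$ and the originally chosen $\til{w}_{\mrm{un}} / \til{v}_{\mrm{un}}$ are morphisms of resolutions of $B / A$ above $\opn{id}_{B / A}$, so by Theorem \ref{thm:950} they induce the same map on squares. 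Combining this with Proposition \ref{prop:940} (which identifies the map induced by the composition with the composition of the individual maps), the triangle in $(*)$ commutes.

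Finally, uniqueness up to unique isomorphism of the pair $\bigl( \opn{Sq}_{B / A}(M), \{\opn{sq}^{\til{B} / \til{A}}\}_{\til{B} / \til{A}} \bigr)$ satisfying $(*)$ is formal: any two such pairs are connected by the composition $(\opn{sq}')^{\til{B} / \til{A}} \circ (\opn{sq}^{\til{B} / \til{A}})^{-1}$, which $(*)$ forces to be independent of $\til{B} / \til{A}$. I do not foresee a genuine obstacle: all the real content has been absorbed into Theorem \ref{thm:950}; what remains is categorical bookkeeping. The only mildly delicate point is to make sure one stays inside $\cat{PDGR}_{\mrm{sc}}^{\leq 0}$ throughout, i.e.\ that the commutative variants of Theorems \ref{thm:305} and \ref{thm:306} are what is being invoked, so that all target DG rings of the semi-free resolutions are commutative and the rectangle object really is a DG module over the commutative ring $B = B^{\mrm{ce}}$.
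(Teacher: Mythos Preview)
Your proposal is correct and follows essentially the same approach as the paper's proof: fix a universal commutative semi-free resolution via Theorem \ref{thm:305}(1), define the square there, produce the comparison isomorphisms via Theorem \ref{thm:306}, and verify well-definedness and condition ($*$) using Theorem \ref{thm:950} together with Proposition \ref{prop:940}. The paper's proof is slightly terser (it defers the verification of ($*$) to the analogous step in the proof of Theorem \ref{thm:985}), but the content is the same.
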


\begin{proof}
The proof is very similar to that of Theorem \ref{thm:985}: we fix a 
universal K-flat resolution of $B / A$; but now it has to be a commutative 
resolution. So let $\til{A}_{\mrm{un}} / \Z$ be a commutative semi-free 
resolution of $A / \Z$, and let 
$\til{B}_{\mrm{un}} / \til{A}_{\mrm{un}}$
be a commutative semi-free resolution
of $B / \til{A}_{\mrm{un}}$, both in $\cat{PDGR}_{\mrm{sc}}^{\leq 0}$.
This is possible by Theorem \ref{thm:305}. Define 
\[ \opn{Sq}_{B / A}(M) :=
\opn{Sq}_{B / A}^{\til{B}_{\mrm{un}} / \til{A}_{\mrm{un}}}(M) . \]

Given any K-flat resolution 
$\til{B} / \til{A}$ of $B / A$ in $\cat{PDGR}_{\mrm{sc}}^{\leq 0}$,
Theorem \ref{thm:306} says there is a morphism of resolutions 
\[ \til{w}_{\mrm{un}} / \til{v}_{\mrm{un}} :
\til{B}_{\mrm{un}} / \til{A}_{\mrm{un}} \to \til{B} / \til{A} . \]
We define the isomorphism
\[ \opn{sq}^{\til{B} / \til{A}} : 
\opn{Sq}_{B / A}^{\til{B}_{\mrm{un}} / \til{A}_{\mrm{un}}}(M) \iso 
\opn{Sq}_{B / A}^{\til{B} / \til{A}}(M) \]
to be 
\[ \opn{sq}^{\til{B} / \til{A}} := 
\opn{Sq}_{\mrm{id} / \mrm{id}}
^{\til{w}_{\mrm{un}} / \til{v}_{\mrm{un}}}(\opn{id}_{M})^{-1} . \]

The verification of condition ($*$) is the same as in the proof of Theorem 
\ref{thm:985}, except that here we use Theorem \ref{thm:950} instead of 
Theorem \ref{thm:840}.
\end{proof}

\begin{dfn} \label{dfn:1000}
A {\em triple in $\cat{DGR}_{\mrm{sc}}^{\leq 0}$} is the data of 
homomorphisms $A \xar{u} B \xar{v} C$ in $\cat{DGR}_{\mrm{sc}}^{\leq 0}$.
We refer to this triple as $C / B / A$.  

A {\em commutative K-flat resolution of $C / B / A$} is a 
triple 
$\til{A} \xar{\til{u}} \til{B} \xar{\til{v}} \til{C}$
in $\cat{DGR}_{\mrm{sc}}^{\leq 0}$,
together with homomorphisms 
$r : \til{A} \to A$, $s : \til{B} \to B$ and $t : \til{C} \to C$
in $\cat{DGR}_{\mrm{sc}}^{\leq 0}$, such that:
\begin{itemize}
\item $r$ is a quasi-isomorphism, and $s, t$ are surjective quasi-isomorphisms.
\item $s \circ \til{u} = u \circ r$ and $t \circ \til{v} = v \circ s$.
\item $\til{u}$ and $\til{v} \circ \til{u}$ are K-flat. 
\end{itemize}
\end{dfn}

See diagram (\ref{eqn:966}) in the Introduction. 
A commutative K-flat resolution 
$\til{A} \xar{\til{u}} \til{B} \xar{\til{v}} \til{C}$
can be viewed as follows:  K-flat resolutions
$s / r : \til{B} / \til{A} \to B / A$ and 
$t / r : \til{C} / \til{A} \to C / A$
in $\cat{PDGR}_{\mrm{sc}}^{\leq 0}$, 
and a morphism of resolutions 
$\til{v} / \mrm{id}_{\til{A}} : \til{B} / \til{A} \to \til{C} / \til{A}$
above the morphism of pairs 
$v / \mrm{id}_{A} : B / A \to C / A$. 

\begin{prop} \label{prop:1011}
Commutative K-flat resolutions of triples exist. 
\end{prop}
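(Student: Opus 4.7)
My plan is to build the resolution in three stages, each an application of Theorem \ref{thm:305}(1), and then verify K-flatness via Proposition \ref{prop:750}.

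First, apply Theorem \ref{thm:305}(1) to the central pair $\Z \to A$ (both commutative) to obtain a strict commutative semi-free resolution, i.e.\ a commutative semi-free DG ring $\til{A}$ over $\Z$ together with a surjective quasi-isomorphism $r : \til{A} \to A$ in $\cat{DGR}_{\mrm{sc}}^{\leq 0}$. (Strictly, we only need $r$ to be a quasi-isomorphism in Definition \ref{dfn:1000}, so surjectivity is a bonus.) Next, consider the composite $\til{A} \xar{u \circ r} B$, which is a nonpositive central pair with $B$ commutative. Apply Theorem \ref{thm:305}(1) a second time to factor it as $\til{A} \xar{\til{u}} \til{B} \xar{s} B$ with $\til{u}$ commutative semi-free and $s$ a surjective quasi-isomorphism in $\cat{DGR}_{\mrm{sc}}^{\leq 0}/\til{A}$. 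Then consider the composite $\til{B} \xar{v \circ s} C$, again a nonpositive central pair with commutative target, and apply Theorem \ref{thm:305}(1) a third time to factor it as $\til{B} \xar{\til{v}} \til{C} \xar{t} C$ with $\til{v}$ commutative semi-free and $t$ a surjective quasi-isomorphism. By construction we have $s \circ \til{u} = u \circ r$ and $t \circ \til{v} = v \circ s$, so the diagram \eqref{eqn:966} commutes, $r$ is a quasi-isomorphism, and $s, t$ are surjective quasi-isomorphisms.

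It remains to verify the K-flatness conditions. Since $\til{u} : \til{A} \to \til{B}$ is commutative semi-free, Proposition \ref{prop:750} gives that $\til{u}$ is K-flat. For $\til{v} \circ \til{u} : \til{A} \to \til{C}$, I claim that $\til{C}$ is in fact commutative semi-free over $\til{A}$. Let $X$ be a set of commutative semi-free ring generators of $\til{B}$ over $\til{A}$, so that $\til{B}^{\natural} \cong \til{A}^{\natural} \ot_{\Z} \Z[X]$, and let $Y$ be a corresponding generating set for $\til{C}$ over $\til{B}$, so that $\til{C}^{\natural} \cong \til{B}^{\natural} \ot_{\Z} \Z[Y]$. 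Then combining these isomorphisms we obtain $\til{C}^{\natural} \cong \til{A}^{\natural} \ot_{\Z} \Z[X \sqcup Y]$ as graded $\til{A}^{\natural}$-rings, exhibiting $X \sqcup Y$ as a commutative semi-free generating set for $\til{C}$ over $\til{A}$. Hence Proposition \ref{prop:750} again yields that $\til{v} \circ \til{u}$ is K-flat.

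There is no real obstacle here; the whole statement is essentially a three-step iteration of the existence of strict commutative semi-free resolutions (Theorem \ref{thm:305}(1)), glued together by the composition property for commutative semi-free generating sets. If one preferred to avoid the explicit generator argument, one could instead invoke the general fact that a composition of K-flat DG ring extensions is K-flat (by writing $N \ot_{\til{A}} \til{C} \cong (N \ot_{\til{A}} \til{B}) \ot_{\til{B}} \til{C}$ for an acyclic $N$ over $\til{A}^{\mrm{op}}$ and using K-flatness at each stage), but the semi-free argument above is more direct.
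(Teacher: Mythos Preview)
Your proof is correct and is essentially the approach the paper has in mind; indeed, the sequential construction you give (resolve $A/\Z$, then $B/\til{A}$, then $C/\til{B}$, each by a commutative semi-free resolution via Theorem \ref{thm:305}(1)) is exactly what the paper spells out later in the proof of Theorem \ref{thm:871}. The paper's one-line proof of Proposition \ref{prop:1011} also cites Theorem \ref{thm:306}, but your sequential construction sidesteps any need for a lifting argument, since each $\til{u}$ and $\til{v}$ is produced directly as the structure map of a strict resolution; so your version is, if anything, slightly more economical.
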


\begin{proof}
This follows from Theorems \ref{thm:305} and \ref{thm:306}. Compare to the 
proof of Corollary \ref{cor:980}.
\end{proof}

Here is the second main theorem of the paper (which is Theorem \ref{thm:966} in 
the Introduction). 

\begin{thm}[Trace Functoriality] \label{thm:871}
Let $A \xar{} B \xar{v} C$ 
be homomorphisms of commutative DG rings, 
let $M \in \cat{D}(B)$, let $N \in \cat{D}(C)$, and let 
$\th : N \to M$ be a morphism in $\cat{D}(B)$.
There is a unique morphism 
\[ \opn{Sq}_{v / \mrm{id}_A}(\th) :
\opn{Sq}_{C / A}(N) \to \opn{Sq}_{B / A}(M) \]
in $\cat{D}(B)$, satisfying the condition\tup{:}
\begin{enumerate}
\item[($**$)] For any commutative K-flat resolution
$\til{A} \to \til{B} \xar{\til{v}} \til{C}$
of the triple $A \to B \xar{v} C$, there is equality 
\[ \opn{Sq}_{v / \mrm{id}_A}^{\til{v} / \mrm{id}_{\til{A}}}(\th) = 
 \opn{sq}^{\til{B} / \til{A}} \circ \opn{Sq}_{v / \mrm{id}_A}(\th) 
\circ (\opn{sq}^{\til{C} / \til{A}})^{-1} \]
of morphisms 
$\opn{Sq}_{C / A}^{\til{C} / \til{A}}(N) \to 
\opn{Sq}_{B / A}^{\til{B} / \til{A}}(M)$
in $\cat{D}(B)$.
\end{enumerate}
\end{thm}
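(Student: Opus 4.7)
The plan is to use any one commutative K-flat resolution of the triple to \emph{define} $\opn{Sq}_{v / \mrm{id}_A}(\th)$, and then verify that condition $(**)$ holds for every other resolution. Uniqueness is immediate: since the morphisms $\opn{sq}^{\til{B} / \til{A}}$ and $\opn{sq}^{\til{C} / \til{A}}$ produced by Theorem \ref{thm:860} are isomorphisms, condition $(**)$ forces
\[ \opn{Sq}_{v / \mrm{id}_A}(\th) = (\opn{sq}^{\til{B} / \til{A}})^{-1} \circ \opn{Sq}_{v / \mrm{id}_A}^{\til{v} / \mrm{id}_{\til{A}}}(\th) \circ \opn{sq}^{\til{C} / \til{A}} . \]
For existence, fix a commutative K-flat resolution $\til{A} \to \til{B} \xar{\til{v}} \til{C}$ of $C / B / A$, which exists by Proposition \ref{prop:1011}, and \emph{define} $\opn{Sq}_{v / \mrm{id}_A}(\th)$ by the displayed formula. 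By construction $(**)$ holds for this particular resolution.

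The substantive part is to show $(**)$ holds for every other commutative K-flat resolution $\til{A}' \to \til{B}' \xar{\til{v}'} \til{C}'$ of the triple. I would do this by producing a third commutative K-flat resolution $\til{A}'' \to \til{B}'' \xar{\til{v}''} \til{C}''$ of $C / B / A$ together with morphisms of triple-resolutions above the identity of $C / B / A$, mapping to each of the two given resolutions. This is done inductively in three stages: first apply Theorem \ref{thm:305} to obtain a commutative semi-free resolution $\til{A}''$ of $A$, and use Theorem \ref{thm:306} to lift it to morphisms $\til{A}'' \to \til{A}$ and $\til{A}'' \to \til{A}'$; then construct $\til{B}''$ commutative semi-free over $\til{A}''$ resolving $B$, and again by Theorem \ref{thm:306} lift to $\til{B}$ and $\til{B}'$; finally do the same for $\til{C}''$.

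With such a common refinement in hand, the desired equality reduces, by pasting together commutative diagrams, to two compatibility facts: (a) condition $(*)$ of Theorem \ref{thm:860} ensures that the $\opn{sq}$ isomorphisms are natural with respect to morphisms of resolutions of pairs; and (b) Proposition \ref{prop:940} together with the Homotopy Invariance Theorem \ref{thm:950} ensures that the morphisms $\opn{Sq}_{v / \mrm{id}_A}^{\til{v} / \mrm{id}_{\til{A}}}(\th)$ are also natural with respect to morphisms of resolutions. Combining these shows that the formula computed using $\til{C} / \til{B} / \til{A}$ equals the one computed using $\til{C}'' / \til{B}'' / \til{A}''$, and symmetrically for $\til{C}' / \til{B}' / \til{A}'$, so the two match.

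The main obstacle will be the third stage: constructing morphisms of triple-resolutions that are simultaneously compatible with the horizontal homomorphisms $\til{u}, \til{v}$ and with the vertical quasi-isomorphisms to $A, B, C$. Each application of Theorem \ref{thm:306} produces a lift only up to homotopy, and in particular the lifts $\til{B}'' \to \til{B}$ obtained from the $\til{A}''$-structure of $\til{B}''$ and from the $B$-target need not agree on the nose. This is precisely the sort of ambiguity that forced us to prove Theorem \ref{thm:950}, and it is by invoking homotopy invariance that one can pass from the diagrams in $\cat{PDGR}_{\mrm{sc}}^{\leq 0}$ to genuine equalities of morphisms in $\cat{D}(B)$.
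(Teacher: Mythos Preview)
Your proposal is correct and follows the same strategy as the paper. The paper streamlines the organization by fixing from the outset a \emph{universal} commutative semi-free resolution $\til{A}_{\mrm{un}} \to \til{B}_{\mrm{un}} \to \til{C}_{\mrm{un}}$ of the triple (built semi-free stage by stage, starting over $\Z$); by Theorem~\ref{thm:306} this maps to \emph{every} K-flat resolution of the triple, so it plays simultaneously the role of your defining resolution and of your common refinement $\til{C}'' / \til{B}'' / \til{A}''$, collapsing the two steps into one.

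Your worry in the final paragraph is overstated. When $\til{B}''$ is taken commutative semi-free over $\til{A}''$, Theorem~\ref{thm:306} is applied with base ring $\til{A}''$ (and target $\til{B}$, viewed over $\til{A}''$ via $\til{A}'' \to \til{A} \to \til{B}$); the output is a genuine $\til{A}''$-ring homomorphism $\til{B}'' \to \til{B}$ over $B$, so compatibility with $\til{A}'' \to \til{A}$ is built in, not something to be arranged separately. The non-uniqueness of such lifts is real, but it is absorbed entirely by condition~($*$) of Theorem~\ref{thm:860} (which already packages Theorem~\ref{thm:950}); the paper's verification of~($**$) simply cites condition~($*$) together with the functoriality of Proposition~\ref{prop:940}, and does not need to invoke Theorem~\ref{thm:950} again directly.
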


\begin{proof}
We begin by choosing a universal K-flat resolution of the triple 
$A \xar{u} B \xar{v} C$.
We choose, in this order, commutative semi-free resolutions 
$r_{\mrm{un}} / \mrm{id} : \til{A}_{\mrm{un}} / \Z \to A / \Z$, 
$s_{\mrm{un}} / r_{\mrm{un}} : \til{B}_{\mrm{un}} / \til{A}_{\mrm{un}} \to B / 
\til{A}_{\mrm{un}}$
and
$t_{\mrm{un}} / s_{\mrm{un}} : \til{C}_{\mrm{un}} / \til{B}_{\mrm{un}} \to C / 
\til{B}_{\mrm{un}}$.
In this way we obtain a K-flat resolving triple 
$\til{A}_{\mrm{un}} \xar{\til{u}_{\mrm{un}}} \til{B}_{\mrm{un}} 
\xar{\til{v}_{\mrm{un}}} \til{C}_{\mrm{un}}$
of $A \xar{u} B \xar{v} C$.
Now we define 
\[ \opn{Sq}_{v / \mrm{id}_A}(\th) := 
(\opn{sq}^{\til{B}_{\mrm{un}} / \til{A}_{\mrm{un}}})^{-1} \circ 
\opn{Sq}_{v / \mrm{id}_A}^{\til{v}_{\mrm{un}} / 
\mrm{id}_{\til{A}_{\mrm{un}}}}(\th)
\circ \opn{sq}^{\til{C}_{\mrm{un}} / \til{A}_{\mrm{un}}} . \]

Let us verify condition ($**$). We are given an arbitrary commutative K-flat 
resolution 
$\til{A} \xar{\til{u}} \til{B} \xar{\til{v}} \til{C}$
of the triple $A \to B \xar{v} C$.
According to Theorem \ref{thm:306} we can find homomorphisms 
$\til{r} : \til{A}_{\mrm{un}} \to \til{A}$,
$\til{s} : \til{B}_{\mrm{un}} \to \til{B}$ and
$\til{t} : \til{C}_{\mrm{un}} \to \til{C}$, 
such that the diagram below in $\cat{PDGR}_{\mrm{sc}}^{\leq 0}$
is commutative.  

\begin{equation} \label{eqn:1000}
\UseTips \xymatrix @C=10ex @R=6ex {
\til{B}_{\mrm{un}} / \til{A}_{\mrm{un}}
\ar[r]^{ \til{v}_{\mrm{un}} / \mrm{id} }
\ar[d]^{ \til{s} / \til{r} }
\ar @/_3em/ [dd]_(0.7){ s_{\mrm{un}} / r_{\mrm{un}} }
&
\til{C}_{\mrm{un}} / \til{A}_{\mrm{un}}
\ar[d]_{ \til{t} / \til{r} }
\ar @/^3em/ [dd]^(0.7){ t_{\mrm{un}} / r_{\mrm{un}} }
\\
\til{B} / \til{A}
\ar[r]^{ \til{v} / \mrm{id} }
\ar[d]^{ s / r }
&
\til{C} / \til{A}
\ar[d]_{ t / r }
\\
B / A
\ar[r]^{ v / \mrm{id} }
&
C / A
} 
\end{equation}

Now let us look at the corresponding diagram with the squaring operation. It is 
a diagram in $\cat{D}(B)$. 

\[ \UseTips \xymatrix @C=16ex @R=8ex {
\opn{Sq}_{B / A}^{\til{B}_{\mrm{un}} / \til{A}_{\mrm{un}}}(M)
&
\opn{Sq}_{C / A}^{\til{C}_{\mrm{un}} / \til{A}_{\mrm{un}}}(N)
\ar[l]_(0.5){ \opn{Sq}_{v / \mrm{id}}^{\til{v}_{\mrm{un}} / \mrm{id}}(\th) }
\\
\opn{Sq}_{B / A}^{\til{B} / \til{A}}(M)
\ar[u]_{ \opn{Sq}_{\mrm{id} / \mrm{id}}^{\til{s} / \til{r}}(\mrm{id}) }
&
\opn{Sq}_{C / A}^{\til{C} / \til{A}}(N)
\ar[u]^{ \opn{Sq}_{\mrm{id} / \mrm{id}}^{\til{t} / \til{r}}(\mrm{id}) }
\ar[l]_(0.5){ \opn{Sq}_{v / \mrm{id}}^{\til{v} / \mrm{id}}(\th) }
\\
\opn{Sq}_{B / A}(M)
\ar @/^4em/ [uu]^(0.3){ \opn{sq}^{\til{B}_{\mrm{un}} / \til{A}_{\mrm{un}}} }
\ar[u]_{ \opn{sq}^{\til{B} / \til{A}} }
&
\opn{Sq}_{C / A}(N)
\ar[u]^{ \opn{sq}^{\til{C} / \til{A}} }
\ar[l]_{ \opn{Sq}_{v / \mrm{id}}(\th) }
\ar @/_3em/ [uu]_(0.3){ \opn{sq}^{\til{C}_{\mrm{un}} / \til{A}_{\mrm{un}}} }
} \]

The top square is commutative by Proposition \ref{prop:940} (applied twice). 
The two half-moons are commutative by condition ($*$) of Theorem \ref{thm:860}.
The outer paths are equal by definition of 
$\opn{Sq}_{v / \mrm{id}}(\th)$. Because the vertical arrows are isomorphisms, 
we conclude that the bottom square is commutative. This is what we had to 
prove. 
\end{proof}

\begin{cor} \label{cor:970}
Let $B / A$ be a pair of commutative DG rings.
The assignments 
\[ M \mapsto \opn{Sq}_{B / A}(M) \quad \tup{and} \quad 
\th \mapsto \opn{Sq}_{\mrm{id}_B / \mrm{id}_A}(\th) \]
are a functor 
\[ \opn{Sq}_{B / A} : \cat{D}(B) \to  \cat{D}(B) . \]
\end{cor}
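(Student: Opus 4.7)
The plan is to verify the two functor axioms (preservation of identities and of composition) by transporting everything to a fixed K-flat resolution and applying the functoriality result Proposition \ref{prop:940}. Concretely, choose any K-flat resolution $\til{B} / \til{A}$ of $B / A$ in $\cat{PDGR}_{\mrm{sc}}^{\leq 0}$; it exists by Proposition \ref{prop:1010}. For every $M \in \cat{D}(B)$ we then have the isomorphism $\opn{sq}^{\til{B} / \til{A}} : \opn{Sq}_{B / A}(M) \iso \opn{Sq}_{B / A}^{\til{B} / \til{A}}(M)$ from Theorem \ref{thm:860}. For every morphism $\th : M \to N$ in $\cat{D}(B)$, Theorem \ref{thm:871}, applied with $C = B$ and $v = \mrm{id}_B$ and to the commutative K-flat resolving triple $\til{A} \to \til{B} \xar{\mrm{id}} \til{B}$ of $A \to B \xar{\mrm{id}} B$, yields the equality
\[ \opn{Sq}_{\mrm{id}_B / \mrm{id}_A}^{\mrm{id}_{\til{B}} / \mrm{id}_{\til{A}}}(\th) = \opn{sq}^{\til{B} / \til{A}} \circ \opn{Sq}_{\mrm{id}_B / \mrm{id}_A}(\th) \circ (\opn{sq}^{\til{B} / \til{A}})^{-1} \]
in $\cat{D}(B)$. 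Thus it suffices to check that the assignment $\th \mapsto \opn{Sq}_{\mrm{id} / \mrm{id}}^{\mrm{id} / \mrm{id}}(\th)$ defined via Proposition \ref{prop:916} (with $M^{\mrm{l}} = M^{\mrm{r}} := M$ and $\th^{\mrm{l}} = \th^{\mrm{r}} := \th$) respects identities and composition.

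For the identity axiom, specialize Proposition \ref{prop:940} to the case $B_1 / A_1 = B_0 / A_0 = B / A$, $\til{B}_1 / \til{A}_1 = \til{B}_0 / \til{A}_0 = \til{B} / \til{A}$, and all horizontal morphisms equal to the identity; the ``identity automorphisms'' clause there gives $\opn{Sq}_{\mrm{id} / \mrm{id}}^{\mrm{id} / \mrm{id}}(\opn{id}_M) = \opn{id}$ on $\opn{Sq}_{B / A}^{\til{B} / \til{A}}(M)$, and transporting back through $\opn{sq}^{\til{B} / \til{A}}$ yields $\opn{Sq}_{\mrm{id}_B / \mrm{id}_A}(\opn{id}_M) = \opn{id}_{\opn{Sq}_{B / A}(M)}$.

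For composition, let $\th_1 : M_0 \to M_1$ and $\th_2 : M_1 \to M_2$ be morphisms in $\cat{D}(B)$. Apply Proposition \ref{prop:940} with $k = 0, 1, 2$ taking $B_k / A_k = B / A$, $\til{B}_k / \til{A}_k = \til{B} / \til{A}$, every $w_k / v_k$ and $\til{w}_k / \til{v}_k$ the identity, and $(\th_k^{\mrm{l}}, \th_k^{\mrm{r}}) := (\th_k, \th_k)$; this yields the equality
\[ \opn{Sq}_{\mrm{id} / \mrm{id}}^{\mrm{id} / \mrm{id}}(\th_2 \circ \th_1) = \opn{Sq}_{\mrm{id} / \mrm{id}}^{\mrm{id} / \mrm{id}}(\th_2) \circ \opn{Sq}_{\mrm{id} / \mrm{id}}^{\mrm{id} / \mrm{id}}(\th_1) \]
in $\cat{D}(B) = \cat{D}(B^{\mrm{ce}})$. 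Conjugating by $\opn{sq}^{\til{B} / \til{A}}$ via the equality displayed above yields $\opn{Sq}_{\mrm{id}_B / \mrm{id}_A}(\th_2 \circ \th_1) = \opn{Sq}_{\mrm{id}_B / \mrm{id}_A}(\th_2) \circ \opn{Sq}_{\mrm{id}_B / \mrm{id}_A}(\th_1)$, completing the verification.

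There is no real obstacle here; the corollary is a bookkeeping consequence of the functoriality of the rectangle construction at the level of a fixed resolution (Proposition \ref{prop:940}), combined with the compatibility of $\opn{sq}^{\til{B} / \til{A}}$ with morphisms (condition ($**$) of Theorem \ref{thm:871}). The only care needed is to apply Theorem \ref{thm:871} in the trivial case $v = \mrm{id}_B$, where the resolving triple is simply the identity extension of $\til{B} / \til{A}$, so that the ``base change'' of resolution appearing in Theorem \ref{thm:871} reduces to the identity.
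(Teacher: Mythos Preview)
Your proof is correct and follows essentially the same approach as the paper: transport to a fixed K-flat resolution via condition ($**$) of Theorem \ref{thm:871}, and then invoke Proposition \ref{prop:940} for both the composition and identity axioms. The paper is terser and cites Lemma \ref{lem:925} directly for the identity case, but this is exactly what the ``identity automorphisms'' clause of Proposition \ref{prop:940} unwinds to, so there is no substantive difference.
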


\begin{proof}
Consider morphisms 
$M_0 \xar{\th_1} M_1 \xar{\th_2} M_2$
in $\cat{D}(B)$. A combination of Proposition \ref{prop:940} and condition 
($**$) of Theorem \ref{thm:871} shows that 
\[ \opn{Sq}_{\mrm{id} / \mrm{id}}(\th_2) \circ 
\opn{Sq}_{\mrm{id} / \mrm{id}}(\th_1) = 
\opn{Sq}_{\mrm{id} / \mrm{id}}(\th_2 \circ \th_1) . \]
And by Lemma \ref{lem:925}, 
$\opn{Sq}_{\mrm{id} / \mrm{id}}(\mrm{id}_M) = 
\mrm{id}_{\opn{Sq}_{B / A}(M)}$.
\end{proof}

\begin{dfn} \label{dfn:1001}
The object $\opn{Sq}_{B / A}(M)$ from Theorem \ref{thm:860}  is called the 
{\em square of $M$ over $B$ relative to $A$}. 
The functor $\opn{Sq}_{B / A}$ from Corollary \ref{cor:970}
is called the {\em squaring operation  over $B$ relative to $A$}. 
\end{dfn}

The following result says that the squaring operation $\opn{Sq}_{B / A}$ is 
also functorial in the DG ring $B$. 

\begin{prop} \label{prop:1000}
We are given homomorphisms of commutative DG rings 
$A \to B_2 \xar{v_2} B_1 \xar{v_1} B_0$,
DG modules $M_k \in \cat{D}(B_k)$,
and morphisms 
$\th_k : M_{k - 1} \to  M_k$ in $\cat{D}(B_k)$.
Then 
\[ \opn{Sq}_{v_2 / \mrm{id}_A}(\th_2) \circ 
\opn{Sq}_{v_1 / \mrm{id}_A}(\th_1) = 
\opn{Sq}_{(v_1 \circ v_2) / \mrm{id}_A}(\th_2 \circ \th_1) , \]
as morphisms 
$\opn{Sq}_{B_0 / A}(M_0) \to \opn{Sq}_{B_2 / A}(M_2)$ in $\cat{D}(B_2)$.  
\end{prop}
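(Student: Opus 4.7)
The plan is to reduce the statement to the analogous functoriality for the ``resolved'' operation $\opn{Sq}^{\til{B}/\til{A}}$, which is already available from Proposition \ref{prop:940}, and then transport it through the comparison isomorphisms $\opn{sq}^{\til{B}/\til{A}}$ supplied by Theorem \ref{thm:860} and Theorem \ref{thm:871}.

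First, I would pick a single commutative K-flat resolution of the four-term chain $A \to B_2 \xar{v_2} B_1 \xar{v_1} B_0$, i.e.\ a diagram
\[ \til{A} \xar{\til{u}} \til{B}_2 \xar{\til{v}_2} \til{B}_1 \xar{\til{v}_1} \til{B}_0 \]
in $\cat{DGR}_{\mrm{sc}}^{\leq 0}$ with quasi-isomorphisms $\til{A} \to A$ and surjective quasi-isomorphisms $\til{B}_k \to B_k$ compatible with the $v_i$'s, and such that each $\til{B}_k/\til{A}$ is K-flat. Its existence follows by iterated application of Theorems \ref{thm:305} and \ref{thm:306}: choose a commutative semi-free resolution $\til{A}/\Z$ of $A/\Z$, then $\til{B}_2/\til{A}$ of $B_2/\til{A}$, then $\til{B}_1/\til{A}$ of $B_1/\til{A}$ together with $\til{v}_2$ lifting $v_2$, and finally $\til{B}_0/\til{A}$ of $B_0/\til{A}$ together with $\til{v}_1$ lifting $v_1$. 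This single resolution simultaneously serves as a commutative K-flat resolution of each of the three triples $B_1/B_2/A$, $B_0/B_1/A$ and $B_0/B_2/A$ in the sense of Definition \ref{dfn:1000}.

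Next, I would apply condition $(**)$ of Theorem \ref{thm:871} to each of the three morphisms $\opn{Sq}_{v_1/\mrm{id}}(\th_1)$, $\opn{Sq}_{v_2/\mrm{id}}(\th_2)$ and $\opn{Sq}_{(v_1 \circ v_2)/\mrm{id}}(\th_2 \circ \th_1)$, obtaining explicit formulas for each in terms of the corresponding resolved morphisms $\opn{Sq}^{\til{v}_i/\mrm{id}}$ conjugated by the isomorphisms $\opn{sq}^{\til{B}_k/\til{A}}$. When one then composes the formulas for $\opn{Sq}_{v_2/\mrm{id}}(\th_2)$ and $\opn{Sq}_{v_1/\mrm{id}}(\th_1)$, the middle factor $(\opn{sq}^{\til{B}_1/\til{A}})^{-1} \circ \opn{sq}^{\til{B}_1/\til{A}}$ cancels, so it suffices to verify the corresponding equality at the resolved level, namely
\[ \opn{Sq}_{v_2/\mrm{id}}^{\til{v}_2/\mrm{id}}(\th_2) \circ \opn{Sq}_{v_1/\mrm{id}}^{\til{v}_1/\mrm{id}}(\th_1) = \opn{Sq}_{(v_1 \circ v_2)/\mrm{id}}^{(\til{v}_1 \circ \til{v}_2)/\mrm{id}}(\th_2 \circ \th_1) . \]
This last equation is exactly the specialization of Proposition \ref{prop:940} to the case $M^{\mrm{l}}_k = M^{\mrm{r}}_k = M_k$ and $\th_k^{\mrm{l}} = \th_k^{\mrm{r}} = \th_k$, via the identification of $\opn{Sq}^{\til{B}/\til{A}}$ with $\opn{Rect}^{\til{B}/\til{A}}$ in Definitions \ref{dfn:950} and \ref{dfn:956}.

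There is no serious obstacle here; the proof is essentially bookkeeping. The only mildly delicate point is the simultaneous existence of the four-term K-flat resolution compatible with all three triples, which is handled by the iterated use of Theorems \ref{thm:305} and \ref{thm:306} as above. Once that resolution is fixed, the argument is a direct diagram chase combining Theorem \ref{thm:871}$(**)$ with Proposition \ref{prop:940}.
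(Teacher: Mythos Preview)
Your proposal is correct and follows essentially the same approach as the paper: choose a single commutative K-flat resolution of the four-term chain, use condition $(**)$ of Theorem~\ref{thm:871} to reduce to the resolved level, and conclude by Proposition~\ref{prop:940}. Your version is simply more explicit about the construction of the four-term resolution and the cancellation of the middle $\opn{sq}^{\til{B}_1/\til{A}}$ factors, which the paper leaves implicit.
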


\begin{proof}
Choose a K-flat resolution 
$\til{A} \xar{} \til{B}_2 \xar{\til{v}_2} \til{B}_1 \xar{\til{v}_1} \til{B}_0$
of 
$A \to B_2 \xar{v_2} B_1 \xar{v_1} B_0$, 
in the obvious sense that generalizes 
Definition \ref{dfn:1000}. According to condition ($**$) of Theorem 
\ref{thm:871}, it suffices to prove that 
\[ \opn{Sq}_{v_2 / \mrm{id}}^{\til{v}_2 / \mrm{id}} (\th_2) \circ 
\opn{Sq}_{v_1 / \mrm{id}}^{\til{v}_1 / \mrm{id}}(\th_1) = 
\opn{Sq}_{(v_1 \circ v_2) / \mrm{id}}^{(\til{v}_1 \circ \til{v}_2) / \mrm{id}}
(\th_2 \circ \th_1) , \]
as morphisms 
$\opn{Sq}_{B_0 / A}^{\til{B}_0 / \til{A}}(M_0) \to 
\opn{Sq}_{B_2 / A}^{\til{B}_2 / \til{A}}(M_2)$ 
in $\cat{D}(B_2)$. This is true by Proposition \ref{prop:940}.
\end{proof}

\begin{prop} \label{prop:1015}
Let $A$ be a commutative DG ring.
The action from Proposition \tup{\ref{prop:461}(1)}
makes $\cat{D}(A)$ into an $\opn{H}^0(A)$-linear category. 
\end{prop}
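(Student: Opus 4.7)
The plan is to construct, for each $M \in \cat{D}(A)$ and each $a \in \opn{H}^0(A)$, a central endomorphism $a_M \in \opn{Hom}_{\cat{D}(A)}(M, M)$, depending functorially on $M$, and then define the scalar action on $\opn{Hom}_{\cat{D}(A)}(M, N)$ by post-composition with $a_N$. Concretely, I would lift $a$ to a cocycle $\til{a} \in \opn{Z}^0(A)$, and let $\mu^M_{\til{a}} \colon M \to M$ denote left multiplication by $\til{a}$. This is a degree $0$ $A$-linear homomorphism (the $A$-linearity uses commutativity of $A$) that commutes with $\d_M$ (because $\til{a}$ is a cocycle), so it is a genuine morphism in $\cat{M}(A)$. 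Set $a_M := \opn{Q}(\mu^M_{\til{a}})$.

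The first verification is independence of the lift. If $\til{a}' = \til{a} + \d(b)$ with $b \in A^{-1}$, then left multiplication by $b$ gives a degree $-1$ $A$-linear homomorphism $h^M_b \colon M \to M$ (again needing commutativity of $A$ so that $b \cd c = (-1)^{\abs{c}} c \cd b$), and a short calculation with the graded Leibniz rule yields
\[ \d(h^M_b) = \mu^M_{\til{a}'} - \mu^M_{\til{a}} \]
in the DG module $\opn{Hom}_A(M, M)$. Hence $\mu^M_{\til{a}}$ and $\mu^M_{\til{a}'}$ are homotopic, and $a_M$ is well defined in $\cat{D}(A)$. The assignment $a \mapsto a_M$ is additive, multiplicative, and sends $1$ to $\opn{id}_M$ already at the level of $\cat{M}(A)$, so it is a ring homomorphism $\opn{H}^0(A) \to \opn{Hom}_{\cat{D}(A)}(M, M)$.

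The main point is naturality: for every morphism $\psi \colon M \to N$ in $\cat{D}(A)$, I need $a_N \circ \psi = \psi \circ a_M$. I would reduce to the case where $M$ is K-projective and $N$ is K-injective, since then Proposition~\ref{prop:460}(2) represents $\psi$ by an actual homomorphism $\phi \in \opn{Hom}_A(M, N)^0$. For such $\phi$, condition~(\ref{eqn:310}) with $k = i = 0$ gives $\phi \circ \mu^M_{\til{a}} = \mu^N_{\til{a}} \circ \phi$ on the nose, and applying $\opn{Q}$ yields the desired identity. This same computation identifies my construction with the action of Proposition~\ref{prop:461}(1): taking $M_0 = M$ K-projective and $N_0 = N$, the formula $(\phi \cd \til{a})(m) = \phi(\til{a} \cd m) = \til{a} \cd \phi(m)$ shows that the DG $A^{\mrm{op}}$-action on $\opn{Hom}_A(M, N)$ descends in degree $0$ cohomology to exactly $\psi \mapsto a_N \circ \psi$.

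Once naturality is in hand, the definition $a \cd \psi := a_N \circ \psi = \psi \circ a_M$ makes each $\opn{Hom}_{\cat{D}(A)}(M, N)$ an $\opn{H}^0(A)$-module, and bilinearity of composition is immediate: for $\psi_1 \colon M \to N$ and $\psi_2 \colon N \to K$, the identities $(a \cd \psi_2) \circ \psi_1 = a_K \circ \psi_2 \circ \psi_1 = a \cd (\psi_2 \circ \psi_1)$ and $\psi_2 \circ (a \cd \psi_1) = \psi_2 \circ a_N \circ \psi_1 = a_K \circ \psi_2 \circ \psi_1$ (the second equality by naturality) give the required compatibility. The only step that is not entirely mechanical is the independence of the lift, and there the crucial input is simply that commutativity of $A$ turns left multiplication by \emph{any} element into an $A$-linear map; everything else is routine.
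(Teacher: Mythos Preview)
Your argument is correct. The paper's proof is more conceptual and terse: it observes that since $A^0$ is central and consists of cocycles, $\cat{M}(A)$ is $A^0$-linear; then because $\opn{Hom}_{\cat{K}(A)}(M,N) = \opn{H}^0(\opn{Hom}_A(M,N))$, the $A^0$-action descends to an $\opn{H}^0(A)$-action making $\cat{K}(A)$ an $\opn{H}^0(A)$-linear category; finally, localization of an $R$-linear category at any class of morphisms remains $R$-linear. Your route is more explicit: you build the endomorphism $a_M$ directly, verify independence of the lift by exhibiting a homotopy, and check naturality by passing to K-projective/K-injective models. Both approaches rest on the same two facts (centrality of $A^0$, and that multiplication by a coboundary is null-homotopic); the paper packages naturality into the statement ``localization preserves linearity'', whereas you verify it by hand. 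Your detour through Proposition~\ref{prop:460} is unnecessary, since naturality already holds in $\cat{K}(A)$ for arbitrary $M,N$ and then propagates to $\cat{D}(A)$ through roofs, but it does no harm.
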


\begin{proof}
Because $A^0$ is in the center of $A$, and $\d(A^0) = 0$, we see that 
$\cat{M}(A)$ is an $A^0$-linear category. See formula (\ref{eqn:1015}). 
Next, from formula 
(\ref{eqn:1016}) we see that $\cat{K}(A)$ is an 
$\opn{H}^0(A)$-linear category. Hence its localization $\cat{D}(A)$ is an
$\opn{H}^0(A)$-linear category.
\end{proof}

Given a homomorphism $B \to C$ of commutative DG rings, and objects 
$M \in \cat{D}(B)$ and $N \in \cat{D}(C)$, the set 
$\opn{Hom}_{\cat{D}(B)}(N, M)$
is an $\opn{H}^0(C)$-module, with action coming the action of 
$\opn{H}^0(C)$ on $N$ as an object of $\cat{D}(C)$, as in the proposition 
above. 

We end the paper with the next result.

\begin{thm} \label{thm:1010}
In the situation of Theorem \tup{\ref{thm:871}}, let $c \in \opn{H}^0(C)$. 
Then 
\[ \opn{Sq}_{v / \mrm{id}}(c \cd \th) =
c^2 \cd \opn{Sq}_{v / \mrm{id}}(\th) , \]
as morphisms 
\[ \opn{Sq}_{C / A} (N)  \to \opn{Sq}_{B / A} (M) \]
in $\cat{D}(B)$.
\end{thm}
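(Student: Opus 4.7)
The plan is to reduce the quadratic behavior of the squaring operation to the bilinearity of the tensor product at the level of compound resolutions, combined with the composition rule of Proposition \ref{prop:1000}.

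First I would factor $c \cd \th$ as the composition $\th \circ m_c$ in $\cat{D}(B)$, where $m_c : N \to N$ denotes scalar multiplication by $c$, and apply Proposition \ref{prop:1000} to the triple $A \to B \xar{v} C \xar{\mrm{id}_C} C$ with morphisms $m_c$ and $\th$. This yields
\[ \opn{Sq}_{v / \mrm{id}_A}(c \cd \th) = \opn{Sq}_{v / \mrm{id}_A}(\th) \circ \opn{Sq}_{\mrm{id}_C / \mrm{id}_A}(m_c) . \]
Because the $\opn{H}^0(C)$-action on $\opn{Hom}_{\cat{D}(B)}\bigl( \opn{Sq}_{C/A}(N), \opn{Sq}_{B/A}(M) \bigr)$ is, by construction, the one induced from the source object $\opn{Sq}_{C/A}(N) \in \cat{D}(C)$, the theorem reduces to the identity
\[ \opn{Sq}_{\mrm{id}_C / \mrm{id}_A}(m_c) = c^2 \cd \mrm{id}_{\opn{Sq}_{C/A}(N)} \]
in $\cat{D}(C)$.

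To prove this identity I would work with an explicit model. Choose a K-flat resolution $t / r : \til{C} / \til{A} \to C / A$ in $\cat{PDGR}_{\mrm{sc}}^{\leq 0}$; since $t$ is a surjective quasi-isomorphism, lift $c \in \opn{H}^0(C)$ to a $0$-cocycle $\tilde{c} \in \opn{Z}^0(\til{C})$. Pick a symmetric compound resolution $\bsym{P} = (\til{P}, \til{I}; \al, \be)$ of $N$ as in Remark \ref{rem:965}, choosing $\til{P}$ to be K-projective in $\cat{M}(\til{C})$ (hence also K-flat) and $\til{I}$ to be K-injective in $\cat{M}(\til{C}^{\mrm{en}})$. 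Define a symmetric compound morphism $\bsym{\eta} : \bsym{P} \to \bsym{P}$ resolving $m_c$ by setting $\ga := \mrm{id}_{\til{P}}$, $\til{\th} := \tilde{c} \cd \mrm{id}_{\til{P}}$ and $\eta := \tilde{c}^2 \cd \mrm{id}_{\til{I}}$; these are well-defined $\til{C}^{\mrm{en}}$-linear chain maps because $\tilde{c}$ is a central degree $0$ cocycle in the commutative DG ring $\til{C}$. The required compatibility $\eta \circ \be = \be \circ (\til{\th} \ot_{\til{A}} \til{\th})$ then follows from the identity $\til{\th} \ot_{\til{A}} \til{\th} = \tilde{c}^2 \cd \mrm{id}_{\til{P}^{\mrm{en}}}$, which is immediate from commutativity and $\deg \tilde{c} = 0$.

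Unwinding Definition \ref{dfn:920}, the morphism $\opn{Rect}_{\mrm{id}/\mrm{id}}^{\mrm{id}/\mrm{id}}(\bsym{\eta})$ is represented by the chain map $\phi \mapsto \tilde{c}^2 \cd \phi$ on $\opn{Hom}_{\til{C}^{\mrm{en}}}(C, \til{I})$. Using $\til{C}^{\mrm{en}}$-linearity of $\phi$ and the fact that $\tilde{c}$ acts on $C$, viewed as a $\til{C}^{\mrm{en}}$-module, by $t(\tilde{c}) = c$, this map equals $\phi \mapsto \phi(c^2 \cd -)$, which by Proposition \ref{prop:461}(1) is precisely the $C^{\mrm{ce}} = C$-action of $c^2$ on $\opn{Rect}_{C/A}^{\til{C}/\til{A}}(\bsym{P})$. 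Transporting this identity via condition ($*$) of Theorem \ref{thm:860} yields the desired equality $\opn{Sq}_{\mrm{id}_C / \mrm{id}_A}(m_c) = c^2 \cd \mrm{id}_{\opn{Sq}_{C/A}(N)}$. The main subtlety, which I expect to require the most care, is the final identification of two a priori different $C$-actions on $\opn{Hom}_{\til{C}^{\mrm{en}}}(C, \til{I})$ — the one given by multiplication by $\tilde{c}^2$ on the target $\til{I}$, and the $C^{\mrm{ce}}$-action of Proposition \ref{prop:461}(1) coming from the source $C$ — and verifying that both reduce, through the quasi-isomorphism $t$, to the $\opn{H}^0(C)$-action of $c^2$ in $\cat{D}(C)$.
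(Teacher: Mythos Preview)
Your overall strategy---factoring $c \cdot \th = \th \circ m_c$ via Proposition \ref{prop:1000} and then computing $\opn{Sq}_{\mrm{id}_C/\mrm{id}_A}(m_c)$---is sound and is a mild variant of the paper's direct approach, which works with the full triple $A \to B \to C$ and the morphism $\th$ throughout rather than first reducing to an endomorphism of $N$.

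However, there is a genuine error in your compound morphism. You claim that $\til{\th} \ot_{\til{A}} \til{\th} = \tilde{c}^2 \cdot \mrm{id}_{\til{P}^{\mrm{en}}}$, and justify this by ``commutativity''. But the tensor product $\til{P}^{\mrm{en}} = \til{P} \ot_{\til{A}} \til{P}$ is over $\til{A}$, not over $\til{C}$, so elements of $\til{C}$ cannot be moved across it. Concretely, $(\til{\th} \ot \til{\th})(p_1 \ot p_2) = \tilde{c}\, p_1 \ot \tilde{c}\, p_2$ equals the action of $\tilde{c} \ot \tilde{c} \in \til{C}^{\mrm{en}}$, which in general differs from the action of $\tilde{c}^2 \ot 1$ (or $1 \ot \tilde{c}^2$), even in $\cat{D}(\til{C}^{\mrm{en}})$. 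For instance, if $\til{A} = \Z$, $\til{C} = \Z[x]$, $N = \til{C}$, then on $\til{P}^{\mrm{en}} \cong \Z[x_1,x_2]$ the two sides act as $x_1 x_2$ and $x_1^2$ respectively. Hence your choice $\eta = \tilde{c}^2 \cdot \mrm{id}_{\til{I}}$ does not satisfy the required compatibility in Definition \ref{dfn:919}.

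The fix is exactly what the paper does: take $\eta$ to be multiplication by $\tilde{c} \ot \tilde{c} \in \til{C}^{\mrm{en}}$ on $\til{I}$. Then the compatibility holds on the nose, and $\opn{Rect}_{\mrm{id}/\mrm{id}}^{\mrm{id}/\mrm{id}}(\bsym{\eta})$ is represented by $\phi \mapsto (\tilde{c} \ot \tilde{c}) \cdot \phi$. Only \emph{after} using the $\til{C}^{\mrm{en}}$-linearity of $\phi : C \to \til{I}$ does this become $\phi \mapsto \phi\bigl((\tilde{c} \ot \tilde{c}) \cdot -\bigr)$, and only on the bimodule $C$---where the left and right $\til{C}$-actions coincide via $t$---does $(\tilde{c} \ot \tilde{c}) \cdot x$ collapse to $t(\tilde{c})^2 \cdot x$. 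This is precisely the ``subtlety'' you anticipated at the end; it is not an extra identification to be checked but rather the mechanism that converts $\tilde{c} \ot \tilde{c}$ into $c^2$, and it cannot be invoked earlier at the level of $\til{I}$ or $\til{P}^{\mrm{en}}$.
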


\begin{proof}
Choose a K-flat resolution
$\til{B} / \til{A}$ of $B / A$. Then choose a strict commutative semi-free 
resolution $\til{C} / \til{B}$ of $C / \til{B}$. 
Putting them together we obtain a K-flat resolution 
$\til{A} \xar{} \til{B} \xar{\til{v}} \til{C}$
of $A \to B \xar{v} C$, where $\til{v}$ is commutative semi-free. 
Let $\bsym{P}_M = (\til{P}_M, \til{I}_M; \al_M, \be_M)$
be a symmetric compound resolution of $M$ over $\til{B} / \til{A}$, 
and let  
$\bsym{P}_N = (\til{P}_N, \til{I}_N; \al_N, \be_N)$
be a symmetric compound resolution of $N$ over $\til{C} / \til{A}$.
See Remark \ref{rem:965} and Definition \ref{dfn:919}.

Because $\til{P}_N$ is K-projective over $\til{B}$,
we can find a homomorphism 
$\til{\th} : \til{P}_N \to \til{P}_M$
in $\cat{M}(\til{B})$ that lifts $\th$. 
We obtain a symmetric compound morphism
$\bsym{\eta} : \bsym{P}_N \to \bsym{P}_M$, 
$\bsym{\eta} := (\til{P}_N; \mrm{id}_{\til{P}_N}, \til{\th}, \eta)$
above $\th$ and  $\til{v} / \mrm{id}_{\til{A}}$.
Specifically, the homomorphism 
$\eta : \til{I}_N \to \til{I}_M$
in $\cat{M}(\til{B}^{\mrm{en}})$ is a lift of 
\[ \til{\th} \ot \til{\th} : \til{P}_N \ot_{\til{A}} \til{P}_N \to 
\til{P}_M \ot_{\til{A}} \til{P}_M . \]
By Proposition \ref{prop:916},  
$\opn{Sq}_{v / \mrm{id}}(\th)$
is represented by 
\[ \opn{Hom}_{v^{\mrm{en}}}(v, \eta) : 
\opn{Hom}_{C^{\mrm{en}}}(C, \til{I}_N) \to 
\opn{Hom}_{B^{\mrm{en}}}(B, \til{I}_M) . \]
We remind that 
$\til{B}^{\mrm{en}} = \til{B} \ot_{\til{A}} \til{B}^{\mrm{op}}$
and
$\til{C}^{\mrm{en}} = \til{C} \ot_{\til{A}} \til{C}^{\mrm{op}}$.

Now choose some $\til{c} \in \til{C}^0$ that represents the cohomology class 
$c \in \opn{H}^0(C) \cong \opn{H}^0(\til{C})$. 
Then 
$\til{c} \cd \til{\th} : \til{P}_N \to \til{P}_M$ 
lifts $c \cd \th : N \to M$, and 
$(\til{c} \ot \til{c}) \cd \eta : \til{I}_N \to \til{I}_M$
lifts 
$(\til{c} \cd \til{\th}) \ot (\til{c} \cd \til{\th})$. 
We get a symmetric compound morphism 
$\bsym{\eta}_c : \bsym{P}_N \to \bsym{P}_M$, 
\[ \bsym{\eta}_c := \bigl( \til{P}_N; \mrm{id}_{\til{P}_N} , \til{c} \cd 
\til{\th}, (\til{c} \ot \til{c}) \cd \eta \bigr)  \]
resolving $c \cd \th$. But then the homomorphism 
\[ \opn{Hom}_{v^{\mrm{en}}}(v, (\til{c} \ot \til{c}) \cd \eta) = 
\opn{Hom}_{v^{\mrm{en}}}( \til{c}^2 \cd v, \eta) = 
\til{c}^2 \cd \opn{Hom}_{v^{\mrm{en}}}(v, \eta)  \]
represents 
$\opn{Sq}_{v / \mrm{id}}(c \cd \th)$.
\end{proof}

This theorem shows that the functor $\opn{Sq}_{B / A}$ is a 
{\em quadratic functor}.


\end{document}